     \def\section{\@startsection{section}{1}%
     \z@{.7\linespacing\@plus\linespacing}{.5\linespacing}%
     {\bfseries%\normalfont\scshape
     \centering
     }}
     \def\@secnumfont{\bfseries}
\renewcommand{\div}{\mathrm{div}}
\newtheorem{theorem}{Theorem}[section]
\newtheorem{lemma}[theorem]{Lemma}
\newtheorem{proposition}[theorem]{Proposition}
\newtheorem{corollary}[theorem]{Corollary}
\theoremstyle{definition}
\newtheorem{definition}[theorem]{Definition}
\newtheorem{example}[theorem]{Example}
\theoremstyle{remark}
\newtheorem{remark}[theorem]{Remark}
\numberwithin{equation}{section}
\newlength{\anIndent}
\let\pdfoutput=\undefined\fi
\chardef\@x10\chardef\@xv60
\def\tcitime{
\def\@time{%
  \@minute\time\@hour\@minute\divide\@hour\@xv
  \ifnum\@hour<\@x 0\fi\the\@hour:%
  \multiply\@hour\@xv\advance\@minute-\@hour
  \ifnum\@minute<\@x 0\fi\the\@minute
  }}%
\def\x@hyperref#1#2#3{%
   % Turn off various catcodes before reading parameter 4
   \catcode`\~ = 12
   \catcode`\$ = 12
   \catcode`\_ = 12
   \catcode`\# = 12
   \catcode`\& = 12
   \y@hyperref{#1}{#2}{#3}%
}
\def\y@hyperref#1#2#3#4{%
   #2\ref{#4}#3
   \catcode`\~ = 13
   \catcode`\$ = 3
   \catcode`\_ = 8
   \catcode`\# = 6
   \catcode`\& = 4
}
\def\QCTOpt[#1]#2{%
  \def\QCTOptB{#1}
  \def\QCTOptA{#2}
}
\def\QCTNOpt#1{%
  \def\QCTOptA{#1}
  \let\QCTOptB\empty
}
\def\Qct{%
  \@ifnextchar[{%
    \QCTOpt}{\QCTNOpt}
}
\def\QCBOpt[#1]#2{%
  \def\QCBOptB{#1}%
  \def\QCBOptA{#2}%
}
\def\QCBNOpt#1{%
  \def\QCBOptA{#1}%
  \let\QCBOptB\empty
}
\def\Qcb{%
  \@ifnextchar[{%
    \QCBOpt}{\QCBNOpt}%
}
\def\PrepCapArgs{%
  \ifx\QCBOptA\empty
    \ifx\QCTOptA\empty
      {}%
    \else
      \ifx\QCTOptB\empty
        {\QCTOptA}%
      \else
        [\QCTOptB]{\QCTOptA}%
      \fi
    \fi
  \else
    \ifx\QCBOptA\empty
      {}%
    \else
      \ifx\QCBOptB\empty
        {\QCBOptA}%
      \else
        [\QCBOptB]{\QCBOptA}%
      \fi
    \fi
  \fi
}
\def\GRAPHICSPS#1{%
 \ifcase\GRAPHICSTYPE%\GRAPHICSTYPE=0
   \special{ps: #1}%
 \or%\GRAPHICSTYPE=1
   \special{language "PS", include "#1"}%
%%%\or%\GRAPHICSTYPE=2
%%%  #1%
 \fi
}%
\def\graffile#1#2#3#4{%
%%% \ifnum\GRAPHICSTYPE=\tw@
%%%  %Following if using psfig
%%%  \@ifundefined{psfig}{\input psfig.tex}{}%
%%%  \psfig{file=#1, height=#3, width=#2}%
%%% \else
  %Following for all others
  % JCS - added BOXTHEFRAME, see below
    \bgroup
	   \@inlabelfalse
       \leavevmode
       \@ifundefined{bbl@deactivate}{\def~{\string~}}{\activesoff}%
        \raise -#4 \BOXTHEFRAME{%
           \hbox to #2{\raise #3\hbox to #2{\null #1\hfil}}}%
    \egroup
}%
\def\draftbox#1#2#3#4{%
 \leavevmode\raise -#4 \hbox{%
  \frame{\rlap{\protect\tiny #1}\hbox to #2%
   {\vrule height#3 width\z@ depth\z@\hfil}%
  }%
 }%
}%
\let\nographics=\@msidraft
\newif\ifwasdraft
\def\GRAPHIC#1#2#3#4#5{%
   \ifnum\@msidraft=\@ne\draftbox{#2}{#3}{#4}{#5}%
   \else\graffile{#1}{#3}{#4}{#5}%
   \fi
}
\def\addtoLaTeXparams#1{%
    \edef\LaTeXparams{\LaTeXparams #1}}%
\newif\ifBoxFrame \BoxFramefalse
\newif\ifOverFrame \OverFramefalse
\newif\ifUnderFrame \UnderFramefalse
\def\BOXTHEFRAME#1{%
   \hbox{%
      \ifBoxFrame
         \frame{#1}%
      \else
         {#1}%
      \fi
   }%
}
\def\doFRAMEparams#1{\BoxFramefalse\OverFramefalse\UnderFramefalse\readFRAMEparams#1\end}%
\def\readFRAMEparams#1{%
 \ifx#1\end%
  \let\next=\relax
  \else
  \ifx#1i\dispkind=\z@\fi
  \ifx#1d\dispkind=\@ne\fi
  \ifx#1f\dispkind=\tw@\fi
  \ifx#1t\addtoLaTeXparams{t}\fi
  \ifx#1b\addtoLaTeXparams{b}\fi
  \ifx#1p\addtoLaTeXparams{p}\fi
  \ifx#1h\addtoLaTeXparams{h}\fi
  \ifx#1X\BoxFrametrue\fi
  \ifx#1O\OverFrametrue\fi
  \ifx#1U\UnderFrametrue\fi
  \ifx#1w
    \ifnum\@msidraft=1\wasdrafttrue\else\wasdraftfalse\fi
    \@msidraft=\@ne
  \fi
  \let\next=\readFRAMEparams
  \fi
 \next
 }%
\def\IFRAME#1#2#3#4#5#6{%
      \bgroup
      \let\QCTOptA\empty
      \let\QCTOptB\empty
      \let\QCBOptA\empty
      \let\QCBOptB\empty
      #6%
      \parindent=0pt
      \leftskip=0pt
      \rightskip=0pt
      \setbox0=\hbox{\QCBOptA}%
      \@tempdima=#1\relax
      \ifOverFrame
          % Do this later
          \typeout{This is not implemented yet}%
          \show\HELP
      \else
         \ifdim\wd0>\@tempdima
            \advance\@tempdima by \@tempdima
            \ifdim\wd0 >\@tempdima
               \setbox1 =\vbox{%
                  \unskip\hbox to \@tempdima{\hfill\GRAPHIC{#5}{#4}{#1}{#2}{#3}\hfill}%
                  \unskip\hbox to \@tempdima{\parbox[b]{\@tempdima}{\QCBOptA}}%
               }%
               \wd1=\@tempdima
            \else
               \textwidth=\wd0
               \setbox1 =\vbox{%
                 \noindent\hbox to \wd0{\hfill\GRAPHIC{#5}{#4}{#1}{#2}{#3}\hfill}\\%
                 \noindent\hbox{\QCBOptA}%
               }%
               \wd1=\wd0
            \fi
         \else
            \ifdim\wd0>0pt
              \hsize=\@tempdima
              \setbox1=\vbox{%
                \unskip\GRAPHIC{#5}{#4}{#1}{#2}{0pt}%
                \break
                \unskip\hbox to \@tempdima{\hfill \QCBOptA\hfill}%
              }%
              \wd1=\@tempdima
           \else
              \hsize=\@tempdima
              \setbox1=\vbox{%
                \unskip\GRAPHIC{#5}{#4}{#1}{#2}{0pt}%
              }%
              \wd1=\@tempdima
           \fi
         \fi
         \@tempdimb=\ht1
         %\advance\@tempdimb by \dp1
         \advance\@tempdimb by -#2
         \advance\@tempdimb by #3
         \leavevmode
         \raise -\@tempdimb \hbox{\box1}%
      \fi
      \egroup%
}%
\def\DFRAME#1#2#3#4#5{%
  \vspace\topsep
  \hfil\break
  \bgroup
     \leftskip\@flushglue
	 \rightskip\@flushglue
	 \parindent\z@
	 \parfillskip\z@skip
     \let\QCTOptA\empty
     \let\QCTOptB\empty
     \let\QCBOptA\empty
     \let\QCBOptB\empty
	 \vbox\bgroup
        \ifOverFrame 
           #5\QCTOptA\par
        \fi
        \GRAPHIC{#4}{#3}{#1}{#2}{\z@}%
        \ifUnderFrame 
           \break#5\QCBOptA
        \fi
	 \egroup
  \egroup
  \vspace\topsep
  \break
}%
\def\FFRAME#1#2#3#4#5#6#7{%
 %If float.sty loaded and float option is 'h', change to 'H'  (gp) 1998/09/05
  \@ifundefined{floatstyle}
    {%floatstyle undefined (and float.sty not present), no change
     \begin{figure}[#1]%
    }
    {%floatstyle DEFINED
	 \ifx#1h%Only the h parameter, change to H
      \begin{figure}[H]%
	 \else
      \begin{figure}[#1]%
	 \fi
	}
  \let\QCTOptA\empty
  \let\QCTOptB\empty
  \let\QCBOptA\empty
  \let\QCBOptB\empty
  \ifOverFrame
    #4
    \ifx\QCTOptA\empty
    \else
      \ifx\QCTOptB\empty
        \caption{\QCTOptA}%
      \else
        \caption[\QCTOptB]{\QCTOptA}%
      \fi
    \fi
    \ifUnderFrame\else
      \label{#5}%
    \fi
  \else
    \UnderFrametrue%
  \fi
  \begin{center}\GRAPHIC{#7}{#6}{#2}{#3}{\z@}\end{center}%
  \ifUnderFrame
    #4
    \ifx\QCBOptA\empty
      \caption{}%
    \else
      \ifx\QCBOptB\empty
        \caption{\QCBOptA}%
      \else
        \caption[\QCBOptB]{\QCBOptA}%
      \fi
    \fi
    \label{#5}%
  \fi
  \end{figure}%
 }%
\def\makeactives{
  \catcode`\"=\active
  \catcode`\;=\active
  \catcode`\:=\active
  \catcode`\'=\active
  \catcode`\~=\active
}
   \gdef\activesoff{%
      \def"{\string"}%
      \def;{\string;}%
      \def:{\string:}%
      \def'{\string'}%
      \def~{\string~}%
      %\bbl@deactivate{"}%
      %\bbl@deactivate{;}%
      %\bbl@deactivate{:}%
      %\bbl@deactivate{'}%
    }
\def\FRAME#1#2#3#4#5#6#7#8{%
 \bgroup
 \ifnum\@msidraft=\@ne
   \wasdrafttrue
 \else
   \wasdraftfalse%
 \fi
 \def\LaTeXparams{}%
 \dispkind=\z@
 \def\LaTeXparams{}%
 \doFRAMEparams{#1}%
 \ifnum\dispkind=\z@\IFRAME{#2}{#3}{#4}{#7}{#8}{#5}\else
  \ifnum\dispkind=\@ne\DFRAME{#2}{#3}{#7}{#8}{#5}\else
   \ifnum\dispkind=\tw@
    \edef\@tempa{\noexpand\FFRAME{\LaTeXparams}}%
    \@tempa{#2}{#3}{#5}{#6}{#7}{#8}%
    \fi
   \fi
  \fi
  \ifwasdraft\@msidraft=1\else\@msidraft=0\fi{}%
  \egroup
 }%
\def\TEXUX#1{"texux"}
\long\def\QQQ#1#2{%
     \long\expandafter\def\csname#1\endcsname{#2}}%
\long\def\QQA#1#2{}%
\def\QTR#1#2{{\csname#1\endcsname {#2}}}%
\def\EXPAND#1[#2]#3{}%
\def\NOEXPAND#1[#2]#3{}%
\def\LaTeXparent#1{}%
\def\ChildStyles#1{}%
\def\ChildDefaults#1{}%
\def\QTagDef#1#2#3{}%
  \providecommand{\UNICODE}[2][]{\protect\rule{.1in}{.1in}}
  \providecommand{\U}[1]{\protect\rule{.1in}{.1in}}
\def\QQfnmark#1{\footnotemark}
 \def\abstract{%
  \if@twocolumn
   \section*{Abstract (Not appropriate in this style!)}%
   \else \small 
   \begin{center}{\bf Abstract\vspace{-.5em}\vspace{\z@}}\end{center}%
   \quotation 
   \fi
  }%
   \def\registered{\relax\ifmmode{}\r@gistered
                    \else$\m@th\r@gistered$\fi}%
 \def\r@gistered{^{\ooalign
  {\hfil\raise.07ex\hbox{$\scriptstyle\rm\text{R}$}\hfil\crcr
  \mathhexbox20D}}}}{}%
\newdimen\theight
\def\newfmtname{LaTeX2e}
  \DeclareOldFontCommand{\rm}{\normalfont\rmfamily}{\mathrm}
  \DeclareOldFontCommand{\sf}{\normalfont\sffamily}{\mathsf}
  \DeclareOldFontCommand{\tt}{\normalfont\ttfamily}{\mathtt}
  \DeclareOldFontCommand{\bf}{\normalfont\bfseries}{\mathbf}
  \DeclareOldFontCommand{\it}{\normalfont\itshape}{\mathit}
  \DeclareOldFontCommand{\sl}{\normalfont\slshape}{\@nomath\sl}
  \DeclareOldFontCommand{\sc}{\normalfont\scshape}{\@nomath\sc}
\def\alpha{{\Greekmath 010B}}%
\def\beta{{\Greekmath 010C}}%
\def\gamma{{\Greekmath 010D}}%
\def\delta{{\Greekmath 010E}}%
\def\epsilon{{\Greekmath 010F}}%
\def\zeta{{\Greekmath 0110}}%
\def\eta{{\Greekmath 0111}}%
\def\theta{{\Greekmath 0112}}%
\def\iota{{\Greekmath 0113}}%
\def\kappa{{\Greekmath 0114}}%
\def\lambda{{\Greekmath 0115}}%
\def\mu{{\Greekmath 0116}}%
\def\nu{{\Greekmath 0117}}%
\def\xi{{\Greekmath 0118}}%
\def\pi{{\Greekmath 0119}}%
\def\rho{{\Greekmath 011A}}%
\def\sigma{{\Greekmath 011B}}%
\def\tau{{\Greekmath 011C}}%
\def\upsilon{{\Greekmath 011D}}%
\def\phi{{\Greekmath 011E}}%
\def\chi{{\Greekmath 011F}}%
\def\psi{{\Greekmath 0120}}%
\def\omega{{\Greekmath 0121}}%
\def\varepsilon{{\Greekmath 0122}}%
\def\vartheta{{\Greekmath 0123}}%
\def\varpi{{\Greekmath 0124}}%
\def\varrho{{\Greekmath 0125}}%
\def\varsigma{{\Greekmath 0126}}%
\def\varphi{{\Greekmath 0127}}%
\def\nabla{{\Greekmath 0272}}
\def\FindBoldGroup{%
   {\setbox0=\hbox{$\mathbf{x\global\edef\theboldgroup{\the\mathgroup}}$}}%
}
\def\Greekmath#1#2#3#4{%
    \if@compatibility
        \ifnum\mathgroup=\symbold
           \mathchoice{\mbox{\boldmath$\displaystyle\mathchar"#1#2#3#4$}}%
                      {\mbox{\boldmath$\textstyle\mathchar"#1#2#3#4$}}%
                      {\mbox{\boldmath$\scriptstyle\mathchar"#1#2#3#4$}}%
                      {\mbox{\boldmath$\scriptscriptstyle\mathchar"#1#2#3#4$}}%
        \else
           \mathchar"#1#2#3#4% 
        \fi 
    \else 
        \FindBoldGroup
        \ifnum\mathgroup=\theboldgroup % For 2e
           \mathchoice{\mbox{\boldmath$\displaystyle\mathchar"#1#2#3#4$}}%
                      {\mbox{\boldmath$\textstyle\mathchar"#1#2#3#4$}}%
                      {\mbox{\boldmath$\scriptstyle\mathchar"#1#2#3#4$}}%
                      {\mbox{\boldmath$\scriptscriptstyle\mathchar"#1#2#3#4$}}%
        \else
           \mathchar"#1#2#3#4% 
        \fi     	    
	  \fi}
\newif\ifGreekBold  \GreekBoldfalse
\let\SAVEPBF=\pbf
\def\pbf{\GreekBoldtrue\SAVEPBF}%
  \newcounter{equationnumber}  
  \def\mathletters{%
     \addtocounter{equation}{1}
     \edef\@currentlabel{\theequation}%
     \setcounter{equationnumber}{\c@equation}
     \setcounter{equation}{0}%
     \edef\theequation{\@currentlabel\noexpand\alph{equation}}%
  }
    \def\BibTeX{{\rm B\kern-.05em{\sc i\kern-.025em b}\kern-.08em
                 T\kern-.1667em\lower.7ex\hbox{E}\kern-.125emX}}}{}%
\def\AmS{{\protect\usefont{OMS}{cmsy}{m}{n}%
                A\kern-.1667em\lower.5ex\hbox{M}\kern-.125emS}}}{}%
\def\@@eqncr{\let\@tempa\relax
    \ifcase\@eqcnt \def\@tempa{& & &}\or \def\@tempa{& &}%
      \else \def\@tempa{&}\fi
     \@tempa
     \if@eqnsw
        \iftag@
           \@taggnum
        \else
           \@eqnnum\stepcounter{equation}%
        \fi
     \fi
     \global\tag@false
     \global\@eqnswtrue
     \global\@eqcnt\z@\cr}
\def\TCItag{\@ifnextchar*{\@TCItagstar}{\@TCItag}}
\def\@TCItag#1{%
    \global\tag@true
    \global\def\@taggnum{(#1)}%
    \global\def\@currentlabel{#1}}
\def\@TCItagstar*#1{%
    \global\tag@true
    \global\def\@taggnum{#1}%
    \global\def\@currentlabel{#1}}
\def\dprod{\mathop{\displaystyle \prod }}%
\def\ExitTCILatex{\makeatother }
\if@compatibility\message{amsmath already loaded}\fi\aftergroup\ExitTCILatex}
\if@compatibility\message{amstex already loaded}\fi\aftergroup\ExitTCILatex}
\if@compatibility\message{amsgen already loaded}\fi\aftergroup\ExitTCILatex}
\let\DOTSI\relax
\def\RIfM@{\relax\ifmmode}%
\def\FN@{\futurelet\next}%
\def\iint{\DOTSI\intno@\tw@\FN@\ints@}%
\def\iiint{\DOTSI\intno@\thr@@\FN@\ints@}%
\def\iiiint{\DOTSI\intno@4 \FN@\ints@}%
\def\idotsint{\DOTSI\intno@\z@\FN@\ints@}%
\def\ints@{\findlimits@\ints@@}%
\newif\iflimtoken@
\newif\iflimits@
\def\findlimits@{\limtoken@true\ifx\next\limits\limits@true
 \else\ifx\next\nolimits\limits@false\else
 \limtoken@false\ifx\ilimits@\nolimits\limits@false\else
 \ifinner\limits@false\else\limits@true\fi\fi\fi\fi}%
\def\multint@{\int\ifnum\intno@=\z@\intdots@                          %1
 \else\intkern@\fi                                                    %2
 \ifnum\intno@>\tw@\int\intkern@\fi                                   %3
 \ifnum\intno@>\thr@@\int\intkern@\fi                                 %4
 \int}%                                                               %5
\def\multintlimits@{\intop\ifnum\intno@=\z@\intdots@\else\intkern@\fi
 \ifnum\intno@>\tw@\intop\intkern@\fi
 \ifnum\intno@>\thr@@\intop\intkern@\fi\intop}%
\def\intic@{%
    \mathchoice{\hskip.5em}{\hskip.4em}{\hskip.4em}{\hskip.4em}}%
\def\negintic@{\mathchoice
 {\hskip-.5em}{\hskip-.4em}{\hskip-.4em}{\hskip-.4em}}%
\def\ints@@{\iflimtoken@                                              %1
 \def\ints@@@{\iflimits@\negintic@
   \mathop{\intic@\multintlimits@}\limits                             %2
  \else\multint@\nolimits\fi                                          %3
  \eat@}%                                                             %4
 \else                                                                %5
 \def\ints@@@{\iflimits@\negintic@
  \mathop{\intic@\multintlimits@}\limits\else
  \multint@\nolimits\fi}\fi\ints@@@}%
\def\intkern@{\mathchoice{\!\!\!}{\!\!}{\!\!}{\!\!}}%
\def\plaincdots@{\mathinner{\cdotp\cdotp\cdotp}}%
\def\intdots@{\mathchoice{\plaincdots@}%
 {{\cdotp}\mkern1.5mu{\cdotp}\mkern1.5mu{\cdotp}}%
 {{\cdotp}\mkern1mu{\cdotp}\mkern1mu{\cdotp}}%
 {{\cdotp}\mkern1mu{\cdotp}\mkern1mu{\cdotp}}}%
\def\RIfM@{\relax\protect\ifmmode}
\def\text{\RIfM@\expandafter\text@\else\expandafter\mbox\fi}
\let\nfss@text\text
\def\text@#1{\mathchoice
   {\textdef@\displaystyle\f@size{#1}}%
   {\textdef@\textstyle\tf@size{\firstchoice@false #1}}%
   {\textdef@\textstyle\sf@size{\firstchoice@false #1}}%
   {\textdef@\textstyle \ssf@size{\firstchoice@false #1}}%
   \glb@settings}
\def\textdef@#1#2#3{\hbox{{%
                    \everymath{#1}%
                    \let\f@size#2\selectfont
                    #3}}}
\newif\iffirstchoice@
\def\Let@{\relax\iffalse{\fi\let\\=\cr\iffalse}\fi}%
\def\vspace@{\def\vspace##1{\crcr\noalign{\vskip##1\relax}}}%
\def\multilimits@{\bgroup\vspace@\Let@
 \baselineskip\fontdimen10 \scriptfont\tw@
 \advance\baselineskip\fontdimen12 \scriptfont\tw@
 \lineskip\thr@@\fontdimen8 \scriptfont\thr@@
 \lineskiplimit\lineskip
 \vbox\bgroup\ialign\bgroup\hfil$\m@th\scriptstyle{##}$\hfil\crcr}%
\def\Sb{_\multilimits@}%
\def\endSb{\crcr\egroup\egroup\egroup}%
\def\Sp{^\multilimits@}%
\newdimen\ex@
\def\rightarrowfill@#1{$#1\m@th\mathord-\mkern-6mu\cleaders
 \hbox{$#1\mkern-2mu\mathord-\mkern-2mu$}\hfill
 \mkern-6mu\mathord\rightarrow$}%
\def\leftarrowfill@#1{$#1\m@th\mathord\leftarrow\mkern-6mu\cleaders
 \hbox{$#1\mkern-2mu\mathord-\mkern-2mu$}\hfill\mkern-6mu\mathord-$}%
\def\leftrightarrowfill@#1{$#1\m@th\mathord\leftarrow
\mkern-6mu\cleaders
 \hbox{$#1\mkern-2mu\mathord-\mkern-2mu$}\hfill
 \mkern-6mu\mathord\rightarrow$}%
\def\overrightarrow{\mathpalette\overrightarrow@}%
\def\overrightarrow@#1#2{\vbox{\ialign{##\crcr\rightarrowfill@#1\crcr
 \noalign{\kern-\ex@\nointerlineskip}$\m@th\hfil#1#2\hfil$\crcr}}}%
\def\overleftarrow{\mathpalette\overleftarrow@}%
\def\overleftarrow@#1#2{\vbox{\ialign{##\crcr\leftarrowfill@#1\crcr
 \noalign{\kern-\ex@\nointerlineskip}$\m@th\hfil#1#2\hfil$\crcr}}}%
\def\overleftrightarrow{\mathpalette\overleftrightarrow@}%
\def\overleftrightarrow@#1#2{\vbox{\ialign{##\crcr
   \leftrightarrowfill@#1\crcr
 \noalign{\kern-\ex@\nointerlineskip}$\m@th\hfil#1#2\hfil$\crcr}}}%
\def\underrightarrow{\mathpalette\underrightarrow@}%
\def\underrightarrow@#1#2{\vtop{\ialign{##\crcr$\m@th\hfil#1#2\hfil
  $\crcr\noalign{\nointerlineskip}\rightarrowfill@#1\crcr}}}%
\def\underleftarrow{\mathpalette\underleftarrow@}%
\def\underleftarrow@#1#2{\vtop{\ialign{##\crcr$\m@th\hfil#1#2\hfil
  $\crcr\noalign{\nointerlineskip}\leftarrowfill@#1\crcr}}}%
\def\underleftrightarrow{\mathpalette\underleftrightarrow@}%
\def\underleftrightarrow@#1#2{\vtop{\ialign{##\crcr$\m@th
  \hfil#1#2\hfil$\crcr
 \noalign{\nointerlineskip}\leftrightarrowfill@#1\crcr}}}%
\def\qopnamewl@#1{\mathop{\operator@font#1}\nlimits@}
\let\nlimits@\displaylimits
\def\setboxz@h{\setbox\z@\hbox}
\def\varlim@#1#2{\mathop{\vtop{\ialign{##\crcr
 \hfil$#1\m@th\operator@font lim$\hfil\crcr
 \noalign{\nointerlineskip}#2#1\crcr
 \noalign{\nointerlineskip\kern-\ex@}\crcr}}}}
 \def\rightarrowfill@#1{\m@th\setboxz@h{$#1-$}\ht\z@\z@
  $#1\copy\z@\mkern-6mu\cleaders
  \hbox{$#1\mkern-2mu\box\z@\mkern-2mu$}\hfill
  \mkern-6mu\mathord\rightarrow$}
\def\leftarrowfill@#1{\m@th\setboxz@h{$#1-$}\ht\z@\z@
  $#1\mathord\leftarrow\mkern-6mu\cleaders
  \hbox{$#1\mkern-2mu\copy\z@\mkern-2mu$}\hfill
  \mkern-6mu\box\z@$}
\def\projlim{\qopnamewl@{proj\,lim}}
\def\injlim{\qopnamewl@{inj\,lim}}
\def\varinjlim{\mathpalette\varlim@\rightarrowfill@}
\def\varprojlim{\mathpalette\varlim@\leftarrowfill@}
\def\varliminf{\mathpalette\varliminf@{}}
\def\varliminf@#1{\mathop{\underline{\vrule\@depth.2\ex@\@width\z@
   \hbox{$#1\m@th\operator@font lim$}}}}
\def\varlimsup{\mathpalette\varlimsup@{}}
\def\varlimsup@#1{\mathop{\overline
  {\hbox{$#1\m@th\operator@font lim$}}}}
\def\align{\@verbatim \frenchspacing\@vobeyspaces \@alignverbatim
You are using the "align" environment in a style in which it is not defined.}
\let\csname endalign*\endcsname =\endtrivlist
\def\alignat{\@verbatim \frenchspacing\@vobeyspaces \@alignatverbatim
You are using the "alignat" environment in a style in which it is not defined.}
\let\csname endalignat*\endcsname =\endtrivlist
\def\xalignat{\@verbatim \frenchspacing\@vobeyspaces \@xalignatverbatim
You are using the "xalignat" environment in a style in which it is not defined.}
\let\csname endxalignat*\endcsname =\endtrivlist
\def\gather{\@verbatim \frenchspacing\@vobeyspaces \@gatherverbatim
You are using the "gather" environment in a style in which it is not defined.}
\let\csname endgather*\endcsname =\endtrivlist
\def\multiline{\@verbatim \frenchspacing\@vobeyspaces \@multilineverbatim
You are using the "multiline" environment in a style in which it is not defined.}
\let\csname endmultiline*\endcsname =\endtrivlist
\def\arrax{\@verbatim \frenchspacing\@vobeyspaces \@arraxverbatim
You are using a type of "array" construct that is only allowed in AmS-LaTeX.}
\def\tabulax{\@verbatim \frenchspacing\@vobeyspaces \@tabulaxverbatim
You are using a type of "tabular" construct that is only allowed in AmS-LaTeX.}
\let\csname endarrax*\endcsname =\endtrivlist
\let\csname endtabulax*\endcsname =\endtrivlist
 \def\endequation{%
     \ifmmode\ifinner % FLEQN hack
      \iftag@
        \addtocounter{equation}{-1} % undo the increment made in the begin part
        $\hfil
           \displaywidth\linewidth\@taggnum\egroup \endtrivlist
        \global\tag@false
        \global\@ignoretrue   
      \else
        $\hfil
           \displaywidth\linewidth\@eqnnum\egroup \endtrivlist
        \global\tag@false
        \global\@ignoretrue 
      \fi
     \else   
      \iftag@
        \addtocounter{equation}{-1} % undo the increment made in the begin part
        \eqno \hbox{\@taggnum}
        \global\tag@false%
        $$\global\@ignoretrue
      \else
        \eqno \hbox{\@eqnnum}% $$ BRACE MATCHING HACK
        $$\global\@ignoretrue
      \fi
     \fi\fi
 } 
 \newif\iftag@ \tag@false
 \def\TCItag{\@ifnextchar*{\@TCItagstar}{\@TCItag}}
 \def\@TCItag#1{%
     \global\tag@true
     \global\def\@taggnum{(#1)}%
     \global\def\@currentlabel{#1}}
 \def\@TCItagstar*#1{%
     \global\tag@true
     \global\def\@taggnum{#1}%
     \global\def\@currentlabel{#1}}
     \def\tag{\@ifnextchar*{\@tagstar}{\@tag}}
     \def\@tag#1{%
         \global\tag@true
         \global\def\@taggnum{(#1)}}
     \def\@tagstar*#1{%
         \global\tag@true
         \global\def\@taggnum{#1}}
\begin{document}

\title[Well-posedness of Deterministic Transport Eq. with Singular Vector Field]{Well-posedness of the Deterministic Transport Equation with Singular Velocity Field Perturbed along Fractional Brownian Paths}

\author{Oussama Amine}
\address{Oussama Amine: Department of Mathematics, University of Oslo, P.O. Box 1053, Blindern, N--0316 Oslo, Norway}
\email{oussamaa@math.uio.no}

\author{Abdol-Reza Mansouri}
\address{Abdol-Reza Mansouri: 416 Jeffery Hall,
	Queen's University,
	Mathematics and Statistics,
	Kingston,  ON K7L 3N6, Canada}
\email{mansouri@queensu.ca}

\author{Frank Proske}
\address{Frank Proske: Department of Mathematics, University of Oslo, P.O. Box 1053, Blindern, N--0316 Oslo, Norway}
\email{proske@math.uio.no}

\subjclass[2010] {--, --.}

\keywords{Transport equation, Compactness criterion, Singular vector fields, Regularization by noise.}

    \begin{abstract}
			In this article we prove path-by-path uniqueness in the sense of Davie \cite{5Davie07} and Shaposhnikov \cite{5Shaposhnikov16} for SDE's driven by a fractional Brownian motion with a Hurst parameter $H\in(0,\frac{1}{2})$, uniformly in the initial conditions, where the drift vector field is allowed to be merely bounded and measurable.\par
		Using this result, we construct weak unique regular solutions in $W_{loc}^{k,p}\left([0,1]\times\mathbb{R}^d\right)$, $p>d$ of the classical transport and continuity equations with singular velocity fields perturbed along fractional Brownian paths.\par
		The latter results provide a systematic way of producing examples of singular velocity fields, which cannot be treated by the regularity theory of DiPerna-Lyons \cite{5DiPernaLions89}, Ambrosio \cite{5Ambrosio04} or Crippa-De Lellis \cite{5CrippaDeLellis08}.\par
		Our approach is based on a priori estimates at the level of flows generated by a sequence of mollified vector fields, converging to the original vector field, and which are uniform with respect to the mollification parameter. In addition, we use a compactness criterion based on Malliavin calculus from \cite{5DMN92} as well as a supremum estimate in time of moments of the derivative of the flow of SDE solutions. 
		
		\emph{keywords}: Transport equation, Compactness criterion, Singular vector fields, Regularization by noise.
		
		\emph{Mathematics Subject Classification} (2010): 60H10, 49N60, 91G80.
		
	\end{abstract}

\maketitle
%\tableofcontents

\section{Introduction}

Consider the following linear Continuity Equation (CE):
\begin{equation}\label{5CE}
\partial_t u + \div(bu) = 0;\quad (t,x)\in[0,T]\times\mathbb{R}^d ,
\end{equation}

where $b(t,x)\in\mathbb{R}^d$ and $u(t,x)\in\mathbb{R}$. Though it has been one of the most elementary partial differential equations (PDE) it plays an essential role in fluid mechanics and the theory of conservation laws, hence a deeper understanding of it is crucial. This equation has a close link with the following dynamical system:

\begin{equation}\label{5ODE}
\begin{cases}
\partial_t X(t,x,s) = b(t,X(t,x,s)) \\
X(s,x,s) = x
\end{cases}
\end{equation}

The link is based on a duality between a Lagrangian and Eulerian points of view, namely the (CE) represents a pointwise, in $(t,x)$, description of the dynamics. This is in contrast to the particle-wise Lagrangian description which traces single particle paths through $($time$\times$space$)$. In the smooth case these two view points are in fact equivalent and one has that the solution of (CE) with initial datum $\bar{u}$ is given by: \[ u_t = X(t,.)_{\#} \bar{u} \]
i.e. the pushforward of the initial datum by the flow generated through (\ref{5ODE}). The existence of this flow, in this smooth setting, is guaranteed by the classical Cauchy-Lipschitz theory. The previous explicit solution formula holds in fact even when $b$ is not within the range of the classical theory, as long as one can give a compatible meaning to both (\ref{5CE}) and (\ref{5ODE}). Indeed taking the Eulerian viewpoint, and starting from the the closely related Transport Equation (\ref{5TE}):
\begin{equation}\label{5TE}
\partial_t u + b\cdot\nabla u = 0;\quad (t,x)\in\left(0,T\right)\times\mathbb{R}^d 
\end{equation}
DiPerna and Lions in their ground breaking work \cite{5DiPernaLions89} provided a positive answer to the question of well-posedness of both (\ref{5CE} )and (\ref{5TE}) and as a consequence to (\ref{5ODE}) as well. Their work is based on a new solution concept called \textit{renormalized solutions}. Briefly, a renormalized solution to (\ref{5CE}) is a distributional solution with the property that
\[\partial \beta(u) + \div(b\beta(u)) = (\beta(u) - u\beta^{'}(u))\div(b),
\]
where the equality is in the sense of distributions, for every $\beta\in C^1_b(\mathbb{R}^d)$ such that $\beta(0)=0$. Renormalized solutions have the property of obeying a "weak" version of the chain rule and this permits one to prove energy estimates rigorously. Indeed, and up to an approximation argument, by choosing $\beta(z) = |z|^p$ one obtains, using Gronwall's lemma, the following energy estimate

\[
\|u(t,.)\|_{L^p (\mathbb{R}^d)}\leq  \exp\left(\int_0^T \|\div(b)\|_{L^{\infty}(\mathbb{R}^d)}\right)^{1-\frac{1}{p}}\times\|\bar{u}\|_{L^p (\mathbb{R}^d)}
\]

for all $t\in\left(0,T\right)$ under appropriate conditions on $b$. Thus one gets, using the linearity of the equation, uniqueness in addition to continuous dependence on initial data under appropriate technical conditions on $b$.
Thus the question of well-posedness is reduced to the question of $b$ having the renormalization property, namely whether all distributional solutions are renormalizable. Indeed, under the assumption of $b\in L^1(\left(0,T\right);W^{1,1}_{loc}(\mathbb{R}^d))$, $\div(b)\in L^{\infty}$ and additional growth conditions, DiPerna and Lions showed that this property holds. The authors went on to show that the results developed at the level of (\ref{5CE})/(\ref{5TE}) using the notion of renormalized solutions can be used to define a generalized solution concept for (\ref{5ODE}), namely that of a generalized almost everywhere flow or its equivalent formulation of a regular Lagrangian flow as introduced in the seminal work of \cite{5Ambrosio04}. In this work the author goes on to show the renormalization property for $b$'s with only BV (i.e. bounded variation) regularity. On the other hand, the example in \cite{5DePauw02} suggests that these results are rather sharp and an extension of the DiPerna-Lions approach beyond the BV case is most probably unlikely. This should be contrasted with the following three key observations, at least in relation to the results of the present work. \par

\begin{enumerate}
	\item The Eulerian to Lagrangian route is not the only one possible and provided one starts with an appropriate notion for a generalized solution to (\ref{5ODE}) a theory can in principle be developed at the Lagrangian level and from it one can make conclusions in the other direction i.e. the Eulerian level. This is in fact the approach proposed in the ingenious work of Crippa and De Lellis \cite{5CrippaDeLellis08} were the authors have proved a priori quantitative estimates directly at the level of (\ref{5ODE}) and used these, in combination with certain functional inequalities, to show existence, uniqueness and compactness properties of the (generalized) flow. The viewpoint developed in this work  provided ideas and techniques that were used in many important works that followed e.g. \cite{5BouchutCrippa13}\cite{5Jabin10}\cite{5BOS11}\cite{5Seis17}\cite{5ChampagnatJabin10}  and \cite{5BreschJabin18} to name a few.\par
	Thus the question of whether to adopt the Lagrangian or Eulerian point of view is problem specific and sometimes a combination of ideas from both is necessary to achieve a deeper understanding in certain situations (see \cite{5BreschJabin18}).
	
	\item For $b$ enjoying some sort of structure, e.g. physical or geometric, one usually is able to say much more with sometimes looser requirements see e.g. \cite{5Hauray03},\cite{5BouchutDesvillettes01},\cite{5CCR06} and \cite{5Crippa-et.al.17}. Take for example \cite{5Hauray03}, where it is shown that existence and uniqueness hold for (\ref{5TE}) directly and for $b$ merely in $L^2 _{loc}$ provided a local property of $b$ holds on a large subset of points $x$. This is achieved using the Hamiltonian structure of $b$ in dimension $2$ in order to reduce the problem to a $1$ dimensional problem. Although such results may give the hope that with the right analysis one can attack problems with truly singular $b$'s i.e. no (weak) differentiability whatsoever the conventional wisdom suggests that a certain degree of differentiability is unavoidable.
	
	\item In contrast to the deterministic setting, the picture in the stochastic one is much more positive. It has been known since the work of \cite{5Zvonkin74} and its multidimensional extension in \cite{5Veretennikov79} that the addition of additive Brownian noise restores well-posedness to (\ref{5ODE}) when $b$ is merely bounded and measurable. The ideas in \cite{5Veretennikov79} were used in combination with the ones in \cite{5Ambrosio04} in order to provide a similar result for a transformed version of (\ref{5TE}) in the work of \cite{5FGP10}. In parallel, and using a different set of tools, a similar result was obtained in \cite{5MNP14}. There is however a caveat in these works, namely that the solution concept is different and the change, of the equation one is solving, does not only affect the dynamics, which may have a physical motivation, but it changes the very meaning of the word solution as well. These ideas are well explained in \cite{5Flandoli11} and thus we will not delve into them further, but it is sufficient to say that in almost all instances of restoration of well-posedness at the ODE or PDE levels (what is also called regularization by noise phenomena) the solution concept is tempered with in a rather fundamental way and thus the question of usefulness of such an endeavor is a legitimate one. This last argument has one counterpoint argument and it starts with the deep result of A. M. Davie \cite{5Davie07}.\newline  This work answers the following question posed by N. V. Krylov:\par
	\textbf{Question:} Fix $\epsilon>0$ and take a Brownian path $(B_t)_{t\in [0,1]}$.\newline Does the following ODE
	\begin{equation}\label{5RODE}
	\begin{cases}
	\partial_t X(t,x,s) = b\left(t,X(t,x,s) + \epsilon B_t\right)\\
	X(s,x,s) = x
	\end{cases}
	\end{equation}
	have a unique solution in the space of continuous functions for $b$ bounded and measurable?
	
	Davie's result gives an affirmative answer to this question for a.e. Brownian path for a fixed initial value $x\in\mathbb{R}^d$. This result was followed by the important work of \cite{5Shaposhnikov16} where the argument was simplified, improved to a uniform result in $x\in\mathbb{R}^d$ and to a large extent "streamlined", thus contributing to extensions of Davie's result to settings other than the Brownian one, namely Levy processes \cite{5Priola18}, Stochastic PDE setting \cite{5ButkovskyMytnik19} and Hilbert space setting \cite{5Wresch17}. In \cite{5CatellierGubinelli16} and \cite{5Beck-et.al.14} similar results, at least in spirit, are achieved at the ODE and PDE levels, respectively, albeit through different methods than those of \cite{5Davie07} and \cite{5Shaposhnikov16}. We stress the fact that the methods in \cite{5CatellierGubinelli16} and \cite{5Beck-et.al.14}, in contrast
	to the method employed in the present paper, cannot be used for the construction of path-by-path solutions to (\ref{5RODE}), in the case of fractional Brownian motion perturbation, when the drift vector
	field belongs to $L^{\infty }(\mathbb{R}^{d})$.
	
\end{enumerate}
The above 3 points in the previous discussion lead us to pose the following natural question:\newline
\textbf{Question:} Given a singular vector field $b$ is there a way of creating a smooth dynamical system, from the original one, by changing the "geometry" of time$\times$space? 

In this work we ask the following version of the above question:\newline
\textbf{Question':} Given a class of vector fields $\mathfrak{B}\ni b$ can one construct a "reasonable" and non-trivial transformation $\mathbb{T}:\mathfrak{B}\longrightarrow\mathfrak{B}$, which produces a "small" deviation from $b$, such that (\ref{5ODE}) with $\tilde{b}:=\mathbb{T}[b]$ is well-posed?\newline

\vspace{0.5cm}
Indeed we show that if  

\begin{equation*}
\mathfrak{B}:= L_{\infty ,\infty }^{1,\infty }:=L^{1}(\mathbb{R}^{d};L^{\infty }([0,T];%
\mathbb{R}^{d}))\cap L^{\infty }(\mathbb{R}^{d};L^{\infty }([0,T];\mathbb{R}%
^{d}))
\end{equation*}

then there exists a $($time$\times$space$)$-transformation such that for every $b\in\mathfrak{B}$ there exist a measurable set  $\Omega_b$ of full mass in the Wiener space $\left(\mathcal{C}_0\left([0,T]\right),\mathcal{B}\left(\mathcal{C}_0([0,T])\right),\mu\right)$ such that for all $\omega\in\Omega_b$  the map $\Psi_{\omega}(t,x):= (t,x+B^H_t(\omega))$ is such that:

\begin{equation}\label{5PsiODE}
\begin{cases}
\partial_t X(t,x,s) = b\circ\Psi_{\omega}\left(t,X(t,x,s)\right)\\
X(s,x,s) = x,
\end{cases}
\end{equation}

where $B^H_\cdot$ is a fractional Brownian motion with Hurst parameter $H\in\left(0,\frac{1}{2}\right)$ constructed on the aforementioned Wiener space (see Appendix), has a classical flow in the space of continuous paths.
Moreover we show that $\left(\Omega_b,\Psi_{\omega}\right)$ not only restores well-posedness, in a path-by-path way, but can be chosen so as to increase the classical differentiability of the resulting flows to any arbitrary order $k\geq 0$, with the right choice of the parameter of regularization $H$. Using this result we prove a similar effect at the PDE level.\newline
This is, to the best of our knowledge, the first instance of higher order regularization by noise phenomenon in the sense of improved differentiability of the characteristics even when the vector field in question is far from possessing any degree of regularity. This regularization effect is at the \textbf{pathwise level} and thus our result can be seen as a systematic way of producing regular differentiable, in the classical sense, characteristics for (\ref{5ODE}) for strongly singular vector fields. As for a related work, but in the sense of pathwise uniqueness, which is a weaker notion than path-by-path, we mention the paper \cite{5BNP19}.   

\subsection{Aim of this work}
Our aim in this work is to exhibit a regularization by noise phenomenon for the transport equation driven by a singular vector field that is ill-posed within the classical theory, all this while still retaining the deterministic character of the equation. More precisely, our main contribution is the following path-by-path uniqueness result

\begin{theorem}
	Let $b\in L_{\infty ,\infty }^{1,\infty }$ and\footnote{Here and in what follows, $C(\cdot)$ stands for a constant that depends only on the given arguments.} $H<
	C(d)<\frac{1}{2}$. Then there exists a measurable set $\Omega^*$ of full mass such that for all $\omega\in\Omega^*$
	
	\begin{equation}
	X_t^x = x + \int_0^t b(s,X_s^x)ds + B^H_t(\omega)
	%\label{5IntSDE}
	\end{equation}
	has \textit{one and only one} solution \textit{uniformly in} $x\in\mathbb{R}^d$.
\end{theorem}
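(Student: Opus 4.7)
The natural route is to adapt Davie's method, in the streamlined version of Shaposhnikov, from standard Brownian motion to fractional Brownian motion with $H<1/2$. Fix a sequence of smooth, bounded mollifications $b_{n}$ converging to $b$ in $L^{1}_{\mathrm{loc}}$, and let $X^{n,x}$ denote the unique classical flow of
\begin{equation*}
X_{t}^{n,x}=x+\int_{0}^{t}b_{n}(s,X_{s}^{n,x})\,ds+B_{t}^{H}.
\end{equation*}
First I would show that $(X^{n,x})_{n}$ is Cauchy in $n$, uniformly in $x\in\mathbb{R}^{d}$, on a measurable set $\Omega^{*}$ of full $\mu$-measure, and then identify its limit as the unique path-by-path solution. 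The strategy proceeds by controlling, for $n>m$, the quantity
\begin{equation*}
\Delta_{n,m}(t,x):=X_{t}^{n,x}-X_{t}^{m,x}=\int_{0}^{t}\bigl[b_{n}(s,X_{s}^{n,x})-b_{m}(s,X_{s}^{m,x})\bigr]ds.
\end{equation*}

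The central ingredient is a regularization-by-noise estimate for Young-type integrals
\begin{equation*}
I_{s,t}^{\omega}(f,\psi):=\int_{s}^{t}f\bigl(r,B_{r}^{H}(\omega)+\psi(r)\bigr)dr
\end{equation*}
when $f\in L^{1,\infty}_{\infty,\infty}$ and $\psi$ is a sufficiently regular perturbation. Using the Malliavin calculus compactness criterion of \cite{5DMN92} mentioned in the abstract, together with Girsanov's theorem for fBM (valid in the range $H<1/2$ through the Volterra kernel representation) and the supremum-in-time moment estimates for the derivative of the flow alluded to in the abstract, one obtains a bound of the form
\begin{equation*}
\mathbb{E}\bigl[|I_{s,t}^{\omega}(f,\psi_{1})-I_{s,t}^{\omega}(f,\psi_{2})|^{p}\bigr]\leq C\,\|f\|_{L^{1,\infty}_{\infty,\infty}}^{p}\,(t-s)^{p(1+\gamma)}\|\psi_{1}-\psi_{2}\|_{\infty}^{p\alpha}
\end{equation*}
for some $\gamma>0$ and $\alpha\in(0,1]$ provided $H$ is small enough in terms of $d$; this is the substitute for the heat-kernel smoothing that is available in the standard Brownian case and is the step I expect to be the main obstacle, since it must be shown to be uniform in the mollification parameter.

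Given this estimate, I would follow Shaposhnikov's iteration: Taylor-expand $b_{n}$ along the fBM path, plug in the identity $\Delta_{n,m}(t,x)=\int_{0}^{t}\{b_{n}(s,\cdot)-b_{m}(s,\cdot)\}(X_{s}^{n,x})\,ds+\int_{0}^{t}\{b_{m}(s,X_{s}^{n,x})-b_{m}(s,X_{s}^{m,x})\}\,ds$, apply the averaging estimate to the first term and a Grönwall-type argument to the second, and arrive at a bound
\begin{equation*}
\mathbb{E}\Bigl[\sup_{t\in[0,1]}\sup_{x\in\mathbb{R}^{d}}|\Delta_{n,m}(t,x)|^{p}\Bigr]\leq C_{p}\,\rho(n,m),
\end{equation*}
where $\rho(n,m)\to 0$ as $n,m\to\infty$. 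The uniformity in $x$ is crucial and must be produced either by combining Sobolev embedding for the spatial derivative of the flow with a countable dense set of initial conditions, or by exploiting the translation invariance of $b_{n}-b_{m}$ in the averaging estimate.

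Finally, a standard Borel--Cantelli argument along a subsequence $n_{k}$ with $\rho(n_{k},n_{k+1})$ summable produces a single measurable set $\Omega^{*}$ of full mass on which $X^{n_{k},x}\to X^{x}$ uniformly in $(t,x)$. On $\Omega^{*}$, any solution $Y$ of the SDE with drift $b$ and fixed $\omega$ must satisfy the same Davie-type a priori bound against the $X^{n,x}$'s, forcing $Y=X^{x}$ for every starting point $x\in\mathbb{R}^{d}$ simultaneously. The threshold $H<C(d)$ arises from the averaging estimate above, where the exponents depend on $d$ through the scaling of the fBM density.
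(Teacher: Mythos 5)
There is a genuine gap, and it sits exactly at the step that distinguishes path-by-path uniqueness from pathwise uniqueness. Your plan produces (in outline) a limit flow $X^{x}$ as an a.s.\ uniform limit of the mollified flows $X^{n,x}$, which gives \emph{existence} of a solution on a full-measure set. But the theorem asserts that \emph{every} continuous function $Y$ solving the integral equation for that fixed $\omega$ coincides with $X^{x}$, and such a $Y$ is a purely deterministic object: it need not be adapted, need not be a limit of mollified solutions, and the perturbation $h_{2}(u)=Y_{s}-B^{H}_{s}(\omega)+\int_{s}^{u}b(r,Y_{r})\,dr$ that you would feed into your averaging estimate depends on $\omega$ in an uncontrolled way. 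Your key estimate is stated in expectation for \emph{fixed deterministic} $\psi_{1},\psi_{2}$, so it simply cannot be evaluated along $\psi_{2}=h_{2}(\cdot,\omega)$. The sentence ``any solution $Y$ must satisfy the same Davie-type a priori bound against the $X^{n,x}$'s'' is the entire content of the theorem and is not an argument. To close this you need an $\omega$-wise version of the averaging bound holding \emph{simultaneously for all} Lipschitz perturbations $h_{1},h_{2}$ in a compact family (the set $Lip_{N}$), which the paper obtains by first proving a Gaussian tail bound (Proposition 2.13, via exponential integrability of $\int_{s}^{t}D_{x}b(u,B^{H}_{u})\,du$ through Clark--Ocone, Dambis--Dubins and Fernique) and then upgrading it by Borel--Cantelli over an $\varepsilon$-net to a statement valid for one exceptional null set and all admissible $h$'s at once (Proposition 2.15). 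A secondary issue: your Gr\"onwall step on $\int_{0}^{t}\{b_{m}(s,X^{n,x}_{s})-b_{m}(s,X^{m,x}_{s})\}\,ds$ produces constants involving $\|Db_{m}\|_{\infty}$, which blows up as $m\to\infty$; this term also has to be handled by an averaging estimate uniform in the mollification parameter, not by Gr\"onwall.

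For comparison, the paper does not run a Cauchy-in-$n$ iteration at all. Existence of the strong solution is imported from prior work; the mollified flows are used only to extract regularity of the limit (via the compactness criterion of \cite{5DMN92} applied to the flow \emph{and its spatial derivative}). Uniqueness is then the Van Kampen/Shaposhnikov argument: one proves a supremum-in-time moment bound $E[\sup_{t}\|X^{s,x}_{t}-X^{s,y}_{t}\|^{2^{r}}]\le C\|x-y\|^{2^{r}}$ (Theorem 2.6, via Garsia--Rodemich--Rumsey and the derivative-of-flow estimates), converts it into an a.s.\ H\"older bound for the flow uniform over dyadic starting times, and then, for an arbitrary continuous solution $Y$, telescopes $f(t)=X(x,0,r,B^{H}(\omega))-X(Y_{t},t,r,B^{H}(\omega))$ over dyadic partitions, using the pathwise averaging bound to control each increment and concluding $f(r)=0$. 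Your outline contains neither the uniform-over-perturbations pathwise averaging estimate nor the flow-composition telescoping, and without some substitute for them the proof does not reach path-by-path uniqueness.
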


As an application of our main result, we get the following 
\begin{theorem}\label{5mainResultTE}
	Let $b\in L_{\infty ,\infty }^{1,\infty} $, $H< C(k,d)<\frac{1}{2}$, $u_0\in\mathcal{C}_b^k(\mathbb{R}^d)$, $p>d$, and $k\geq 2$.\newline
	 Define $b^*$ in 
	 \begin{equation}
	 \begin{cases}
	 \frac{\partial}{\partial t} u(t,x) + b^*(t,x)\cdot\nabla_x u(t,x) = 0\text{	,	}(t,x)\in\left[0,T\right]\times\mathbb{R}^d \\
	 u(0,x) = u_0(x) \text{	on	}\mathbb{R}^d
	 \end{cases}
	 \end{equation}
	 
	 as \[b^*(t,x)  = b(t,x+B_t^H(\omega)).
	\]
	
	Then there exists a measurable set $\tilde{\Omega}$ with $\mu(\tilde{\Omega}) =1$ such that for all $\omega\in\tilde{\Omega}$ there exists a unique weak solution $u = u_{\omega}$ in the class  $W^{1,p}_{loc}\left([0,T]\times\mathbb{R}^d\right)$. Moreover \[u(t,\cdot)\in  \bigcap_{p\geq 1} W^{k,p}_{loc}\left(\mathbb{R}^d\right)\] for all $t$.
\end{theorem}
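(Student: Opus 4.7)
The plan is to transfer the path-by-path SDE result to the PDE level via the method of characteristics. Fix $\omega$ in the full-measure set $\Omega^{*}$ provided by the first theorem and set
$$
\phi_{t}(x) := X_{t}^{x}(\omega) - B_{t}^{H}(\omega),
$$
where $X^{x}$ is the unique SDE flow. A direct differentiation in $t$ yields
$$
\partial_{t}\phi_{t}(x) \;=\; b\bigl(t,\phi_{t}(x)+B_{t}^{H}(\omega)\bigr) \;=\; b^{*}(t,\phi_{t}(x)),\qquad \phi_{0}(x)=x,
$$
so $\phi_{t}$ is a classical flow in time for the deterministic (but $\omega$-dependent) field $b^{*}$. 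Path-by-path uniqueness for the SDE therefore yields path-by-path uniqueness of this ODE flow on a measurable set $\tilde\Omega$ of full measure.

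The next step is to upgrade the spatial regularity of $\phi_{t}$ and of $\phi_{t}^{-1}$. Since $\phi_{t}$ and $X_{t}$ differ by a function of $t$ alone, their spatial derivatives coincide, so the supremum-in-time moment estimate on $\nabla X_{t}$ highlighted in the abstract, together with its higher-order analogues which become available by lowering $H$ beyond the threshold $H<C(k,d)$, transfers verbatim to $\phi_{t}$. Working at the level of a mollification $b_{n}\in C_{c}^{\infty}$, I would produce bounds of the form
$$
\sup_{t\in[0,T]}\;\bigl\|\nabla^{j}\phi^{n}_{t}\bigr\|_{L^{p}(K)} \;\le\; C(K,p),\qquad 1\le j\le k,\ p<\infty,
$$
uniform in $n$ on every compact $K\subset\mathbb{R}^{d}$, with a matching uniform lower bound on $\det\nabla\phi^{n}_{t}$ coming from Liouville's formula applied to the translated drift. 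The inverse function theorem then delivers analogous bounds for the inverse flow $\psi^{n}_{t}:=(\phi^{n}_{t})^{-1}$.

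With the flow in hand I would define the candidate solution
$$
u(t,x) := u_{0}\bigl(\psi_{t}(x)\bigr),
$$
and verify that it is a weak solution by approximation. Each smooth $u_{n}(t,x):=u_{0}(\psi^{n}_{t}(x))$ satisfies the regularized transport equation pointwise. The compactness criterion based on Malliavin calculus from \cite{5DMN92}, already invoked in the proof of the first theorem, furnishes strong convergence $\phi^{n}_{t}\to\phi_{t}$ in $C([0,T];L^{1}_{loc})$, which combined with the uniform bounds above, continuity of composition by $u_{0}\in\mathcal{C}_{b}^{k}$, and the dominated convergence theorem, lets one pass to the limit in the weak formulation tested against $\varphi\in C_{c}^{\infty}([0,T)\times\mathbb{R}^{d})$. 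The resulting limit is a weak solution in $W^{1,p}_{loc}$ for every $p>d$, and applying the chain rule to $u_{0}\circ\psi_{t}$ against the uniform bounds on $\nabla^{j}\psi_{t}$ gives $u(t,\cdot)\in\bigcap_{p\ge 1}W^{k,p}_{loc}(\mathbb{R}^{d})$, as claimed.

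The main obstacle is uniqueness in $W^{1,p}_{loc}$ with $p>d$. Since $b^{*}(t,\cdot)=b(t,\cdot+B_{t}^{H}(\omega))$ inherits no spatial regularity beyond boundedness from $b$, neither the DiPerna--Lions nor the Ambrosio renormalization theory is directly applicable; uniqueness has to be extracted from the regularity of the flow itself. For any weak solution $v\in W^{1,p}_{loc}$, Morrey's embedding produces a continuous representative, and the fact that $\phi_{t}$ is a Sobolev diffeomorphism of class $W^{k,p}_{loc}$ with controlled Jacobian makes the change of variable $\tilde v(t,x):=v(t,\phi_{t}(x))$ admissible at the level of the distributional equation. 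A chain-rule computation, legitimized by a second mollification in the Eulerian variable and passage to the limit using the uniform derivative bounds on $\phi_{t}$, shows that $\partial_{t}\tilde v = 0$ in $\mathcal{D}'$, so $\tilde v\equiv u_{0}$ and $v=u_{0}\circ\psi_{t}$ almost everywhere. The delicate point here is precisely justifying this chain rule in the Lagrangian frame when $b^{*}$ has no Sobolev regularity but the flow does; the threshold $p>d$ enters through Morrey's embedding in closing the commutator estimate that appears when removing the mollification.
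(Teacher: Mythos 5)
Your overall strategy coincides with the paper's: pass to the ODE flow $\hat X^x_t = X^x_t - B^H_t$ of $b^*$, mollify $b$, use the Malliavin-calculus compactness machinery to pass to the limit in the weak formulation, represent the solution as $u_0$ composed with the inverse flow, and prove uniqueness by composing an arbitrary weak solution with the forward flow and showing it is constant in time via a change of variables. Two points, however, deserve attention, and the first is a genuine gap.

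The gap is your treatment of the inverse flow. You propose to obtain a uniform (in $n$) lower bound on $\det\nabla\phi^n_t$ from Liouville's formula and then invoke the inverse function theorem to control $\psi^n_t=(\phi^n_t)^{-1}$ and its derivatives. Liouville's formula gives $\det\nabla\phi^n_t(x)=\exp\bigl(\int_0^t \operatorname{div}b_n(s,\phi^n_s(x)+B^H_s)\,ds\bigr)$, and since $b\in L^{1,\infty}_{\infty,\infty}$ is merely bounded and measurable, $\|\operatorname{div}b_n\|_\infty$ blows up as the mollification parameter tends to zero; no uniform lower bound on the Jacobian is available by this route, and the inverse-function-theorem step collapses with it. The paper avoids this entirely: the inverse flow $\hat Y^x_t$ is constructed as the solution of a \emph{backward} SDE of the same singular type, so the full probabilistic apparatus (Girsanov, local-time variational estimates, the uniform moment bounds of Lemma \ref{5BoundedDerivatives}, and the compactness criterion of \cite{5DMN92}) applies to $\hat Y$ and $\partial_x\hat Y$ directly, yielding the essential boundedness of $\det\partial_y\hat Y^y_t$ that both the existence and the uniqueness arguments require. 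You need this detour; the forward-flow bounds alone do not transfer to the inverse.

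A smaller point: the hypothesis $p>d$ does not enter through Morrey's embedding or a commutator estimate (the paper explicitly dispenses with commutator arguments). It is the admissibility condition for the change-of-variables formula with Banach indicatrix (Theorem \ref{5ChangeOfVariableFormula}, due to Hajlasz) applied to the Sobolev homeomorphism $\hat Y_t\in W^{1,p}_{loc}$, which is the tool that legitimizes the substitution $y=\hat X^x_t(\omega)$ in the uniqueness computation. With these two corrections your outline matches the paper's proof.
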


\subsection{The method}

\begin{enumerate}
	\item \textbf{Compactness criterion:} We use a compactness criterion
	developed in \cite{5DMN92} to show that the sequence of solutions to
	(\ref{5SDE}) associated to a sequence of smooth mollification  approximating the driving
	vector field is relatively compact in $L^2$. This is the approach that was
	adopted in e.g. \cite{5MMNPZ13},\cite{5MNP14} or \cite{5ABP18}. The novelty
	in the present work is related to our extensive use of this tool not only
	at the level of the flows but also at the level of their derivatives.
	This culminates in Theorem \ref{5Identification} which says, roughly, that the unique
	flow solving (\ref{5SDE}) admits a spatial derivative and both live in
	$\mathcal{C}([s,T]\times K;L^{2}(\Omega ;\mathbb{R}^{d}))$.
	 It is important to mention that the derivative of the flow plays an important role in many of the computations that we carry out at both the ODE and PDE levels. The higher regularizing power of the fBm, when $H$ is appropriately small, permits non-trivial control on certain non-linear expressions involving limiting and differentiation operations without any differentiability assumptions on $b$. In other words, the higher regularization effect at the level of the flow overrides any need for a commutator-lemma argument, as in the classical proof of \cite{5DiPernaLions89} in the classical setting or \cite{5Beck-et.al.14} and \cite{5FGP10} in the stochastic one, and by implication the need for any differentiability assumptions on the vector field.
	
	\item \textbf{Davie's uniqueness:}
	Our main result at the level of ODE is known as path-by-path uniqueness or as
	uniqueness in the sense of Davie. This is because such a result was proven for
	the first time in Davie's paper \cite{5Davie07} in the setting of Brownian
	motion. This is the strongest type of
	uniqueness that can be proven for (\ref{5SDE}) and should be contrasted with
	the other notion namely that of pathwise uniqueness. Briefly, while Davie's
	uniqueness says something about uniqueness in the space of continuous paths,
	pathwise uniqueness is concerned with an equivalence class at the level of
	stochastic processes defined on some filtered probability space. The recent
	work \cite{5ShaposhnikovWresch20} shows that there is a gap between the two
	notions by constructing several examples. As was mentioned in the
	introduction, and in relation to deterministic problems, a path-by-path result
	is more desirable as it permits a pathwise analysis as we show in Section \ref{5ApplicationsToPDE}.
	
	\item \textbf{The Van Kampen way:} In the original work of Davie \cite{5Davie07},
	the result was obtained, in a direct way, through some highly non-trivial estimates without relying on the existence of strong solutions. Thus it was reasonable to hope that a simpler method which uses the existence of a unique strong solution could be developed. This turned out to be
	the case and in \cite{5Shaposhnikov16} the existence of a stochastic flow of
	homeomorphisms with ``almost'' Lipschitz regularity was used in
	combination with the regularizing effect of the so-called averaging operator
	to recover Davie's result. It turns out that this principle,
	which first appeared in Van Kampen's work \cite{5VanKampen37} and first used in
	the setting of SDE's in the work of \cite{5Shaposhnikov16}, is a very general
	and robust one. This can be witnessed by its applicability to not only SDE's
	driven by non-Markovian dynamics, as in this present work, but was used in the context
	of stochastic PDE's with singular drift coefficients as in \cite{5ButkovskyMytnik19}. In this context, we shall also mention the work \cite{5CatellierGubinelli16}, where the authors study path-by-path solutions of SDE's driven by distributional drift vector fields in Besov spaces and fractional Brownian motion. Here, as was pointed out before, the framework of \cite{5CatellierGubinelli16} cannot be used for vector fields in $L^\infty$.
	
	\item \textbf{Supremum estimate with respect to the derivative of the flow}
	The last ingredient in our approach is based on a uniform control, in
	time, of the spatial regularity of the stochastic flow which substitutes the lack of certain techniques, available     only
         in the Markovian setting. Markovian techniques are
	unavailable in our setting, but by controlling the first derivative
	of the flow we can achieve the desired
	result.

\end{enumerate}

\subsection{Plan of the paper}
The paper is divided into two parts, the first deals with the question of
proving uniqueness, for the SDE interpretation of an ODE perturbed by fBm, in
the sense of Davie.
The proof is based on
\cite{5Shaposhnikov16}\cite{5Shaposhnikov17} and the following two diagrams show the
correspondence between this work and ours

	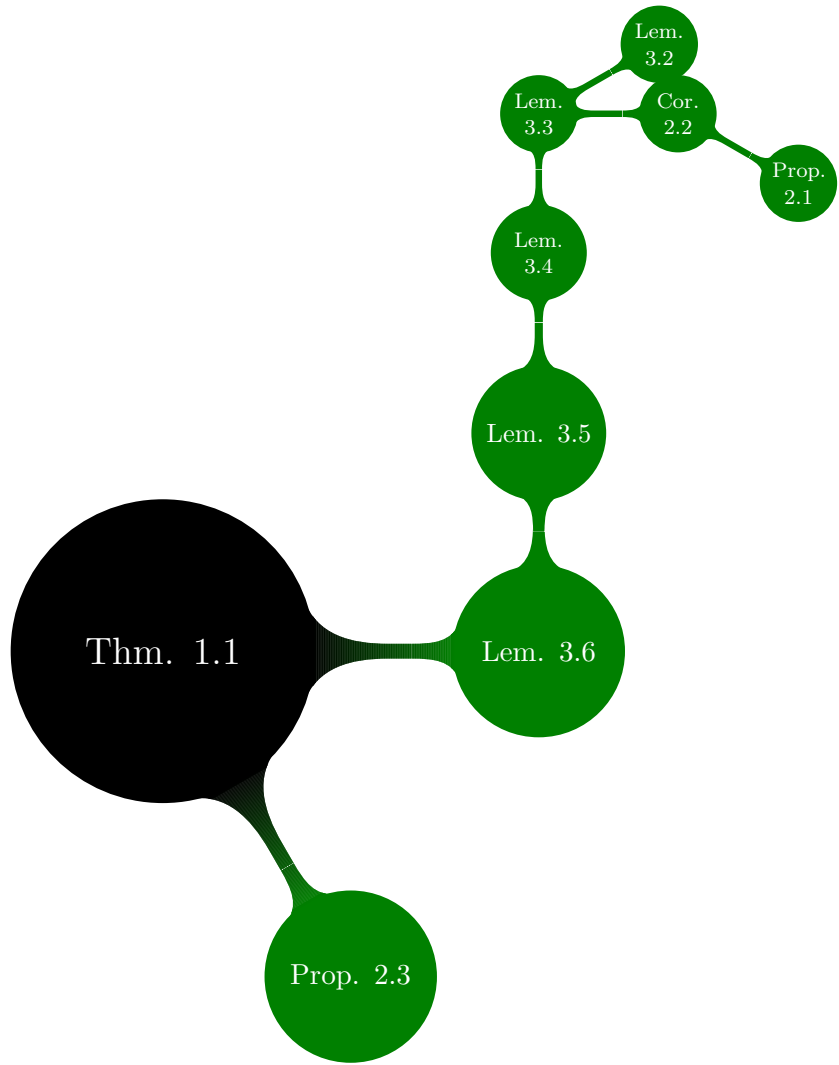
\begin{figure}
		\centering
		\begin{tikzpicture}
		\path[mindmap,concept color=black,text=white]
		node[concept] {Thm. 1.1}
		[clockwise from=0]
		child[concept color=green!50!black] {
		    node[concept] {Lem. 3.6}
			[clockwise from=90]
			child {
    			node[concept] {Lem. 3.5}
    			[clockwise from=90]
    			child { node[concept] {Lem. 3.4} 
    				[clockwise from=90]
    				child { node[concept] {Lem. 3.3} [clockwise from=30]
    					child { node[concept] {Lem. 3.2}[clockwise from=0] }
    					child { node[concept] {Cor. 2.2}[clockwise from=-30]
    						child { 
    							node[concept] {Prop. 2.1}
    						} 
    					}
    				}
    			}
    		}	
		}  
		child[concept color=green!50!black] {
			node[concept] {Prop. 2.3}};
		\end{tikzpicture}
		\caption{Components of the proof in \cite{5Shaposhnikov16}.}
		\label{5figure1}
	\end{figure}

	\begin{figure}
		\centering
		\begin{tikzpicture}
		\path[mindmap,concept color=black,text=white]
		node[concept] {Thm. 1.1}
		[clockwise from=0]
		child[concept color=green!50!black] {
		    node[concept] {(Prop. 2.16)}
			[clockwise from=90]
			child {
    			node[concept] {Lem. 3.5}
    			[clockwise from=90]
    			child { node[concept] {Lem. 3.4} 
    				[clockwise from=90]
    				child { node[concept] {Lem. 3.3} [clockwise from=30]
    					child { node[concept] {Lem. 3.2}[clockwise from=0] }
    					child { node[concept] {(Prop. 2.13)}[clockwise from=-30]
    						child[concept color=blue] { 
    							node[concept] {(Lem. 2.14)}
    						} 
    					}
    				}
    			}
    		}	
		}  
		child[concept color=green!50!black] {
			node[concept] {Prop. 2.3}
			child[concept color=blue] { 
    							node[concept] {Thm. 2.4}
    						} 
    	};
		\end{tikzpicture}
		\caption{Components of the proof in our work. The green components are used as they are, or with a minor modification, from \cite{5Shaposhnikov16}, the blue part is our contribution. The parentheses denote the results as they appear in this work.}
		\label{5figure2}
	\end{figure}
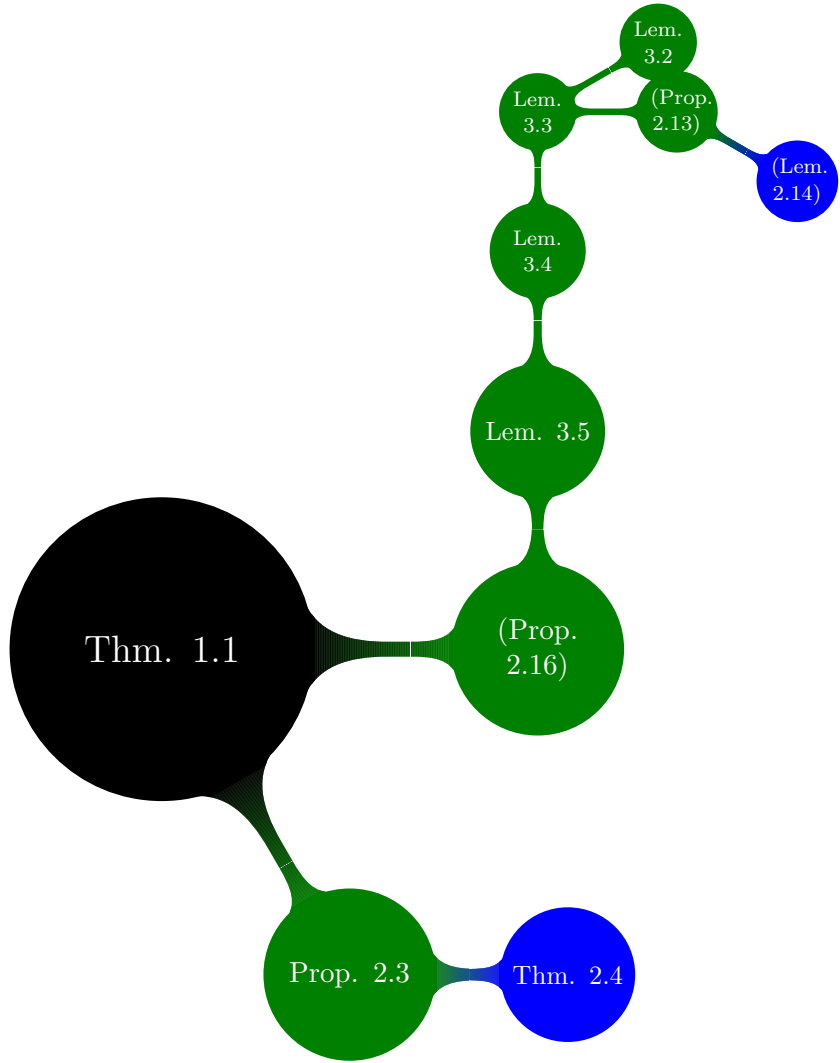

Section 2.1 is concerned with the first component in the Van Kampen argument, namely the regularity of the stochastic flow, and the main theorem of this section is Theorem 2.4. Section 2.2 shows the almost Lipschitz regularity of the averaging operator, which plays the role of the continuous vector field in the original proof of Van Kampen,  and our contribution is Proposition 2.1. The green portion is left as is from \cite{5Shaposhnikov16}.\par
The second part deals with an application of our result to the transport and
continuity equations. This is the content of Section \ref{5ApplicationsToPDE}.

\section{From SDE's to random ODE's}

Consider the following differential equation:

\begin{equation}
X_{t}^{s,x}= x + \int_s^t b(u,X_{u}^{s,x})du + B_{t}^{H} - B_{s}^{H},\quad X_{s}^{s,x}=x,0\leq s \leq t\leq T
\label{5SDE}
\end{equation}

where $B_{t}^{H},0\leq t\leq T$ is a $d-$dimensional fractional Brownian
motion (fBm) with Hurst parameter $H\in (0,\frac{1}{2})$ having the following integral representation \begin{equation}
B_{t}^{H}=\int_{0}^{t}K_{H}(t,s)I_{d\times d}dB_{s}  \label{5RepFractional}
\end{equation}

with respect to a standard Brownian motion (Bm).

Then, using techniques from Malliavin calculus and arguments of a local time
variational calculus kind, as recently developed in the series of works \cite{5BNP19}, \cite{5BLPP18}, \cite{5ABP20}, the following class of vector
fields were investigated
\begin{equation*}
b\in L_{\infty ,\infty }^{1,\infty }:=L^{1}(\mathbb{R}^{d};L^{\infty }([0,T];%
\mathbb{R}^{d}))\cap L^{\infty }(\mathbb{R}^{d};L^{\infty }([0,T];\mathbb{R}%
^{d})).
\end{equation*}%

The main result of \cite{5BNP19} is:

\begin{theorem}
\label{5StrongSolution}Let $s\in\left[0,T\right]$, $b\in L_{\infty ,\infty }^{1,\infty }$ and $k\geq 2$. Then if $H<%
\frac{1}{2(d-1+2k)}$ there exists a unique (global) strong solution\footnote{We recall that strong existence means that, on any given filtered probability space equipped with a given fractional Brownian motion $B^H$, one can find a solution defined on it. Strong uniqueness means that any such two solutions must coincide.} $X_{\cdot
}^{s,x}$ of the SDE (\ref{5SDE}). Moreover, for every $x\in \mathbb{R}^{d},t\in
\lbrack s,T]$ $X_{t}^{s,x}$ is Malliavin differentiable in the direction of
the Brownian motion $B$ in (\ref{5RepFractional}) and $X_{t}^{s,\cdot }$ is
locally Sobolev differentiable $\mu -a.e.$ \par
That is, more precisely,

\[
X_{t}^{s,\cdot }\in \bigcap\limits_{p\geq 2}L^{2}(\Omega ;W^{k,p}(U)) \] 

for bounded and open sets $U\subset \mathbb{R}^{d}$.

\end{theorem}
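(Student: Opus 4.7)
The plan is to combine Girsanov's transform for fractional Brownian motion with a Malliavin--calculus compactness argument applied to a smooth approximation scheme. First I would pick a sequence $b_n\in C_c^{\infty}([0,T]\times\mathbb{R}^d;\mathbb{R}^d)$ with uniformly bounded $L_{\infty,\infty}^{1,\infty}$-norm and with $b_n\to b$ almost everywhere. For each $n$ the SDE (\ref{5SDE}) with drift $b_n$ admits a unique classical strong solution $X^{n,s,x}$ that is smooth in $x$, and the strategy is to show that this family converges (after suitable localization) in $L^{2}(\Omega;W^{k,p}_{loc})$ for $p\ge 2$, and to identify the limit as the unique strong solution of (\ref{5SDE}).

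The central analytic tool is the Girsanov transformation attached to the representation (\ref{5RepFractional}). Through the transfer operator $K_H$ every expectation of the form $E[F(X^{n,s,\cdot})]$ can be rewritten as $E[F(x+B^H_{\cdot})\,\mathcal{E}(M^n)]$, where $M^n$ is the Wiener integral of an explicit $K_H$-transform of $b_n$. This reduction removes the solution itself from the functionals one has to estimate, leaving only functionals of the fBm weighted by a Doléans--Dade density. The roughness of $B^{H}$ is then exploited through averaging and local-time arguments: for bounded measurable $f$ the averaging operator $T^{B^H}_{t} f(x):=\int_{0}^{t} f(u,x+B^{H}_{u})\,du$ gains roughly $1/(2H)-d/2$ derivatives in $x$, and this is the sole place where the hypothesis $b\in L_{\infty,\infty}^{1,\infty}$, rather than $b\in W^{k,p}$, enters.

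Next, differentiate the regularized equation in $x$ up to order $k$ to obtain a variational system for $\nabla_{x}^{j} X^{n,s,x}$, $1\le j\le k$. Formally the right-hand sides contain $\nabla^{j}b_n(t,X^{n,s,x}_{t})$, which blow up as $n\to\infty$, but after Girsanov the quantities that actually need to be controlled are spatial derivatives of $T^{B^{H}}_{t} b_n$ composed with random shifts. These are uniformly bounded in an appropriate Sobolev/Besov scale precisely when $H<1/(2(d-1+2k))$: each spatial dimension costs about one Besov exponent and each additional order of differentiation costs two, so one needs the averaging gain $1/(2H)$ to exceed $(d-1)/2+k$. The uniform $L^{p}(\Omega)$ bounds on $\nabla_{x}^{j}X^{n,s,x}$, together with bounds on their Malliavin derivatives in the direction of the underlying Bm $B$, are then fed into the compactness criterion of \cite{5DMN92} to yield relative compactness of $\{X^{n,s,x}_{t}\}$ in $L^{2}(\Omega;W^{k,p}_{loc})$, and strong convergence along a subsequence to a limit $X^{s,x}$.

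The limit is a strong solution because every $X^{n,s,x}$ is a functional of $B^{H}$ and the Malliavin bounds persist in the limit; passing to the limit in the approximating equations uses precisely the $W^{1,p}_{loc}$-convergence together with the smoothing of $T^{B^{H}}$ on $b_n-b$. Uniqueness follows from the same Girsanov reduction applied to the difference of two putative solutions. The main obstacle, and the one dictating the Hurst threshold, is the quantitative Schauder-type estimate for $T^{B^{H}}$ at derivative level $k$: one must prove that $\|T^{B^{H}}_{t}b_n\|_{W^{k+1,p}_{loc}}$ is controlled in terms of $\|b_n\|_{L_{\infty,\infty}^{1,\infty}}$ alone, uniformly in $n$. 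This requires a careful Fourier/paraproduct decomposition combined with sharp estimates on the characteristic-function exponent of increments of $B^{H}$, and the factor $d-1+2k$ appearing in the Hurst condition is exactly the price paid to close these Besov bounds at the $k$-th derivative level while retaining integrability in $x$ and in time.
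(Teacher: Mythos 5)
First, a point of reference: the paper does not prove this theorem at all --- it is quoted verbatim as ``the main result of \cite{5BNP19}'', so the proof you are implicitly competing with is the one in that companion paper (whose machinery the present paper reuses in Section 2 for the cases $k=1,2$, e.g.\ Lemma \ref{5RelativeCompactness}, Theorem \ref{5StrongConvergenceDerivative} and Lemma \ref{5BoundedDerivatives}). Your overall architecture is the right one and matches that proof: mollify $b$, use Girsanov through the kernel $K_{H}$ to trade the unknown $X^{n,s,x}$ for $x+B^{H}_{\cdot}$ weighted by a Dol\'eans--Dade density (with Lemma \ref{5Novikov} controlling the density), exploit the regularizing effect of the rough fBm paths, and close with the compactness criterion of \cite{5DMN92}; the threshold $H<\tfrac{1}{2(d-1+2k)}$ indeed comes from the $k$-th order derivative bounds (Lemma \ref{5BoundedDerivatives}).

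There are, however, genuine gaps in the two steps that carry all the weight. (i) The mechanism by which $\nabla^{j}b_{n}(t,X^{n}_{t})$ is tamed is not a Schauder/paraproduct estimate for the averaging operator $T^{B^{H}}$: the actual argument expands $\partial_{x}^{j}X^{x,n}_{t}$ via Picard iteration into an infinite series of iterated integrals over simplices $\Delta^{m}_{0,t}$ of products of derivatives of $b_{n}$, applies Girsanov to replace $X^{n}$ by $x+B^{H}$ inside each term, uses the shuffle-product identity (\ref{5shuffleIntegral}) to express high moments as further iterated integrals, and then estimates each term by the Fourier/local-time bound of Theorem \ref{5mainestimate2}, summing the series by Stirling-type asymptotics. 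Your sentence asserting that the relevant quantities ``are uniformly bounded in an appropriate Sobolev/Besov scale precisely when $H<1/(2(d-1+2k))$'' is exactly the statement that needs proof, and the route you indicate (a Schauder estimate for $T^{B^{H}}$ applied to a composition with a random shift) is not how it is, or plausibly can be, closed; the averaging-operator estimate is used in this paper only for the path-by-path uniqueness \`a la Davie--Shaposhnikov, not for strong solvability. Relatedly, your dimensional accounting $\tfrac{1}{2H}>\tfrac{d-1}{2}+k$ is off by a factor of two from the required $\tfrac{1}{2H}>d-1+2k$ (the extra factor comes from squaring in the $L^{2}$ moments/Cauchy--Schwarz step of the local-time estimate). (ii) ``Uniqueness follows from the same Girsanov reduction applied to the difference of two putative solutions'' is not a workable argument for merely bounded measurable $b$: there is no Gronwall-type contraction available. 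The actual scheme (as in \cite{5BDMP18}, \cite{5MMNPZ13}) is that Girsanov yields weak existence and uniqueness in law, the compactness construction produces a solution that is measurable with respect to the driving noise (hence strong), and these two facts together give strong uniqueness without Yamada--Watanabe. You should make both of these steps explicit before the sketch can be considered a proof.
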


The aim of this section is to go from (\ref{5StrongSolution}) to a result about uniqueness in the sense of \cite{5Davie07} following the ideas in \cite{5Shaposhnikov16}.

More precisely, the main theorem of this section is the following: 

\begin{theorem}\label{5PathByPath}
	Let $b\in L_{\infty ,\infty }^{1,\infty }$ and $H<
	\frac{1}{2(d+2)}$. Then there exists a measurable set $\Omega^*$ of full mass such that for all $\omega\in\Omega^*$
	
	\begin{equation}
	X_t^x = x + \int_0^t b(s,X_s^x)ds + B^H_t(\omega)
	\label{5IntSDE}
	\end{equation}
	has \textit{one and only one} solution \textit{uniformly in} $x\in\mathbb{R}^d$.
\end{theorem}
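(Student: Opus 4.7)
The plan is to follow the Van Kampen--Shaposhnikov scheme indicated in Figure \ref{5figure2}. The starting point is Theorem \ref{5StrongSolution}, which for sufficiently small $H$ furnishes a unique strong solution $X_t^{s,x}$ to \eqref{5SDE} whose flow is locally Sobolev differentiable in space, $\mathbb{P}$-almost surely. The route to path-by-path uniqueness then consists of three pieces: (i) upgrade this $L^2(\Omega;W^{k,p}_{\mathrm{loc}})$ flow regularity to a pathwise Hölder control on the Jacobian $\nabla_x X$; (ii) use that control to establish an ``almost Lipschitz'' estimate for the averaging operator $\psi \mapsto \int_0^{\cdot} b(s,\psi(s)+B_s^H(\omega))\,ds$; and (iii) run the dyadic Van Kampen iteration of \cite{5Shaposhnikov16}, coupled with a Borel--Cantelli argument over a countable dense set of initial conditions, to strip the $\omega$-null sets uniformly in $x$.

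For step (i), I would establish a supremum-in-time moment bound
\[
\mathbb{E}\Big[\sup_{0 \le t \le T} |\nabla_x X_t^{0,x}|^p\Big] < \infty, \qquad p \ge 1,
\]
locally uniformly in $x$. Combined with the Malliavin-calculus compactness criterion of \cite{5DMN92} that underlies \cite{5BNP19}, this produces a $\mathbb{P}$-a.s.\ continuous version of $\nabla_x X$ on $[0,T] \times K$ for every compact $K \subset \mathbb{R}^d$; a Kolmogorov continuity argument then yields the pathwise Hölder bound on the Jacobian that substitutes, in the non-Markovian setting, for the Lipschitz stochastic flow of homeomorphisms used in \cite{5Shaposhnikov16}. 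This corresponds to the blue nodes of Figure \ref{5figure2} and is the principal technical novelty.

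For step (ii), with the Jacobian bound in place the key estimate takes the form
\[
\sup_{0 \le t \le T} \Big| \int_0^t \big[ b(s,\psi(s)+B_s^H(\omega)) - b(s,\tilde\psi(s)+B_s^H(\omega)) \big]\, ds \Big| \le C(\omega)\, \|\psi - \tilde\psi\|_\infty \,\big(1 + |\log \|\psi - \tilde\psi\|_\infty|\big),
\]
obtained by pulling the integrand back along the characteristics $X^{s,\cdot}$ and using the Hölder control on $\nabla_x X$. This is the analogue of Proposition~2.3 of \cite{5Shaposhnikov16}. Step (iii) then follows the Van Kampen induction of Lemmas~3.2--3.5 of \cite{5Shaposhnikov16} without essential change: for two candidate solutions $X, \tilde X$ of \eqref{5IntSDE} on the same $\omega$, the almost-Lipschitz bound propagates smallness across a dyadic partition of $[0,T]$ and forces $X \equiv \tilde X$; the Borel--Cantelli step over $x \in \mathbb{Q}^d$, combined with the spatial continuity of both solutions, produces a single full-measure $\Omega^*$ on which uniqueness holds for every $x \in \mathbb{R}^d$ simultaneously.

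The hard part is step (i). In the Brownian setting of \cite{5Shaposhnikov16} the pointwise-in-$(t,\omega)$ control on the flow is obtained through Zvonkin--Veretennikov PDE estimates and the Markov property, neither of which survives for fractional noise with $H<1/2$. Our substitute must push the Malliavin/local-time regularization mechanism of \cite{5BNP19,5BLPP18,5ABP20} through the derivative equation and then upgrade it to a supremum-in-time moment estimate sharp enough for Kolmogorov's criterion, uniformly over $x$ on compacts. This is what the blue portions of Figure \ref{5figure2} accomplish, and the interplay between this step and the regularity threshold required in Theorem \ref{5StrongSolution} is precisely what dictates the small value of $H$ in the hypothesis of Theorem \ref{5PathByPath}.
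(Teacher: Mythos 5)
Your overall architecture matches the paper's (and Figure \ref{5figure2}): a sup-in-time moment control on the derivative of the flow feeding a pathwise H\"older estimate, an almost-Lipschitz bound for the averaging operator, and the Van Kampen dyadic iteration. Step (i) is essentially the paper's route: the key object there is Theorem \ref{5MainEstimate}, $E[\sup_{t}\|X_t^{s,x}-X_t^{s,y}\|^{2^r}]\le C\|x-y\|^{2^r}$, proved via Garsia--Rodemich--Rumsey by reducing to $L^p$ time-increment bounds on $\frac{\partial}{\partial x}X_t^{x,n}$ uniform in $x$ and $n$, and then upgraded to a pathwise H\"older modulus by the extended Kolmogorov lemma (Lemma \ref{5ExtKolmogorov}) applied to the $\mathcal{C}([0,T];\mathbb{R}^d)$-valued map $(s,x)\mapsto X^{s,x}_{\min(s+\cdot,T)}$. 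But there are two genuine gaps in steps (ii) and (iii).

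First, the almost-Lipschitz estimate for the averaging operator cannot be obtained by ``pulling the integrand back along the characteristics and using the H\"older control on $\nabla_x X$'': in the uniqueness argument this estimate must be applied with $h_1,h_2$ built from an \emph{arbitrary} candidate solution $Y$ (namely $h_i(u)=Y_s-B^H_s+\int_s^u b(r,\cdot)dr$), i.e.\ to generic Lipschitz perturbations of the fBm path that are not characteristics of the flow, so flow regularity gives you nothing here. The paper proves Proposition \ref{5Almost lip.} by an entirely different mechanism: an exponential moment bound on $\int_s^t D_xb(u,B^H_u)\,du$ via the Clark--Ocone formula, heat-kernel integration by parts, Dambis--Dubins--Schwarz and Fernique's theorem, giving a Gaussian tail $C\exp(-\alpha\lambda^2)$ that is then made pathwise (Proposition 2.15) with exceptional probability $C\exp(-l^{-\nu})$ per dyadic block. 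Second, your Borel--Cantelli over $x\in\mathbb{Q}^d$ combined with ``spatial continuity of both solutions'' does not yield uniqueness uniformly in $x$: an arbitrary solution of \eqref{5IntSDE} started at an irrational $x$ need not be approximable by solutions started at rational points (a priori there is no continuous selection among possibly many solutions), so uniqueness does not transfer by continuity. In the paper the Borel--Cantelli is run over the dyadic \emph{time} partitions (the sets $\Omega_{k,i}$ from the averaging estimate), and uniformity in $x$ comes instead from the pathwise H\"older bound of Proposition \ref{5Holder flow}(4), which holds on a single full-measure set for all $x,y$ in a ball and all $s$ in the dyadic grids simultaneously; the contraction $f(t)=X(x,0,r,B^H(\omega))-X(Y_t,t,r,B^H(\omega))$ then closes for every $x$ at once.
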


\subsection{H\"{o}lder flow}

We start with the first and most important part of our proof, namely the flow. The main result in this section is the strong control in point 4 of the following proposition. This is essentially the same as in (\cite{5Shaposhnikov17}):
\begin{proposition}\label{5Holder flow}
Let $b\in L_{\infty ,\infty }^{1,\infty }$, $H<
\frac{1}{2(d+2)}$ and $(\Omega,\mathcal{F},\mathbf{P})$ be a probability space. Then there exists a H\"{o}lder flow of solutions $\phi_{s,t}(x)$ to equation (\ref{5SDE}).\par
More precisely, for any filtered probability space $(\Omega,\mathcal{F},\{\mathcal{F}_t\},\mathbf{P})$ and fBm $B^H$ generating $\{\mathcal{F}_t\}$, there exist a mapping
\[
(s,t,x,\omega)\mapsto \phi_{s,t}(x,\omega)\quad\textit{with } (s,t)\in\Delta_{0,T}^2,x\in\mathbb{R}^d,\omega\in\Omega
,\]
where \[\Delta_{0,T}^2 := \left\{(s,t): 0\leq s\leq t\leq T \right\} ,\]
such that the following properties hold:
\begin{enumerate}
\item $\forall x\in\mathbb{R}^d$, $\phi_{s,t}(x)$ is a continuous $ \mathcal{F}_{s,t}$ adapted solution to (\ref{5IntSDE}) with $\phi_{s,s}(x) = Id$. $ \mathcal{F}_{s,t}$ stands for the filtration generated by the increments of the driving process.

\item $\mathbf{P}$-almost surely the mapping $x\mapsto\phi_{s,t}(x)$ is a homeomorphism.

\item $\mathbf{P}$-almost surely $\forall x\in\mathbb{R}^d$ and $0\leq s\leq u \leq t \leq T$ we have $\phi_{s,t}(x) = \phi_{u,t}(\phi_{s,u}(x))$.

\item For any $\alpha\in\left(0,1\right)$, $\eta>0$, $N>0$ and a given increasing sequence $S:=\left\{S_n\right\}_{n\geq0}$ of finite sets with $Card(S_n)\leq 2^{n\eta}$, there exists a set $\Omega^{'}$ with mass $1$ such that for any $s\in S_n$ $x,y\in\mathbb{R}^d$ with $|x|,|y|<N,|x-y|\leq 2^{-n}$ and all $t\in[s,T]$
\[\|\phi_{s,t}(x)-\phi_{s,t}(y)\|\leq C(\alpha,N,T,S,\omega)\|x-y\|^\alpha\]

\end{enumerate}
\end{proposition}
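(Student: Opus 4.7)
The strategy is to assemble the flow from the strong solutions given by Theorem \ref{5StrongSolution} and then upgrade the spatial regularity from Sobolev (which is what Theorem \ref{5StrongSolution} delivers) to the uniform Hölder control stated in item 4. The first three items are by now rather standard consequences of strong existence, strong uniqueness and Sobolev regularity of $x \mapsto X_{t}^{s,x}$, while item 4 is the genuinely new piece and will dictate the choice of the threshold $H<\tfrac{1}{2(d+2)}$.

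\textbf{Step 1 (construction of the flow).} For each $(s,x)$, let $X_{\cdot}^{s,x}$ be the strong solution of (\ref{5SDE}) produced by Theorem \ref{5StrongSolution}. Taking $k=2$ and $p>d$, the embedding $W^{k,p}_{loc}\hookrightarrow C^{1,\alpha}_{loc}$ gives, for each fixed $(s,t)$, a continuous modification $x\mapsto \phi_{s,t}(x)$. The cocycle identity in item 3 and the homeomorphism property in item 2 follow from strong uniqueness on a single $\mathbf{P}$-null set, combined with invertibility coming from the positive lower bound on the Jacobian $\det(\nabla \phi_{s,t})$ (itself a byproduct of the uniform moment estimates obtained in Step 2 applied to the inverse flow $\phi_{t,s}$).

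\textbf{Step 2 (moment estimates for the Jacobian).} The crucial input is a bound of the form
\begin{equation*}
\sup_{s\le t\le T}\,\sup_{x\in\mathbb{R}^d}\,\mathbb{E}\bigl[\,|\nabla\phi_{s,t}(x)|^{p}\,\bigr]\le C(p,T,d,H,\|b\|_{L^{1,\infty}_{\infty,\infty}}),\qquad \forall\,p\ge 2,
\end{equation*}
which is the ``supremum estimate with respect to the derivative of the flow'' highlighted in the method section of the introduction. This estimate is obtained uniformly on the approximating sequence of mollified drifts $b^{\varepsilon}$ via the Malliavin-calculus compactness criterion from \cite{5DMN92}, and then passed to the limit using Theorem \ref{5StrongSolution}. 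It is here that the threshold $H<\tfrac{1}{2(d+2)}$ enters: one needs $k=2$ in Theorem \ref{5StrongSolution} together with arbitrary integrability exponent $p$, which forces precisely this bound on the Hurst parameter.

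\textbf{Step 3 (uniform Hölder estimate).} Fix $\alpha\in(0,1)$, $\eta>0$ and $N>0$. By the fundamental theorem of calculus along the segment $[x,y]$, Minkowski's and Hölder's inequalities, Step 2 yields, for every $p\ge 2$,
\begin{equation*}
\mathbb{E}\Bigl[\sup_{t\in[s,T]}|\phi_{s,t}(x)-\phi_{s,t}(y)|^{p}\Bigr]\le C(p)\,|x-y|^{p},
\end{equation*}
uniformly in $s\in[0,T]$ and in $x,y$ with $|x|,|y|\le N$. Apply this on the countable index set obtained by combining the given grids $S_{n}$ in time (of cardinality at most $2^{n\eta}$) with a $2^{-n}$-dyadic grid in space inside the ball of radius $N$ (of cardinality $\lesssim N^{d}2^{nd}$). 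Markov's inequality with threshold $|x-y|^{\alpha}$ gives for each pair $(s,x,y)$ in this grid a tail bound of order $C(p)\,|x-y|^{p(1-\alpha)}\le C(p)\,2^{-np(1-\alpha)}$, so the total probability that the $\alpha$-Hölder bound fails at level $n$ is at most $C\, 2^{n(\eta+2d)}\cdot 2^{-np(1-\alpha)}$. Choosing $p$ large enough that $p(1-\alpha)>\eta+2d+1$ makes this summable, and Borel--Cantelli yields a full-measure set $\Omega'$ on which the estimate holds for all but finitely many $n$. A standard chaining/continuity argument extends the bound from dyadic to arbitrary $x,y$ with $|x-y|\le 2^{-n}$, completing item 4.

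\textbf{Main obstacle.} The delicate point is Step 2: obtaining $L^{p}$-moments of $\nabla\phi_{s,t}$ of \emph{every} order, uniformly in $(s,t,x)$ and in the mollification parameter, with $b$ merely in $L^{1,\infty}_{\infty,\infty}$. This is exactly where the regularization by the fractional Brownian motion with small Hurst index is exploited: through the Malliavin compactness criterion of \cite{5DMN92} one converts the variation-of-constants expression for $\nabla\phi^{\varepsilon}_{s,t}$ into an expression involving singular integrals of $\nabla b^{\varepsilon}$, which are controlled using local-time estimates of the fBm available only when $H$ is small enough, namely $H<\tfrac{1}{2(d+2)}$. Once this uniform bound is in hand, the Kolmogorov--Borel--Cantelli counting in Step 3 is routine.
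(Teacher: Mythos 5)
Your outline of items (1)--(3) and of the Kolmogorov--Borel--Cantelli counting in Step 3 matches the paper's route (the counting is exactly Lemma \ref{5ExtKolmogorov}, borrowed from \cite{5Shaposhnikov17}), but there is a genuine gap at the junction of Steps 2 and 3, and it sits precisely where the paper's main technical effort lies.

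In Step 2 you establish $\sup_{t}\sup_{x}\mathbb{E}\bigl[\,|\nabla\phi_{s,t}(x)|^{p}\,\bigr]<\infty$, i.e.\ a moment bound at each \emph{fixed} time $t$. In Step 3 you then claim that the fundamental theorem of calculus plus Minkowski/H\"older ``yields''
\begin{equation*}
\mathbb{E}\Bigl[\sup_{t\in[s,T]}|\phi_{s,t}(x)-\phi_{s,t}(y)|^{p}\Bigr]\le C\,|x-y|^{p},
\end{equation*}
with the supremum over $t$ \emph{inside} the expectation. This does not follow: $\sup_{t}\mathbb{E}[\cdot]$ does not control $\mathbb{E}[\sup_{t}\cdot]$, and you genuinely need the supremum inside, because the statement of item 4 requires the H\"older bound for \emph{all} $t\in[s,T]$ while Borel--Cantelli is only run over the countable grid in $(s,x,y)$. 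In the Markovian setting one could try a maximal inequality, but the paper explicitly points out (item 4 of ``The method'') that such tools are unavailable for fBm, and this is why Theorem \ref{5MainEstimate} exists: its proof applies the Garsia--Rodemich--Rumsey lemma to reduce the supremum in $t$ to a double time integral of increments, which in turn requires a H\"older-in-time estimate of the form $\mathbb{E}\bigl[\Vert \partial_{x}X_{t_{2}}^{x,n}-\partial_{x}X_{t_{1}}^{x,n}\Vert^{p}\bigr]\le C\,|t_{2}-t_{1}|^{\theta p}$, uniform in $n$ and $x$. That estimate is the hard core of Section 2.1.4: it is obtained by Picard expansion of $\partial_{x}X_{t}^{x,n}$, Girsanov's theorem, the shuffle-product combinatorics and the local-time estimate of Theorem \ref{5mainestimate2}, and it is where the threshold $H<\frac{1}{2(d+2)}$ is actually consumed. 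Your ``main obstacle'' paragraph locates the difficulty in the fixed-time derivative bounds, but those are comparatively soft (Lemma \ref{5BoundedDerivatives}); the real obstacle is upgrading them to a supremum in time, and your proposal passes over it in one sentence.

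A secondary, more minor point: in Step 1 you derive the cocycle identity from ``strong uniqueness on a single null set''. Since $\phi_{s,u}(x)$ is a random initial condition while strong uniqueness is stated for deterministic starting points, this requires an argument; the paper devotes Proposition \ref{5FlowProperty} (an approximation by mollified drifts, a change of variables with the inverse flow, and several subsequence extractions) to exactly this step. Your Jacobian-based route to the homeomorphism property is plausible but likewise not substantiated.
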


Points $(1),(2),(3)$ follow directly from the technical Proposition \ref{5FlowProperty}, Remark \ref{5remark:uniformFlow} and Proposition \ref{5prop:homeomorphism} in the Appendix. Point $(4)$ in contrast is new and technically demanding, its proof is based on the following key result:

\begin{theorem}
\label{5MainEstimate}
\bigskip Let $H<\frac{1}{2(d+2)}$ and let $X_{\cdot }^{s,x}$ be the unique
strong solution to the SDE%
\begin{equation}
dX_{t}^{s,x}=b(t,X_{t}^{s,x})dt+dB_{t}^{H},X_{s}^{x}=x,0\leq t\leq T
\label{11SDE}
\end{equation}%
for $b\in L_{\infty ,\infty }^{1,\infty }$. Let $K$ be a compact cube in $%
\mathbb{R}^{d}$ and $r\in \mathbb{N}.$ Then for all $s\in \left[ 0,T\right) $
and $x,y\in K$:%
\begin{equation*}
E\left[ \sup_{t\in \left[ s,T\right] }\left\Vert
X_{t}^{s,x}-X_{t}^{s,y}\right\Vert ^{2^{r}}\right] \leq
C_{r,d,H,T}(K)\left\Vert x-y\right\Vert ^{2^{r}}.
\end{equation*}

\end{theorem}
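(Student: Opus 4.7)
The plan is to approximate $b$ by a sequence $\{b_n\}\subset C_b^\infty([0,T]\times\mathbb{R}^d;\mathbb{R}^d)$ with $\|b_n\|_\infty \le \|b\|_\infty$ and $b_n \to b$ in $L^{1,\infty}_{\infty,\infty}$. For each $n$ the SDE (\ref{11SDE}) with $b_n$ admits a classical $C^1$ flow $X^{s,x,n}_\cdot$, and the fundamental theorem of calculus together with Jensen's inequality gives, for a convex compact $K$ and $z_\theta := y + \theta(x-y)$,
\begin{equation*}
\sup_{t\in[s,T]} \|X_t^{s,x,n} - X_t^{s,y,n}\|^{2^r} \le \|x-y\|^{2^r}\int_0^1 \sup_{t\in[s,T]}\|\partial_x X_t^{s,z_\theta,n}\|^{2^r}\, d\theta.
\end{equation*}
Taking expectation and using Fubini, the theorem reduces to producing a moment bound
\begin{equation*}
\sup_n \sup_{z\in K} E\bigl[\sup_{t\in[s,T]}\|\partial_x X_t^{s,z,n}\|^{2^r}\bigr] \le C_{r,d,H,T}(K),
\end{equation*}
followed by a passage to the limit based on the compactness criterion of \cite{5DMN92} that already underpins Theorem \ref{5StrongSolution}.

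For the Jacobian bound, the linear variational equation $\partial_x X_t^{s,z,n} = I + \int_s^t \nabla b_n(u,X_u^{s,z,n})\,\partial_x X_u^{s,z,n}\,du$ combined with Gronwall yields $\|\partial_x X_t^{s,z,n}\| \le \exp\bigl(\int_s^T |\nabla b_n(u,X_u^{s,z,n})|\,du\bigr)$. Invoking Girsanov's theorem for fBm via the Volterra representation (\ref{5RepFractional}) removes the drift: under an equivalent measure $\tilde{\mathbb{P}}_n$, the process $X^{s,z,n}$ becomes $z + B^H_\cdot - B_s^H$. A Cauchy--Schwarz splitting isolates the Girsanov density --- whose exponential moments are uniformly bounded in $n$ via a Novikov-type estimate for fBm and $\|b_n\|_\infty \le \|b\|_\infty$ --- from the crucial quantity
\begin{equation*}
\tilde{E}\Bigl[\exp\Bigl(2p\int_s^T |\nabla b_n(u, z + B_u^H - B_s^H)|\,du\Bigr)\Bigr].
\end{equation*}
This last term must be bounded uniformly in $n$ despite $\|\nabla b_n\|_\infty \to \infty$ in general; the mechanism is regularization by noise. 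Expanding the exponential and repeatedly applying Malliavin integration by parts against the (smooth, for small $H$) occupation density of $B^H$, each multiple integral $\int_{[s,T]^k}\prod_{i=1}^k |\nabla b_n(u_i, z+B_{u_i}^H - B_s^H)|\,du$ is recast in terms of $\|b_n\|_\infty$ paired with negative-order Sobolev norms of the local time of fBm, which have sufficiently good moments precisely when $H < \frac{1}{2(d+2)}$.

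With the uniform $n$-independent estimate in hand, the compactness criterion from \cite{5DMN92} (applied as in Theorem \ref{5StrongSolution}) yields $X^{s,x,n}_t \to X^{s,x}_t$ in $L^2(\Omega)$ along a subsequence for each $(s,t,x)$, and uniform integrability provided by the analogous estimate at exponent $2^{r+1}$ together with Fatou's lemma transfer the inequality to the limit. The decisive obstacle throughout is the uniform-in-$n$ exponential moment in the second step: pathwise Gronwall alone cannot remove the dependence on $\|\nabla b_n\|_\infty$, and the improvement rests entirely on noise-induced regularization for small Hurst parameter, which also dictates the threshold $H<\frac{1}{2(d+2)}$ appearing in the statement (strictly stronger than the $H<\frac{1}{2(d+1)}$ required merely for $W^{1,p}$-differentiability of the flow in Theorem \ref{5StrongSolution}).
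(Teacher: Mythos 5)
There is a genuine gap, and it sits exactly at the step you yourself flag as decisive. Your pathwise Gronwall bound $\|\partial_x X_t^{s,z,n}\|\le \exp\bigl(\int_s^T|\nabla b_n(u,X_u^{s,z,n})|\,du\bigr)$ discards the signed structure of the derivative before the regularization-by-noise machinery is ever applied, and after that loss the quantity you need, namely $\tilde E\bigl[\exp\bigl(2p\int_s^T|\nabla b_n(u,z+B_u^H-B_s^H)|\,du\bigr)\bigr]$, is simply \emph{not} bounded uniformly in $n$ in terms of $\|b_n\|_\infty$ and $\|b_n\|_{L^1_xL^\infty_t}$. Take $d=1$ and $b_n(x)=\sin(nx)$: then $\|b_n\|_\infty=1$ for all $n$, but $\int_s^T|\nabla b_n(u,z+B_u^H)|\,du=n\int_s^T|\cos(n(z+B_u^H))|\,du\sim \tfrac{2}{\pi}n(T-s)\to\infty$, so the exponential moment diverges. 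The local-time integration by parts you invoke cannot repair this: expanding the exponential produces multiple integrals of $\prod_i|\nabla b_n(u_i,\cdot)|$, and the absolute values prevent the derivatives from being transferred onto the Gaussian kernel --- the whole method lives on the cancellations in the \emph{signed} averaged derivative $\int_s^t D_xb(u,B_u^H)\,du$ (compare the paper's Lemma 2.12, where the norm sits outside the time integral). This is why the paper never uses Gronwall on the variational equation; it keeps the full Picard/Dyson expansion $\partial_x X_t= I+\sum_{m\ge1}\int_{\Delta^m_{0,t}}Db(u_1,X_{u_1})\cdots Db(u_m,X_{u_m})\,du$, whose summands retain the form $\prod_j\partial_{x_{l_j}}b^{(i_j)}$ with no absolute values, so that after Girsanov, the shuffle relations and the local-time estimate (Theorem \ref{5mainestimate2}) convert every derivative into a negative power of time increments and a factor $\|b\|_{L^1(\mathbb{R}^d;L^\infty([0,T]))}$, with factorial denominators making the series converge precisely when $H<\frac{1}{2(d+2)}$.

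A second, related issue: your reduction demands the bound $E\bigl[\sup_{t\in[s,T]}\|\partial_x X_t^{s,z,n}\|^{2^r}\bigr]\le C$ with the supremum \emph{inside} the expectation, which is a strictly harder object than anything the paper actually proves about the derivative (Lemma \ref{5BoundedDerivatives} only controls $\sup_t E[\cdot]$). The paper sidesteps this by applying the Garsia--Rodemich--Rumsey inequality to $f(t)=\|X_t^x-X_t^y\|$, which reduces the supremum estimate for the \emph{difference of flows} to fixed-time increment estimates $E\bigl[\|\partial_x X_{t_2}^{x,n}-\partial_x X_{t_1}^{x,n}\|^p\bigr]\le C|t_2-t_1|^{p\varepsilon/(1+\varepsilon)}$, uniformly in $n$ and $x\in K$; these are again obtained from the signed Picard expansion, with the factor $|t_2-t_1|$ extracted from an indicator $\chi_{[t_1,t_2]}$ inserted into the first integral. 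If you want to salvage your outline, you would need to replace the Gronwall step by this expansion-based increment estimate and insert a GRR (or chaining) argument to move the supremum inside the expectation; as written, the central estimate your proof rests on is false.
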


\begin{proof}[Proof of Proposition \ref{5Holder flow} (4)] (\cite{5Shaposhnikov17})
	Let $K:= [-N, N]^d$ and define the following random mapping $J$ from $\Omega \times [0, T] \times [-N, N]^d$
	to the Banach space $\mathcal{C}([0, T], \mathbb{R}^d)$ equipped with the supremum norm as follows:
	\[
	\mathtt{J}(\omega, s, x)(t) := X_{\min(s + t, T)}^{s,x}(\omega).
	\]
	The joint continuity of $X_{s, t}^x$ with respect to $t, x$ immediately implies the mapping $\mathtt{J}$
	is continuous.
	Next, the estimate 
	\[
	E[\sup_{t\in \lbrack s,T]}\left\Vert X_{t}^{s,x}-X_{t}^{s,y}\right\Vert
	^{2^{r}}]\leq C_{d,H,T}(K)\left\Vert x-y\right\Vert ^{2^{r}}\text{.}
	\]
	can be written as
	\[
	\sup_{s \in [0, T]} \mathbb{E} \|\mathtt{J}(\omega, s, x) - \mathtt{J}(\omega, s, y)\|^{2^{r}} \leq C_{d,H,T}(K)\left\Vert x-y\right\Vert ^{2^{r}}
	\]
	For any $\alpha \in (0, 1)$ and $\eta > 0$ one can find
	sufficiently large $r > 0$ such that
	\[
	\alpha < \frac{2^{r} - 1 - d}{2^{r}}, \  \eta < 2^{r} - 1 - d - \alpha 2^{r}
	.\]
	So now it is easy to complete the proof applying Lemma \ref{5ExtKolmogorov}.
\end{proof}

In order to prove the previous theorem we need a series of preparatory lemmas and theorems. To fulfill the requirements of this result, we make use of the flow's derivative and for the method we use to achieve that we need to push the Hurst parameter lower than just $\frac{1}{2(d+2)}$. 

To achieve the desired control over the derivative we employ, again, the compactness criterion $L^2$ from \cite{5DMN92}. Lemma 2.6 tells us that the sequence of the derivatives of solutions to the SDE when the drift is smooth satisfy the conditions of the compactness criterion when $H<\frac{1}{2(d+3)}$. This the content of Section \ref{5appFlowDeriv}. Section \ref{5relativeCompactnessDerivative} shows the relative compactness of the mentioned sequence. Section \ref{5identifyGoodLimits} yields "good" versions of the limit. Section \ref{5mainSupEstimate} is about the proof of the main result of this section namely Theorem \ref{5MainEstimate}.

\subsubsection{Controlling the derivative of the sequence of approximating flows}
\label{5appFlowDeriv}
In this part we provide control over the derivative of the solution to (\ref{52.2}) using a compactness criterion of \cite{5DMN92} i.e. Corollary \ref{5VI_compactcrit}.

 Before starting with our program, we need the following technical lemma

 \begin{lemma}\label{5JointHolderContinuity}
	Let $b\in L_{\infty ,\infty }^{1,\infty }$ and $H<%
	\frac{1}{2(d+2)}$. Then there exists, for all $0\leq s< T$ and $x\in\mathbb{R}^d$, a unique strong solution $X_{\cdot
	}^{s,x}$ of the SDE (\ref{5SDE}) such that for all $0\leq s< T$ the mapping 
\[((t,x)\mapsto X_{t
}^{s,x})\]  on $[s,T]\times\mathbb{R}^d$ is locally H\"{o}lder continuous a.e.

\end{lemma}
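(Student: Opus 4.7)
Existence and uniqueness of the strong solution $X^{s,x}_\cdot$ for each fixed $x\in\mathbb{R}^d$ are already supplied by Theorem~\ref{5StrongSolution} (the hypothesis $H<\frac{1}{2(d+2)}$ is compatible with the requirement $H<\frac{1}{2(d-1+2k)}$ of that theorem for $k=2$). The task is therefore to upgrade the pointwise-in-$x$ family $\{X^{s,x}_t:(t,x)\in[s,T]\times\mathbb{R}^d\}$ to a version that is jointly locally H\"{o}lder continuous on almost every path. The natural vehicle is a multi-parameter Kolmogorov continuity theorem (in particular, Lemma~\ref{5ExtKolmogorov} from the appendix, already used in the proof of Proposition~\ref{5Holder flow}(4) above), and the work reduces to producing suitable moment estimates on increments in both the time and space variables.

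The temporal estimate is elementary. Writing $X^{s,x}_t - X^{s,x}_{t'} = \int_{t'}^{t}b(u,X^{s,x}_u)\,du + (B^H_t-B^H_{t'})$ and using $\|b\|_\infty<\infty$ together with the Gaussian bound $\mathbb{E}|B^H_t-B^H_{t'}|^p\leq C_{p,H}|t-t'|^{pH}$ yields $\mathbb{E}\|X^{s,x}_t-X^{s,x}_{t'}\|^{p}\leq C_p |t-t'|^{pH}$ for every $p\geq 2$, so the temporal H\"{o}lder exponent can be pushed arbitrarily close to $H$ by taking $p$ large.

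The spatial estimate is the genuine content. Here I would follow the mollification route underlying the proof of Theorem~\ref{5StrongSolution}: pick smooth, compactly supported $b_n\to b$ with $\|b_n\|_\infty\le\|b\|_\infty$, let $X^{(n),s,x}_\cdot$ be the associated classical flow, and invoke the Malliavin-calculus based compactness criterion of \cite{5DMN92} (Corollary~\ref{5VI_compactcrit}) together with the kernel representation \eqref{5RepFractional}. Since $H<\frac{1}{2(d+2)}$, this machinery delivers a bound
\begin{equation*}
\sup_{n}\sup_{t\in[s,T]}\mathbb{E}\|X^{(n),s,x}_t - X^{(n),s,y}_t\|^{p}\leq C_{p,K}|x-y|^{p},\qquad x,y\in K,
\end{equation*}
for every compact cube $K$ and every $p\geq 2$; passing to the limit $n\to\infty$ in $L^2(\Omega)$ (which is the convergence used to construct the strong solution in the first place) transfers this bound to $X^{s,x}_\cdot$.

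Combining the two moment estimates with $p$ chosen so that both $pH$ and $p$ exceed $d+1$ produces an increment bound of the form $\mathbb{E}\|X^{s,x}_t-X^{s,y}_{t'}\|^p\leq C_{K,p}(|t-t'|^{pH}+|x-y|^p)$ on any compact subset of $[s,T]\times\mathbb{R}^d$, whence Lemma~\ref{5ExtKolmogorov} furnishes a modification that is jointly locally H\"{o}lder in $(t,x)$. Pathwise uniqueness from Theorem~\ref{5StrongSolution} identifies this modification with $X^{s,x}_\cdot$ for every $x$ in a countable dense subset of $\mathbb{R}^d$ on a full-measure event, and continuity extends the identification to all $x\in\mathbb{R}^d$. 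The main obstacle is the uniform-in-$n$ spatial moment bound, since $\nabla b_n$ diverges; that is exactly where the regularising power of the low-Hurst-parameter fBm enters, through the DMN92 compactness criterion, and it is essentially the same ingredient that powered the construction of the strong solution itself.
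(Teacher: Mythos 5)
Your overall architecture coincides with the paper's proof: mollify $b$, bound the temporal increment using $\|b\|_\infty$ and the Gaussian moment estimate for $B^H_t-B^H_{t'}$, obtain a spatial Lipschitz-type moment bound uniformly in $n$, pass to the limit via the $L^2(\Omega)$ convergence $X^{s,x,n}_t\to X^{s,x}_t$ together with Fatou, and conclude with Kolmogorov's continuity theorem. However, there is a misstep in how you justify the one step you yourself identify as "the genuine content." You claim that the uniform-in-$n$ bound $\sup_n\sup_t \mathbb{E}\|X^{(n),s,x}_t-X^{(n),s,y}_t\|^p\leq C_{p,K}\|x-y\|^p$ is delivered by the compactness criterion of \cite{5DMN92} (Corollary~\ref{5VI_compactcrit}). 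That criterion yields only \emph{relative compactness} of a family in $L^2(\Omega)$ — a qualitative statement — and cannot by itself produce a quantitative modulus of continuity in the parameter $x$. As written, this step would not go through.

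What the paper actually does at this point is elementary once the right a priori estimate is in hand: it writes $X^{s,x,n}_{t_2}-X^{s,y,n}_{t_2}$ as a telescoping sum over the coordinates of $x-y$ and applies the fundamental theorem of calculus in each one‑dimensional slot, reducing the spatial increment to integrals of $\frac{\partial}{\partial x_i}X^{s,\cdot,n,j}_{t_2}$. The uniform bound
\[
\sup_{n\in\mathbb{N},\,t\in[s,T],\,x\in\mathbb{R}^d}\mathbb{E}\Bigl[\bigl\Vert \tfrac{\partial}{\partial x}X^{s,x,n}_t\bigr\Vert^2_{\mathbb{R}^{d\times d}}\Bigr]<\infty
\]
then comes from Lemma~\ref{5BoundedDerivatives} in the Appendix, which is proved via Girsanov's theorem and the local-time/shuffle-integral estimates (Theorem~\ref{5mainestimate2}), not via the compactness criterion; the condition $H<\frac{1}{2(d+2)}<\frac{1}{2(d+1)}$ is exactly what makes that lemma applicable with $k=1$. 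So the correct attribution for the regularizing effect of the low Hurst parameter in this lemma is the uniform derivative moment bound, with the DMN92 criterion playing its role elsewhere (in the relative-compactness arguments of Section~2.1.2). With that substitution your argument matches the paper's; the remaining cosmetic differences (you invoke Lemma~\ref{5ExtKolmogorov} where the paper uses the classical Kolmogorov theorem, and you spell out the identification of the modification with the strong solution via a countable dense set) are harmless.
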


\begin{proof}
	
	Let $%
	\{b_{n}\}_{n\geq 1}\subset C_{c}^{\infty }([0,T]\times \mathbb{R}^{d})$,
	such that 
	\begin{equation*}
	b_{n}(t,x)\longrightarrow b(t,x)
	\end{equation*}%
	as $n\rightarrow \infty $ for a.e. $(t,x)\in \lbrack 0,T]\times \mathbb{R}%
	^{d}$ with $\sup_{n\geq 0}\Vert b_{n}\Vert _{L_{\infty }^{1}}<\infty $ and
	such that $|b_{n}(t,x)|\leq M<\infty $, $n\geq 1$ a.e. for some constant $M$%
	. Assume that $X_{\cdot }^{s,x,n}$ is the unique strong solution to the SDE 
	\begin{equation*}
	dX_{t}^{s,x,n}=b_{n}(t,X_{t}^{s,x,n})du+dB_{t}^{H},\,\,0\leq t\leq
	T,\,\,\,X_{s}^{s,x,n}=x\in \mathbb{R}^{d}\,.
	\end{equation*}%
	 Then for $t_1, t_2\in [s,T]$ and $x,y\in\mathbb{R}^d$, such that $t_1\leq t_2$, we have that
	\[X_{t_{1}}^{s,x,n}-X_{t_{2}}^{s,y,n} = X_{t_{1}}^{s,x,n}-X_{t_{2}}^{s,x,n} + X_{t_{2}}^{s,x,n}-X_{t_{2}}^{s,y,n} = J_1 + J_2
	\]
	where 
	\[J_1 := X_{t_{1}}^{s,x,n}-X_{t_{2}}^{s,x,n}\]
	
	and 
	
	\[J_2 := X_{t_{2}}^{s,x,n}-X_{t_{2}}^{s,y,n}\]

	We see that for $p>1$ that
	
	\[E[\Vert J_1\Vert^p]\leq 2^p\left( M^p\vert t_1-t_2\vert^p + E[\Vert B^H_{t_2-t_1}\Vert^p]\right) = C(s,T,p,H,M)\vert t_1-t_2\vert^{Hp}, \]
	
	where $\Vert\cdot\Vert$ is the Euclidean norm and $C(\cdot)$ is a constant that depends on its argument.
	On the other hand, by using the fundamental theorem of calculus with respect to the
	one dimensional arguments of the entries of $
	X_{t}^{s,x,n}=\left(X_{t}^{s,x,n,j}\right)
	_{1\leq j\leq d}$, we find that 
	\begin{equation*}
	\begin{split}
	E[\left\Vert J_2\right\Vert ^{p}] = &E[\left\Vert X_{t_{2}}^{s,x,n}-X_{t_{2}}^{s,x,n}\right\Vert ^{p}] \\
	=&(\sum_{j=1}^{d}E[(%
	X_{t_{2}}^{s,x_{1},...,x_{d},n,j}-%
	X_{t_{2}}^{s,y_{1},...,y_{d},n,j})^{2}])^{\frac{p}{2}} \\
	\leq (&C(d)\sum_{j=1}^{d}\{E[(%
	X_{t_{2}}^{s,x_{1},...,x_{d},n,j}-%
	X_{t_{2}}^{s,y_{1},x_{2}...,x_{d},n,j})^{2}] \\
	&+E[(%
	X_{t_{2}}^{s,y_{1},x_{2},x_{3}...,x_{d},n,j}-%
	X_{t_{2}}^{s,y_{1},y_{2},x_{3}...,x_{d},n,j})^{2}] \\
	&+...+E[(
	X_{t_{2}}^{s,y_{1},y_{2},...,y_{d-1},x_{d},n,j}-X_{t_{2}}^{s,y_{1},y_{2},y_{3}...,y_{d},n,j})^{2}]\} )^{\frac{p}{2}}\\
	\leq &(C(d,K)\sum_{j=1}^{d}\{\left\vert x_{1}-y_{1}\right\vert
	^{2}\sup_{x\in K}E[(\frac{\partial}{\partial x_{1}}%
	X_{t_{2}}^{s,x_{1},...,x_{d},n,j})^{2}] \\
	&+\left\vert x_{2}-y_{2}\right\vert ^{2}\sup_{x\in K}E[(\frac{\partial
	}{\partial x_{2}}X_{t_{2}}^{s,x_{1},...,x_{d},n,j})^{2}] \\
	&+...+\left\vert x_{d}-y_{d}\right\vert ^{2}\sup_{x\in K}E[(\frac{\partial
		}{\partial x_{d}}X_{t_{2}}^{s,x_{1},...,x_{d},n,j})^{2}]\})^{\frac{p}{2}} \\
	\leq &C(d,K)\left\Vert x-y\right\Vert ^{p}(\sup_{n\in \mathbb{N},t\in
		\lbrack s,T],x\in K}E[\left\Vert \frac{\partial }{\partial x}%
	X_{t}^{s,x,n}\right\Vert _{\mathbb{R}^{d\times d}}^{2}])^{\frac{p}{2}}.
	\end{split}
	\end{equation*}
	
	where $K$ is a compact cube in $\mathbb{R}^d.$
	
	Using Lemma 5.10 in the Appendix we have
	
	\[\sup_{n\in \mathbb{N},t\in
		\lbrack s,T],x\in \mathbb{R}^d}E\Big[\left\Vert \frac{\partial }{\partial x}%
	X_{t}^{s,x,n}\right\Vert _{\mathbb{R}^{d\times d}}^{2}\Big] < \infty\]
	
	Hence we get
	\[E\Big[\left\Vert X_{t_{1}}^{s,x,n}-X_{t_{2}}^{s,y,n}\right\Vert ^{p}\Big] \leq C(d,s,T,H,K,p,\Vert b\Vert) \left( \left\Vert x-y\right\Vert ^{Hp} + \left\Vert t_1-t_2\right\Vert ^{Hp}\right)\] for all $x,y\in K$, $t_1,t_2\in [s,T]$ and $n\geq 1$.
	
	Further, using Corollary 4.8 in \cite{5BNP19} we have that \[ X_t^{s,x,n}\rightarrow X_t^{s,x}\text{ in } L^2(\Omega)\]
	as $n\rightarrow\infty$ for all $s,t,x$.
	
	Hence using Fatou's lemma we get
	\begin{equation*}
	    \begin{split}
	        E\Big[\left\Vert X_{t_{1}}^{s,x}-X_{t_{2}}^{s,y}\right\Vert ^{p}\Big] &\leq
		\lim_{j\rightarrow\infty } E\Big[\left\Vert X_{t_{1}}^{s,x,n(j)}-X_{t_{2}}^{s,y,n(j)}\right\Vert ^{p}\Big] \\
		&\leq C(d,s,T,H,K,p,\Vert b\Vert)\left( \left\Vert x-y\right\Vert ^{Hp} + \left\Vert t_1-t_2\right\Vert ^{Hp}\right)
	    \end{split}
	\end{equation*}

	for all $x,y\in K$ and $t_1,t_2\in [s,T].$\par
	To conclude, we apply Kolmogorov's continuity theorem (for $Hp\geq d+\beta$ and $\beta >0$) to the stochastic field $(t,x)\mapsto X_t^{s,x}$, which gives a locally H\"{o}lder continuous version and which itself is a unique strong solution to the SDE (\ref{5SDE}).

\end{proof}

\begin{lemma}
\label{5RelativeCompactness}\bigskip Let $H<\frac{1}{2(d+3)}$ and $b\in
C_{c}([0,T]\times \mathbb{R}^{d};\mathbb{R}^{d})$. Assume $X_{\cdot }^{x}$
is the unique strong solution to the SDE%
\begin{equation*}
dX_{t}^{s,x}=b(t,X_{t}^{s,x})dt+dB_{t}^{H},X_{s}^{s,x}=x,0\leq s\leq  t\leq T
\end{equation*}%
and let $\frac{\partial }{\partial x}X_{t}^{s,x}$ be the Fr\'{e}chet
derivative of $X_{t}^{s,x}$. Then there exists a $\beta \in (0,1/2)$ such
that for all $x\in \mathbb{R}^{d}$, $0\leq s<t\leq T$%
\begin{equation}
\int_{0}^{t}\int_{0}^{t}\frac{\left\Vert D_{\theta }\frac{\partial }{%
\partial x}X_{t}^{s,x}-D_{\theta ^{\prime }}\frac{\partial }{\partial x}%
X_{t}^{s,x}\right\Vert _{L^{2}(\Omega ;\mathbb{R}^{d}\otimes \mathbb{R}%
^{d}\otimes \mathbb{R}^{d})}^{2}}{\left\vert \theta -\theta ^{\prime
}\right\vert ^{1+2\beta }}d\theta ^{\prime }d\theta \leq
C_{H,d,T}(\left\Vert b\right\Vert _{L_{\infty }^{\infty }},\left\Vert
b\right\Vert _{L_{\infty }^{1}})  \label{5E1}
\end{equation}%
and%
\begin{equation}
\left\Vert D_{\cdot }\frac{\partial }{\partial x}X_{t}^{s,x}\right\Vert
_{L^{2}([0,t]\times \Omega ;\mathbb{R}^{d}\otimes \mathbb{R}^{d}\otimes 
\mathbb{R}^{d})}\leq C_{H,d,T}(\left\Vert b\right\Vert _{L_{\infty }^{\infty
}},\left\Vert b\right\Vert _{L_{\infty }^{1}}),  \label{5E2}
\end{equation}%
where $C_{H,d,T}:[0,\infty )\times \lbrack 0,\infty )\longrightarrow \lbrack
0,\infty )$ is a continuous function, which only depends on $H,d$ and $T$
(and not on $x$, $s$ or $b$).
\end{lemma}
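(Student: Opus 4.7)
The plan is to verify the two hypotheses of the Da Prato--Meyer--Nualart compactness criterion (Corollary~\ref{5VI_compactcrit}) applied to the family of Fr\'{e}chet derivatives $Y_t^{s,x} := \partial_x X_t^{s,x}$, with constants depending only on $H, d, T$ and the $L^{\infty}_{\infty}, L^{1}_{\infty}$-norms of $b$ (and \emph{not} on derivatives of $b$, even though $b$ here is smooth).

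\textbf{Step 1 (equations for $Y$ and $D_\theta Y$).} Since $b$ is smooth, differentiating the SDE in $x$ yields the pathwise linear matrix ODE $Y_t = I_{d \times d} + \int_s^t \nabla b(u, X_u) Y_u \, du$. Applying the Malliavin derivative $D_\theta$ in the direction of the underlying Brownian motion from (\ref{5RepFractional}) to the SDEs for $X$ and $Y$ gives, for $\theta \in [0,t]$,
\begin{align*}
D_\theta X_t &= \bigl(K_H(t,\theta) - K_H(s,\theta)\mathbf{1}_{\theta \leq s}\bigr)I + \int_{s \vee \theta}^t \nabla b(u, X_u)\, D_\theta X_u \, du, \\
D_\theta Y_t &= \int_{s \vee \theta}^t \nabla^2 b(u, X_u)(D_\theta X_u) Y_u \, du + \int_{s \vee \theta}^t \nabla b(u, X_u)\, D_\theta Y_u \, du.
\end{align*}
Variation of constants then expresses $D_\theta Y_t$ pathwise as an integral whose prefactors are controlled by $\exp\bigl(\int_s^T \|\nabla b(u,X_u)\|\,du\bigr)$ multiplied by $K_H(\cdot, \theta)$.

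\textbf{Step 2 (moment bounds on $\nabla^k b$ along the flow).} I would establish that, for $k \in \{1,2\}$ and any $p < \infty$, there is a constant $C$ depending only on $H, d, T, p$ and $\|b\|_{L^{\infty}_{\infty} \cap L^{1}_{\infty}}$ such that
\[ \mathbb{E}\Bigl[\Bigl(\int_s^T \|\nabla^k b(u, X_u^{s,x})\|\,du\Bigr)^p\Bigr] \leq C, \qquad \text{uniformly in the smoothness of } b. \]
The argument mirrors \cite{5BNP19}: apply Girsanov (legitimate since $b$ is bounded) to replace $X_u$ by $x + B_u^H$ weighted by an exponential Radon--Nikodym density, then perform an iterated integration-by-parts / Hermite--Malliavin expansion that transfers each spatial derivative of $b$ onto the Gaussian density of the free fBm. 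The resulting singular kernels are square-integrable precisely under $H < \tfrac{1}{2(d+2+k)}$. Taking $k = 2$ and exploiting the fact that one of the $\nabla b$ factors is paired with the already-available $K_H(t, \theta)$ appearing in $D_\theta X_u$ gives the sharp threshold $H < \tfrac{1}{2(d+3)}$ of the statement.

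\textbf{Step 3 (verification of the two bounds).} Combining Steps 1 and 2 with Gr\"{o}nwall's inequality delivers
\[ \|D_\theta Y_t\|_{L^2(\Omega)} \leq C\bigl(|K_H(t,\theta)| + |K_H(s,\theta)|\mathbf{1}_{\theta \leq s}\bigr), \]
which upon integrating in $\theta$ and using $\int_0^t |K_H(t,\theta)|^2\, d\theta = t^{2H} \leq T^{2H}$ proves (\ref{5E2}). For (\ref{5E1}), I would split $D_\theta Y_t - D_{\theta'} Y_t$ into a contribution coming from the source-term differences $K_H(t,\theta)-K_H(t,\theta')$ (and similarly at $s$) multiplied by uniformly bounded random factors, plus a contribution from the differing integration limits $[s\vee\theta, t]$ vs.\ $[s\vee\theta', t]$. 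The first piece is handled by the classical Slobodeckij estimate
\[ \int_0^t\!\!\int_0^t \frac{|K_H(t,\theta)-K_H(t,\theta')|^2}{|\theta-\theta'|^{1+2\beta}}\,d\theta\,d\theta' < \infty \qquad \text{for any } \beta \in (0, H), \]
and the second is even better since $|s\vee\theta - s\vee\theta'| \leq |\theta - \theta'|$.

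\textbf{Main obstacle.} Step 2 is the delicate part: moment bounds on $\int_s^T \|\nabla^2 b(u, X_u^{s,x})\| du$ whose constants depend only on the $L^\infty_\infty$ and $L^1_\infty$ norms of $b$ itself are precisely the regularization-by-noise statement driving the whole subject, and pushing the argument of \cite{5BNP19} from one derivative of $b$ (yielding the bound on $Y_t$ under $H < \tfrac{1}{2(d+2)}$) to two derivatives (needed for $D_\theta Y_t$) is exactly why the Hurst parameter must be tightened to $H < \tfrac{1}{2(d+3)}$ here.
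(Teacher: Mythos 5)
Your Step 2 is where the argument breaks, and it breaks in a way that cannot be repaired along the lines you propose. You claim moment bounds of the form
\begin{equation*}
E\Bigl[\Bigl(\int_s^T \bigl\Vert \nabla^k b(u,X_u^{s,x})\bigr\Vert\,du\Bigr)^p\Bigr]\leq C\bigl(\Vert b\Vert_{L_{\infty}^{\infty}},\Vert b\Vert_{L_{\infty}^{1}}\bigr),\qquad k=1,2,
\end{equation*}
``uniformly in the smoothness of $b$'', and you feed these into a variation-of-constants/Gr\"onwall scheme whose prefactors are $\exp\bigl(\int_s^T\Vert\nabla b(u,X_u)\Vert\,du\bigr)$. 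No such bounds hold: the quantity $\int_s^T\Vert\nabla^2 b(u,X_u)\Vert\,du$ genuinely grows with the size of $\nabla^2 b$ (take $b$ bounded by $1$ but rapidly oscillating; the flow spends positive time where $\Vert\nabla^2 b\Vert$ is huge, and there is no cancellation left to exploit once the norm sits inside the time integral). The Girsanov-plus-integration-by-parts machinery of \cite{5BNP19} that you invoke controls only the \emph{signed, ordered} iterated integrals $\int_{\Delta^m_{\theta,t}}\prod_j D^{\alpha_j}b(u_j,x+B^H_{u_j})\,du$ --- the derivatives must remain derivatives so that they can be transferred onto the Gaussian kernel (cf.\ (\ref{IntPart}) and Theorem \ref{5mainestimate2}); the absolute value in those estimates is outside the expectation and outside the simplex integral. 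Consequently neither your exponential prefactor in Step 1 nor the Gr\"onwall closure in Step 3 admits an $L^2(\Omega)$ bound depending only on $\Vert b\Vert_{L_{\infty}^{\infty}}$ and $\Vert b\Vert_{L_{\infty}^{1}}$.

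The paper's proof is built precisely to avoid this trap: it never bounds $\exp(\int\Vert\nabla b\Vert)$ or $\int\Vert\nabla^2 b\Vert$. Instead it expands $\frac{\partial}{\partial x}X_t^x$, $D_{\theta'}X_t^x-D_\theta X_t^x$ and $D_{\theta'}\frac{\partial}{\partial x}X_t^x-D_\theta\frac{\partial}{\partial x}X_t^x$ into Picard (Dyson) series of ordered iterated integrals (the terms $A_1,\dots,A_8$, then $A_9$, $A_{10}$, $A_{11}$), merges products of simplex integrals via the shuffle relation (\ref{5shuffleIntegral}) and Lemma \ref{5partialshuffle}, removes the drift by Girsanov, and only then applies the local-time/Fourier estimate of Theorem \ref{5mainestimate2} to each individual summand, tracking that the second-order derivative of $b$ and the factor $K_H(u,\theta')-K_H(u,\theta)$ each occur exactly once per product, so that the total derivative order is $m+m_1+m_2+2$ and the resulting series converges exactly for $H<\frac{1}{2(d+3)}$. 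Your exponent bookkeeping (square-integrability under $H<\frac{1}{2(d+2+k)}$, which for $k=2$ gives $\frac{1}{2(d+4)}$, followed by an informal recovery of $\frac{1}{2(d+3)}$) does not match this count either. If you want to reconstruct the proof, keep everything in series form from the start and estimate term by term; any step that passes through $\int\Vert\nabla^k b(u,X_u)\Vert\,du$ for $k\geq 1$ destroys the cancellation on which the whole theory relies.
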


\begin{proof}
 Assume without loss of generality that $s=0$. Since $b$ is a
smooth vector field, we know that%
\begin{equation}
\begin{split}
\frac{\partial }{\partial x}X_{t}^{x} =& I_{d\times
d}+\int_{0}^{t}Db(u,X_{u}^{x})\frac{\partial }{\partial x}X_{u}^{x}du
\label{5DerivativeFlow} \\
=&\frac{\partial }{\partial x}X_{\theta }^{x}+\int_{\theta
}^{t}Db(u,X_{u}^{x})\frac{\partial }{\partial x}X_{u}^{x}du  \notag
\end{split}
\end{equation}

for all $0<\theta <t$, where $Db:\mathbb{R}^{d}\longrightarrow L(\mathbb{R}%
^{d},\mathbb{R}^{d})$ is the derivative of $b$ with respect to the space
variable.

On the other hand, using the chain rule for Malliavin derivatives combined
with Lemma 1.2.3 in \cite{5Nualart10}, we find that%

\begin{equation}
\begin{split}
D_{s}\frac{\partial }{\partial x}X_{t}^{x} = \int_{s}^{t}D^{2}b(u,X_{u}^{x})D_{s}X_{u}^{x}\frac{\partial }{\partial x}
X_{u}^{x}+Db(u,X_{u}^{x})D_{s}\frac{\partial }{\partial x}X_{u}^{x}du
\end{split}
\end{equation}

in $L^{2}([0,t]\times \Omega )$ for all $t$. Fix $t$. Then for almost all $%
\theta ^{\prime }<\theta $ with $0<\theta ^{\prime }<\theta <t$ we have%

\begin{equation}
\begin{split}
D_{\theta ^{\prime }}\frac{\partial }{\partial x}X_{t}^{x}-D_{\theta }
\frac{\partial }{\partial x}X_{t}^{x} =&\int_{\theta^{\prime }}^{\theta }D^{2}b(u,X_{u}^{x})D_{s}X_{u}^{x}\frac{\partial}{\partial x} X_{u}^{x} + Db(u,X_{u}^{x})D_{s}\frac{\partial }{\partial x} X_{u}^{x}du  \notag \\
&+\int_{\theta }^{t}D^{2}b(u,X_{u}^{x})(D_{\theta ^{\prime
}}X_{u}^{x}-D_{\theta }X_{u}^{x})\frac{\partial }{\partial x}X_{u}^{x} 
\notag \\
&+ Db(u,X_{u}^{x})(D_{\theta ^{\prime }}\frac{\partial }{\partial x}
X_{u}^{x}-D_{\theta }\frac{\partial }{\partial x}X_{u}^{x})du  \notag \\
=&	D_{\theta ^{\prime }}\frac{\partial }{\partial x}X_{\theta}^{x}+\int_{\theta }^{t}D^{2}b(u,X_{u}^{x})(D_{\theta ^{\prime}} X_{u}^{x} - D_{\theta }X_{u}^{x})\frac{\partial }{\partial x} X_{u}^{x}
\label{5Picard2} \\
&+ Db(u,X_{u}^{x})(D_{\theta ^{\prime }}\frac{\partial }{\partial x}
X_{u}^{x} - D_{\theta }\frac{\partial }{\partial x}X_{u}^{x})du  \notag
\end{split}
\end{equation}

Using Picard iteration in connection with (\ref{5DerivativeFlow}), we see that%

\begin{equation}
\begin{split}
\frac{\partial }{\partial x}X_{t}^{x} =&\frac{\partial }{\partial x}%
X_{\theta }^{x}+\sum_{m\geq 1}\int_{\Delta _{\theta
,t}^{m}}Db(u_{1},X_{u_{1}}^{x})...Db(u_{m},X_{u_{m}}^{x})\frac{\partial }{%
\partial x}X_{\theta }^{x}du_{m}...du_{1}  \label{5DX} \\
=&(I_{d\times d}+\sum_{m\geq 1}\int_{\Delta _{\theta
,t}^{m}}Db(u_{1},X_{u_{1}}^{x})...Db(u_{m},X_{u_{m}}^{x})du_{m}...du_{1})%
\frac{\partial }{\partial x}X_{\theta }^{x}  \notag
\end{split}
\end{equation}

in $L^{2}(\Omega ;\mathcal{C}([\theta ,T];\mathbb{R}^{d\times d})$ for all $x$ and $%
\theta $, where%

\begin{equation*}
\Delta _{\theta ,t}^{m}=\{(u_{m},...u_{1})\in \lbrack 0,T]^{m}:\theta
<u_{m}<...<u_{1}<t\}.
\end{equation*}%
Further, using once more the chain rule for the Malliavin derivative, see 
\cite[Proposition 1.2.3]{5Nualart10}, we find that for almost all $\theta \in
\lbrack 0,T]$ 

\begin{equation*}
D_{\theta }X_{t}^{x}=K_{H}(t,\theta )I_{d\times d}+\int_{\theta
}^{t}Db(u,X_{u}^{x})D_{\theta }X_{u}^{n}du,
\end{equation*}%
in $L^{2}([\theta ,T]\times \Omega )$ holds. Thus we have for almost all $
0<\theta ^{\prime }<\theta $ that

\begin{equation*}
\begin{split}
D_{\theta ^{\prime }}X_{t}^{x}- D_{\theta }X_{t}^{x} =& K_{H}(t,\theta
^{\prime })I_{d\times d}-K_{H}(t,\theta )I_{d\times d} \\ &+ \int_{\theta ^{\prime }}^{t}Db(u,X_{u}^{x})D_{\theta ^{\prime
}}X_{u}^{x}du-\int_{\theta }^{t}Db(u,X_{u}^{x})D_{\theta }X_{u}^{x}du \\
=& K_{H}(t,\theta ^{\prime })I_{d\times d}-K_{H}(t,\theta )I_{d\times d} \\
& +\int_{\theta ^{\prime }}^{\theta }Db(u,X_{u}^{x})D_{\theta ^{\prime
}}X_{u}^{x}du+\int_{\theta }^{t}Db(u,X_{u}^{x})(D_{\theta ^{\prime
}}X_{u}^{x}-D_{\theta }X_{u}^{x})du \\
=& K_{H}(t,\theta ^{\prime })I_{d\times d}-K_{H}(t,\theta )I_{d\times
d}+D_{\theta ^{\prime }}X_{\theta }^{x}-K_{H}(\theta ,\theta ^{\prime
})I_{d\times d} \\
& +\int_{\theta }^{t}Db(u,X_{u}^{x})(D_{\theta ^{\prime
}}X_{u}^{x}-D_{\theta }X_{u}^{x})du
\end{split}
\end{equation*}

in $L^{2}([\theta ,T]\times \Omega )$.\par

By applying again Picard iteration to
the above equation, we may write

\begin{equation*}
\begin{split}
D_{\theta ^{\prime }}X_{t}^{x}-& D_{\theta }X_{t}^{x}=K_{H}(t,\theta
^{\prime })I_{d\times d}-K_{H}(t,\theta )I_{d\times d} \\
& +\sum_{m=1}^{\infty }\int_{\Delta _{\theta
,t}^{m}}\prod_{j=1}^{m}Db(s_{j},X_{s_{j}}^{x})\left( K_{H}(s_{m},\theta
^{\prime })I_{d\times d}-K_{H}(s_{m},\theta )I_{d\times d}\right)
ds_{m}\cdots ds_{1} \\
& +\left( I_{d\times d}+\sum_{m=1}^{\infty }\int_{\Delta _{\theta
,t}^{m}}\prod_{j=1}^{m}Db(s_{j},X_{s_{j}}^{x})ds_{m}\cdots ds_{1}\right)
\left( D_{\theta ^{\prime }}X_{\theta }^{x}-K_{H}(\theta ,\theta ^{\prime
})I_{d\times d}\right) .
\end{split}
\end{equation*}

We also observe that one may write

\begin{equation*}
D_{\theta ^{\prime }}X_{\theta }^{x}-K_{H}(\theta ,\theta ^{\prime
})I_{d\times d}=\sum_{m\geq 1}\int_{\Delta _{\theta ^{\prime },\theta
}^{m}}\prod_{j=1}^{m}Db(s_{j},X_{s_{j}}^{x})(K_{H}(s_{m},\theta ^{\prime
})I_{d\times d})\,ds_{m}\cdots ds_{1}.
\end{equation*}%

In summary, we can write 

\begin{equation*}
D_{\theta ^{\prime }}X_{t}^{x}-D_{\theta }X_{t}^{x}=I_{1}(\theta ^{\prime
},\theta )+I_{2}(\theta ^{\prime },\theta )+I_{3}(\theta ^{\prime },\theta ),
\end{equation*}%

where 

\begin{equation*}
\begin{split}
I_{1}(\theta ^{\prime },\theta ):=& K_{H}(t,\theta ^{\prime })I_{d\times
d}-K_{H}(t,\theta )I_{d\times d} \\
I_{2}(\theta ^{\prime },\theta ):=& \sum_{m\geq 1}\int_{\Delta _{\theta
,t}^{m}}\prod_{j=1}^{m}Db(s_{j},X_{s_{j}}^{x})\left( K_{H}(s_{m},\theta
^{\prime })I_{d\times d}-K_{H}(s_{m},\theta )I_{d\times d}\right)
ds_{m}\cdots ds_{1} \\
I_{3}(\theta ^{\prime },\theta ):=& \left( I_{d\times d}+\sum_{m\geq
1}\int_{\Delta _{\theta
,t}^{m}}\prod_{j=1}^{m}Db(s_{j},X_{s_{j}}^{x})ds_{m}\cdots ds_{1}\right) \\
& \times \left( \sum_{m\geq 1}\int_{\Delta _{\theta ^{\prime },\theta
}^{m}}\prod_{j=1}^{m}Db(s_{j},X_{s_{j}}^{n})(K_{H}(s_{m},\theta ^{\prime
})I_{d\times d})ds_{m}\cdots ds_{1}.\right) .
\end{split}
\end{equation*}

Once more we can use Picard iteration applied to (\ref{5Picard2}) and obtain
for almost all $\theta ^{\prime },\theta $ with $\theta ^{\prime }<\theta $
that%
\begin{equation}
\begin{split}
&D_{\theta ^{\prime }}\frac{\partial }{\partial x}X_{t}^{x}-D_{\theta }%
\frac{\partial }{\partial x}X_{t}^{x}  \notag \\
=&D_{\theta ^{\prime }}\frac{\partial }{\partial x}X_{\theta
}^{x}+\int_{\theta }^{t}D^{2}b(u,X_{u}^{x})(D_{\theta ^{\prime
}}X_{u}^{x}-D_{\theta }X_{u}^{x})\frac{\partial }{\partial x}X_{u}^{x}du 
\notag \\
&+\sum_{m\geq 1}\int_{\Delta _{\theta
,t}^{m}}(\prod_{j=1}^{m}Db(s_{j},X_{s_{j}}^{x}))  \notag \\
&\times (D_{\theta ^{\prime }}\frac{\partial }{\partial x}X_{\theta
}^{x}+\int_{\theta }^{s_{m}}D^{2}b(u,X_{u}^{x})(D_{\theta ^{\prime
}}X_{u}^{x}-D_{\theta }X_{u}^{x})\frac{\partial }{\partial x}%
X_{u}^{x}du)ds_{m}\cdots ds_{1}  \notag \\
=&A_{1}+...+A_{8}  \label{5Difference}
\end{split}
\end{equation}
in $L^{2}([\theta ,T]\times \Omega )$, where%

\begin{equation}
\begin{split}
A_{1} :&=D_{\theta ^{\prime }}\frac{\partial }{\partial x}X_{\theta }^{x},
\\
A_{2} :&=\int_{\theta }^{t}D^{2}b(u,X_{u}^{x})I_{1}(\theta ^{\prime
},\theta )\frac{\partial }{\partial x}X_{u}^{x}du, \\
A_{3} :&=\int_{\theta }^{t}D^{2}b(u,X_{u}^{x})I_{2}(\theta ^{\prime
},\theta )\frac{\partial }{\partial x}X_{u}^{x}du, \\
A_{4} :&=\int_{\theta }^{t}D^{2}b(u,X_{u}^{x})I_{3}(\theta ^{\prime
},\theta )\frac{\partial }{\partial x}X_{u}^{x}du, \\
A_{5} :&=\sum_{m\geq 1}\int_{\Delta _{\theta
,t}^{m}}\prod_{j=1}^{m}Db(s_{j},X_{s_{j}}^{x})ds_{m}\cdots ds_{1}A_{1}, \\
A_{6} :&=\sum_{m\geq 1}\int_{\Delta _{\theta
,t}^{m}}(\prod_{j=1}^{m}Db(s_{j},X_{s_{j}}^{x}))A_{2}(s_{m})ds_{m}\cdots
ds_{1} \\
A_{7} :&=\sum_{m\geq 1}\int_{\Delta _{\theta
,t}^{m}}(\prod_{j=1}^{m}Db(s_{j},X_{s_{j}}^{x}))A_{3}(s_{m})ds_{m}\cdots
ds_{1}, \\
A_{8} :&=\sum_{m\geq 1}\int_{\Delta _{\theta
,t}^{m}}(\prod_{j=1}^{m}Db(s_{j},X_{s_{j}}^{x}))A_{4}(s_{m})ds_{m}\cdots
ds_{1}.
\end{split}
\end{equation}

Using Fubini's theorem and the fact that the terms $A_{1},...,A_{8}$ are
continuous in time $t>\theta $ in $L^{2}(\Omega )$, we also see that for all 
$t\in (0,T]$ the above relation holds in $L^{2}(\Omega )$ for almost all $%
\theta ^{\prime },\theta $ with $\theta ^{\prime }<\theta <t$.

Let us first consider the most difficult term $A_{7}$. By applying dominated
convergence in connection with the term $A_{7}$ and (\ref{5DX}) we get that%
\begin{equation}
\begin{split}
A_{7} =&\sum_{m\geq 1}\int_{\Delta _{\theta
,t}^{m}}(\prod_{j=1}^{m}Db(s_{j},X_{s_{j}}^{x}))\int_{\theta
}^{s_{m}}D^{2}b(u,X_{u}^{x})I_{2}(\theta ^{\prime },\theta )\frac{\partial }{%
\partial x}X_{u}^{x}duds_{m}\cdots ds_{1} \\
=&\sum_{m\geq 1}\int_{\Delta _{\theta
,t}^{m}}(\prod_{j=1}^{m}Db(s_{j},X_{s_{j}}^{x}))\int_{\theta
}^{s_{m}}D^{2}b(u,X_{u}^{x})I_{2}(\theta ^{\prime },\theta )duds_{m}\cdots
ds_{1}\frac{\partial }{\partial x}X_{\theta }^{x} \\
&+\sum_{m\geq 1}\sum_{m_{1}\geq 1}\int_{\Delta _{\theta
,t}^{m}}(\prod_{j=1}^{m}Db(s_{j},X_{s_{j}}^{x}))\int_{\theta
}^{s_{m}}D^{2}b(u,X_{u}^{x})I_{2}(\theta ^{\prime },\theta ) \\
&\times \int_{\Delta _{\theta
,u}^{m_{1}}}Db(u_{1},X_{u_{1}}^{x})...Db(u_{m_{1}},X_{u_{m_{1}}}^{x})du_{m_{1}}...du_{1}duds_{m}\cdots ds_{1}%
\frac{\partial }{\partial x}X_{\theta }^{x} \\
=&\sum_{m\geq 1}\sum_{m_{2}\geq 1}\int_{\Delta _{\theta
,t}^{m}}(\prod_{j=1}^{m}Db(s_{j},X_{s_{j}}^{x}))\int_{\theta
}^{s_{m}}D^{2}b(u,X_{u}^{x}) \\
&\times \int_{\Delta _{\theta
,u}^{m_{2}}}\prod_{j=1}^{m_{2}}Db(z_{j},X_{z_{j}}^{x})\left(
K_{H}(z_{m_{2}},\theta ^{\prime })I_{d\times d}-K_{H}(z_{m_{2}},\theta
)I_{d\times d}\right) dz_{m_{2}}\cdots dz_{1} \\
&duds_{m}\cdots ds_{1}\frac{\partial }{\partial x}X_{\theta }^{x} \\
&+\sum_{m\geq 1}\sum_{m_{1}\geq 1}\sum_{m_{2}\geq 1}\int_{\Delta _{\theta
,t}^{m}}(\prod_{j=1}^{m}Db(s_{j},X_{s_{j}}^{x}))\int_{\theta
}^{s_{m}}D^{2}b(u,X_{u}^{x}) \\
&\times \int_{\Delta _{\theta
,u}^{m_{2}}}\prod_{j=1}^{m_{2}}Db(z_{j},X_{z_{j}}^{x})\left(
K_{H}(z_{m_{2}},\theta ^{\prime })I_{d\times d}-K_{H}(z_{m_{2}},\theta
)I_{d\times d}\right) dz_{m_{2}}\cdots dz_{1} \\
&\times \int_{\Delta _{\theta
,u}^{m_{1}}}Db(u_{1},X_{u_{1}}^{x})...Db(u_{m_{1}},X_{u_{m_{1}}}^{x})du_{m_{1}}...du_{1}duds_{m}\cdots ds_{1}%
\frac{\partial }{\partial x}X_{\theta }^{x} \\
=&A_{9}+A_{10}
\end{split}
\end{equation}
in $L^{2}(\Omega )$ for almost all $\theta ^{\prime },\theta $ with $\theta
^{\prime }<\theta <t$, where $A_{9}$ is the first and $A_{10}$ the
second term in the last equality. Let $A_{11}$ be the first factor in $%
A_{10} $ and%
\begin{equation}
\begin{split}
&L_{m_{1},m_{2}} \\
=&\int_{\theta }^{s_{m}}D^{2}b(u,X_{u}^{x})\int_{\Delta _{\theta
,u}^{m_{2}}}\prod_{j=1}^{m_{2}}Db(z_{j},X_{z_{j}}^{x})\left(
K_{H}(z_{m_{2}},\theta ^{\prime })I_{d\times d}-K_{H}(z_{m_{2}},\theta
)I_{d\times d}\right) dz_{m_{2}}\cdots dz_{1} \\
&\times \int_{\Delta _{\theta
,u}^{m_{1}}}Db(u_{1},X_{u_{1}}^{x})...Db(u_{m_{1}},X_{u_{m_{1}}}^{x})du_{m_{1}}...du_{1}du
\end{split}
\end{equation}
We can now apply the shuffling relation (\ref{5shuffleIntegral}) to the term $L_{m_{1},m_{2}}$ and obtain that%
\begin{equation}\label{5I2}
L_{m_{1},m_{2}}=\int_{\Delta _{\theta ,s_{m}}^{m_{1}+m_{2}+1}}\mathcal{H}%
_{m_{1},m_{2}}^{X}(u)du_{m_{1}+m_{2}+1}...du_{1}  
\end{equation}%
for $u=(u_{1},...,u_{m_{1}+m_{2}+1}),$ where the integrand $\mathcal{H}%
_{m_{1},m_{2}}^{X}(u)\in \mathbb{R}^{d}\otimes \mathbb{R}^{d}\otimes \mathbb{%
R}^{d}$ has entries given by sums of at most $C(d)^{m_{1}+m_{2}+1}$
summands, which are products of length $m_{1}+m_{2}+1$ of functions
belonging to the class%
\begin{equation*}
\left\{ 
\begin{array}{c}
\frac{\partial ^{j}}{\partial x_{l_{1}}\partial x_{l_{j}}}%
b^{(i)}(u,X_{u}^{x})\left( K_{H}(u,\theta ^{\prime })-K_{H}(u,\theta
)\right) ^{\varepsilon }, \\ 
j=1,2,l_{1},l_{2},i=1,...,d,\varepsilon \in \{0,1\}%
\end{array}%
\right\} .
\end{equation*}%
Here it is important to mention that second order derivatives of functions
as well as the factor $\left( K_{H}(u,\theta ^{\prime })-K_{H}(u,\theta
)\right) $ in those products of functions on $\Delta _{\theta
,s_{m}}^{m_{1}+m_{2}+1}$ in (\ref{5I2}) only appear once. Thus the absolute
value of the multi-index $\alpha $ with respect to the total order of
derivatives of those products of functions in connection with Lemma %
\ref{5OrderDerivatives} in the Appendix is given by%
\begin{equation}
\left\vert \alpha \right\vert =m_{1}+m_{2}+2.
\end{equation}%
By definition we have that%
\begin{equation}
\begin{split}
A_{11} =&\sum_{m\geq 1}\sum_{m_{1}\geq 1}\sum_{m_{2}\geq 1}\int_{\Delta
_{\theta ,t}^{m}}(\prod_{j=1}^{m}Db(s_{j},X_{s_{j}}^{x}))\int_{\Delta
_{\theta ,s_{m}}^{m_{1}+m_{2}+1}}\mathcal{H}_{m_{1},m_{2}}^{X}(u) \\
&du_{m_{1}+m_{2}+1}...du_{1}ds_{m}\cdots ds_{1}.
\end{split}
\end{equation}
Using now Lemma \ref{5shuffleIntegral} to the intergrals in $A_{10}$, we find
that%
\begin{equation}
A_{11}=\sum_{m\geq 1}\sum_{m_{1}\geq 1}\sum_{m_{2}\geq 1}\int_{\Delta
_{\theta ,t}^{m+m_{1}+m_{2}+1}}\mathcal{H}%
_{m,m_{1},m_{2}}^{X}(u)du_{m+m_{1}+m_{2}+1}...du_{1}  \label{5A11}
\end{equation}%
for $u=(u_{1},...,u_{m+m_{1}+m_{2}+1}),$ where the integrand $\mathcal{H}%
_{m,m_{1},m_{2}}^{X}(u)\in \mathbb{R}^{d}\otimes \mathbb{R}^{d}\otimes 
\mathbb{R}^{d}$ possesses entries given by sums of at most $%
C(d)^{m+m_{1}+m_{2}+1}$ summands, which are products of length $%
m+m_{1}+m_{2}+1$ of functions belonging to the class%
\begin{equation*}
\left\{ 
\begin{array}{c}
\frac{\partial ^{j}}{\partial x_{l_{1}}\partial x_{l_{j}}}%
b^{(i)}(u,X_{u}^{x})\left( K_{H}(u,\theta ^{\prime })-K_{H}(u,\theta
)\right) ^{\varepsilon }, \\ 
j=1,2,l_{1},l_{2},i=1,...,d,\varepsilon \in \{0,1\}%
\end{array}%
\right\} .
\end{equation*}%
Also here we observe that second order derivatives of functions as well as
the factor $\left( K_{H}(u,\theta ^{\prime })-K_{H}(u,\theta )\right) $ in
those products of functions on $\Delta _{\theta ,t}^{m+m_{1}+m_{2}+1}$ in (%
\ref{5I2}) only appear once. Therefore the absolute value of the multi-index 
$\alpha $ with respect to the total order of derivatives of those products
of functions in connection with Lemma \ref{5OrderDerivatives} in the
Appendix is given by%
\begin{equation}
\left\vert \alpha \right\vert =m+m_{1}+m_{2}+2.
\end{equation}

Further, we see that%

\begin{equation}
\begin{split}
E[\left\Vert A_{10}\right\Vert _{\mathbb{R}^{d\times d\times d}}^{2}]
&\leq C(d)E[\left\Vert A_{11}\right\Vert _{\mathbb{R}^{d\times d\times
d}}^{2}\left\Vert \frac{\partial }{\partial x}X_{\theta }^{x}\right\Vert _{%
\mathbb{R}^{d\times d\times d}}^{2}]  \notag \\
&\leq C(d)E[\left\Vert A_{11}\right\Vert _{\mathbb{R}^{d\times d\times
d}}^{4}]^{1/2}E[\left\Vert \frac{\partial }{\partial x}X_{\theta
}^{x}\right\Vert _{\mathbb{R}^{d\times d}}^{4}]^{1/2}.  \label{5EA10}
\end{split}
\end{equation}

It follows from Lemma \ref{5BoundedDerivatives} that%
\begin{equation*}
\sup_{x\in \mathbb{R}^{d}}E[\left\Vert \frac{\partial }{\partial x}X_{\theta
}^{x}\right\Vert _{\mathbb{R}^{d\times d}}^{4}]^{1/2}\leq
C_{H,d,T}(\left\Vert b\right\Vert _{L_{\infty }^{\infty }},\left\Vert
b\right\Vert _{L_{\infty }^{1}})
\end{equation*}%
for some continuous function $C_{H,d,T}:[0,\infty )^{2}\longrightarrow
\lbrack 0,\infty )$, which doesn't depend on $\theta $.

On the other hand, we can employ H\"{o}lder's inequality and Girsanov's
theorem (Theorem \ref{5girsanov}) in combination with Lemma \ref{5Novikov} in
the Appendix and get that%
\begin{equation}
\begin{split}
&E[\left\Vert A_{11}\right\Vert _{\mathbb{R}^{d\times d\times d}}^{4}]^{1/2}\leq   \notag \\
&C(\left\Vert b\right\Vert _{L_{\infty }^{\infty }})\left( \sum_{m\geq
1}\sum_{m_{1}\geq 1}\sum_{m_{2}\geq 1}\sum_{i\in I}\left\Vert \int_{\Delta
_{\theta ,t}^{m+m_{1}+m_{2}+1}}\mathcal{H}%
_{i,m,m_{1},m_{2}}^{B^{H}}(u)du_{m+m_{1}+m_{2}+1}...du_{1}\right\Vert
_{L^{8}(\Omega ;\mathbb{R})}\right) ^{2}  \label{5LP}
\end{split}
\end{equation}

where $C:[0,\infty )\longrightarrow \lbrack 0,\infty )$ is a continuous
function. Here $\#I\leq K^{m+m_{1}+m_{2}+1}$ for a constant $K=K(d)$ and the
integrands $\mathcal{H}_{i,m,m_{1},m_{2}}^{B^{H}}(u)$ are of the form 
\begin{equation*}
\mathcal{H}_{i,m,m_{1},m_{2}}^{B^{H}}(u)=\prod\limits_{l=1}^{m+m_{1}+m_{2}+1} h_{l}(u_{l}),h_{l}\in \Lambda
,l=1,...,m+m_{1}+m_{2}+1
\end{equation*}
where 
\begin{equation*}
\Lambda :=\left\{ 
\begin{array}{c}
\frac{\partial ^{j}}{\partial x_{l_{1}}\partial x_{l_{j}}}%
b^{(i)}(u,x+B_{u}^{H})\left( K_{H}(u,\theta ^{\prime })-K_{H}(u,\theta
)\right) ^{\varepsilon },
j=1,2,l_{1},l_{2},i=1,...,d,\varepsilon \in \{0,1\}%
\end{array}%
\right\} .
\end{equation*}%
Again, in this case functions with second order derivatives as well as the
factor $\left( K_{H}(u,\theta ^{\prime })-K_{H}(u,\theta )\right) $ only
appear once in those products.

Define 
\begin{equation*}
J=\left( \int_{\Delta _{\theta ,t}^{m+m_{1}+m_{2}+1}}\mathcal{H}%
_{i,m,m_{1},m_{2}}^{B^{H}}(u)du_{m+m_{1}+m_{2}+1}...du_{1}\right) ^{8}.
\end{equation*}%
Using once more the shuffle relation (\ref{5shuffleIntegral}) in the Appendix,
successively, we obtain that $J$ can be written as a sum of, at most of
length $K^{m+m_{1}+m_{2}+1}$ with summands of the form%
\begin{equation}
\int_{\Delta _{\theta
,t}^{8(m+m_{1}+m_{2}+1)}}\prod%
\limits_{l=1}^{8(m+m_{1}+m_{2}+1)}f_{l}(u_{l})du_{8(m+m_{1}+m_{2}+1)}...du_{1},
\label{5f}
\end{equation}%
where $f_{l}\in \Lambda $ for all $l$.

Here the number of factors $f_{l}$ in the above product, which have a second
order derivative, is exactly $8.$ The same applies to the factor $\left(
K_{H}(u,\theta ^{\prime })-K_{H}(u,\theta )\right) $. Therefore the total
order of the derivatives involved in (\ref{5f}) in connection with
Proposition \ref{5OrderDerivatives} is given by%
\begin{equation}
\left\vert \alpha \right\vert =8(m+m_{1}+m_{2}+2).  \label{5alpha2}
\end{equation}

We can now apply Theorem \ref{5mainestimate2} for $m$ given by $%
8(m+m_{1}+m_{2}+1)$ and $\sum_{j=1}^{8(m+m_{1}+m_{2}+1)}\varepsilon _{j}=8$
and find that%
\tiny
\begin{equation*}
\begin{split}
&\left\vert E\left[ \int_{\Delta _{\theta
,t}^{8(m+m_{1}+m_{2}+1)}}\prod%
\limits_{l=1}^{8(m+m_{1}+m_{2}+1)}f_{l}(u_{l})du_{8(m+m_{1}+m_{2}+1)}...du_{1}%
\right] \right\vert 
\leq \\
&C^{m+m_{1}+m_{2}}(\left\Vert b\right\Vert _{L^{1}(\mathbb{R}^{d};L^{\infty
}([0,T];\mathbb{R}^{d}))})^{8(m+m_{1}+m_{2}+1)}\left( \frac{\theta -\theta
^{\prime }}{\theta \theta ^{\prime }}\right) ^{8\gamma }\theta ^{8\left( H-%
\frac{1}{2}-\gamma \right) }\times \\
&\frac{((2(8(m+m_{1}+m_{2}+2))!)^{1/4}(t-\theta
)^{-H(8(m+m_{1}+m_{2}+1)d+2\cdot 8(m+m_{1}+m_{2}+2))-(H-\frac{1}{2}-\gamma
)8+8(m+m_{1}+m_{2}+1)}}{\Gamma (-H(2d8(m+m_{1}+m_{2}+1)+4\cdot
8(m+m_{1}+m_{2}+2))+2(H-\frac{1}{2}-\gamma )8+2\cdot
8(m+m_{1}+m_{2}+1))^{1/2}}
\end{split}
\end{equation*}
\normalsize
for a constant $C$ depending only on $H,T,d$.

So the latter shows that%
\tiny
\begin{equation*}
\begin{split}
&E[\left\Vert A_{11}\right\Vert _{\mathbb{R}^{d\times d\times d}}^{4}]^{1/2}
\leq \\
&C(\left\Vert b\right\Vert _{L_{\infty }^{\infty }})\Big(\sum_{m\geq
1}\sum_{m_{1}\geq 1}\sum_{m_{2}\geq 1}K^{m+m_{1}+m_{2}}\Big((\left\Vert
b\right\Vert _{L^{1}(\mathbb{R}^{d};L^{\infty }([0,T];\mathbb{R}%
^{d}))})^{m+m_{1}+m_{2}+1}\left( \frac{\theta -\theta ^{\prime }}{\theta
\theta ^{\prime }}\right) ^{\gamma }\theta ^{H-\frac{1}{2}-\gamma }\times \\
&\frac{((2(8(m+m_{1}+m_{2}+2))!)^{1/32}(t-\theta
)^{-H((m+m_{1}+m_{2}+1)d+2(m+m_{1}+m_{2}+2))+(H-\frac{1}{2}-\gamma
)+(m+m_{1}+m_{2}+1)}}{\Gamma (-H(2d8(m+m_{1}+m_{2}+1)+4\cdot
8(m+m_{1}+m_{2}+2))+2(H-\frac{1}{2}-\gamma )8+2\cdot
8(m+m_{1}+m_{2}+1))^{1/16}}\Big)\Big)^{2}
\end{split}
\end{equation*}
\normalsize
for a constant $K$ depending on $H,T,d$.

Since for small $\varepsilon =\gamma \in (0,H)$ we have that%
\begin{equation*}
\frac{1-2\varepsilon }{2(d+3)}\leq \frac{\frac{1}{2}-\varepsilon }{d+2+\frac{%
1}{4}}\leq \frac{(\frac{1}{2}-\varepsilon )+\frac{\varepsilon -\gamma }{%
m+m_{1}+m_{2}+1}}{d+2+\frac{1}{m+m_{1}+m_{2}+1}}
\end{equation*}
for $m,m_{1},m_{2}\geq 1$, the above sum converges, when $H<\frac{1}{2(d+3)}$%
.

So using the latter combined with (\ref{5EA10}), we find that%
\begin{equation*}
E[\left\Vert A_{10}\right\Vert _{\mathbb{R}^{d\times d\times d}}^{2}]\leq
C_{H,d,T}(\Vert b\Vert _{L_{\infty }^{\infty }},\Vert b\Vert _{L_{\infty
}^{1}})\left( \frac{\theta -\theta ^{\prime }}{\theta \theta ^{\prime }}%
\right) ^{2\gamma }\theta ^{2\left( H-\frac{1}{2}-\gamma \right) }
\end{equation*}%
for a continuous function $C_{H,d,T}:[0,\infty )\times \lbrack 0,\infty
)\longrightarrow \lbrack 0,\infty )$.

Let us now choose a small $\gamma \in (0,H)$ and a suitably small $\beta \in
(0,1/2)$, $0<\beta <\gamma <H<1/2$ such that 
\begin{equation*}
\int_{0}^{t}\int_{0}^{t}\left\vert \frac{\theta -\theta ^{\prime }}{\theta
\theta ^{\prime }}\right\vert ^{2\gamma }|\theta |^{2\left( H-\frac{1}{2}%
-\gamma \right) }|\theta -\theta ^{\prime }|^{-1-2\beta }d\theta ^{\prime
}d\theta <\infty ,
\end{equation*}%

for every $t\in (0,T]$. See the proof of Lemma A.4 in \cite{5BNP19}.
Hence%
\begin{equation}
\int_{0}^{t}\int_{0}^{t}\frac{E[\left\Vert A_{10}\right\Vert _{\mathbb{R}%
^{d\times d\times d}}^{2}]}{|\theta -\theta ^{\prime }|^{1+2\beta }}d\theta
^{\prime }d\theta \leq C_{H,d,T}(\Vert b\Vert _{L_{\infty }^{\infty }},\Vert
b\Vert _{L_{\infty }^{1}})  \label{5EstimateA10}
\end{equation}%
for all $t\in (0,T]$, where$C_{H,d,T}:[0,\infty )\times \lbrack 0,\infty
)\longrightarrow \lbrack 0,\infty )$ is a continuous function.

In the same way (and mostly even easier), we can treat the other terms $%
A_{9},A_{1},...,A_{6}$ and $A_{8}$ and obtain estimates of the type (\ref%
{5EstimateA10}).

Thus we get from (\ref{5Difference}) the first estimate (\ref{5E1}).

In a similar way (but simpler), we also derive the estimate (\ref{5E2}).
\end{proof}

\subsubsection{Finding $\left((t,x)\mapsto X_{t}^{s,x},(t,x)\mapsto \frac{\partial }{\partial x}X_{t}^{s,x}\right)$}
\label{5relativeCompactnessDerivative}
\begin{theorem}
\label{5StrongConvergenceDerivative} Suppose that $H<\frac{1}{2(d+3)}$. Let $%
\{b_{n}\}_{n\geq 1}\subset C_{c}^{\infty }([0,T]\times \mathbb{R}^{d})$,
such that 
\begin{equation*}
b_{n}(t,x)\longrightarrow b(t,x)
\end{equation*}%
as $n\rightarrow \infty $ for a.e. $(t,x)\in \lbrack 0,T]\times \mathbb{R}%
^{d}$ with $\sup_{n\geq 0}\Vert b_{n}\Vert _{L_{\infty }^{1}}<\infty $ and
such that $|b_{n}(t,x)|\leq M<\infty $, $n\geq 0$ a.e. for some constant $M$%
. Assume that $X_{\cdot }^{s,x,n}$ is the unique strong solution to the SDE 
\begin{equation*}
dX_{t}^{s,x,n}=b_{n}(t,X_{t}^{s,x,n})du+dB_{t}^{H},\,\,0\leq t\leq
T,\,\,\,X_{s}^{s,x,n}=x\in \mathbb{R}^{d}\,.
\end{equation*}%
Let $K$ be a compact cube in $\mathbb{R}^{d}$. Then for all $0\leq s<T$ the
sequences 
\begin{equation*}
((t,x)\mapsto X_{t}^{s,x,n})_{n\geq 1}
\end{equation*}%
and%
\begin{equation*}
((t,x)\mapsto \frac{\partial }{\partial x}X_{t}^{s,x,n})_{n\geq 1}
\end{equation*}
are relatively compact in $\mathcal{C}([s,T]\times K;L^{2}(\Omega ;\mathbb{R}^{d}))$
and $\mathcal{C}([s,T]\times K;L^{2}(\Omega ;\mathbb{R}^{d\times d}))$, respectively.
\end{theorem}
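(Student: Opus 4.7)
My plan is to verify the hypotheses of the Arzel\`a--Ascoli theorem in the Banach space $\mathcal{C}([s,T]\times K;L^{2}(\Omega;V))$, where $V\in\{\mathbb{R}^{d},\mathbb{R}^{d\times d}\}$: namely pointwise relative compactness of the evaluations in $L^{2}(\Omega;V)$ and joint equicontinuity in $(t,x)$, both uniform in $n$.

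For the pointwise compactness I invoke the Da Prato--Malliavin--Nualart criterion (Corollary~\ref{5VI_compactcrit}). For the derivative sequence $\{\partial_{x}X_{t}^{s,x,n}\}_{n\geq 1}$, Lemma~\ref{5RelativeCompactness} supplies exactly the Besov-type bound~(\ref{5E1}) together with the $L^{2}$-bound~(\ref{5E2}), and crucially both constants depend only on $H,d,T$ and the $L_{\infty,\infty}^{1,\infty}$-norm of the drift, hence are uniform in $n$. For $\{X_{t}^{s,x,n}\}_{n\geq 1}$ the analogous but simpler Malliavin derivative estimates, established in \cite{5BNP19}, yield pointwise relative compactness in $L^{2}(\Omega;\mathbb{R}^{d})$.

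For the equicontinuity of $(t,x)\mapsto X_{t}^{s,x,n}$ I observe that the proof of Lemma~\ref{5JointHolderContinuity} in fact delivers an $n$-uniform joint H\"older estimate, since every constant appearing there depends only on $d,s,T,H,K,p$ and $\sup_n\|b_n\|_{L_{\infty,\infty}^{1,\infty}}$; this immediately gives the required $L^{2}(\Omega;\mathbb{R}^{d})$-modulus of continuity. For the derivative sequence I split a joint increment into spatial and temporal parts. The spatial part is controlled by an $n$-uniform $L^{2}(\Omega)$ bound on $\partial_{x}^{2}X_{t}^{s,x,n}$ via the mean value theorem, just as in the proof of Lemma~\ref{5JointHolderContinuity}; such a bound follows from Theorem~\ref{5StrongSolution} applied to each smooth $b_{n}$ with $k=2$, since the hypothesis $H<1/(2(d+3))$ matches exactly $H<1/(2(d-1+2\cdot 2))$ and the underlying Malliavin moment estimates depend on the drift only through its $L_{\infty,\infty}^{1,\infty}$-norm (cf.\ Lemma~\ref{5BoundedDerivatives}).

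The temporal equicontinuity of the derivatives is the main technical obstacle. Using the Picard-series representation~(\ref{5DX}), the increment $\partial_{x}X_{t_{2}}^{s,x,n}-\partial_{x}X_{t_{1}}^{s,x,n}$ expands into a sum of multiple integrals in which at least one integration variable is forced to lie in $[t_{1},t_{2}]$. Applying Girsanov's theorem (Theorem~\ref{5girsanov}) to replace the solution $X^{s,x,n}$ by the shifted fBm $x+B^{H}$, then combining the shuffle identity with Theorem~\ref{5mainestimate2} exactly as in the proof of Lemma~\ref{5RelativeCompactness}, one extracts a factor $(t_{2}-t_{1})^{\gamma}$ for some $\gamma>0$ while the remaining series converges precisely under $H<1/(2(d+3))$; the resulting bound is independent of $n$. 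Combining pointwise compactness with the uniform joint equicontinuity, Arzel\`a--Ascoli then yields the claimed relative compactness of both sequences in $\mathcal{C}([s,T]\times K;L^{2}(\Omega;V))$.
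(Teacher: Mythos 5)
Your proposal is correct and follows essentially the same route as the paper: pointwise relative compactness in $L^{2}(\Omega)$ via the Da Prato--Malliavin--Nualart criterion applied through Lemma \ref{5RelativeCompactness}, an $n$-uniform spatial modulus from the fundamental theorem of calculus together with the second-derivative moment bound of Lemma \ref{5BoundedDerivatives}, an $n$-uniform temporal modulus by the same Picard/Girsanov/shuffle machinery as in Lemma \ref{5RelativeCompactness}, and then Arzel\`a--Ascoli in $\mathcal{C}([s,T]\times K;L^{2}(\Omega;V))$. The only difference is cosmetic: you spell out the temporal-equicontinuity argument and the matching of $H<\tfrac{1}{2(d+3)}$ with the $k=2$ case of the moment estimates, which the paper compresses into a reference to ``exactly the same arguments as in the proof of Lemma \ref{5RelativeCompactness}.''
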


\begin{remark}
We mention here that the second statement in Theorem \ref%
{5StrongConvergenceDerivative} on the relative compactness of the derivative
of the flows is not needed for proving the estimate in Theorem \ref%
{5MainEstimate}.  We believe that this result is of independent
interest and that it will pay off dividends in the study of other problems
in SDE and SPDE theory.
\end{remark}

\begin{proof}
Without loss of generality, assume that $s=0$. Let $x,y\in K$ and $%
t_{1}<t_{2}$. Then%
\begin{equation*}
\begin{split}
&E\left[\left\Vert \frac{\partial }{\partial x}X_{t_{1}}^{x,n}-\frac{\partial }{%
\partial x}X_{t_{2}}^{y,n}\right\Vert ^{2}\right] \leq 2\left(E\left[\left\Vert \frac{\partial }{\partial x}X_{t_{1}}^{x,n}-\frac{%
\partial }{\partial x}X_{t_{2}}^{x,n}\right\Vert ^{2}\right]+E\left[\left\Vert \frac{%
\partial }{\partial x}X_{t_{2}}^{x,n}-\frac{\partial }{\partial x}%
X_{t_{2}}^{y,n}\right\Vert ^{2}\right]\right).
\end{split}
\end{equation*}

By using the fundamental theorem of calculus with respect to the
one dimensional arguments of the entries of $\frac{\partial }{\partial x}%
X_{t}^{x,n}=\left( \frac{\partial }{\partial x_{i}}X_{t}^{x,n,j}\right)
_{1\leq i,j\leq d}$ and by using the Euclidean norm $\left\Vert \cdot
\right\Vert $, we find that 

\begin{equation*}
\begin{split}
&E[\left\Vert \frac{\partial }{\partial x}X_{t_{2}}^{x,n}-\frac{\partial }{%
\partial x}X_{t_{2}}^{y,n}\right\Vert ^{2}] \\
=&\sum_{i,j=1}^{d}E[(\frac{\partial }{\partial x_{i}}%
X_{t_{2}}^{x_{1},...,x_{d},n,j}-\frac{\partial }{\partial x_{i}}%
X_{t_{2}}^{y_{1},...,y_{d},n,j})^{2}] \\
\leq &C_{d}\sum_{i,j=1}^{d}\{E[(\frac{\partial }{\partial x_{i}}%
X_{t_{2}}^{x_{1},...,x_{d},n,j}-\frac{\partial }{\partial x_{i}}%
X_{t_{2}}^{y_{1},x_{2}...,x_{d},n,j})^{2}] \\
&+E[(\frac{\partial }{\partial x_{i}}%
X_{t_{2}}^{y_{1},x_{2},x_{3}...,x_{d},n,j}-\frac{\partial }{\partial x_{i}}%
X_{t_{2}}^{y_{1},y_{2},x_{3}...,x_{d},n,j})^{2}] \\
&+...+E[(\frac{\partial }{\partial x_{i}}%
X_{t_{2}}^{y_{1},y_{2},...,y_{d-1},x_{d},n,j}-\frac{\partial }{\partial x_{i}%
}X_{t_{2}}^{y_{1},y_{2},y_{3}...,y_{d},n,j})^{2}]\} \\
\leq &C_{d}(K)\sum_{i,j=1}^{d}\{\left\vert x_{1}-y_{1}\right\vert
^{2}\sup_{x\in K}E[(\frac{\partial ^{2}}{\partial x_{1}\partial x_{i}}%
X_{t_{2}}^{x_{1},...,x_{d},n,j})^{2}] \\
&+\left\vert x_{2}-y_{2}\right\vert ^{2}\sup_{x\in K}E[(\frac{\partial ^{2}%
}{\partial x_{1}\partial x_{i}}X_{t_{2}}^{x_{1},...,x_{d},n,j})^{2}] \\
&+...+\left\vert x_{d}-y_{d}\right\vert ^{2}\sup_{x\in K}E[(\frac{\partial
^{2}}{\partial x_{1}\partial x_{i}}X_{t_{2}}^{x_{1},...,x_{d},n,j})^{2}]\} \\
\leq &C_{d}(K)\left\Vert x-y\right\Vert ^{2}\sup_{n\in \mathbb{N},t\in
\lbrack 0,T],x\in K}E[\left\Vert \frac{\partial ^{2}}{\partial x^{2}}%
X_{t}^{x,n}\right\Vert _{\mathbb{R}^{d\times d\times d}}^{2}].
\end{split}
\end{equation*}
We know by our assumptions and by Lemma \ref{5BoundedDerivatives} in the Appendix that%
\begin{equation*}
\sup_{n\in \mathbb{N},t\in \lbrack 0,T],x\in K}E[\left\Vert \frac{\partial
^{2}}{\partial x^{2}}X_{t}^{x,n}\right\Vert _{\mathbb{R}^{d\times d\times
d}}^{2}]<\infty \text{.}
\end{equation*}%
Further, by using exactly the same arguments as in the proof of Lemma \ref%
{5RelativeCompactness}, we also obtain that%
\begin{equation*}
E[\left\Vert \frac{\partial }{\partial x}X_{t_{1}}^{x,n}-\frac{\partial }{%
\partial x}X_{t_{2}}^{x,n}\right\Vert ^{2}]\leq C_{d,H,T}\left\vert
t_{1}-t_{2}\right\vert ^{\epsilon },
\end{equation*}%
where $\epsilon >0$ and $C_{d,H,T}$ is a constant only depending on $d,H,T$.

Altogether, we see that%
\begin{equation*}
E[\left\Vert \frac{\partial }{\partial x}X_{t_{1}}^{x,n}-\frac{\partial }{%
\partial x}X_{t_{2}}^{y,n}\right\Vert ^{2}]\leq C_{d,H,T}(K)(\left\Vert
x-y\right\Vert ^{2}+\left\vert t_{1}-t_{2}\right\vert ^{\epsilon })
\end{equation*}%
for a constant $C_{d,H,T}(K)$, which only depends on $d,H,T$ and the compact
cube $K$.

On the other hand, we conclude from Lemma \ref{5RelativeCompactness} combined
with Corollary \ref{5VI_compactcrit} that the family $\left\{ \frac{\partial 
}{\partial x}X_{t}^{x,n}\right\} _{n\in \mathbb{N},t\in \lbrack 0,T],x\in K}$
is relatively compact in $L^{2}(\Omega ;\mathbb{R}^{d\times d})$. Thus it
follows from the Theorem of Arzela-Ascoli, with respect to the infinite dimensional state space, that%
\begin{equation*}
((t,x)\mapsto \frac{\partial }{\partial x}X_{t}^{s,x,n})_{n\geq 1}
\end{equation*}%
is relatively compact in $\mathcal{C}([0,T]\times K;L^{2}(\Omega ;\mathbb{R}^{d\times
d}))$.

The first statement of the Theorem can be treated, similarly.
\end{proof}

\bigskip

\subsubsection{Identifying the limits and proving compatibility of the two limits}
\label{5identifyGoodLimits}

\begin{lemma}
\label{5ContinuousVersion}Retain the conditions of Theorem \ref%
{5StrongConvergenceDerivative}. Let $Y$ and $Y^{\prime }$ be the limits of 
\begin{equation*}
((t,x)\mapsto X_{t}^{s,x,n_{k}})_{k\geq 1}
\end{equation*}%
and%
\begin{equation*}
((t,x)\mapsto \frac{\partial }{\partial x}X_{t}^{s,x,n_{k}})_{k\geq 1}
\end{equation*}%
in $\mathcal{C}([s,T]\times K;L^{2}(\Omega ;\mathbb{R}^{d}))$ and $\mathcal{C}([s,T]\times
K;L^{2}(\Omega ;\mathbb{R}^{d\times d}))$, respectively with respect to a
subsequence $(n_{k})_{k\geq 1}$, which only depends on the compact cube $K$, 
$s$ and $T$. Then, $Y$ and $Y^{\prime }$ have continuous versions $Z$ and $%
Z^{\prime }$, respectively.
\end{lemma}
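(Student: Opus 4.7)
The plan is to apply Kolmogorov's continuity theorem to the random fields $Y$ and $Y'$ on $[s,T]\times K$. They already belong to $\mathcal{C}([s,T]\times K;L^2(\Omega))$, so the only missing ingredient is a quantitative moment bound in $(t,x)$, uniform in $n$, that survives passage to the subsequential limit and is strong enough to yield a version continuous in $(t,x)$ for almost every $\omega$.

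For $Y$, the proof of Lemma \ref{5JointHolderContinuity} gives the uniform estimate
\begin{equation*}
E\!\left[\|X_{t_1}^{s,x,n}-X_{t_2}^{s,y,n}\|^{p}\right]\leq C(d,s,T,H,K,p,\|b\|)\bigl(\|x-y\|^{Hp}+|t_1-t_2|^{Hp}\bigr)
\end{equation*}
for every $p>1$ and every $x,y\in K$, $t_1,t_2\in[s,T]$. Since $X_{t}^{s,x,n_k}\to Y(t,x)$ in $L^2(\Omega)$ for each fixed $(t,x)$, we may extract a further a.s.-convergent subsequence and invoke Fatou's lemma to transport the inequality to $Y$. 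Choosing $p$ large enough that $Hp>d+1$, the classical Kolmogorov continuity criterion on the $(d+1)$-dimensional parameter set $[s,T]\times K$ supplies a Hölder-continuous modification $Z$ of $Y$.

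For $Y'$, the argument in the proof of Theorem \ref{5StrongConvergenceDerivative} already yields the $L^2$-Hölder bound
\begin{equation*}
E\!\left[\bigl\|\tfrac{\partial}{\partial x}X_{t_1}^{x,n}-\tfrac{\partial}{\partial x}X_{t_2}^{y,n}\bigr\|^{2}\right]\leq C_{d,H,T}(K)\bigl(\|x-y\|^{2}+|t_1-t_2|^{\epsilon}\bigr),
\end{equation*}
for some $\epsilon>0$, uniformly in $n$. Combined with the uniform higher-moment control
\begin{equation*}
\sup_{n\in\mathbb{N},\,t\in[s,T],\,x\in K}E\!\left[\bigl\|\tfrac{\partial}{\partial x}X_{t}^{x,n}\bigr\|^{q}\right]<\infty
\end{equation*}
for every $q<\infty$ coming from Lemma \ref{5BoundedDerivatives} in the Appendix, a standard interpolation $\|\cdot\|_{r}\leq\|\cdot\|_{2}^{\theta}\|\cdot\|_{q}^{1-\theta}$ upgrades the Hölder bound to $L^{r}$ for $r$ as large as we need, still uniformly in $n$. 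Passing to the limit along the subsequence $(n_k)$ by Fatou and taking $r$ large enough to meet Kolmogorov's criterion in $d+1$ parameters produces the desired continuous version $Z'$ of $Y'$.

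The main obstacle I foresee is the interpolation step for the derivative: one must track the Hölder exponents carefully since the $L^2$ estimate is only Lipschitz in $x$ and of (possibly small) order $\epsilon$ in $t$, so the exponent obtained in $L^{r}$ degrades toward these $L^2$ values. If the resulting exponents turn out to be too small to dominate $d+1$ for any finite $r$, the fallback is to rerun the Malliavin--Girsanov chain from the proof of Lemma \ref{5RelativeCompactness} directly in $L^{r}$ rather than $L^{2}$; the structure of the shuffle and factorial estimates is unchanged, only the exponents in the Girsanov bound and in the application of Theorem \ref{5mainestimate2} need to be adjusted, giving the required moment control without any interpolation loss.
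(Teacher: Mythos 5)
For the limit $Y$ your argument is fine and is essentially what the paper does: the uniform bound $E[\|X_{t_1}^{s,x,n}-X_{t_2}^{s,y,n}\|^{p}]\leq C(\|x-y\|^{Hp}+|t_1-t_2|^{Hp})$ from Lemma \ref{5JointHolderContinuity} scales linearly in $p$ in the exponent, so taking $Hp>d+1$, passing to the limit by Fatou along an a.s.\ convergent sub-subsequence, and applying Kolmogorov gives $Z$.

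For $Y'$, however, your primary route has a genuine gap: the interpolation step cannot produce the moment exponents Kolmogorov needs. Writing $\Delta=\tfrac{\partial}{\partial x}X_{t_1}^{x,n}-\tfrac{\partial}{\partial x}X_{t_2}^{y,n}$ and interpolating $\|\Delta\|_{L^r}\leq\|\Delta\|_{L^2}^{\theta}\|\Delta\|_{L^q}^{1-\theta}$ with $\tfrac1r=\tfrac{\theta}{2}+\tfrac{1-\theta}{q}$, one has $r\theta\leq 2$ for every $q\leq\infty$, so the best obtainable bound is
\begin{equation*}
E\bigl[\|\Delta\|^{r}\bigr]\;\lesssim\;\|\Delta\|_{L^2}^{r\theta}\;\leq\;\|\Delta\|_{L^2}^{2}\;\lesssim\;\|x-y\|^{2}+|t_1-t_2|^{\epsilon},
\end{equation*}
no matter how large $r$ is taken. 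The exponent is capped at $2$ in space and $\epsilon$ in time, whereas Kolmogorov's criterion on the $(d+1)$-dimensional parameter set $[s,T]\times K$ requires an exponent exceeding $d+1$; this fails for every $d\geq 1$. The uniform higher moments from Lemma \ref{5BoundedDerivatives} only bound $\|\Delta\|_{L^q}$ by a constant, they do not make it small, so no choice of $q$ rescues the interpolation. What you call the ``fallback'' is in fact the only viable route and is exactly what the paper does: rerun the Picard/shuffle/Girsanov/local-time machinery of Lemma \ref{5RelativeCompactness} and Theorem \ref{5StrongConvergenceDerivative} directly at the level of $p$-th moments, which yields $E[\|\Delta\|^{p}]\leq C_{d,H,T,p}(K)(\|x-y\|^{p-1}+|t_1-t_2|^{\epsilon(p-1)})$ with exponents growing linearly in $p$; choosing $p$ so large that $\min(p-1,\epsilon(p-1))>d+1$ then dominates $(\|x-y\|+|t_1-t_2|)^{d+1+\beta}$, and Fatou plus Kolmogorov finishes. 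You should promote that computation to the main argument and drop the interpolation step.
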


\begin{proof}
By using very similar arguments as in the proof of Theorem \ref%
{5StrongConvergenceDerivative} in combination with the techniques in Lemma %
\ref{5RelativeCompactness}, we can find for integers $p\geq 2$ the estimate%
\begin{equation*}
E[\left\Vert \frac{\partial }{\partial x}X_{t_{1}}^{x,n_{k}}-\frac{\partial 
}{\partial x}X_{t_{2}}^{y,n_{k}}\right\Vert ^{p}]\leq (\left\Vert
x-y\right\Vert ^{p-1}+\left\vert t_{1}-t_{2}\right\vert ^{\epsilon (p-1)}),
\end{equation*}%
where $\epsilon >0$ (small) and $C_{d,H,T,p}(K)$ is a constant depending only
on $d,H,T,p$ and $K$. So if $p$ is large enough, we obtain that%
\begin{equation*}
E[\left\Vert \frac{\partial }{\partial x}X_{t_{1}}^{x,n}-\frac{\partial }{%
\partial x}X_{t_{2}}^{y,n}\right\Vert ^{p}]\leq C_{d,H,T,p}(K)(\left\Vert
x-y\right\Vert +\left\vert t_{1}-t_{2}\right\vert )^{d+1+\beta }
\end{equation*}%
for some $\beta >0$. By applying Fatou's Lemma in connection with a
subsequence $(m_{k})_{k\geq 1}$ of $(n_{k})_{k\geq 1}$, we find that%
\begin{equation}
\begin{split}
&E[\left\Vert Y^{\prime }(s,t_{1},x)-Y^{\prime }(s,t_{2},y)\right\Vert ^{p}]
\\
\leq &\underline{\lim }_{k\longrightarrow \infty }C_{d,H,T,p}(K)(\left\Vert
x-y\right\Vert +\left\vert t_{1}-t_{2}\right\vert )^{d+1+\beta } \\
=&C_{d,H,T,p}(K)(\left\Vert x-y\right\Vert +\left\vert
t_{1}-t_{2}\right\vert )^{d+1+\beta }\text{.}
\end{split}
\end{equation}

The proof follows by using Kolmogorov's continuity theorem.

The existence of a continuous version of $Y$ can be shown in the same way.
\end{proof}

\begin{theorem}
\label{5Identification}Assume the conditions of Lemma \ref{5ContinuousVersion}%
. Then%
\begin{equation*}
Z(s,t,x)=X_{t}^{s,x}\text{ and }Z^{\prime }(s,t,x)=\frac{\partial }{\partial
x}X_{t}^{s,x},
\end{equation*}%
a.e. in $t,x,\omega $, where $\frac{\partial }{\partial x}X_{t}^{s,x}$ is a
representative of the spatial Sobolev derivative of $X_{t}^{s,x}$ in $%
W^{1,2}(U)$ for an open and bounded set $U\subset K$.
\end{theorem}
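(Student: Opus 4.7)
The plan is to pass to the limit in the smooth approximations $b_{n_k}$, exploiting the uniform $L^{2}(\Omega)$-convergence on compacts from Theorem \ref{5StrongConvergenceDerivative} together with the previously-established convergence of $X^{s,x,n_{k}}_{t}$ to the unique strong solution $X^{s,x}_{t}$ in $L^{2}(\Omega)$.

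First I would settle the identification $Z(s,t,x)=X^{s,x}_{t}$. The $\mathcal{C}([s,T]\times K;L^{2}(\Omega;\mathbb{R}^{d}))$-convergence yields, for every fixed $(s,t,x)$, that $X^{s,x,n_{k}}_{t}\to Y(s,t,x)$ in $L^{2}(\Omega)$. Independently, Corollary 4.8 of \cite{5BNP19} (already invoked inside the proof of Lemma \ref{5JointHolderContinuity}) gives $X^{s,x,n_{k}}_{t}\to X^{s,x}_{t}$ in $L^{2}(\Omega)$. Uniqueness of limits forces $Y(s,t,x)=X^{s,x}_{t}$ $\mathbf{P}$-a.s.\ at each fixed $(s,t,x)$, and passing to the continuous modification $Z$ together with Fubini delivers the first claim a.e.\ in $(t,x,\omega)$.

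For $Z'$, fix $\varphi\in C^{\infty}_{c}(U)$ and $i\in\{1,\dots,d\}$. Since $b_{n_{k}}$ is smooth, classical flow theory for SDEs makes $x\mapsto X^{s,x,n_{k}}_{t}(\omega)$ continuously differentiable $\mathbf{P}$-a.s., so pathwise integration by parts yields
\begin{equation*}
\int_{U}\partial_{i}X^{s,x,n_{k}}_{t}(\omega)\,\varphi(x)\,dx=-\int_{U}X^{s,x,n_{k}}_{t}(\omega)\,\partial_{i}\varphi(x)\,dx.
\end{equation*}
Writing $Z'(s,t,x)\,e_{i}$ for the $i$-th column of the $\mathbb{R}^{d\times d}$-valued object $Z'$, Minkowski's integral inequality combined with the uniform $L^{2}(\Omega)$-convergence on the compact $\overline{U}\subset K$ allows passage to the $L^{2}(\Omega)$-limit on both sides, yielding
\begin{equation*}
\int_{U}Z'(s,t,x)\,e_{i}\,\varphi(x)\,dx=-\int_{U}Z(s,t,x)\,\partial_{i}\varphi(x)\,dx
\end{equation*}
$\mathbf{P}$-a.s., for each fixed $\varphi$ and $i$.

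To finish, pick a countable family $\{\varphi_{m}\}$ dense in $C^{\infty}_{c}(U)$ in the $C^{1}$-topology, so that there is a single $\mathbf{P}$-null set off of which the identity above holds simultaneously for all $\varphi_{m}$ and all $i$, hence for all $\varphi\in C^{\infty}_{c}(U)$ by density. On this full-measure event, both $Z(s,t,\cdot,\omega)$ and $Z'(s,t,\cdot,\omega)$ are continuous on $\overline{U}$ and therefore in $L^{2}(U)$, so $Z'(s,t,\cdot,\omega)$ is a genuine Sobolev derivative in $W^{1,2}(U)$; combined with the first step this realizes it as a representative of $\partial_{x}X^{s,x}_{t}$. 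The main obstacle I anticipate is the transfer of the pointwise integration-by-parts identity to the $L^{2}(\Omega)$-limit, since no $\omega$-wise convergence of the approximants is available; this is resolved by Minkowski's inequality, which converts the uniform $L^{2}(\Omega)$-control in $x$ into $L^{2}(\Omega)$-convergence of the $x$-integrals, after which the countable-dense trick on $\varphi$ produces a universal null set.
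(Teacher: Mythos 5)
Your proposal is correct and follows essentially the same route the paper indicates: the paper's own proof is a one-sentence sketch ("integration by parts argument in connection with Theorem \ref{5StrongConvergenceDerivative} and the uniform estimate in Lemma \ref{5BoundedDerivatives}"), and your argument --- identifying $Z$ with $X_t^{s,x}$ via the $L^2(\Omega)$-convergence of the approximating strong solutions, then passing the pathwise integration-by-parts identity for the smooth flows to the limit by Minkowski's inequality and a countable-dense family of test functions --- is precisely the standard fleshing-out of that sketch.
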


\begin{proof}
The proof follows from an integration by part argument in connection with
Theorem \ref{5StrongConvergenceDerivative} and the uniform estimate in Lemma %
\ref{5BoundedDerivatives} in the Appendix.
\end{proof}

\subsubsection{Controlling $\sup_{t\in \lbrack s,T]}\left\Vert \frac{\partial }{%
		\partial x}X_{t}^{s,x}\right\Vert ^{2^{n}}$  for $n\geq 1$}
	
	\label{5mainSupEstimate}
	We are now coming to the proof Theorem \ref{5MainEstimate}, which also provides a uniform estimate, in $t$, of the moments of the derivative of our solution.

\begin{proof}[Proof of Theorem \ref{5MainEstimate}]
Let us assume without loss of generality that $s=0$ and $T=1$. In proving
this result we want to apply the inequality of Garsia-Rodemich-Rumsey (see
Lemma \ref{GarciaRodemichRumsey} in the Appendix) to the case, when $%
d(t,s)=\left\vert t-s\right\vert ^{\frac{\varepsilon }{1+\varepsilon }}$, $%
0<\varepsilon <1$, $\Psi (x)=x^{^{\frac{4(1+\varepsilon )}{\varepsilon }%
}},x\geq 0$, $\Lambda =\left[ 0,1\right] $, $f(t)=\left\Vert
X_{t}^{x}-X_{t}^{y}\right\Vert ,x,y\in K$, where $K$ is a compact cube in $%
\mathbb{R}^{d}$. Then, $\sigma (r)\geq r^{\frac{1+\varepsilon }{\varepsilon }%
}$ and we get that%
\begin{equation*}
\left\vert f(t)-f(s)\right\vert \leq 18\int_{0}^{d(t,s)/2}\Psi ^{-1}\left( 
\frac{U}{(\sigma (r))^{2}}\right) dr,
\end{equation*}%
where%
\begin{equation*}
U=\int_{0}^{1}\int_{0}^{1}\Psi \left( \frac{\left\vert
f(t_{2})-f(t_{1})\right\vert }{d(t_{2},t_{1})}\right) dt_{2}dt_{1}.
\end{equation*}%
Setting $s=0$, we see that%
\begin{eqnarray*}
\left\vert f(t)\right\vert  &\leq &18\int_{0}^{d(t,0)/2}\Psi ^{-1}\left( 
\frac{U}{(\sigma (r))^{2}}\right) dr+\left\vert f(0)\right\vert  \\
&=&18\int_{0}^{d(t,0)/2}\Psi ^{-1}\left( \frac{U}{(\sigma (r))^{2}}\right)
dr+\left\Vert x-y\right\Vert .
\end{eqnarray*}%
Let $p=2^{r}>1$ with $r\in \mathbb{N}$ such that $p\frac{\varepsilon }{%
4(1+\varepsilon )}>1$. Then%
\begin{equation*}
\left\vert f(t)\right\vert ^{p}\leq C_{p}((\int_{0}^{d(t,0)/2}\Psi
^{-1}\left( \frac{U}{(\sigma (r))^{2}}\right) dr)^{p}+\left\Vert
x-y\right\Vert ^{p}).
\end{equation*}%
Hence,%
\begin{eqnarray*}
\sup_{0\leq t\leq 1}\left\vert f(t)\right\vert ^{p} &\leq
&C_{p}((\int_{0}^{1}\Psi ^{-1}\left( \frac{U}{(\sigma (r))^{2}}\right)
dr)^{p}+\left\Vert x-y\right\Vert ^{p}) \\
&=&C_{p}((\int_{0}^{1}\left( \frac{U}{(\sigma (r))^{2}}\right) ^{\frac{%
\varepsilon }{4(1+\varepsilon )}}dr)^{p}+\left\Vert x-y\right\Vert ^{p}) \\
&\leq &C_{p}((\int_{0}^{1}\left( \frac{1}{r^{^{\frac{2(1+\varepsilon )}{%
\varepsilon }}}}\right) ^{\frac{\varepsilon }{4(1+\varepsilon )}}dr)^{p}U^{p%
\frac{\varepsilon }{4(1+\varepsilon )}}+\left\Vert x-y\right\Vert ^{p}) \\
&=&C_{p}((\int_{0}^{1}\frac{1}{r^{^{\frac{1}{2}}}}dr)^{p}U^{p\frac{%
\varepsilon }{4(1+\varepsilon )}}+\left\Vert x-y\right\Vert ^{p}) \\
&\leq &C_{p}(U^{p\frac{\varepsilon }{4(1+\varepsilon )}}+\left\Vert
x-y\right\Vert ^{p}).
\end{eqnarray*}%
On the other hand,%
\begin{eqnarray*}
U^{p\frac{\varepsilon }{4(1+\varepsilon )}} &\leq
&\int_{0}^{1}\int_{0}^{1}\left( \Psi \left( \frac{\left\vert
f(t_{2})-f(t_{1})\right\vert }{d(t_{2},t_{1})}\right) \right) ^{p\frac{%
\varepsilon }{4(1+\varepsilon )}}dt_{2}dt_{1} \\
&=&\int_{0}^{1}\int_{0}^{1}\left( \left( \frac{\left\vert
f(t_{2})-f(t_{1})\right\vert }{d(t_{2},t_{1})}\right) ^{\frac{%
4(1+\varepsilon )}{\varepsilon }}\right) ^{p\frac{\varepsilon }{%
4(1+\varepsilon )}}dt_{2}dt_{1} \\
&=&\int_{0}^{1}\int_{0}^{1}\left( \frac{\left\vert
f(t_{2})-f(t_{1})\right\vert }{d(t_{2},t_{1})}\right) ^{p}dt_{2}dt_{1}.
\end{eqnarray*}%
Therefore, we get that%
\begin{eqnarray*}
E\left[ \sup_{0\leq t\leq 1}\left\vert f(t)\right\vert ^{p}\right]  &\leq
&C_{p}(E\left[ U^{p\frac{\varepsilon }{2(1+\varepsilon )}}\right]
+\left\Vert x-y\right\Vert ^{p}) \\
&\leq &C_{p}(\int_{0}^{1}\int_{0}^{1}E\left[ \left( \frac{\left\vert
f(t_{2})-f(t_{1})\right\vert }{d(t_{2},t_{1})}\right) ^{p}\right]
dt_{2}dt_{1}+\left\Vert x-y\right\Vert ^{p}) \\
&\leq &C_{p}(\int_{0}^{1}\int_{0}^{1}E\left[ \left( \frac{\left\Vert
X_{t_{2}}^{x}-X_{t_{2}}^{y}-(X_{t_{1}}^{x}-X_{t_{1}}^{y})\right\Vert }{%
d(t_{2},t_{1})}\right) ^{p}\right] dt_{2}dt_{1}+\left\Vert x-y\right\Vert
^{p}).
\end{eqnarray*}%
Further, by using Theorem \ref{5StrongConvergenceDerivative}, the uniform
integrability of $\left\Vert X_{t}^{x,n}\right\Vert ^{p},n\geq 1,0\leq t\leq
1$ and Fatou `s Lemma, we obtain that%
\begin{eqnarray*}
&&E\left[ \sup_{0\leq t\leq 1}\left\vert f(t)\right\vert ^{p}\right]  \\
&\leq &C_{p}(\underline{\lim }_{n\longrightarrow \infty
}\int_{0}^{1}\int_{0}^{1}E\left[ \left( \frac{\left\Vert
X_{t_{2}}^{x,n}-X_{t_{2}}^{y,n}-(X_{t_{1}}^{x,n}-X_{t_{1}}^{y,n})\right\Vert 
}{d(t_{2},t_{1})}\right) ^{p}\right] dt_{2}dt_{1}+\left\Vert x-y\right\Vert
^{p})\text{,}
\end{eqnarray*}%
where $X_{t}^{x,n},0\leq t\leq 1$ is the strong solution to \ref{11SDE}
associated with the approximating sequence of smooth vector fields $%
b_{n},n\geq 1$ in Theorem \ref{5StrongConvergenceDerivative}.

Let $x=(x_{1},...,x_{d})^{\ast },y=(y_{1},...,y_{d})^{\ast }$ and and assume
without loss of generality that $x_{i}>y_{i},i=1,...,d$. Then, by using the
fundamental theorem of calculus, we can write%
\begin{eqnarray*}
&&X_{t}^{x,n}-X_{t}^{n,y} \\
&=&(X_{t}^{x_{1},...,x_{d},n}-X_{t}^{y_{1},x_{2},...,x_{d},n})+(X_{t}^{y_{1},x_{2,}x_{3},...,x_{d},n}-X_{t}^{y_{1},y_{2},x_{3},...,x_{d},n})
\\
&&+...+(X_{t}^{y_{1},y_{2,}...,y_{d-1},x_{d},n}-X_{t}^{y_{1},y_{2},...,y_{d},n})
\\
&=&\int_{y_{1}}^{x_{1}}\frac{\partial }{\partial z_{1}}%
X_{t}^{z_{1},x_{2}...,x_{d},n}dz_{1}+\int_{y_{2}}^{x_{2}}\frac{\partial }{%
\partial z_{2}}X_{t}^{y_{1},z_{2},x_{3},...,x_{d},n}dz_{2} \\
&&+...+\int_{y_{d}}^{x_{d}}\frac{\partial }{\partial z_{d}}%
X_{t}^{y_{1},y_{2},...,y_{d-1},z_{d},n}dz_{d}.
\end{eqnarray*}%
So%
\begin{eqnarray*}
&&X_{t_{2}}^{x,n}-X_{t_{2}}^{y,n}-(X_{t_{1}}^{x,n}-X_{t_{1}}^{y,n}) \\
&=&\int_{y_{1}}^{x_{1}}(\frac{\partial }{\partial z_{1}}%
X_{t_{2}}^{z_{1},x_{2}...,x_{d},n}-\frac{\partial }{\partial z_{1}}%
X_{t_{1}}^{z_{1},x_{2}...,x_{d},n})dz_{1}+\int_{y_{2}}^{x_{2}}(\frac{%
\partial }{\partial z_{2}}X_{t_{2}}^{y_{1},z_{2},x_{3},...,x_{d},n}-\frac{%
\partial }{\partial z_{2}}X_{t_{1}}^{y_{1},z_{2},x_{3},...,x_{d},n})dz_{2} \\
&&+...+\int_{y_{d}}^{x_{d}}(\frac{\partial }{\partial z_{d}}%
X_{t_{2}}^{y_{1},y_{2},...,y_{d-1},z_{d},n}-\frac{\partial }{\partial z_{d}}%
X_{t_{1}}^{y_{1},y_{2},...,y_{d-1},z_{d},n})dz_{d}.
\end{eqnarray*}%
Hence,%
\begin{eqnarray*}
&&E\left[ \left( \frac{\left\Vert
X_{t_{2}}^{x,n}-X_{t_{2}}^{y,n}-(X_{t_{1}}^{x,n}-X_{t_{1}}^{y,n})\right\Vert 
}{d(t_{2},t_{1})}\right) ^{p}\right] \\
&\leq &C_{d}(E\left[ (\int_{y_{1}}^{x_{1}}\left\Vert \frac{\partial }{%
\partial z_{1}}X_{t_{2}}^{z_{1},x_{2}...,x_{d},n}-\frac{\partial }{\partial
z_{1}}X_{t_{1}}^{z_{1},x_{2}...,x_{d},n}\right\Vert
/d(t_{2},t_{1})dz_{1})^{p}\right] \\
&&+E\left[ (\int_{y_{2}}^{x_{2}}\left\Vert \frac{\partial }{\partial z_{2}}%
X_{t_{2}}^{y_{1},z_{2},x_{3},...,x_{d},n}-\frac{\partial }{\partial z_{2}}%
X_{t_{1}}^{y_{1},z_{2},x_{3},...,x_{d},n}\right\Vert
/d(t_{2},t_{1})dz_{2})^{p}\right] \\
&&+...+E\left[ (\int_{y_{d}}^{x_{d}}\left\Vert \frac{\partial }{\partial
z_{d}}X_{t_{2}}^{y_{1},y_{2},...,y_{d-1},z_{d},n}-\frac{\partial }{\partial
z_{d}}X_{t_{1}}^{y_{1},y_{2},...,y_{d-1},z_{d},n}\right\Vert
/d(t_{2},t_{1})dz_{d})^{p})\right] \\
&\leq &C_{d}(\left\vert x_{1}-y_{1}\right\vert ^{p-1}\int_{y_{1}}^{x_{1}}E 
\left[ (\left\Vert \frac{\partial }{\partial z_{1}}%
X_{t_{2}}^{z_{1},x_{2}...,x_{d},n}-\frac{\partial }{\partial z_{1}}%
X_{t_{1}}^{z_{1},x_{2}...,x_{d},n}\right\Vert /d(t_{2},t_{1}))^{p}\right]
dz_{1} \\
&&+\left\vert x_{2}-y_{2}\right\vert ^{p-1}\int_{y_{2}}^{x_{2}}E\left[
(\left\Vert \frac{\partial }{\partial z_{2}}%
X_{t_{2}}^{y_{1},z_{2},x_{3},...,x_{d},n}-\frac{\partial }{\partial z_{2}}%
X_{t_{1}}^{y_{1},z_{2},x_{3},...,x_{d},n}\right\Vert /d(t_{2},t_{1}))^{p}%
\right] dz_{2} \\
&&+...+ \\
&&\left\vert x_{d}-y_{d}\right\vert ^{p-1}\int_{y_{d}}^{x_{d}}E\left[
(\left\Vert \frac{\partial }{\partial z_{d}}%
X_{t_{2}}^{y_{1},y_{2},...,y_{d-1},z_{d},n}-\frac{\partial }{\partial z_{d}}%
X_{t_{1}}^{y_{1},y_{2},...,y_{d-1},z_{d},n}\right\Vert /d(t_{2},t_{1}))^{p})%
\right] dz_{d}) \\
&\leq &C_{d}(\left\vert x_{1}-y_{1}\right\vert ^{p}+...+\left\vert
x_{d}-y_{d}\right\vert ^{p})\times \\
&&\times \sup_{x\in K}E\left[ (\left\Vert \frac{\partial }{\partial x}%
X_{t_{2}}^{x,n}-\frac{\partial }{\partial x}X_{t_{1}}^{x,n}\right\Vert
/d(t_{2},t_{1}))^{p})\right] \\
&\leq &C_{d}\left\Vert x-y\right\Vert ^{p}\sup_{x\in K}E\left[ (\left\Vert 
\frac{\partial }{\partial x}X_{t_{2}}^{x,n}-\frac{\partial }{\partial x}%
X_{t_{1}}^{x,n}\right\Vert /d(t_{2},t_{1}))^{p})\right] .
\end{eqnarray*}%
Thus%
\begin{eqnarray}
&&E\left[ \sup_{0\leq t\leq 1}\left\vert f(t)\right\vert ^{p}\right]  \notag
\\
&\leq &C_{p,d}(\underline{\lim }_{n\longrightarrow \infty
}\int_{0}^{1}\int_{0}^{1}\sup_{x\in K}E\left[ (\left\Vert \frac{\partial }{%
\partial x}X_{t_{2}}^{x,n}-\frac{\partial }{\partial x}X_{t_{1}}^{x,n}\right%
\Vert /d(t_{2},t_{1}))^{p})\right] dt_{2}dt_{1}\left\Vert x-y\right\Vert ^{p}
\notag \\
&&+\left\Vert x-y\right\Vert ^{p})\text{.}  \label{Garcia}
\end{eqnarray}

Fix $0\leq t_{2},t_{1}\leq 1$ and suppose that $t_{2}>t_{1}$. Since \ the
stochastic flow associated with the smooth vector field $b_{n}$ is smooth,
too (compare to e.g. \cite{5Kunita}), we obtain that%
\begin{equation}
\frac{\partial }{\partial x}X_{t}^{x}=I_{d\times
d}+\int_{0}^{t}Db(u,X_{u}^{x})\frac{\partial }{\partial x}X_{u}^{x}du,
\label{5InitialDerivative}
\end{equation}%
where $Db:\mathbb{R}^{d}\longrightarrow L(\mathbb{R}^{d},\mathbb{R}^{d})$ is
the derivative of $b$ with respect to the space variable.

Using Picard iteration, we see that%
\begin{equation}
\frac{\partial }{\partial x}X_{t}^{x}=I_{d\times d}+\sum_{m\geq
1}\int_{\Delta
_{0,t}^{m}}Db(u_{1},X_{u_{1}}^{x})...Db(u_{m},X_{u_{m}}^{x})du_{m}...du_{1},
\label{5FirstOrderExpansion}
\end{equation}%
in $L^{2}(\left[ 0,1\right] \times \Omega )$, where%
\begin{equation*}
\Delta _{s,t}^{m}=\{(u_{m},...u_{1})\in \lbrack 0,T]^{m}:\theta
<u_{m}<...<u_{1}<t\}.
\end{equation*}%
So we can see that 
\begin{eqnarray*}
&&\frac{\partial }{\partial x}X_{t_{2}}^{x,n}-\frac{\partial }{\partial x}%
X_{t_{1}}^{x,n} \\
&=&\sum_{m\geq 1}\int_{\Delta _{0,t_{2}}^{m}}(\chi _{\left[ t_{1},t_{2}%
\right]
}(u_{1})Db_{n}(u_{1},X_{u_{1}}^{x,n}))Db_{n}(u_{2},X_{u_{2}}^{x,n})...Db_{n}(u_{1},X_{u_{m}}^{x,n})du_{m}...du_{1}
\end{eqnarray*}%
in $L^{2}(\left[ 0,1\right] \times \Omega )$.

Therefore we obtain from the Cauchy-Schwarz inequality and Girsanov's
theorem (Theorem \ref{5girsanov}) in combination with Lemma \ref{5Novikov}
in the Appendix that%
\begin{equation}
E\left[ \left\Vert \frac{\partial }{\partial x}X_{t_{2}}^{x,n}-\frac{%
\partial }{\partial x}X_{t_{1}}^{x,n}\right\Vert ^{p}\right] \leq
C(\left\Vert b_{n}\right\Vert _{L_{\infty }^{\infty }})\left( \sum_{m\geq
1}\sum_{i\in I}\left\Vert \int_{\Delta _{0,t_{2}}^{m}}\mathcal{H}%
_{i}^{B^{H}}(u)du_{m}...du_{1}\right\Vert _{L^{2p}(\Omega ;\mathbb{R}%
)}\right) ^{p}  \label{Start}
\end{equation}%
where $C:[0,\infty )\longrightarrow \lbrack 0,\infty )$ is a continuous
function which doesn't depend on $p$. Here $\#I\leq K^{m}$ for a constant $%
K=K(d)$ and the integrands $\mathcal{H}_{i}^{B^{H}}(u)$ are of the form 
\begin{equation*}
\mathcal{H}_{i}^{B^{H}}(u)=\prod\limits_{l=1}^{m}h_{l}(u_{l}),h_{l}\in
\Lambda _{1}\cup \Lambda _{2},l=1,...,m
\end{equation*}%
where 
\begin{equation*}
\Lambda _{1}:=\left\{ \frac{\partial }{\partial x_{l}}%
b_{n}^{(i)}(u,x+B_{u}^{H}),l,i=1,...,d\right\}
\end{equation*}%
and 
\begin{equation*}
\Lambda _{2}:=\left\{ \chi _{\left[ t_{1},t_{2}\right] }(u)\frac{\partial }{%
\partial x_{l}}b_{n}^{(i)}(u,x+B_{u}^{H}),l,i=1,...,d\right\}
\end{equation*}

Define 
\begin{equation*}
J=\left( \int_{\Delta _{0,t_{2}}^{m}}\mathcal{H}%
_{i}^{B^{H}}(u)du_{m}...du_{1}\right) ^{2p}.
\end{equation*}%
Using once more Lemma \ref{5partialshuffle} in the Appendix, successively,
we obtain that $J$ can be written as a sum of, at most of length $K^{pm}$
with summands of the form%
\begin{equation}
\int_{\Delta
_{0,t_{2}}^{2pm}}\prod\limits_{l=1}^{2pm}f_{l}(u_{l})du_{2pm}...du_{1},
\label{Product}
\end{equation}%
where $f_{l}\in \Lambda _{1}\cup \Lambda _{2}$ for all $l$.

Thus the total order of the derivatives involved in (\ref{5f}) in connection
with Lemma \ref{5OrderDerivatives} is given by%
\begin{equation}
\left\vert \alpha \right\vert =2pm.
\end{equation}%
Here we also mention that there are exactly $2p$ factors $f_{l}$ in $\Lambda
_{2}$ in the product of \ref{Product}.

Further, we know from \ref{IntPart} in the Appendix that%
\begin{equation*}
\int_{\Delta
_{0,t_{2}}^{2pm}}\prod\limits_{l=1}^{2pm}f_{l}(u_{l})du_{2pm}...du_{1}=%
\int_{(\mathbb{R}^{d})^{2pm}}\Lambda _{\alpha }^{g}(0,t_{2},z)dz\text{,}
\end{equation*}%
where%
\begin{equation}
\Lambda _{\alpha }^{g}(\theta ,t,z)=(2\pi )^{-d2pm}\int_{(\mathbb{R}%
^{d})^{2pm}}\int_{\Delta _{\theta
,t}^{2pm}}\prod\limits_{j=1}^{2pm}g_{j}(s_{j},z_{j})(-iu_{j})^{\alpha
_{j}}\exp (-i\left\langle u_{j},B_{s_{j}}-z_{j}\right\rangle dsdu\text{.}
\label{LocalTime}
\end{equation}%
Here $g_{j}\in \Lambda _{1}^{\ast }\cup \Lambda _{2}^{\ast },j=1,...,2pm$
for 
\begin{equation*}
\Lambda _{1}^{\ast }:=\left\{ \frac{\partial }{\partial x_{l}}%
b_{n}^{(i)}(u,x+B_{u}^{H}),l,i=1,...,d\right\}
\end{equation*}%
and 
\begin{equation*}
\Lambda _{2}^{\ast }:=\left\{ \chi _{\left[ t_{1},t_{2}\right] }(u)\frac{%
\partial }{\partial x_{l}}b_{n}^{(i)}(u,x+B_{u}^{H}),l,i=1,...,d\right\} .
\end{equation*}%
Also here we note that there are exactly $2p$ factors $g_{j}$ in $\Lambda
_{2}^{\ast }$ in the product of (\ref{LocalTime}).

So we can apply Lemma \ref{LocalTimeEstimate} in the Appendix for $m$
replaced by $2pm$ get that

\begin{eqnarray*}
&&\left\vert E\left[ \int_{(\mathbb{R}^{d})^{2pm}}\Lambda _{\alpha
}^{g}(0,t_{2},z)dz\right] \right\vert \\
&\leq &C^{pm+\left\vert \alpha \right\vert /2}\int_{(\mathbb{R}%
^{d})^{2pm}}(\Psi _{\alpha }^{g}(0,t_{2},z))^{1/2}dz \\
&=&C^{2pm}\int_{(\mathbb{R}^{d})^{2pm}}(\Psi _{\alpha
}^{g}(0,t_{2},z))^{1/2}dz,
\end{eqnarray*}%
where%
\begin{eqnarray*}
&&\Psi _{\alpha }^{g}(0,t_{2},z) \\
&=&\dprod\limits_{l=1}^{d}\sqrt{(2\left\vert \alpha ^{(l)}\right\vert )!}%
\sum_{\sigma \in S(2pm,2pm)}\int_{\Delta _{0,t_{2}}^{4pm}}\left\vert
g_{\sigma }(s,z)\right\vert \dprod\limits_{j=1}^{4pm}\frac{1}{\left\vert
s_{j}-s_{j-1}\right\vert ^{H(d+2\sum_{i=1}^{d}\alpha _{\left[ \sigma (j)%
\right] }^{(i)})}}ds_{1}...ds_{4pm},
\end{eqnarray*}%
\begin{equation*}
\left\vert g_{\sigma }(s,z)\right\vert :=\dprod\limits_{j=1}^{4pm}g_{\left[
\sigma (j)\right] }(s_{j},z_{\left[ \sigma (j)\right] }),
\end{equation*}%
\begin{equation*}
\left[ j\right] :=\left\{ 
\begin{array}{cc}
j & ,\text{ }1\leq j\leq 2pm \\ 
j-2pm & ,\text{ }m+1\leq j\leq 4pm%
\end{array}%
\right.
\end{equation*}%
and $\sigma \in S(2pm,2pm)$ are shuffle permutations.

Here, 
\begin{equation*}
\sum_{i=1}^{d}\alpha _{\left[ \sigma (j)\right] }^{(i)}=1
\end{equation*}%
for all $j$. Using Stirling 
%TCIMACRO{\U{b4}}%
%BeginExpansion
\'{}%
%EndExpansion
s formula we also see that%
\begin{equation*}
\dprod\limits_{l=1}^{d}(2\left\vert \alpha ^{(l)}\right\vert )!\leq
(2\left\vert \alpha \right\vert )!C^{\left\vert \alpha \right\vert }.
\end{equation*}%
So, it follows from H\"{o}lder `s inequality that 
\begin{eqnarray*}
&&\Psi _{\alpha }^{g}(0,t_{2},z) \\
&\leq &C^{\left\vert \alpha \right\vert }\sqrt{(2\left\vert \alpha
\right\vert )!}\sum_{\sigma \in S(2pm,2pm)}\int_{\Delta
_{0,t_{2}}^{4pm}}\left\vert g_{\sigma }(s,z)\right\vert
\dprod\limits_{j=1}^{4pm}\frac{1}{\left\vert s_{j}-s_{j-1}\right\vert
^{H(d+2)}}ds_{1}...ds_{4pm} \\
&=&C^{2pm}\sqrt{(4pm)!}\sum_{\sigma \in S(2pm,2pm)}\int_{\Delta
_{0,t_{2}}^{4pm}}\left\vert g_{\sigma }(s,z)\right\vert
\dprod\limits_{j=1}^{4pm}\frac{1}{\left\vert s_{j}-s_{j-1}\right\vert
^{H(d+2)}}ds_{1}...ds_{4pm} \\
&\leq &C^{2pm}\sqrt{(4pm)!}\sum_{\sigma \in S(2pm,2pm)}(\int_{\Delta
_{0,t_{2}}^{4pm}}(\left\vert g_{\sigma }^{\ast }(s,z)\right\vert
)^{1+\varepsilon }\dprod\limits_{j=1}^{4pm}\frac{1}{\left\vert
s_{j}-s_{j-1}\right\vert ^{(1+\varepsilon )H(d+2)}}ds_{1}...ds_{4pm})^{\frac{%
1}{1+\varepsilon }}\times \\
&&\times (\int_{\Delta _{0,t_{2}}^{4pm}}\dprod\limits_{j=1}^{4pm}\kappa
_{j}(s_{j})ds_{1}...ds_{4pm})^{\frac{\varepsilon }{1+\varepsilon }}\text{,}
\end{eqnarray*}%
where $\kappa _{j}$ is either $\chi _{\left[ 0,t_{2}\right] }$ or $\chi _{%
\left[ 0,t_{1}\right] }$. Note that $\chi _{\left[ 0,t_{1}\right] }$ appears
exactly $4p$ times in the product $\dprod\limits_{j=1}^{4pm}\kappa _{j}$. On
the other hand, $g_{\sigma }^{\ast }$ is a product of functions given by the
components of the vector field $b_{n}$. Each of those components occur $4p$
times in that product.

So we find that%
\begin{eqnarray*}
&&(\int_{\Delta _{0,t_{2}}^{4pm}}\dprod\limits_{j=1}^{4pm}\kappa
_{j}(s_{j})ds_{1}...ds_{4pm})^{\frac{\varepsilon }{1+\varepsilon }} \\
&\leq &(\int_{\left[ 0,t_{2}\right] ^{4pm}}\dprod\limits_{j=1}^{4pm}\kappa
_{j}(s_{j})ds_{1}...ds_{4pm})^{\frac{\varepsilon }{1+\varepsilon }} \\
&=&((\int_{\left[ 0,t_{2}\right] }\chi _{\left[ 0,t_{1}\right]
}(s)ds)^{4p}(\int_{\left[ 0,t_{2}\right] }\chi _{\left[ 0,t_{2}\right]
}(s)ds)^{4pm-4p})^{\frac{\varepsilon }{1+\varepsilon }} \\
&=&(\left\vert t_{2}-t_{1}\right\vert ^{\frac{\varepsilon }{1+\varepsilon }%
})^{4p}(t_{2})^{(4pm-4p)\frac{\varepsilon }{1+\varepsilon }}\leq (\left\vert
t_{2}-t_{1}\right\vert ^{\frac{\varepsilon }{1+\varepsilon }})^{4p}\text{.}
\end{eqnarray*}%
Further, we also have that%
\begin{eqnarray*}
&&C^{2pm}\sqrt{(4pm)!}\sum_{\sigma \in S(2pm,2pm)}(\int_{\Delta
_{0,t_{2}}^{4pm}}(\left\vert g_{\sigma }^{\ast }(s,z)\right\vert
)^{1+\varepsilon }\dprod\limits_{j=1}^{4pm}\frac{1}{\left\vert
s_{j}-s_{j-1}\right\vert ^{(1+\varepsilon )H(d+2)}}ds_{1}...ds_{4pm})^{\frac{%
1}{1+\varepsilon }} \\
&\leq &C^{2pm}\sqrt{(4pm)!}\sum_{\sigma \in S(2pm,2pm)}\left\Vert g_{\sigma
}^{\ast }(\cdot ,z)\right\Vert _{L^{\infty }(\left[ 0,1\right]
^{4pm})}\times  \\
&&\times (\int_{\Delta _{0,t_{2}}^{4pm}}\dprod\limits_{j=1}^{4pm}\frac{1}{%
\left\vert s_{j}-s_{j-1}\right\vert ^{(1+\varepsilon )H(d+2)}}%
ds_{1}...ds_{4pm})^{\frac{1}{1+\varepsilon }}.
\end{eqnarray*}%
Hence,%
\begin{eqnarray*}
&&\left\vert E\left[ \int_{(\mathbb{R}^{d})^{2pm}}\Lambda _{\alpha
}^{g}(0,t_{2},z)dz\right] \right\vert  \\
&\leq &C^{pm+\left\vert \alpha \right\vert /2}\int_{(\mathbb{R}%
^{d})^{2pm}}(\Psi _{\alpha }^{g}(0,t_{2},z))^{1/2}dz \\
&=&C^{2pm}\int_{(\mathbb{R}^{d})^{2pm}}(\Psi _{\alpha
}^{g}(0,t_{2},z))^{1/2}dz \\
&\leq &K^{2pm}(\int_{(\mathbb{R}^{d})^{2pm}}(C^{2pm}\sqrt{(4pm)!}%
\sum_{\sigma \in S(2pm,2pm)}\left\Vert g_{\sigma }^{\ast }(\cdot
,z)\right\Vert _{L^{\infty }(\left[ 0,1\right] ^{4pm})}\times  \\
&&\times (\int_{\Delta _{0,t_{2}}^{4pm}}\dprod\limits_{j=1}^{4pm}\frac{1}{%
\left\vert s_{j}-s_{j-1}\right\vert ^{(1+\varepsilon )H(d+2)}}%
ds_{1}...ds_{4pm})^{\frac{1}{1+\varepsilon }})^{\frac{1}{2}}(\left\vert
t_{2}-t_{1}\right\vert ^{\frac{\varepsilon }{1+\varepsilon }})^{2p}dz) \\
&\leq &K^{2pm}C^{pm}\sqrt[4]{(4pm)!}\sup_{\sigma \in S(2pm,2pm)}\int_{(%
\mathbb{R}^{d})^{2pm}}(\left\Vert g_{\sigma }^{\ast }(\cdot ,z)\right\Vert
_{L^{\infty }(\left[ 0,1\right] ^{4pm})})^{1/2}dz\times  \\
&&\times \sum_{\sigma \in S(2pm,2pm)}(\int_{\Delta
_{0,t_{2}}^{4pm}}\dprod\limits_{j=1}^{4pm}\frac{1}{\left\vert
s_{j}-s_{j-1}\right\vert ^{(1+\varepsilon )H(d+2)}}ds_{1}...ds_{4pm})^{\frac{%
1}{1+\varepsilon }})^{\frac{1}{2}} \\
&&\times (\left\vert t_{2}-t_{1}\right\vert ^{\frac{\varepsilon }{%
1+\varepsilon }})^{2p}
\end{eqnarray*}%
As in \cite{5BNP19} we observe that%
\begin{eqnarray*}
&&\int_{(\mathbb{R}^{d})^{2pm}}(\left\Vert g_{\sigma }^{\ast }(\cdot
,z)\right\Vert _{L^{\infty }(\left[ 0,1\right] ^{4pm})})^{1/2}dz \\
&=&\int_{(\mathbb{R}^{d})^{2pm}}\prod\limits_{j=1}^{2pm}\left\Vert
g_{j}(\cdot ,z_{j})\right\Vert _{L^{\infty }(\left[ 0,1\right] )}dz \\
&=&\prod\limits_{j=1}^{2pm}\left\Vert g_{j}\right\Vert _{L^{1}(\mathbb{R}%
^{d};L^{\infty }(\left[ 0,1\right] ))}
\end{eqnarray*}%
for all $\sigma $. Hence,%
\begin{eqnarray*}
&&\left\vert E\left[ \int_{(\mathbb{R}^{d})^{2pm}}\Lambda _{\alpha
}^{g}(0,t_{2},z)dz\right] \right\vert  \\
&\leq &(\left\vert t_{2}-t_{1}\right\vert ^{\frac{\varepsilon }{%
1+\varepsilon }})^{2p}C_{p}^{m}\prod\limits_{j=1}^{2pm}\left\Vert
g_{j}\right\Vert _{L^{1}(\mathbb{R}^{d};L^{\infty }(\left[ 0,1\right]
))}\times  \\
&&\times \sqrt[4]{(4pm)!}\sum_{\sigma \in S(2pm,2pm)}(\int_{\Delta
_{0,t_{2}}^{4pm}}\dprod\limits_{j=1}^{4pm}\frac{1}{\left\vert
s_{j}-s_{j-1}\right\vert ^{(1+\varepsilon )H(d+2)}}ds_{1}...ds_{4pm})^{\frac{%
1}{1+\varepsilon }})^{\frac{1}{2}}. \\
&\leq &(\left\vert t_{2}-t_{1}\right\vert ^{\frac{\varepsilon }{%
1+\varepsilon }})^{2p}C_{p}^{m}(\left\Vert b_{n}\right\Vert _{L^{1}(\mathbb{R%
}^{d};L^{\infty }(\left[ 0,1\right] ;\mathbb{R}^{d}))})^{2pm}\times  \\
&&\times \sqrt[4]{(4pm)!}\sum_{\sigma \in S(2pm,2pm)}(\int_{\Delta
_{0,t_{2}}^{4pm}}\dprod\limits_{j=1}^{4pm}\frac{1}{\left\vert
s_{j}-s_{j-1}\right\vert ^{(1+\varepsilon )H(d+2)}}ds_{1}...ds_{4pm})^{\frac{%
1}{1+\varepsilon }})^{\frac{1}{2}}.
\end{eqnarray*}%
On the other hand, Lemma A.7 in \cite{5BNP19} implies for $\varepsilon _{j}=0
$ and $w_{j}=-(1+\varepsilon )H(d+2)$ that%
\begin{eqnarray*}
&&\int_{\Delta _{0,t_{2}}^{4pm}}\dprod\limits_{j=1}^{4pm}\frac{1}{\left\vert
s_{j}-s_{j-1}\right\vert ^{(1+\varepsilon )H(d+2)}}ds_{1}...ds_{4pm} \\
&\leq &C^{4pm}\frac{t_{2}^{-H4pm(1+\varepsilon )(d+2)+4pm}}{\Gamma
(-H4pm(1+\varepsilon )(d+2)+4pm)}.
\end{eqnarray*}%
So altogether, we obtain from (\ref{Start}) that%
\begin{eqnarray*}
&&E\left[ \left\Vert \frac{\partial }{\partial x}X_{t_{2}}^{x,n}-\frac{%
\partial }{\partial x}X_{t_{1}}^{x,n}\right\Vert ^{p}\right] \leq
C(\left\Vert b\right\Vert _{L_{\infty }^{\infty }})\left( \sum_{m\geq
1}\sum_{i\in I}\left\Vert \int_{\Delta _{0,t_{2}}^{m}}\mathcal{H}%
_{i}^{B^{H}}(u)du_{m}...du_{1}\right\Vert _{L^{2p}(\Omega ;\mathbb{R}%
)}\right) ^{p} \\
&\leq &C(\left\Vert b_{n}\right\Vert _{L_{\infty }^{\infty }})\left(
\sum_{m\geq 1}K_{d,p}^{m}\left\vert E\left[ \int_{(\mathbb{R}%
^{d})^{2pm}}\Lambda _{\alpha }^{g}(0,t_{2},z)dz\right] \right\vert
^{1/2p}\right) ^{p} \\
&\leq &C(\left\Vert b_{n}\right\Vert _{L_{\infty }^{\infty }})\left(
\sum_{m\geq 1}K_{d,p}^{m}(\left\vert t_{2}-t_{1}\right\vert ^{\frac{%
\varepsilon }{1+\varepsilon }})C_{p}^{m/2p}(\left\Vert b_{n}\right\Vert
_{L^{1}(\mathbb{R}^{d};L^{\infty }(\left[ 0,1\right] ;\mathbb{R}%
^{d}))})^{m}\times \right.  \\
&&\left. \times \left( \sqrt[4]{(4pm)!}L_{d,p}^{m}\left( C^{4pm}\frac{%
t_{2}^{-H4pm(1+\varepsilon )(d+2)+4pm}}{\Gamma (-H4pm(1+\varepsilon
)(d+2)+4pm)}\right) ^{\frac{1}{2(1+\varepsilon )}}\right) ^{\frac{1}{2p}%
}\right) ^{p} \\
&=&C(\left\Vert b\right\Vert _{L_{\infty }^{\infty }})(\left\vert
t_{2}-t_{1}\right\vert ^{\frac{\varepsilon }{1+\varepsilon }})^{p}\left(
\sum_{m\geq 1}K_{d,p}^{m}C_{p}^{m/2p}(\left\Vert b_{n}\right\Vert _{L^{1}(%
\mathbb{R}^{d};L^{\infty }(\left[ 0,1\right] ;\mathbb{R}^{d}))})^{m}\times
\right.  \\
&&\left. \times \left( \sqrt[4]{(4pm)!}L_{d,p}^{m}\left( C^{4pm}\frac{%
t_{2}^{-H4pm(1+\varepsilon )(d+2)+4pm}}{\Gamma (-H4pm(1+\varepsilon
)(d+2)+4pm)}\right) ^{\frac{1}{2(1+\varepsilon )}}\right) ^{\frac{1}{2p}%
}\right) ^{p}.
\end{eqnarray*}%
\ \ 

By assumption we have that%
\begin{equation*}
H<\frac{1}{2(d+2)}.
\end{equation*}%
Hence, there exists a $\varepsilon \in (0,1)$ such that%
\begin{equation*}
H<\frac{(1-\varepsilon )}{(1+\varepsilon )}\frac{1}{2(d+2)},
\end{equation*}%
which is equivalent to%
\begin{equation*}
\frac{1}{1+\varepsilon }-\frac{1}{2}>H(d+2).
\end{equation*}%
The latter inequality is equivalent to%
\begin{equation*}
-H(d+2)+\frac{1}{1+\varepsilon }>\frac{1}{2}.
\end{equation*}%
Therefore, we get that%
\begin{equation*}
-H4pm(1+\varepsilon )(d+2)+4pm>(1+\varepsilon )2pm.
\end{equation*}%
So, by using Stirling `s formula and the fact that $H<\frac{(1-\varepsilon )%
}{(1+\varepsilon )}\frac{1}{2(d+2)}<\frac{1}{(1+\varepsilon )(d+2)}$, we
find that%
\begin{eqnarray*}
&&\sum_{m\geq 1}(K_{d,p}^{m}C_{p}^{m/2p}(\left\Vert b_{n}\right\Vert _{L^{1}(%
\mathbb{R}^{d};L^{\infty }(\left[ 0,1\right] ;\mathbb{R}^{d}))})^{m}\times \\
&&\times \left( \sqrt[4]{(4pm)!}L_{d,p}^{m}\left( C^{4pm}\frac{%
t_{2}^{-H4pm(1+\varepsilon )(d+2)+4pm}}{\Gamma (-H4pm(1+\varepsilon
)(d+2)+4pm)}\right) ^{\frac{1}{2(1+\varepsilon )}}\right) ^{\frac{1}{2p}}) \\
&\leq &\sum_{m\geq 1}(K_{d,p}^{m}C_{p}^{m/2p}(\left\Vert b_{n}\right\Vert
_{L^{1}(\mathbb{R}^{d};L^{\infty }(\left[ 0,1\right] ;\mathbb{R}%
^{d}))})^{m}\times \\
&&\times T^{-Hm(d+2)+\frac{m}{(1+\varepsilon )}}\left( \sqrt[4]{(4pm)!}%
L_{d,p}^{m}\left( C^{4pm}\frac{1}{\Gamma (-H4pm(1+\varepsilon )(d+2)+4pm)}%
\right) ^{\frac{1}{2(1+\varepsilon )}}\right) ^{\frac{1}{2p}}) \\
&<&\infty
\end{eqnarray*}%
for $T=1$.

Altogether, we see that there exists a continuous function $C:\left[
0,\infty \right) \times \left[ 0,\infty \right) \longrightarrow \left[
0,\infty \right) $ depending on $d,p,H$ and the compact cube $K\subset 
\mathbb{R}^{d}$ such that%
\begin{equation*}
E\left[ \left\Vert \frac{\partial }{\partial x}X_{t_{2}}^{x,n}-\frac{%
\partial }{\partial x}X_{t_{1}}^{x,n}\right\Vert ^{p}\right] (\left\vert
t_{2}-t_{1}\right\vert ^{\frac{\varepsilon }{1+\varepsilon }%
})^{p}\sup_{n\geq 1}C(\left\Vert b_{n}\right\Vert _{L_{\infty }^{\infty
}},\left\Vert b_{n}\right\Vert _{L^{1}(\mathbb{R}^{d};L^{\infty }(\left[ 0,1%
\right] ;\mathbb{R}^{d}))})<\infty
\end{equation*}%
for all $x\in K,$ $n\geq 1_{.}$

Therefore, we can conclude from (\ref{Garcia}) that for all $H<\frac{1}{%
2(d+2)}$ and all compact cubes $K\subset \mathbb{R}^{d}$ there exists a $%
p_{0}=p_{0}(H)>1$ such that for all $p=2^{r}>p_{0}$ there exists a constant $%
C_{d,H,K}$ (which also depends on $\left\Vert b\right\Vert _{L_{\infty
}^{\infty }},\left\Vert b\right\Vert _{L^{1}(\mathbb{R}^{d};L^{\infty }(%
\left[ 0,1\right] ;\mathbb{R}^{d}))}$) such that for all $x,y\in K$: 
\begin{equation*}
E\left[ \left( \sup_{0\leq t\leq 1}\left\Vert X_{t}^{x}-X_{t}^{y}\right\Vert
\right) ^{p}\right] \leq C_{d,H,p,K}\left\Vert x-y\right\Vert ^{p}.
\end{equation*}%
Let $a>1$ and $aq=2^{r}>p_{0}(H)$ for $q>1$. Then there exists a $%
C_{d,H,a,K} $ \ such that for all $x,y\in K$ 
\begin{eqnarray*}
E\left[ \left( \sup_{0\leq t\leq 1}\left\Vert X_{t}^{x}-X_{t}^{y}\right\Vert
\right) ^{a}\right] &\leq &E\left[ \left( \sup_{0\leq t\leq 1}\left\Vert
X_{t}^{x}-X_{t}^{y}\right\Vert \right) ^{aq}\right] ^{\frac{1}{q}} \\
&\leq &C_{d,H,a,K}\left\Vert x-y\right\Vert ^{a}.
\end{eqnarray*}

\end{proof}

\subsection{Pathwise regularization by noise}

We start with a probabilistic result regarding the regularization effect of the averaging operator. From this and with practically the same argument as in \cite{5Shaposhnikov16}, this result is improved to one stated at the pathwise level. 
 
\begin{proposition}\label{5Almost lip.}
Let $H\in\left(0,\frac{1}{2}\right)$ and $\zeta\in\left(0,\frac{1}{2}\right]$ such that $H < \frac{\zeta}{d+2}$. Then, there exist constants $C,\alpha\geq 0$ such that, for any Borel measurable map $b\in L^\infty(\left[r,u\right]\times\mathbb{R}^d,\mathbb{R}^d)$ with $\|b\|_\infty\leq 1$, any Borel measurable functions $h_1,h_2\in L^\infty(\left[r,u\right],\mathbb{R}^d)$ and any $\lambda\geq 0$, the following equality holds:
\begin{equation*}
    \begin{split}
        \mathbb{P}\Bigg[\Big\vert\int_r^u b(s,B_s^H + h_1(s))-b(s,B_s^H + h_2(s)) ds \Big\vert\geq &\lambda \left(u-r\right)^{(1-\zeta)}\|h_1-h_2\|_\infty\Bigg]\\ &\leq C\exp(-\alpha\lambda^2)
    \end{split}
\end{equation*}

\end{proposition}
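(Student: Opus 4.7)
The approach follows Shaposhnikov's strategy for Brownian motion (\cite{5Shaposhnikov16}, Lemma 2.14), adapted to fractional Brownian motion via the shuffle/local-time machinery developed in the appendix. Write
\begin{equation*}
\Phi := \int_r^u \bigl[b(s,B_s^H + h_1(s)) - b(s,B_s^H + h_2(s))\bigr]\,ds, \qquad K := (u-r)^{1-\zeta}\|h_1-h_2\|_\infty.
\end{equation*}
By Chebyshev's inequality and optimization over $p$, the claimed sub-Gaussian tail $\mathbb{P}(|\Phi|\geq \lambda K)\leq Ce^{-\alpha \lambda^2}$ follows from a moment estimate of the form $E|\Phi|^{2p}\leq (C_0 p)^{p} K^{2p}$ for every $p\in\mathbb{N}$, with $C_0=C_0(d,H,\zeta)$; so the entire task reduces to proving this sub-Gaussian moment bound.

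We may assume $b\in C_c^\infty$: approximate the general $b$ by smooth $b_n$ with $\|b_n\|_\infty\leq 1$ and $b_n\to b$ a.e., and observe that dominated convergence transfers any uniform-in-$n$ moment bound from the corresponding $\Phi_n$ to $\Phi$. With $b$ smooth, expand
\begin{equation*}
E|\Phi|^{2p} = (2p)!\int_{\Delta_{r,u}^{2p}} E\prod_{i=1}^{2p}\phi_i\, ds_1\cdots ds_{2p}, \qquad \phi_i := b(s_i,B_{s_i}^H + h_1(s_i)) - b(s_i,B_{s_i}^H + h_2(s_i)),
\end{equation*}
and represent each $\phi_i$ via Fourier inversion of $b(s_i,\cdot)$. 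The crucial pointwise inequality
\begin{equation*}
\bigl|e^{i\xi\cdot h_1(s)} - e^{i\xi\cdot h_2(s)}\bigr|\leq |\xi|\,\|h_1-h_2\|_\infty,
\end{equation*}
applied in each of the $2p$ factors, extracts the linear factor $\|h_1-h_2\|_\infty^{2p}$ at the price of one spatial derivative per factor (total multi-index $|\alpha|=2p$ in the notation of the appendix).

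Next, apply the shuffle expansion together with the local-time/integration-by-parts estimate of Lemma \ref{LocalTimeEstimate} and Theorem \ref{5mainestimate2}, exactly as in the proof of Theorem \ref{5MainEstimate}. Tracking the factorial and gamma contributions produces, after integration over the simplex, a bound of the form
\begin{equation*}
\Big|E\prod_{i=1}^{2p} \phi_i\Big|\leq C^{2p}\,\|h_1-h_2\|_\infty^{2p}\,\sqrt{(4p)!}\,\frac{(u-r)^{2p(1-H(d+2))}}{\Gamma\bigl(4p(1-H(d+2))+1\bigr)^{1/2}}.
\end{equation*}
Since $H<\zeta/(d+2)$ by hypothesis, the time exponent satisfies $2p(1-H(d+2))\geq 2p(1-\zeta)$. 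Combining with the $(2p)!$ from the simplex expansion and applying Stirling's formula, the factorials reassemble into $(C_0 p)^p$, yielding the desired sub-Gaussian bound.

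The main obstacle is the delicate combinatorial bookkeeping in this last step: one must verify that the Stirling asymptotics of $(2p)!\sqrt{(4p)!}/\Gamma(4p(1-H(d+2))+1)^{1/2}$ are of order $(Cp)^{p}$ rather than $(Cp)^{2p}$, i.e.\ sub-Gaussian and not merely sub-exponential. This is precisely where the threshold $H<\zeta/(d+2)$ enters: $1-H(d+2)$ plays the role of the Brownian exponent $1/2$, forcing the denominator gamma factor to compensate $\sqrt{(4p)!}$ in the correct proportion; for $\zeta=1/2$ we recover the Brownian scaling of \cite{5Shaposhnikov16}.
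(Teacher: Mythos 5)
Your overall strategy (Chebyshev plus growth control of all moments $E|\Phi|^{2p}$) is sound in principle, but the specific machinery you invoke does not deliver the bound, and the paper in fact proves the proposition by an entirely different route. Two concrete problems. First, the shuffle/local-time estimates you appeal to (Lemma \ref{LocalTimeEstimate} and Theorem \ref{5mainestimate2}) produce bounds in terms of $\prod_j\Vert f_j\Vert_{L^{1}(\mathbb{R}^{d};L^{\infty}([0,T]))}$, i.e.\ they require spatial integrability of $b$. Proposition \ref{5Almost lip.} assumes only $\Vert b\Vert_{\infty}\leq 1$, and the constants $C,\alpha$ must be uniform over that class; your reduction to $b_n\in C_c^{\infty}$ does not help, since $\Vert b_n\Vert_{L^1(\mathbb{R}^d)}\to\infty$ for a generic bounded $b$, so the "uniform-in-$n$" moment bound you need is never established. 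Second, the factorial bookkeeping does not close even granting the $L^1$ issue: with $m=2p$ factors, total derivative order $|\alpha|=2p$, the cited estimates give a numerator of order $((4p)!)^{1/4}$ (you wrote $\sqrt{(4p)!}$, which is worse) against a denominator $\Gamma\bigl(4p\beta\bigr)^{1/2}$ with $\beta=1-H(d+2)\in(\tfrac12,1)$, and together with the $(2p)!$ from symmetrizing $[r,u]^{2p}$ onto the simplex one gets $E|\Phi|^{2p}\lesssim (Cp)^{(3-2\beta)p}K^{2p}$ with $3-2\beta>1$. That yields only a stretched-exponential tail $\exp(-c\lambda^{2/(3-2\beta)})$, not the sub-Gaussian $\exp(-\alpha\lambda^2)$ claimed; your assertion that "the factorials reassemble into $(C_0p)^p$" is exactly the step that fails, and the hypothesis $H<\zeta/(d+2)$ does not rescue it.

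The paper's proof avoids both obstructions by working with exponential moments directly rather than with the combinatorial expansion. It first proves the auxiliary Lemma: $E[\exp(\alpha|t-s|^{-2(1-3H)}\Vert\int_s^tD_xb(u,B_u^H)du\Vert^2)]<C$ for $\Vert b\Vert_\infty\leq 1$. The mechanism is a Clark--Ocone decomposition of $\nabla_xb(r,B_r^H)$ in the underlying Wiener process, followed by Gaussian semigroup integration by parts, $P_{\sigma_H^2}(\partial_jb)(y)=-\sigma_H^{-2}E[b(W_{\sigma_H^2}+y)W^{(j)}_{\sigma_H^2}]$, which trades each derivative for a factor $\sigma_H(r,u)^{-1}\lesssim|r-u|^{-H}$ at the price of only $\Vert b\Vert_\infty$ --- this is what makes the estimate uniform over merely bounded $b$. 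The martingale part is then written as a time-changed Brownian motion via Dambis--Dubins--Schwarz with quadratic variation $\lesssim\Vert b\Vert_\infty^2|t-s|^{2-6H}$, and Fernique's theorem supplies the Gaussian exponential moment. The proposition follows by the mean value theorem (to extract $\Vert h_1-h_2\Vert_\infty$), an approximation argument, and Chebyshev's inequality applied to $\exp(\alpha X^2)$. If you want to salvage a moment-based proof, you would need an analogue of the conditional-expectation/heat-kernel step that costs only $\Vert b\Vert_\infty$ per derivative and produces moments of order $(Cp)^p$; the local-time estimates of Section 5 are not that tool.
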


This is based on the following estimate (compare \cite{5Davie07} and \cite{5Shaposhnikov16}):

\begin{lemma}
\bigskip Let $b\in C_{b}^{1}(\left[ 0,T\right] \times \mathbb{R}^{d};\mathbb{%
R}^{d})$ and $\left\Vert b\right\Vert _{\infty }\leq 1$. Suppose that $H<%
\frac{1}{6}$. Then there exists a $C<\infty $ and a sufficiently small $%
\alpha >0$ (which depend on $H,d,T$, but not on $b$) such that for $0\leq
s<t\leq T$:%
\begin{equation*}
E\left[ \exp (\frac{\alpha }{\left\vert t-s\right\vert ^{2(1-3H)}}\left\Vert
\int_{s}^{t}D_{x}b(u,B_{u}^{H})du\right\Vert ^{2})\right] <C,
\end{equation*}%
where $D_{x}$ is the Fr\'{e}chet derivative of $b$ with respect to the
spatial variable $x$.

\end{lemma}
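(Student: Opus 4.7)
The strategy is to expand the exponential into its Taylor series and reduce the claim to a polynomial moment bound of the form
\begin{equation*}
E\!\left[\|F_{s,t}\|^{2k}\right]\leq C_0^{\,k}\,k!\,|t-s|^{2k(1-3H)},\qquad F_{s,t}:=\int_s^t D_xb(u,B_u^H)\,du,
\end{equation*}
with $C_0=C_0(H,d,T)$ independent of $b$. Indeed, Fubini gives
\begin{equation*}
E\!\left[\exp\!\left(\tfrac{\alpha\,\|F_{s,t}\|^2}{|t-s|^{2(1-3H)}}\right)\right]=\sum_{k\geq 0}\frac{\alpha^k}{k!\,|t-s|^{2k(1-3H)}}\,E\!\left[\|F_{s,t}\|^{2k}\right],
\end{equation*}
and the polynomial bound makes this series geometric in $\alpha C_0$, bounded whenever $\alpha<1/C_0$.

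To establish the polynomial bound I would first expand $\|F_{s,t}\|^{2k}$ componentwise, obtaining (up to a combinatorial prefactor $K(d)^{k}$) a finite sum of products $\prod_{l=1}^{2k}\int_s^t\partial_{x_{j_l}}b^{(i_l)}(u,B_u^H)\,du$. Using the symmetrization identity $(\int_s^t g\,du)^{n}=n!\int_{\Delta_{s,t}^{n}}g(u_1)\cdots g(u_n)\,du$ together with the shuffle product, each such product is rewritten as $(2k)!$ times an iterated integral over the simplex $\Delta_{s,t}^{2k}$ of a product of $2k$ factors $\partial_{x_{j_l}}b^{(i_l)}(u_l,B_{u_l}^H)$. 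The task is then reduced to bounding
\begin{equation*}
E\!\left[\int_{\Delta_{s,t}^{2k}}\prod_{l=1}^{2k}\partial_{x_{j_l}}b^{(i_l)}(u_l,B_{u_l}^H)\,du\right].
\end{equation*}

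I would estimate this expectation by the Malliavin / local-time integration-by-parts machinery developed for Lemma \ref{5RelativeCompactness} and Lemma A.7 of \cite{5BNP19}: approximate $b$ by smooth vector fields, invoke Girsanov's theorem to reduce to the free fractional Brownian motion, transfer each first-order spatial derivative off $b$ onto the Gaussian density of $(B_{u_1}^H,\ldots,B_{u_{2k}}^H)$, and use $\|b\|_\infty\leq 1$ to absorb the vector field pointwise. The standard density estimate yields a per-vertex factor $(u_l-u_{l-1})^{-H(d+2)}$ (one derivative of order one at each vertex), and Dirichlet's identity gives
\begin{equation*}
\int_{\Delta_{s,t}^{2k}}\prod_{l=1}^{2k}(u_l-u_{l-1})^{-H(d+2)}\,du=\frac{\Gamma(1-H(d+2))^{2k}}{\Gamma(2k(1-H(d+2))+1)}\,|t-s|^{2k(1-H(d+2))}.
\end{equation*}
When the scaling reads $|t-s|^{2k(1-3H)}$, Stirling's formula converts the $(2k)!$ prefactor into $k!$ exactly when $2(1-3H)>1$, i.e.\ $H<1/6$.

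The principal obstacle is the factorial balancing. The naive Fubini / symmetrization step contributes $(2k)!$, and any further combinatorial blow-up from the shuffle sums appearing in the Malliavin expansion must be tamed by the Gamma-function denominator $\Gamma(2k(1-3H)+1)$. The threshold $H<1/6$ is exactly the critical value at which Stirling's comparison $(2k)!/\Gamma(2k(1-3H)+1)\leq C_0^{\,k}\,k!$ becomes valid: any weaker regularization by the fBm fails to compress $(2k)!$ down to $k!$ and the Taylor series for the exponential moment diverges. A secondary care point is to carry out the IBP uniformly on the full simplex while retaining the $\|b\|_\infty$-normalization, which is handled by the smooth-approximation / Girsanov scheme already used in Section 2.
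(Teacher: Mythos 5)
Your reduction of the exponential moment to the family of bounds $E[\|F_{s,t}\|^{2k}]\leq C_0^k\,k!\,|t-s|^{2k(1-3H)}$ is a legitimate general strategy, and your Stirling arithmetic correctly identifies $2(1-3H)>1$, i.e.\ $H<1/6$, as the threshold at which such a bound would close the series. The gap is in how you propose to prove the moment bound. The simplex/shuffle/local-time machinery you invoke (Lemma \ref{5RelativeCompactness}, Theorem \ref{5mainestimate2}, Lemma A.7 of \cite{5BNP19}) is normalized in $\left\Vert b\right\Vert _{L^{1}(\mathbb{R}^{d};L^{\infty }([0,T]))}$, because the derivatives are transferred to the Gaussian through Fourier inversion over $(\mathbb{R}^d)^{2k}$; the present lemma assumes only $b\in C_b^1$ with $\left\Vert b\right\Vert_\infty\leq 1$, and no $L^1_x$ control is available. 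Moreover, that route pays a per-vertex singularity $|u_l-u_{l-1}|^{-H(d+2)}$ (the $Hd$ is the price of the $d$-dimensional frequency integration), so the resulting time exponent is $2k\bigl(1-H(d+2)\bigr)$ and the convergence threshold is $H<\frac{1}{2(d+2)}$ --- this coincides with the lemma's dimension-free $2k(1-3H)$ and $H<1/6$ only when $d=1$. Your sketch conflates the two regimes: you cannot simultaneously "absorb $b$ pointwise in sup norm" and quote the $(u_l-u_{l-1})^{-H(d+2)}$ density estimate, since the latter is tied to the $L^1_x$ normalization.

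The paper's proof is entirely different and avoids iterated integrals altogether. It applies the Clark--Ocone formula to $\nabla_x b(r,B_r^H)$ in the direction of the underlying Wiener process $W$ of the representation $B_t^H=\int_0^t K_H(t,u)\,dW_u$, writing $\int_s^t\nabla_xb(r,B_r^H)\,dr$ as a conditional-expectation (drift) term plus a stochastic integral $\int_s^tA(u)\,dW_u$. The conditional expectations are heat-semigroup smoothings of $\partial_jb$ and $\partial_i\partial_jb$, and a single integration by parts against the heat kernel bounds them using only $\left\Vert b\right\Vert_\infty$ at a cost of one power of $\sigma_H(r,u)\sim|r-u|^{H}$ per derivative --- dimension enters only in constants, never in the exponent. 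This gives the deterministic bounds $\|A(u)\|\lesssim |t-u|^{1/2-3H}+|t-u|^{1/2-H}$, hence $\langle M\rangle_t\leq C\left\Vert b\right\Vert_\infty^2|t-s|^{2-6H}$; the condition $H<1/6$ is exactly what makes the exponent $\tfrac12-3H$ positive. Dambis--Dubins--Schwarz then dominates the martingale in law by a time-changed Brownian motion run up to $\gamma(t,s)\sim|t-s|^{2-6H}$, and Fernique's theorem delivers the exponential square integrability. If you wish to salvage your moment-based outline, the correct input is this martingale decomposition together with BDG, not the local-time estimates of Section 2.
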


\begin{proof}
IConsider without loss of generality the case, when $b\in C_{b}^{1}(\left[ 0,T%
\right] \times \mathbb{R}^{d};\mathbb{R})$. Assume that $b$ is smooth and
compactly supported. Then the Clark-Ocone formula combined with the chain
rule for Malliavin derivatives $D^{W}$ (see \cite{5Nualart10}) in the
direction of the Wiener process $W_{t},0\leq t\leq T$ in the representation%
\begin{equation*}
B_{t}^{H}=\int_{0}^{t}K_{H}(t,u)I_{d\times d}dW_{u},0\leq t\leq T
\end{equation*}%
yields for $r>s$%
\begin{eqnarray*}
\nabla _{x}b(r,B_{r}^{H}) &=&E\left[ \nabla _{x}b(r,B_{r}^{H})\right.
\left\vert \mathcal{F}_{s}\right] +\int_{s}^{r}(E\left[ D_{u}^{W}(\nabla
_{x}b(r,B_{r}^{H}))\right. \left\vert \mathcal{F}_{u}\right] )^{\ast }dW_{u}
\\
&=&E\left[ \nabla _{x}b(r,B_{r}^{H})\right. \left\vert \mathcal{F}_{s}\right]
+\int_{s}^{r}K_{H}(r,u)(E\left[ \nabla _{x}^{2}b(r,B_{r}^{H}))\right.
\left\vert \mathcal{F}_{u}\right] )^{\ast }dW_{u}.
\end{eqnarray*}%
So using the stochastic Fubini theorem we get that%
\begin{eqnarray*}
\int_{s}^{t}\nabla _{x}b(r,B_{r}^{H})dr &=&\int_{s}^{t}E\left[ \nabla
_{x}b(r,B_{r}^{H})\right. \left\vert \mathcal{F}_{s}\right]
dr+\int_{s}^{t}\int_{s}^{r}K_{H}(r,u)(E\left[ \nabla
_{x}^{2}b(r,B_{r}^{H}))\right. \left\vert \mathcal{F}_{u}\right] )^{\ast
}dW_{u}dr \\
&=&\int_{s}^{t}E\left[ \nabla _{x}b(r,B_{r}^{H})\right. \left\vert \mathcal{F%
}_{s}\right] dr+\int_{s}^{t}\int_{u}^{t}K_{H}(r,u)(E\left[ \nabla
_{x}^{2}b(r,B_{r}^{H}))\right. \left\vert \mathcal{F}_{u}\right] )^{\ast
}drdW_{u}.
\end{eqnarray*}%
We can also write%
\begin{eqnarray*}
B_{r}^{H} &=&E\left[ B_{r}^{H}\right. \left\vert \mathcal{F}_{u}\right]
+B_{r}^{H}-E\left[ B_{r}^{H}\right. \left\vert \mathcal{F}_{u}\right]  \\
&=&\int_{0}^{u}K_{H}(r,l)I_{d\times
d}dW_{l}+\int_{u}^{r}K_{H}(r,l)I_{d\times d}dW_{l} \\
&=&:Z_{u}^{(1)}+Z_{r,u}^{(2)},r>u\text{.}
\end{eqnarray*}%
Since $Z_{u}^{(1)}$, $Z_{r,u}^{(2)}$ are independent with%
\begin{equation*}
Cov\left[ Z_{r,u}^{(2)}\right] =\sigma _{H}^{2}(r,u)I_{d\times d}
\end{equation*}%
for $\sigma _{H}^{2}(r,u):=\int_{u}^{r}K_{H}^{2}(r,l)dl$, we find that%
\begin{equation*}
E\left[ \nabla _{x}b(r,B_{r}^{H})\right. \left\vert \mathcal{F}_{s}\right]
=((P_{\sigma _{H}^{2}(r,s)}(\frac{\partial }{\partial x_{j}}b(r,\cdot
))(Z_{s}^{(1)}))_{1\leq j\leq d})^{\ast }
\end{equation*}%
as well as 
\begin{equation*}
E\left[ \nabla _{x}^{2}b(r,B_{r}^{H}))\right. \left\vert \mathcal{F}_{u}%
\right] =(P_{\sigma _{H}^{2}(r,u)}(\frac{\partial ^{2}}{\partial
x_{i}\partial x_{j}}b(r,\cdot ))(Z_{u}^{(1)}))_{1\leq i,j\leq d},
\end{equation*}%
where $P_{t},t\geq 0$ is the semigroup associated with the Wiener process.
See for similar arguments in \cite{5NO02} or \cite{5MMNPZ13}.

Further, using standard integration by parts in connection with the heat
kernel we observe that%
\begin{eqnarray*}
P_{\sigma _{H}^{2}(r,s)}(\frac{\partial }{\partial x_{j}}b(r,\cdot ))(y) &=&%
\frac{1}{(2\pi )^{d/2}\sigma _{H}(r,s)}\int_{\mathbb{R}^{d}}\frac{\partial }{%
\partial x_{j}}b(r,x+y)\exp (-\frac{1}{2\sigma _{H}^{2}(r,s)}\left\Vert
x\right\Vert ^{2})dx \\
&=&-\frac{1}{\sigma _{H}^{2}(r,s)}E\left[ b(r,W_{\sigma
_{H}^{2}(r,s)}+y)W_{\sigma _{H}^{2}(r,s)}^{(j)}\right] 
\end{eqnarray*}%
as well as similarly%
\begin{eqnarray*}
&&(P_{\sigma _{H}^{2}(r,u)}(\frac{\partial ^{2}}{\partial x_{i}\partial x_{j}%
}b(r,\cdot ))(y))_{1\leq i,j\leq d} \\
&=&\frac{1}{\sigma _{H}^{4}(r,u)}E\left[ b(r,W_{\sigma
_{H}^{2}(r,u)}+y)(W_{\sigma _{H}^{2}(r,u)}\otimes W_{\sigma
_{H}^{2}(r,u)}-\sigma _{H}^{2}(r,u)I_{d\times d})\right] \text{.}
\end{eqnarray*}%
So we obtain that 
\begin{eqnarray*}
&&\int_{s}^{t}\nabla _{x}b(r,B_{r}^{H})dr \\
&=&\int_{s}^{t}-\frac{1}{\sigma _{H}^{2}(r,s)}(\left. E\left[ b(r,W_{\sigma
_{H}^{2}(r,s)}+y)W_{\sigma _{H}^{2}(r,s)}\right] \right\vert
_{y=Z_{s}^{(1)}})^{\ast }dr \\
&&+\int_{s}^{t}\int_{u}^{t}K_{H}(r,u)\frac{1}{\sigma _{H}^{4}(r,u)}\times  \\
&&\times (\left. E\left[ b(r,W_{\sigma _{H}^{2}(r,u)}+y)(W_{\sigma
_{H}^{2}(r,u)}\otimes W_{\sigma _{H}^{2}(r,u)}-\sigma
_{H}^{2}(r,u)I_{d\times d})\right] \right\vert _{y=Z_{u}^{(1)}})^{\ast
}drdW_{u}.
\end{eqnarray*}%
We also know that $\sigma _{H}(r,s)\geq c_{H}\left\vert r-s\right\vert ^{H}$%
. Thus we get that 
\begin{eqnarray*}
&&\left\Vert \int_{s}^{t}-\frac{1}{\sigma _{H}^{2}(r,s)}(\left. E\left[
b(r,W_{\sigma _{H}^{2}(r,s)}+y)W_{\sigma _{H}^{2}(r,s)}\right] \right\vert
_{y=Z_{s}^{(1)}})^{\ast }dr\right\Vert  \\
&\leq &\left\Vert b\right\Vert _{\infty }\int_{s}^{t}\frac{1}{\sigma
_{H}^{2}(r,s)}E\left[ \left\Vert W_{\sigma _{H}^{2}(r,s)}\right\Vert \right]
dr\leq C(H,d)\left\Vert b\right\Vert _{\infty }\left\vert t-s\right\vert
^{1-H}.
\end{eqnarray*}%
In addition, we find that%
\begin{eqnarray*}
&&\left\Vert \int_{u}^{t}K_{H}(r,u)\frac{1}{\sigma _{H}^{4}(r,u)}(\left. E%
\left[ b(r,W_{\sigma _{H}^{2}(r,u)}+y)(W_{\sigma _{H}^{2}(r,u)}\otimes
W_{\sigma _{H}^{2}(r,u)}-\sigma _{H}^{2}(r,u)I_{d\times d}\right]
\right\vert _{y=Z_{u}^{(1)}})^{\ast }dr\right\Vert  \\
&\leq &\left\Vert b\right\Vert _{\infty }\int_{u}^{t}K_{H}(r,u)\frac{1}{%
\sigma _{H}^{4}(r,u)}(E\left[ (\left\Vert W_{\sigma _{H}^{2}(r,u)}\otimes
W_{\sigma _{H}^{2}(r,u)}\right\Vert \right] +\sigma _{H}^{2}(r,u))dr \\
&\leq &C(H,d)\left\Vert b\right\Vert _{\infty }(\frac{1}{\frac{1}{2}-3H}%
\left\vert t-u\right\vert ^{\frac{1}{2}-3H}+\frac{1}{\frac{1}{2}-H}%
\left\vert t-u\right\vert ^{\frac{1}{2}-H}).
\end{eqnarray*}%
Define%
\begin{eqnarray*}
A(u) &=&(a_{ij}(u))_{1\leq i,j\leq d} \\
&=&\int_{u}^{t}K_{H}(r,u)\frac{1}{\sigma _{H}^{4}(r,u)}(\left. E\left[
b(r,W_{\sigma _{H}^{2}(r,u)}+y)(W_{\sigma _{H}^{2}(r,u)}\otimes W_{\sigma
_{H}^{2}(r,u)}-\sigma _{H}^{2}(r,u)I_{d\times d})\right] \right\vert
_{y=Z_{u}^{(1)}})^{\ast }dr.
\end{eqnarray*}%
Then%
\begin{equation*}
\left\Vert \int_{s}^{t}A(u)dW_{u}\right\Vert ^{2}\leq \frac{1}{2}%
\sum_{i=1}^{d}\sum_{j_{1},j_{2}=1}^{d}((%
\int_{s}^{t}a_{ij_{1}}(u)dW_{u}^{(j_{1})})^{2}+(%
\int_{s}^{t}a_{ij_{2}}(u)dW_{u}^{(j_{2})})^{2}).
\end{equation*}%
So for $\beta >0$%
\begin{eqnarray*}
&&E\left[ \exp (\beta \left\Vert \int_{s}^{t}A(u)dW_{u}\right\Vert ^{2})%
\right]  \\
&\leq &E\left[ \exp (\beta
d\sum_{i=1}^{d}\sum_{j_{1}=1}^{d}(%
\int_{s}^{t}a_{ij_{1}}(u)dW_{u}^{(j_{1})})^{2})\right] ^{1/2}E\left[ \exp
(\beta
d\sum_{i=1}^{d}\sum_{j_{2}=1}^{d}(%
\int_{s}^{t}a_{ij_{2}}(u)dW_{u}^{(j_{2})})^{2})\right] ^{1/2} \\
&\leq &\dprod\limits_{i,j=1}^{d}E\left[ \exp (\beta
d^{3}(\int_{s}^{t}a_{ij}(u)dW_{u}^{(j)})^{2})\right] ^{1/d^{2}}.
\end{eqnarray*}%
The theorem of Dambis-Dubins (see e.g. \cite{KaratzasShreve}) also shows that%
\begin{equation*}
M_{h}^{(i,j)}:=\int_{s}^{h}a_{ij}(u)dW_{u}^{(j)}=B_{\left\langle
M^{(i,j)}\right\rangle _{h}}^{(i,j)},h\leq t
\end{equation*}%
where $B_{u}^{(i,j)},u\geq 0$ is a Brownian motion with respect to a certain
filtration (on a possibly extended probability space) for each $i,j$. Since%
\begin{eqnarray*}
\left\langle M^{(i,j)}\right\rangle _{t}
&=&\int_{s}^{t}(a_{ij}(u))^{2}du\leq \int_{s}^{t}\left\Vert A(u)\right\Vert
^{2}du \\
&\leq &C(H,d)\left\Vert b\right\Vert _{\infty }^{2}\int_{s}^{t}(\frac{1}{%
\frac{1}{2}-3H}\left\vert t-u\right\vert ^{\frac{1}{2}-3H}+\frac{1}{\frac{1}{%
2}-H}\left\vert t-u\right\vert ^{\frac{1}{2}-H})^{2}du \\
&\leq &C(H,d,T)\left\Vert b\right\Vert _{\infty }^{2}\left\vert
t-s\right\vert ^{2-6H}=:\gamma (t,s)
\end{eqnarray*}%
and 
\begin{equation*}
(M_{t}^{(i,j)})^{2}\leq (\sup_{0\leq l\leq \gamma (t,s)}\left\vert
B_{l}^{(i,j)}\right\vert )^{2}\overset{law}{=}\gamma (t,s)(\sup_{0\leq l\leq
1}\left\vert B_{l}\right\vert )^{2}
\end{equation*}%
for a Brownian motion $B_{l},0\leq l\leq 1$. Hence, it follows from Fernique 
%TCIMACRO{\U{b4}}%
%BeginExpansion
\'{}%
%EndExpansion
s theorem that there exists a sufficiently small $\beta =\beta (H,d,T)>0$
such that%
\begin{eqnarray*}
E\left[ \exp (\beta \left\Vert \int_{s}^{t}A(u)dW_{u}\right\Vert ^{2})\right]
&\leq &\dprod\limits_{i,j=1}^{d}E\left[ \exp (\beta d^{3}\gamma
(t,s)(\sup_{0\leq l\leq 1}\left\vert B_{l}^{(i,j)}\right\vert )^{2})\right]
^{1/d^{2}} \\
&=&E\left[ \exp (\beta d^{3}\gamma (t,s)(\sup_{0\leq l\leq 1}\left\vert
B_{l}\right\vert )^{2})\right] <\infty .
\end{eqnarray*}%
Altogether, we see that there exists a $C(H,d,T)>0$ and a sufficiently small 
$\alpha =\alpha (H,d,T)>0$ such that%
\begin{equation*}
E\left[ \exp (\alpha \left\Vert \int_{s}^{t}D_{x}b(u,B_{u}^{H})du\right\Vert
^{2})\right] \leq E\left[ \exp (\alpha C(H,d,T)\left\Vert b\right\Vert
_{\infty }^{2}\left\vert t-s\right\vert ^{2-6H}(1+\sup_{0\leq l\leq
1}\left\vert B_{l}\right\vert )^{2})\right] <\infty .
\end{equation*}

The general case of vector fields $b\in C_{b}^{1}(\left[ 0,T\right] \times 
\mathbb{R}^{d};\mathbb{R}^{d})$ is obtained by using approximation combined
with Fatou 
%TCIMACRO{\U{b4}}%
%BeginExpansion
\'{}%
%EndExpansion
s Lemma.

\end{proof}

\begin{proof}[Proof of Proposition \ref{5Almost lip.}]
The proof follows directly from an approximation argument combined with Chebyshev's inequality and the previous lemma. 
\end{proof}

%\begin{remark}
%Proposition \ref{5Almost lip.} can be seen as a probabilistic statement regarding the regularity of the so-called %averaging operator. This is stated using $\zeta$ which represents the power of regularization of the fractional %noise. Note that to achieve the same regularizing power as Brownian motion, $\zeta$ must be chosen to be equal %to $\frac{1}{2}$ and hence $H < \frac{1}{2(2+d)}$.
%\end{remark}

In what follows we will need the following functional space:

\begin{equation*}
\begin{split}
Lip_N([r,u],\mathbb{R}^d) &:=  \\ 
& \{h\in \mathcal{C}([r,u],\mathbb{R}^d) : \|h(t)-h(s)\|\leq |t-s|, (t,s)\in[r,u]^2,\max_{s\in[r,u]}\|h(s)\|\leq N\}
\end{split}
\end{equation*}

endowed with the uniform metric.

Now we can state the main result of this section.

\begin{proposition}
There exists constants $C$ and $\nu>0$, independent of $l:=u-r$, and a set $\Omega^{'}$ such that \[P(\Omega \backslash \Omega^{'})\leq C\exp(-l^{-\nu})\]

and for any $h_1,h_2\in Lip_N([r,u],\mathbb{R}^d)$ with $\|h_1-h_2\|\leq 4l$, $\omega\in\Omega^{'}$ the following estimate holds:

\[\|\int_r^u b(s,B_s^H(\omega) + h_1(s))-b(s,B_s^H(\omega) + h_2(s)) ds \| \leq Cl^{\frac{4}{3}}\]
\end{proposition}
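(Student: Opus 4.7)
The plan is to upgrade the pairwise probabilistic estimate of the preceding proposition (the ``Almost Lipschitz'' estimate) to a uniform pathwise bound over the compact class $\mathrm{Lip}_N([r,u], \mathbb{R}^d)$ via a Dudley-type chaining argument. Fix $\zeta \in (0, 1/2]$ with $H < \zeta/(d+2)$ (permitted by the standing hypothesis on $H$) and set
\[
F(h; \omega) := \int_r^u b(s, B_s^H(\omega) + h(s))\,ds.
\]
The preceding proposition then asserts that $(F(h))_{h \in \mathrm{Lip}_N}$ has sub-Gaussian increments with respect to the metric $d_F(h_1, h_2) := l^{1-\zeta}\, \|h_1 - h_2\|_\infty$.

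Piecewise-linear interpolation on a grid of mesh $\eta$ with values quantized at scale $\eta$ gives $\log N(\eta, \mathrm{Lip}_N, \|\cdot\|_\infty) \leq C(d,N)(1 + l/\eta)$, equivalently $\log N(\epsilon, \mathrm{Lip}_N, d_F) \leq C(d,N)(1 + l^{2-\zeta}/\epsilon)$. Set $\epsilon_k := 2^{-k} \cdot 4 l^{2-\zeta}$ and choose $\epsilon_k$-nets $\mathcal{N}_k$ in $d_F$ (of cardinality $\leq \exp(C \cdot 2^k)$) with nearest-point projections $\pi_k$. Define $\Omega \setminus \Omega'$ as the union over $k \geq 1$ of the bad events
\[
E_k := \bigl\{\exists\,(\tilde g, \tilde g') \in \mathcal{N}_k \times \mathcal{N}_{k-1}:\ d_F(\tilde g, \tilde g') \leq 3\epsilon_{k-1},\ |F(\tilde g) - F(\tilde g')| > 3\lambda_k \epsilon_{k-1}\bigr\}.
\]
With $\lambda_k^2 := C'(2^k + l^{-\nu_0})$ for $C'$ large and $\nu_0 \in (0, 4/3 - 2\zeta)$ small, a union bound combined with the preceding proposition yields $\mathbb{P}(E_k) \leq \exp(-c\, l^{-\nu_0})$ after absorbing the geometric factor $\exp(C 2^k)$, so that $\mathbb{P}(\Omega \setminus \Omega') \leq C\exp(-l^{-\nu})$ for $\nu := \nu_0 > 0$.

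On $\Omega'$, for every $h \in \mathrm{Lip}_N$ the telescoping identity $F(h) = F(\pi_0(h)) + \sum_{k \geq 1}[F(\pi_k(h)) - F(\pi_{k-1}(h))]$ combined with $d_F(\pi_k(h), \pi_{k-1}(h)) \leq 3\epsilon_{k-1}$ yields $\sup_h |F(h) - F(\pi_0(h))| \leq 3 \sum_{k \geq 1} \lambda_k \epsilon_{k-1} \lesssim l^{2-\zeta} + l^{2-\zeta - \nu_0/2}$. Adding the analogous bound for pairs in the finite set $\mathcal{N}_0$ (direct application of the preceding proposition plus union bound), one obtains $|F(h_1) - F(h_2)| \lesssim l^{2-\zeta} + l^{2-\zeta - \nu_0/2}$ uniformly over admissible pairs. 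Since $\zeta \leq 1/2$ gives $2 - \zeta \geq 3/2$ and $\nu_0 < 4/3 - 2\zeta$ forces $2 - \zeta - \nu_0/2 > 4/3$, both exponents exceed $4/3$, so the bound reduces to $C l^{4/3}$ for $l$ sufficiently small.

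The main obstacle is the coupled choice of $(\epsilon_k, \lambda_k)$ achieving simultaneously (i) a summable tail $\sum_k \mathbb{P}(E_k) \leq C\exp(-l^{-\nu})$ despite $|\mathcal{N}_k|$ growing like $\exp(C 2^k)$, and (ii) a telescoped deterministic bound of order $l^{4/3}$; the secondary technical point of justifying the telescoping identity pathwise is handled either via smooth approximation of $b$ (under which $F$ becomes pathwise continuous in $h$ by dominated convergence, so $F(\pi_k(h)) \to F(h)$ for each $\omega \in \Omega'$ and the bound passes to the limit) or by a direct residual-bounding argument at a sufficiently fine finest scale.
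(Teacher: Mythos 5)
Your proposal is correct and follows essentially the same route as the paper, whose proof of this proposition consists of invoking Proposition 2.14 with $\zeta=\tfrac{1}{2}$ and deferring to the chaining argument of Lemma 3.6 in Shaposhnikov's paper — precisely the dyadic-net/union-bound/telescoping scheme you reconstruct, with the same exponent bookkeeping ($2-\zeta\geq\tfrac32$ and a small loss $\nu_0/2$ keeping the bound above $l^{4/3}$). The only point you flag as delicate, the passage $F(\pi_k(h))\to F(h)$ for merely measurable $b$, is likewise glossed in the paper and its reference, and your proposed resolution via approximation of $b$ is the standard one.
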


\begin{proof}
The proof follows from Proposition \ref{5Almost lip.}, choosing $\zeta=\frac{1}{2}$, following the same reasoning as in Lemma 3.6 in \cite{5Shaposhnikov16}.
\end{proof}

\subsection{Proof of the main theorem}

Now in order to proof Theorem 2.2, one proceeds as in the proof of Theorem 1.1 in \cite{5Shaposhnikov16} without any changes. For the sake of completeness, we include the proof of this theorem adapted to our setting

\begin{proof}
	Without loss of generality suppose again that $\|b\|_{\infty}\leq 1$. Fix an $N\geq 1$ and let $Lip_N([r,u],\mathbb{R}^d)$ be defined as above. Without loss of generality assume $T=1$. For each $k\geq 1$ divide the interval $[0,1]$ into $M=2^k$ closed subintervals \[\left[0,\frac{1}{M}\right],\cdots,\left[\frac{(M-1)}{M},1\right]\]  
	and for each of the sub-intervals we apply Proposition (2.15) to get sets, say, $\Omega_{k,i}$ where the conclusions of the proposition hold. Let \[\Omega_k := \bigcap_{i = 0}^{M - 1}\Omega_{k, i}. \]
	
	The Borel-Cantelli lemma gives us that the set \[\Omega^{**} := \liminf_{k \to \infty}\Omega_k = \bigcup_{K = 1}^{\infty}\bigcap_{k = K}^{\infty}\Omega_k\] has full measure.
	
	Proposition (2.3) point 4, with $\eta = 1$ and 
	\[S = \{S_n\}_{n = 1}^{\infty} =  \left\{k/{2^n}; k \in \{0, 1, \ldots, 2^n - 1\}\right\}\] 
	
	gives the following estimate
	\[
	|X^{s,x}_t(\omega) - X^{s,y}_t(\omega)| \leq
	C(\alpha, T, N, S, \omega)|x - y|^{\alpha}, \
	|x - y| \leq \frac{1}{2^n}, s \in S_n
	\]
	where \[
	(s,t,x,\omega)\mapsto X^{s,x}_t(\omega)=:X(x,s,t,B^H(\omega))\quad\textit{with } (s,t)\in\Delta_{0,T}^2,x\in\mathbb{R}^d,\omega\in\Omega^*
	\]
	is the flow map from Proposition (2.3) and $\Omega^*$ is a modification of $\Omega^{**}$ of full measure.\par
	
	We now show that for any $\omega\in\Omega^*$ such that \[|x| + T\|b\|_{\infty} + \max_{u\in[0,1]}|B^H_u(\omega)| = |x| + 1 + \max_{u\in[0,1]}|B^H_u(\omega)| \leq N,\] (\ref{5SDE}) has a unique solution.\par
	Take such an $\omega\in\Omega^*$ and let $Y_t$ be any solution to equation
	(\ref{5SDE}) in $\mathcal{C}\left( [0,T];\mathbb{R}^d \right)$, for this $\omega$, starting from
	$x\in\mathbb{R}^d$, we shall show that then $Y_t = X(x,0,t,B^H(\omega))$. Due
	to the construction of $\Omega^*$ there exists $K$, which depends on the path
	$B^H(\omega)$ such that for all $k\geq K$ the path $B^H(\omega)\in\Omega_k$.
	Let 
	\[
	M' = 2^{k'}, \quad r = \frac{i}{M'}, \quad \text{where} \ k' \geq K.
	\]
	Define the following auxiliary function on the interval $[0, r]$:
	\[
	f(t) := X(x, 0, r, B^H(\omega)) - X(Y_t, t, r, B^H(\omega)).
	\]
	
	We observe that for any $s \leq t$, the flow property says that
	\begin{equation*}
	\begin{split}
	f(t) - f(s) =& X\left(Y_s, s, r, B^H(\omega)\right) - X\left(Y_t, t, r, B^H(\omega)\right) \\
	=& X\left(X(Y_s, s, t, B^H(\omega)),t, r, B^H(\omega)\right)-X\left(Y_t, t, r, B^H(\omega)\right).
	\end{split}
	\end{equation*}
	
	The difference $X(Y_s, s, t, B^H(\omega))-Y_t$ can be represented as follows:
	\begin{equation*}
	\begin{split}
	X(Y_s, s, t, B^H(\omega)) - Y_t =
	\int_{s}^{t}b\Big(u, Y_s +& B^H_u(\omega) - B^H_s(\omega) + \int_{s}^{u}b(r, X_r) dr\Big) du - \\  &\int_{s}^{t}b\left(u, Y_s + B^H_u(\omega) - B^H_s(\omega) + \int_{s}^{u}b(r, Y_r) dr\right) du
	\end{split}
	\end{equation*}
	
	Hence 
	\begin{equation*}
	\begin{split}
	X(Y_s, s, t, B^H(\omega)) - Y_t = \int_{s}^{t}b\left(u, B^H_u(\omega) + h_1(u)\right)\,du - \int_{s}^{t}b\left(u, B^H_u(\omega) + h_2(u)\right) du,
	\end{split}
	\end{equation*}
	
	where
	$$
	h_1(u) = Y_s - B^H_s(\omega) + \int_{s}^{u}b(r, X_r) dr, \ \ h_2(u) =Y_s - B^H_s(\omega) + \int_{s}^{u}b(r, Y_r) dr .
	$$
	Let $k \geq k'$ and $M = 2^{k}$ and take $s, t$ to be $\frac{i}{M}$ and $\frac{i + 1}{M}$, respectively. Then by the regularization property of the averaging operator we obtain the following estimate:
	\[
	|X(Y_s, s, t, B^H(\omega))-Y_t| \leq \frac{C}{M^{\frac{4}{3}}}
	\]
	Taking $M$ sufficiently large this yields
	\[
	|X(Y_s, s, t, B^H(\omega))-Y_t| \leq \frac{1}{M}.
	\]
	
	Hence there exists a positive constant $C = C(N, S, B^H(\omega))$ such that
	\[
	|f(t) - f(s)| \leq C|X(Y_s, s, t, B^H(\omega))-Y_t|^{\frac{4}{5}}.
	\]
	
	\[
	\left|f\left(\frac{i + 1}{M}\right) - f\left(\frac{i}{M}\right)\right| \leq \left(\frac{C}{M^{\frac{4}{3}}}\right)^{\frac{4}{5}},
	\]
	and consequently
	\[
	|f(r)| \leq \sum_{i=0}^{2^k}\left|f\left(\frac{i + 1}{M}\right) - f\left(\frac{i}{M}\right)\right|\leq \frac{C\times M}{M^{\frac{16}{15}}}=\frac{C}{M^{\frac{1}{15}}}.
	\]
	Due to the arbitrariness of $k$ we conclude
	\[
	f(r) = X(x, 0, r, B^H(\omega)) - Y_r = 0.
	\]
	Since $r$ was an arbitrary dyadic number in $[0, 1]$  with  a sufficiently large denominator,
	the continuity of $Y_t$ and $X(x, 0, t, B^H(\omega))$ implies the equality $Y_t = X(x, 0, t, B^H(\omega))$ for each $t \in [0, T]$.
	The proof is complete.
\end{proof}

\section{Applications to the Transport equation (TE) and Continuity equation (CE)}
\label{5ApplicationsToPDE}

\subsection{Existence of unique regular solutions to deterministic transport equations for perturbed velocity fields $b\in L_{\infty ,\infty }^{1,\infty }$}

As an application of the results in Section 2, we will exhibit a regularization by noise phenomena by restoring well-posedness of the following transport equation

\begin{equation}\label{5TEoriginal}
\begin{cases}
\frac{\partial}{\partial t} u(t,x) + b(t,x)\cdot\nabla_x u(t,x) = 0\text{	,	}(t,x)\in\left[0,T\right]\times\mathbb{R}^d, \\
u(0,x) = u_0(x) \text{	on	}\mathbb{R}^d,
\end{cases}
\end{equation}

when $b\in L_{\infty ,\infty }^{1,\infty }$.\par

We will show that the following transport equation

\begin{equation}\label{5TEperturbed}
\begin{cases}
\frac{\partial}{\partial t} u(t,x) + b^*(t,x)\cdot\nabla_x u(t,x) = 0\text{	,	}(t,x)\in\left[0,T\right]\times\mathbb{R}^d, \\
u(0,x) = u_0(x) \text{	on	}\mathbb{R}^d,
\end{cases}
\end{equation}

when $b^* = b\circ\Psi$, for a certain $\Psi$, has a unique (weak) solution $u\in W^{1,\infty}_{loc}\left(\left[0,T\right]\times\mathbb{R}^d\right)$ with $u(t,\cdot)\in\bigcap_{p\geq 1}W^{k,p}_{loc}\left(\mathbb{R}^d\right)$.

In proving such a result, which cannot be treated in the framework of renormalized solutions of \cite{5DiPernaLions89}, \cite{5Ambrosio04}, we need some auxiliary results.\par
If $b\in L_{\infty ,\infty }^{1,\infty }$, $H< \left(\frac{1}{2(d+3)}\land\frac{1}{2(d+2k-1)}\right)$ we know from Theorem \ref{5StrongSolution}, Theorem \ref{5PathByPath} and Lemma \ref{5JointHolderContinuity} that there exists a measurable set $\tilde{\Omega}$ with full measure such that for all $\omega\in\tilde{\Omega}$ and $x\in\mathbb{R}^d$ the solution path of the strong solution uniquely solves (2.3) in the space of continuous functions. Moreover, for all $\omega\in\tilde{\Omega}$ we have that, for all $t\in[0,T]$ 
\begin{equation}
 \left(x\mapsto X^{x}_t(\omega)\right) \in \bigcap_{p\geq 1}W^{k,p}_{loc}\left(\mathbb{R}^d;\mathbb{R}^d\right)
 \label{5differentiableforwardflow}
\end{equation}
as well as 
\begin{equation}
\left(\left(t,x\right)\mapsto X^{x}_t(\omega)\right)
\label{5Holderregularityforwardflow}
\end{equation}
is locally H\"{o}lder continuous on $[0,T]\times\mathbb{R}^d$. On the other hand, by using backward SDE's (see e.g. \cite{5MNP14}) and the same arguments as in Section 2, we find that there exists an adapted process $Y^x_{\cdot}$ such that for all $\omega\in\tilde{\Omega}$ $\left(\left(t,x\right)\mapsto Y^{x}_t(\omega)\right)$ is the global inverse of $\left(\left(t,x\right)\mapsto X^{x}_t(\omega)\right)$ and such that (\ref{5differentiableforwardflow}) and (\ref{5Holderregularityforwardflow}) are satisfied.\par

Our approach will be to employ a variation of the classical method of characteristics in connection with construction of solutions to (\ref{5TEperturbed}). To this end, we consider the ODE
\begin{equation}
	\frac{d}{dt} \hat{X}^x_t = b^*(t,\hat{X}^x_t),\quad \hat{X}^x_0 = x\in\mathbb{R}^d\text{ , }t\in\left[0,T\right]
	\label{5transformeddriftODE}
\end{equation}

for the vector field $b^*(t,x) = b(t,x+B^H(\omega))$ for fixed $\omega\in\tilde{\Omega}$ and where $b$ and $H$ are as before. \par
By using a simple transformation we see that $\hat{X}^x_{\cdot}(\omega)$ given by  \[\hat{X}^x_{t}(\omega)  = {X}^x_{t}(\omega) - {B}^H_{t}(\omega) \]
is the unique solution of (\ref{5transformeddriftODE}) for all $\omega\in\tilde{\Omega}$, $x\in\mathbb{R}^d$. Similarly, we can find an inverse solution process $\hat{Y}^x_{\cdot}(\omega)$ of $\hat{X}^x_{\cdot}(\omega)$ (with respect to the corresponding backward ODE)
and observe that both processes have the properties (\ref{5differentiableforwardflow}) and (\ref{5Holderregularityforwardflow}) (for the same $\hat{\Omega}$).

\begin{lemma}
	Let $H<\frac{1}{2(d+3)}$. Then there exists a measurable set $\Omega^{'}$ with full measure such that for all $\omega\in\Omega^{'}$ 
	\[
	\left(\left(t,x\right)\mapsto \hat{Y}^{x}_t(\omega)\right)\in W^{1,\infty}_{loc}\left(\left[0,T\right]\times\mathbb{R}^d;\mathbb{R}^d\right)
	\]
\end{lemma}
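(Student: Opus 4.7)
The plan is to leverage the spatial regularity of the forward flow $\hat{X}^x_t = X^x_t - B^H_t(\omega)$ already established in Section 2.1 and transfer it to the inverse $\hat{Y}$ via the identity $\hat{X}^{\hat{Y}^x_t}_t = x$ together with an inverse-function-type argument. Since $\hat{X}^x_t$ and $X^x_t$ differ by a term independent of $x$, every spatial regularity result proved for $X$ in Section 2.1 will apply verbatim to $\hat{X}$.

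First, I would combine Theorem \ref{5StrongConvergenceDerivative}, Lemma \ref{5ContinuousVersion} and Theorem \ref{5Identification} to deduce that the spatial derivative $(t,x)\mapsto D_x \hat{X}^x_t(\omega)$ admits, on a full-measure set $\Omega_1$, a jointly continuous version on $[0,T]\times\mathbb{R}^d$, which in particular is bounded on every compact subset $[0,T]\times K$. This yields the upper Lipschitz bound $|\hat{X}^x_t(\omega) - \hat{X}^y_t(\omega)|\leq C_1(\omega,K)|x-y|$ for $x,y\in K$, $t\in [0,T]$.

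Next, to control the inverse of $D\hat{X}$ I would apply Liouville's formula to the smooth approximations,
\[
\det D_x X^{x,n}_t = \exp\!\Big(\int_0^t \operatorname{div} b_n(s, X^{x,n}_s)\,ds\Big),
\]
and combine Girsanov's theorem with the Malliavin-calculus estimates from the proof of Lemma \ref{5RelativeCompactness} (where the threshold $H<\frac{1}{2(d+3)}$ is precisely needed) to obtain uniform-in-$n$ exponential-moment bounds on the integral in the exponent. Passing to the limit along the subsequence from Theorem \ref{5StrongConvergenceDerivative} would then give $\det D_x\hat{X}^x_t(\omega)\geq c(\omega,K)>0$ uniformly on $[0,T]\times K$, for $\omega$ in a full-measure set $\Omega_2$.

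Setting $\Omega':=\Omega_1\cap\Omega_2$, for $\omega\in\Omega'$ I would differentiate the identity $\hat{X}^{\hat{Y}^x_t}_t = x$ in $x$ and $t$ to obtain
\[
D_x \hat{Y}^x_t = \big(D_y\hat{X}^y_t\big|_{y=\hat{Y}^x_t}\big)^{-1}, \qquad \partial_t \hat{Y}^x_t = -\big(D_y\hat{X}\big)^{-1} b^*(t,x),
\]
both of which are bounded on compacts by the previous two steps together with $\|b^*\|_\infty\leq\|b\|_\infty$. Hence $(t,x)\mapsto \hat{Y}^x_t(\omega)$ is locally Lipschitz on $[0,T]\times\mathbb{R}^d$ and therefore lies in $W^{1,\infty}_{loc}([0,T]\times\mathbb{R}^d;\mathbb{R}^d)$. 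The hard part will be the lower bound on $\det D\hat{X}$: obtaining it pathwise and uniformly in the mollification amounts to controlling $\operatorname{div} b_n$ in an averaged sense even though $b$ is merely bounded measurable, and the regularization provided by the fractional noise at exponent $H<\frac{1}{2(d+3)}$ is precisely what will make this feasible.
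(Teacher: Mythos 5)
Your route is genuinely different from the paper's, and its load-bearing step is not secured. The paper does not invert the forward Jacobian at all: it observes that $\hat{Y}^x_\cdot$ is itself the flow of a backward equation whose drift is again merely bounded and measurable, so the entire Section 2 machinery (relative compactness, identification of limits, continuous version of the spatial derivative as in Theorem \ref{5Identification}) applies verbatim to $\hat{Y}$ and yields a locally bounded continuous version of $\frac{\partial}{\partial x}\hat{Y}^x_t$; the weak time derivative is then controlled by the bounded drift, and $W^{1,\infty}_{loc}$ in $(t,x)$ follows with no determinant estimate needed. This is why the paper's proof is three lines.

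By contrast, your argument hinges on the pathwise, uniform-on-compacts lower bound $\det D_x\hat{X}^x_t(\omega)\ge c(\omega,K)>0$, and the way you propose to obtain it does not suffice as stated. Liouville's formula plus Girsanov and the shuffle/local-time estimates can indeed give bounds on $E\bigl[\exp\bigl(-k\int_0^t \mathrm{div}\, b_n(s,X_s^{x,n})\,ds\bigr)\bigr]$ that are uniform in $n$ and depend only on $\left\Vert b\right\Vert_{L_{\infty}^{1}}$ and $\left\Vert b\right\Vert_{L_{\infty}^{\infty}}$ (this is essentially the computation behind Lemma \ref{5BoundedDerivatives} applied to the inverse flow), but these are moment bounds at fixed $(t,x)$. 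They control the probability that $\det D X^{x,n}_t$ is small pointwise; they do not by themselves produce a single full-measure set on which the limiting determinant is bounded away from zero simultaneously for all $(t,x)$ in a compact set. To close this you would need sup-type estimates on $(\det D X^{x,n}_t)^{-1}$ jointly in $(t,x)$, uniform in $n$ (a Garsia--Rodemich--Rumsey or Kolmogorov argument on the reciprocal determinant), together with a justification that the a.e.\ identification of the continuous version of the derivative with the Sobolev derivative upgrades to a genuine $C^1$ statement, so that the inverse function theorem applies pointwise. None of this is impossible, but it is a substantial additional layer of analysis that your sketch gestures at rather than supplies, and it is entirely avoided by working with the backward equation directly.
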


\begin{proof}
		One can show in exactly the same way as in Theorem \ref{5Identification} that $\left(\left(t,x\right)\mapsto \frac{\partial}{\partial x}\hat{Y}^{x}_t\right)$ has a continuous version . So the same also applies to $\hat{Y}^{x}_{\cdot}$. \par
		On the other hand, we see that $\hat{Y}^{x}_{\cdot}$ is absolutely continuous with the bounded weak derivative \[-b^*(t,\hat{Y}^{x}_{t}).\]
		So we find a measurable set $\Omega^{'}$ of full measure with the required property.
\end{proof}

In the sequel we also want to apply the following chain rule for Sobolev functions.

\begin{lemma}
	Let $f\in\mathcal{C}_b^1(\mathbb{R}^d;\mathbb{R})$ and $u\in W^{1,1}_{loc}\left(\mathbb{R}^n;\mathbb{R}^d\right)$. Then \[f\circ u\in  W^{1,1}_{loc}\left(\mathbb{R}^n;\mathbb{R}^d\right)\]
	and 
	\[D_x\left(f\circ u\right)= \nabla f\circ D_x u\]
\end{lemma}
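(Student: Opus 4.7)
The plan is to reduce to the classical chain rule by mollification and then pass to the limit, exploiting the fact that both $f$ and $\nabla f$ are bounded. First I would observe that $f\circ u\in L^{\infty}(\mathbb{R}^{n})\subset L^{1}_{loc}(\mathbb{R}^{n})$, so the only issue is to identify its distributional derivative. Fix an open, bounded $U\subset\mathbb{R}^{n}$ and a standard mollifier $\rho_{\varepsilon}$, and set $u_{\varepsilon}:=u\ast\rho_{\varepsilon}$. Then $u_{\varepsilon}\in C^{\infty}$, $u_{\varepsilon}\to u$ in $W^{1,1}(U;\mathbb{R}^{d})$ as $\varepsilon\downarrow 0$, and along a subsequence $\varepsilon_{k}\downarrow 0$ we have $u_{\varepsilon_{k}}\to u$ and $Du_{\varepsilon_{k}}\to Du$ almost everywhere on $U$. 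For the smooth maps $u_{\varepsilon_{k}}$ the classical chain rule yields
\begin{equation*}
D(f\circ u_{\varepsilon_{k}})(x)=\nabla f(u_{\varepsilon_{k}}(x))\cdot Du_{\varepsilon_{k}}(x)\quad\text{for every }x\in U.
\end{equation*}

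The next step is to pass to the limit $k\to\infty$ on both sides. For the left-hand side, the Lipschitz estimate $|f(u_{\varepsilon_{k}})-f(u)|\leq\|\nabla f\|_{\infty}|u_{\varepsilon_{k}}-u|$ combined with $u_{\varepsilon_{k}}\to u$ in $L^{1}(U;\mathbb{R}^{d})$ shows $f\circ u_{\varepsilon_{k}}\to f\circ u$ in $L^{1}(U)$. For the right-hand side, I would decompose
\begin{equation*}
\nabla f(u_{\varepsilon_{k}})Du_{\varepsilon_{k}}-\nabla f(u)Du=\nabla f(u_{\varepsilon_{k}})\bigl[Du_{\varepsilon_{k}}-Du\bigr]+\bigl[\nabla f(u_{\varepsilon_{k}})-\nabla f(u)\bigr]Du.
\end{equation*}
The first term is controlled in $L^{1}(U)$ by $\|\nabla f\|_{\infty}\|Du_{\varepsilon_{k}}-Du\|_{L^{1}(U)}\to 0$. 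For the second term, continuity of $\nabla f$ and the a.e.\ convergence $u_{\varepsilon_{k}}\to u$ give $\nabla f(u_{\varepsilon_{k}})\to\nabla f(u)$ a.e., and the integrand is dominated by $2\|\nabla f\|_{\infty}|Du|\in L^{1}(U)$, so dominated convergence finishes the job.

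Finally, testing the classical identity against an arbitrary $\varphi\in C_{c}^{\infty}(U)$ and integrating by parts,
\begin{equation*}
\int_{U}(f\circ u_{\varepsilon_{k}})\,\partial_{i}\varphi\,dx=-\int_{U}\bigl(\nabla f(u_{\varepsilon_{k}})\cdot Du_{\varepsilon_{k}}\bigr)_{i}\varphi\,dx,
\end{equation*}
I pass to the limit using the two convergences established above and conclude that the function $x\mapsto\nabla f(u(x))\cdot Du(x)$, which lies in $L^{\infty}(U;\mathbb{R}^{n})$ since $|\nabla f(u)Du|\leq\|\nabla f\|_{\infty}|Du|\in L^{1}(U)$, is the weak derivative of $f\circ u$ on $U$. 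Since $U$ was arbitrary, $f\circ u\in W^{1,1}_{loc}(\mathbb{R}^{n})$ and the claimed identity $D_{x}(f\circ u)=(\nabla f)(u)\cdot D_{x}u$ holds. No single step is really an obstacle here; the only subtlety is arranging a single subsequence along which both $u_{\varepsilon_{k}}$ and $Du_{\varepsilon_{k}}$ converge a.e., which is what allows the dominated convergence argument on the composition of the continuous but nonlinear function $\nabla f$ with $u_{\varepsilon_{k}}$.
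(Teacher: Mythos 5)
Your proposal is correct and is exactly the argument the paper has in mind: the paper's proof is a one-sentence sketch ("apply the classical chain rule to a smooth sequence approximating $u$ in $W^{1,1}(U)$ and pass to the limit"), and you have simply filled in the standard details (mollification, a.e.\ convergence along a subsequence, the splitting of the error term, and dominated convergence using the bound $\|\nabla f\|_{\infty}|Du|$).
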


\begin{proof}
	The proof follows from the classical chain rule applied to a sequence $(u_i)_{i\geq 1}$ in $\mathcal{C}^{\infty}\left(U\right)$, for $U$ a bounded open set, which approximates $u$ in $W^{1,1}\left(U;\mathbb{R}^d\right)$.
\end{proof}

\begin{remark}
	In a similar way to the previous lemma, one can show that if ${f\in\mathcal{C}_b^k(\mathbb{R}^d;\mathbb{R})}$ and $u\in\bigcap_{p\geq 1} W^{k,p}_{loc}\left(\mathbb{R}^n;\mathbb{R}^d\right)$ then \[f\circ u\in \bigcap_{p\geq 1} W^{k,p}_{loc}\left(\mathbb{R}^n;\mathbb{R}\right).\]
\end{remark}

We also need the following special version of a change of variable formula, which can be found in e.g (\cite{5Hajlasz93}). 

\begin{theorem}\label{5ChangeOfVariableFormula}
	Let $\mathcal{O}\subseteq\mathbb{R}^d$ be an open set and $f\in W^{1,p}_{loc} (\mathcal{O})$, where $p>d$. Denote by $\mathcal{J}_f$ the Jacobian of $f$ and for $E\subseteq\mathcal{O}$ the function $N_f(\cdot , E) : \mathbb{R}^d\mapsto \mathbb{N}\cup\{\infty\}$, the \textit{ Banach indicatrix of f}, by $\textmd{N}_f(y , E) := \textit{card}\Bigl(f^{-1}\left(\{y\}\right)\bigcap E\Bigr)$.\par
	If $u:\mathbb{R}^d \rightarrow \mathbb{R}$ is a measurable function and $E\subseteq\mathcal{O}$ is measurable, then 
	\[\int_{E}\left(u\circ f\right)\vert\mathcal{J}_f\vert dx = \int_{\mathbb{R}^d} u(y) N_f(y , E) dy
	\] provided that at least one of $\left(u\circ f\right)\vert\mathcal{J}_f\vert$ or $u(y) N_f(y , E) $ is integrable.
\end{theorem}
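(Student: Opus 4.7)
The plan is to reduce the statement to the classical area formula for Lipschitz maps via a Lusin-type approximation argument. The three ingredients needed are: (i) the Morrey embedding $W^{1,p}_{loc}(\mathcal{O}) \hookrightarrow C^{0,1-d/p}_{loc}(\mathcal{O})$ when $p>d$, which gives $f$ a continuous (even locally Hölder) representative; (ii) the Lusin $(N)$ condition, $|f(A)|=0$ whenever $|A|=0$, which is known to hold for the continuous representative of a $W^{1,p}_{loc}$ map with $p>d$ (Marcus--Mizel / Mal\'y); and (iii) a Lusin-type approximation of Sobolev maps by Lipschitz maps.

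First I would fix the continuous representative of $f$, so that the Banach indicatrix $N_f(\cdot,E)$ is genuinely counting preimages of a continuous map. Next I would invoke the standard $W^{1,p}$ Lusin approximation: truncating at level $k$ the Hardy--Littlewood maximal function of $|\nabla f|$ produces, for every $k\in\mathbb{N}$, a Lipschitz map $f_k:\mathbb{R}^d\to\mathbb{R}^d$ and a closed set $F_k\subset\mathcal{O}$ with $f=f_k$ on $F_k$, $\nabla f=\nabla f_k$ a.e.\ on $F_k$, and $\bigl|\mathcal{O}\setminus\bigcup_k F_k\bigr|=0$; hence $\mathcal{J}_f=\mathcal{J}_{f_k}$ a.e.\ on $F_k$ as well.

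Then on each $F_k$, the classical area formula for Lipschitz maps (Federer) gives
\begin{equation*}
\int_{E\cap F_k}(u\circ f)\,|\mathcal{J}_f|\,dx
\;=\;\int_{E\cap F_k}(u\circ f_k)\,|\mathcal{J}_{f_k}|\,dx
\;=\;\int_{\mathbb{R}^d}u(y)\,\mathrm{card}\bigl(f_k^{-1}(\{y\})\cap E\cap F_k\bigr)\,dy.
\end{equation*}
Writing $E\cap \bigcup_k F_k$ as an increasing union $\bigcup_k \widetilde F_k$ with $\widetilde F_k := F_k\setminus\bigcup_{j<k}F_j$ and using $f=f_k$ on $\widetilde F_k$, the integrands on both sides are additive in $E$, so summing and applying monotone convergence yields the claimed identity with $E$ replaced by $E\cap\bigcup_k F_k$. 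The leftover set $E\setminus\bigcup_k F_k$ has Lebesgue measure zero, so it contributes nothing to the left-hand side; by Lusin's $(N)$, its $f$-image is also null, so it contributes nothing to the right-hand side either, where a measurable choice of representative makes $N_f(\cdot,E)$ the almost-everywhere limit of $\sum_{k}\mathrm{card}\bigl(f^{-1}(\{\cdot\})\cap\widetilde F_k\bigr)$.

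The main obstacle is twofold: first, establishing the $(N)$ condition for the continuous representative when $p>d$ (the subcritical case $p\le d$ is known to fail, so the hypothesis $p>d$ really is used here); and second, the bookkeeping that identifies $\sum_k N_{f|_{\widetilde F_k}}(\cdot,E\cap \widetilde F_k)$ with $N_f(\cdot,E)$ outside an $\mathcal{L}^d$-null set of $y$'s, which uses both the disjointification and the fact that $|f(E\setminus\bigcup_k F_k)|=0$. Integrability on one side transfers to the other by the standard $\sigma$-finiteness argument once these two points are in place.
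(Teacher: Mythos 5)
The paper offers no proof of this theorem --- it is quoted directly from Haj\l asz \cite{5Hajlasz93} --- so the only meaningful comparison is with that reference, whose argument is essentially the one you give: continuous representative via Morrey embedding, the Marcus--Mizel condition $(N)$ for $p>d$, the Lipschitz--Lusin decomposition, the Federer area formula on each Lipschitz piece, and monotone convergence plus condition $(N)$ to dispose of the residual null set. Your write-up is correct; the only step worth spelling out further is the measurability of $y\mapsto N_f(y,E)$ for merely Lebesgue measurable $E$, which your a.e.\ identification of $N_f(\cdot,E)$ with $\sum_k \mathrm{card}\bigl(f^{-1}(\{\cdot\})\cap \widetilde F_k\cap E\bigr)$ does take care of.
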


Before stating our main result, we give a definition for a solution to our transport equation:
\begin{definition}
	Let $b^*\in L^{\infty}\left([0,T]\times\mathbb{R}^d\right)$ and $u_0\in \mathcal{C}(\mathbb{R}^d)$ in (\ref{5TEperturbed}). Then we call a function $u\in\mathcal{C}\left([0,T]\times\mathbb{R}^d\right)\cap W^{1,1}_{loc}\left([0,T]\times\mathbb{R}^d\right)$ with $u(0,x)=u_0(x)$, for all $x\in\mathbb{R}^d $, a $\textit{{weak solution to the transport equation (\ref{5TEperturbed})}}$, if for all $\rho\in\mathcal{C}_c^{\infty}\left(\left(0,T\right)\right)$ and $\eta\in\mathcal{C}_c^{\infty}\left(\mathbb{R}^d\right)$
	\begin{equation}\label{5weakformulation}
	 \int_{0}^{T}\int_{\mathbb{R}^d}\{-u(t,x)\rho^{'}(t)\eta(x)  + b^*(t,x)\cdot \nabla u(t,x)\rho(t)\eta(x)\} dx dt = 0,
	\end{equation}
    where $\rho^{'}$ denotes the derivative of $\rho$.
\end{definition}

We can now state our main result in this section:

\begin{theorem}
	Let $b\in L_{\infty ,\infty }^{1,\infty} $, $H< \left(\frac{1}{2(d+3)}\land\frac{1}{2(d+2k-1)}\right)$, $u_0\in\mathcal{C}_b^k(\mathbb{R}^d)$, $p>d$, and $k\geq 2$. Define $b^*$ in (\ref{5weakformulation}) as \[b^*(t,x)  = b(t,x+B_t^H(\omega)).
	\]
	
	Then there exists a measurable set $\tilde{\Omega}$ with $\mu(\tilde{\Omega}) =1$ such that for all $\omega\in\tilde{\Omega}$ there exists a unique weak solution $u = u_{\omega}$ in the class  $W^{1,p}_{loc}\left([0,T]\times\mathbb{R}^d\right)$. Moreover \[u(t,\cdot)\in  \cap_{p\geq 1} W^{k,p}_{loc}\left(\mathbb{R}^d\right)\] for all $t$.
\end{theorem}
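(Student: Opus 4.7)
The plan is to implement the classical method of characteristics built on the inverse flow $\hat Y$ from Section 3. First I would fix the full-measure set $\tilde\Omega$ obtained by intersecting the pathwise-uniqueness set from Theorem 2.2 with the set $\Omega'$ of Lemma 3.1; on $\tilde\Omega$ both $\hat X_\cdot^\cdot(\omega)$ and $\hat Y_\cdot^\cdot(\omega)$ are jointly $W^{1,\infty}_{loc}([0,T]\times\mathbb R^d)$, and for each fixed $t$ they lie in $\bigcap_{p\geq 1}W^{k,p}_{loc}(\mathbb R^d;\mathbb R^d)$. I would then define the candidate
\[
u(t,x):=u_0(\hat Y_t^x(\omega)).
\]
The Sobolev chain rule of Lemma 3.2 and its $k$-th order iteration from the Remark, applied to $u_0\in\mathcal{C}_b^k$ composed with $\hat Y$, immediately give the claimed regularity $u\in W^{1,p}_{loc}([0,T]\times\mathbb R^d)$ for every $p>d$ and $u(t,\cdot)\in\bigcap_{p\geq 1}W^{k,p}_{loc}(\mathbb R^d)$.

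For existence I would argue by mollification. Taking the sequence $b_n\in C_c^\infty$ from Section 2 with $b_n\to b$ almost everywhere and $\|b_n\|_\infty\leq M$, set $b_n^*(t,x):=b_n(t,x+B_t^H(\omega))$ and $u^n(t,x):=u_0(\hat Y_t^{x,n})$. Each $u^n$ solves the classical transport equation driven by $b_n^*$ pointwise, hence satisfies the weak identity exactly. Passing to the limit requires locally strong $L^p$ convergence of $\hat Y^{x,n}$ and $\nabla_x\hat Y^{x,n}$, which is the backward-flow version of Theorem 2.8 obtained via the same Malliavin-compactness argument; Lemma 3.2 then transfers this convergence to $u^n\to u$ and $\nabla_x u^n\to\nabla_x u$ in $L^p_{loc}$. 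The integral $\int b_n^*\cdot\nabla u^n\,\rho\eta\,dx\,dt$ converges to its $b$-counterpart by dominated convergence combined with the uniform bound $|b_n^*|\leq M$ and the a.e.\ convergence $b_n^*\to b^*$, while the $u^n\rho'\eta$ term converges trivially.

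The principal obstacle is uniqueness. By linearity it suffices to show that any $\tilde u\in W^{1,p}_{loc}$ satisfying the weak equation with $u_0=0$ vanishes. Since $p>d$, $\tilde u$ has a continuous representative and $\partial_t\tilde u=-b^*\cdot\nabla\tilde u\in L^p_{loc}$ pointwise a.e. I would show that $t\mapsto\tilde u(t,\hat X_t^y)$ is constant. To justify this in our Sobolev framework I would mollify $\tilde u$ in $x$ to obtain $\tilde u_\varepsilon\in C^1_x$ satisfying
\[
\partial_t\tilde u_\varepsilon+b^*\cdot\nabla\tilde u_\varepsilon=r_\varepsilon,\qquad r_\varepsilon:=b^*\cdot\nabla\tilde u_\varepsilon-(b^*\cdot\nabla\tilde u)_\varepsilon.
\]
Here the DiPerna--Lions obstruction is reversed: since the mollifier acts on $\tilde u$ and $\nabla\tilde u\in L^p_{loc}$ with $p>d$, the commutator $r_\varepsilon\to 0$ in $L^1_{loc}$ by the standard Friedrichs lemma, and no Sobolev regularity of $b^*$ is ever used. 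The classical chain rule for Lipschitz paths applied along $t\mapsto\hat X_t^y$ then gives $\frac{d}{dt}\tilde u_\varepsilon(t,\hat X_t^y)=r_\varepsilon(t,\hat X_t^y)$, which integrated in $t$ and controlled via the uniform $W^{1,\infty}$ Jacobian bounds on $\hat X$ forces $\tilde u(t,\hat X_t^y)=\tilde u(0,y)=0$ in the limit $\varepsilon\downarrow 0$. Inverting with $\hat Y$, which is a homeomorphism by Proposition 2.3, yields $\tilde u\equiv 0$, completing the argument.
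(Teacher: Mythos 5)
Your overall architecture coincides with the paper's: the solution is $u(t,x)=u_0(\hat Y^x_t(\omega))$, existence is obtained by mollifying $b$ and passing to the limit via the $\mathcal{C}([0,T]\times K;L^2(\Omega))$ convergence of $\hat Y^{x,n}$ and $\partial_x\hat Y^{x,n}$, regularity comes from the Sobolev chain rule (Lemma 3.2 and Remark 3.3), and uniqueness reduces to showing that $t\mapsto u(t,\hat X^x_t)$ is constant. Where you diverge is in how you prove that last reduction. The paper stays entirely inside integrated identities: it approximates the weak solution $u$ by smooth $u_n$ in $W^{1,p}((0,T)\times\mathcal{U})$, performs the change of variables $y=\hat X^x_t(\omega)$ via Theorem 3.5 (the Banach-indicatrix formula, legitimate since $\hat Y\in W^{1,p}_{loc}$ with $p>d$), and uses the essential boundedness of $\left\vert\det\partial_y\hat Y^y_t\right\vert$ to pass to the limit, landing on $\int_0^T\int u(t,\hat X^x_t)\rho'(t)\eta(x)\,dx\,dt=0$. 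You instead mollify in $x$ only, isolate the Friedrichs commutator $r_\varepsilon$, and observe that it vanishes in $L^1_{loc}$ \emph{without any regularity of} $b^*$ because $\nabla\tilde u\in L^p_{loc}$ is an honest function. That observation is correct and is exactly the structural point the paper makes informally (the regularity of the flow ``overrides any need for a commutator-lemma argument''); your version makes it explicit, which is a genuine expository gain.

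One step needs reordering to be rigorous. The identity $\partial_t\tilde u_\varepsilon+b^*\cdot\nabla\tilde u_\varepsilon=r_\varepsilon$ holds only for a.e.\ $(t,x)$, and $r_\varepsilon\to0$ only in $L^1_{loc}$ of $(t,x)$; a single characteristic $t\mapsto(t,\hat X^y_t)$ is a null set, so you cannot evaluate either statement along a \emph{fixed} $y$ and then let $\varepsilon\downarrow0$. You must integrate over $y$ in a compact set first, use the change of variables $z=\hat X^y_s$ with the essentially bounded Jacobian $\left\vert\det\partial_z\hat Y^z_s\right\vert$ (note: it is the Jacobian of the inverse flow $\hat Y$, not of $\hat X$, that enters) to bound $\int_K\int_0^t\vert r_\varepsilon(s,\hat X^y_s)\vert\,ds\,dy$ by $C\Vert r_\varepsilon\Vert_{L^1([0,T]\times\mathcal{U})}$, conclude $\tilde u(t,\hat X^y_t)=0$ for a.e.\ $y$, and only then upgrade to all $y$ by continuity. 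With that permutation of quantifiers your argument closes, and it is then equivalent in content to the paper's change-of-variables computation.
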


\begin{proof}
	\begin{enumerate}
		\item \textbf{Existence:}
		Let $(b_n)_{n\geq 1}\subseteq\mathcal{C}_c^{\infty}\left([0,T]\times\mathbb{R}^d\right)$ be an approximating sequence for $b$ as in Theorem \ref{5StrongConvergenceDerivative}. Then for the process $\hat{Y}^{x,n}_{\cdot}$ associated with the coefficients $b_n$, the process $u_n(\omega,t,x) = u_0(\hat{Y}^{x,n}_{t})$ satisfies (\ref{5weakformulation}) (for an appropriate version of $\hat{Y}^{x,n}_{\cdot}$) a.e., that is for all $\rho\in\mathcal{C}_c^{\infty}\left(\left(0,T\right)\right)$ and $\eta\in\mathcal{C}_c^{\infty}\left(\mathbb{R}^d\right)$
		\begin{equation}\label{5weakformulationApprox}
			\begin{split}
			0 =& \int_{0}^{T}\int_{\mathbb{R}^d}\{-u_n(\omega,t,x)\rho^{'}(t)\eta(x)  + b^*_n(t,x)\cdot \nabla u_n(\omega,t,x)\rho(t)\eta(x)\} dx dt \\
					0 =& \int_{0}^{T}\int_{\mathbb{R}^d}\{-u_0(\hat{Y}^{x,n}_{t}(\omega))\rho^{'}(t)\eta(x)\\  &+ b_n(t,x + B^H_t(\omega))\cdot (\nabla u_0)(\hat{Y}^{x,n}_{t}(\omega))^{T}\frac{\partial}{\partial x}\hat{Y}^{x,n}_{t}(\omega)\rho(t)\eta(x)\} dx dt\textit{,		for a.a. }\omega .
			\end{split}
		\end{equation}
	Using the same proof as in Lemma \ref{5ContinuousVersion}, we find that \[\Big(\left(t,x\right)\mapsto \hat{Y}^{x,n_j}_{t}\Big)_{j\geq 1}\] and \[\Big(\left(t,x\right)\mapsto \frac{\partial}{\partial x}\hat{Y}^{x,n_j}_{t}\Big)_{j\geq 1}\]
	converge in $\mathcal{C}\left([0,T]\times K ; L^2(\Omega;\mathbb{R}^d)\right)$ and $\mathcal{C}\left([0,T]\times K ; L^2(\Omega;\mathbb{R}^{d\times d})\right)$ respectively for a subsequence which only depends on the compact cube $K$. Further we have that \[Y(t,x,\omega) = \hat{Y}^{x}_{t}(\omega)\text{	,	}Y^{'}(t,x,\omega) = \frac{\partial}{\partial x}\hat{Y}^{x}_{t}(\omega)\] $(t,x,\omega)$-a.e. (See Theorem \ref{5Identification}).\par

	Now, since $B^H_t$ has a Gaussian density and $u_0\in \mathcal{C}_b^k(\mathbb{R}^d)$ for $k\geq 2$, we obtain that the right hand side of (\ref{5weakformulationApprox}) converges for $n\rightarrow\infty$ in $L^1(\Omega)$ to the expression%
	{\small
	\[ \int_{0}^{T}\int_{\mathbb{R}^d}\{-u_0(\hat{Y}^{x}_{t}(\omega))\rho^{'}(t)\eta(x)+ b(t,x + B^H_t(\omega))\cdot (\nabla u_0)(\hat{Y}^{x}_{t}(\omega))^{T}\frac{\partial}{\partial x}\hat{Y}^{x}_{t}(\omega)\rho(t)\eta(x)\} dx dt,\]}%
	which must be zero $\omega$-a.e.\par
	
		Because of Lemma 3.1 and Lemma 3.2 we know that, $\omega$-a.e., \[(\nabla u_0)(\hat{Y}^{x}_{t}(\omega))^{T}\frac{\partial}{\partial x}\hat{Y}^{x}_{t}(\omega) = \nabla u(\omega,t,x)\quad (t,x) \text{-a.e.	}\]
		
		Hence we obtain that
		\begin{equation}\label{5moduloseperability}
			\int_{0}^{T}\int_{\mathbb{R}^d}\{-u (\omega,t,x)\rho^{'}(t)\eta(x)  + b^* (t,x)\cdot \nabla u (\omega,t,x)\rho(t)\eta(x)\} dx dt = 0
		\end{equation}
		$\omega\text{-a.e.}$
		 
		 Because of separability of Sobolev spaces, when $p<\infty$, we also see that there exists a $\Omega^+$ with $\mu(\Omega^+) = 1$ such that for all  $\rho\in\mathcal{C}_c^{\infty}\left(\left(0,T\right)\right)$ and $\eta\in\mathcal{C}_c^{\infty}\left(\mathbb{R}^d\right)$ the condition (\ref{5moduloseperability}) holds.\par
		 Altogether, using in addition Remark 3.3 we can find a set $\Omega^*$ with $\mu(\Omega^*) = 1$, which has the required properties with respect to existence and regularity of solutions in the statement of the theorem.

		 \item \textbf{Uniqueness:}  For $\omega\in\Omega^*$, let $u=u_{\omega}$ be a weak solution in $W^{1,p}_{loc}\left([0,T]\times\mathbb{R}^d\right)$. We want to show that for $\rho\in\mathcal{C}_c^{\infty}\left(\left(0,T\right)\right)$ and $\eta\in\mathcal{C}_c^{\infty}\left(\mathbb{R}^d\right)$
		 
		 \begin{equation}\label{5constantcharacteristics}
		 	0 = \int_{0}^{T}\int_{\mathbb{R}^d} \Big(u (t,\hat{X}^x_t(\omega))\rho^{'}(t)\eta(x) \Big) dx dt
		 \end{equation}
		 
		 To this end , let $\mathcal{U}\subseteq\mathbb{R}^d$ be an open bounded set with $$\mathcal{U} \supseteq \Big(\text{supp}(\eta)\cup \hat{X}(\omega)\big([0,T]\times\text{supp}(\eta) \big)\Big)$$ and let $\left(u_n\right)_{n\geq 1}\subset \mathcal{C}^{\infty}\left((0,T)\times\mathcal{U}\right)$ such that $u_n \xrightarrow[n\rightarrow\infty]{} u$ in $W^{1,p}\left((0,T)\times\mathcal{U}\right)$.\par
		 
		Then using the chain rule of Lemma 3.2 (for $f\in\mathcal{C}_b^1(\mathcal{U};\mathbb{R})$), integration by parts and Theorem \ref{5ChangeOfVariableFormula} ($E = \mathcal{O} = \mathcal{U} = \hat{Y}^{-1}_t(\omega)\left(O_t\right)$) we find that

		 \begin{equation}\label{5constantcharacteristicsApprox}
		 \begin{split}
		 	&\int_{0}^{T}\int_{\mathbb{R}^d} \Big(u_n (t,\hat{X}^x_t(\omega)) \rho^{'}(t)\eta(x) \Big)  dx dt  \\ &=-\int_{0}^{T}\int_{\mathbb{R}^d} \left\{\left(\frac{\partial}{\partial t} u_n (t,\hat{X}^x_t(\omega)) + b^*(t,\hat{X}^x_t(\omega))\cdot(\nabla u_n) (t,\hat{X}^x_t(\omega)) )\right)\rho^{ }(t)\eta(x) \right\}  dx dt		 \\
		 	&=-	\int_{0}^{T}\int_{\mathcal{U}} \left\{\left(\frac{\partial}{\partial t} u_n (t,y) + b^*(t,y)\cdot(\nabla u_n) (t,y) )\right)\rho^{ }(t)\eta\left(\hat{Y}^y_t(\omega)\right)\left\vert\det \frac{\partial}{\partial y}\hat{Y}^y_t(\omega)\right\vert \right\} dy dt 
		 \end{split}
		 \end{equation}
		 
		 Since $\frac{\partial}{\partial y}\hat{Y}^y_t(\omega)$, and therefore $\left\vert\det \frac{\partial}{\partial y}\hat{Y}^y_t(\omega)\right\vert$, is essentially bounded on $\mathcal{U}$ for this fixed $\omega\in\Omega^*$, it follows from our assumption that the last expression in (\ref{5constantcharacteristicsApprox}) converges for $n\rightarrow\infty$ to  
		 
		 \[-	\int_{0}^{T}\int_{\mathcal{U}} \left\{\left(\frac{\partial}{\partial t} u (t,y) + b^*(t,y)\cdot(\nabla u) (t,y) )\right)\rho^{ }(t)\eta\left(\hat{Y}^y_t(\omega)\right)\left\vert\det \frac{\partial}{\partial y}\hat{Y}^y_t(\omega)\right\vert \right\} dy dt \]
		 which is equal to zero.\par
		 On the other hand, by using Theorem \ref{5ChangeOfVariableFormula} once more we can argue similarly that 
		 
		 \begin{equation}\label{5constantcharacteristicsApprox2}
		 \begin{split}
		 \int_{0}^{T}\int_{\mathbb{R}^d} \Big(u_n (t,\hat{X}^x_t(\omega)) &\rho^{'}(t)\eta(x) \Big)  dx dt  \\ &= \int_{0}^{T}\int_{\mathcal{U}} \left(u_n (t,y)\rho^{'}(t)\eta\left(\hat{Y}^y_t(\omega)\right)\left\vert\det \frac{\partial}{\partial y}\hat{Y}^y_t(\omega)\right\vert \right) dy dt		 \\
		 \xrightarrow[n\rightarrow\infty]{}&	\int_{0}^{T}\int_{\mathcal{U}} \left(u  (t,y)\rho^{'}(t)\eta\left(\hat{Y}^y_t(\omega)\right)\left\vert\det \frac{\partial}{\partial y}\hat{Y}^y_t(\omega)\right\vert \right) dy dt \\
		 =& \int_{0}^{T}\int_{\mathbb{R}^d} \Big(u(t,\hat{X}^x_t(\omega)) \rho^{'}(t)\eta(x) \Big)  dx dt 
		 \end{split}
		 \end{equation}
		 
		 So (\ref{5constantcharacteristics}) holds for all test functions $\eta$ and $\rho$. Since $\left(\left(t,x\right)\mapsto u\left(t,\hat{X}^x_t(\omega)\right)\right)$ is continuous it follows that for all $x\in\mathbb{R}^d$ \[0 = \int_{0}^{T} u\left(t,\hat{X}^x_t(\omega)\right)\rho^{'}(t) dt\] for all $\rho\in\mathcal{C}_c^{\infty}\left(\left(0,T\right)\right)$.\par
		 
		 Thus we conclude that for all $x\in\mathbb{R}^d$ $\left( t \mapsto u\left(t,\hat{X}^x_t(\omega)\right)\right)$ is absolutely continuous and that \[u(t,\hat{X}^x_t) = u(0,\hat{X}^x_0) = u(0,x) = u_0(x)\] for all $t$ and $x$.\par
		 Hence \[u(t,x) = u_0\left(\hat{Y}^x_t(\omega)\right)\]
		 for all $t,x$.
		 
	\end{enumerate}
\end{proof}

\begin{remark}
    One may expect a similar result to the previous one in the case of a perturbation given by a Wiener process, that is $B^H_\cdot$, when $H=\frac{1}{2}$.\par
    However, the proof of Theorem 3.6 makes use of the property that $\frac{\partial}{\partial x} Y_t^x$ or $\frac{\partial}{\partial x} \hat{Y}_t^x$ belong to $L_{loc}^\infty\left([0,T]\times\mathbb{R}^d\right)$ a.e.\par
    It is not obvious, whether the latter property holds in the Wiener case, too. Therefore, we cannot directly see, how a result as in Theorem 3.6 could be established for perturbations along Wiener paths.
\end{remark}

\subsection{Existence of unique solutions to the Continuity equation (CE)}

In this section we study the following equation
\begin{equation}
\begin{cases}
\partial_t\mu + {\div} \left(b\mu\right) = 0 \\
\mu_0 = \bar{\mu},
\end{cases}
\tag{CE}
\label{5eqn:CE}	  
\end{equation}

and its related ODE 

\begin{equation}\label{5eqn:ODE}\tag{ODE}
\begin{cases}
\frac{d}{dt} X(t,x) = b\left(t,X(t,x)\right)\\
X(0,x) = x,
\end{cases}
\end{equation}

when the vector field $b$ belongs to $L_{\infty ,\infty }^{1,\infty }$ i.e.

\begin{equation*}
L^{1}(\mathbb{R}^{d};L^{\infty }([0,T];%
\mathbb{R}^{d}))\cap L^{\infty }(\mathbb{R}^{d};L^{\infty }([0,T];\mathbb{R}%
^{d})).\footnote{We assume in this section that $b$ is defined everywhere in order to make sense of the product of $b\mu$ when $\mu$ is a measure.}
\end{equation*}%

It is well known  (see \cite{5MS19}) that (\ref{5eqn:CE}) is in general ill-posed even when the vector field $b$ is continuous, and that a minimum degree of differentiability is necessary for uniqueness of solutions to (\ref{5eqn:CE}). Note that the existence of a flow is not sufficient for the uniqueness of solutions to (\ref{5eqn:CE}) for initial measures which are signed and further conditions on $b$ need to be imposed. When $b$ is Lipschitz, uniqueness in the class of signed measures holds via Theorem 8.1.7 in \cite{5AGS08}. Outside of the Lipschitz setting only few results are available in this class (See \cite{5BahouriChemin94}\cite{5AmbrosioBernard08}). In the class of positive measures the situation is much more clear by the use of the superposition principle which says that uniqueness of (\ref{5eqn:CE}) in the class of positive measures is equivalent to the uniqueness at the level of (\ref{5eqn:ODE}). More precisely we have 
\begin{theorem}\label{5SuperpositionPrinciple}
	Let $A\subset\mathbb{R}^d$ be a Borel set. Then the following two properties are equivalent:
	\begin{enumerate}
		\item Solutions of (\ref{5eqn:ODE}) are unique for every initial point $x\in A$;
		\item Positive measure-valued solutions of the (\ref{5eqn:CE}) are unique for every initial data $\bar{\mu}$ which is a positive measure concentrated on $A$, i.e. such that $\bar{\mu}(\mathbb{R}^d\setminus A) =0$. 	
	\end{enumerate}
\end{theorem}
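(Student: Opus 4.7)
The natural route is via Ambrosio's superposition principle, which represents any nonnegative measure-valued solution of (CE) as the push-forward, under the time-$t$ evaluation map, of a probability measure $\eta$ on $C([0,T];\mathbb{R}^d)$ concentrated on the set of integral curves of $b$. Since $b\in L^{1,\infty}_{\infty,\infty}$ is in particular bounded and measurable, the hypotheses of the superposition principle (as in Ambrosio's formulation, e.g.\ Theorem 8.2.1 of Ambrosio--Gigli--Savaré) are satisfied after a routine truncation/localization argument justifying finite mass and finite-energy control along curves. The plan is therefore to reduce both implications to statements about the disintegration of such representing measures with respect to the initial-time evaluation.

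First I would handle the easy direction $(2)\Rightarrow(1)$. Assume (CE) uniqueness holds for positive measures supported in $A$ and suppose for contradiction that $X^{1}(\cdot,x_{0})$ and $X^{2}(\cdot,x_{0})$ are two distinct solutions of (ODE) issuing from some $x_{0}\in A$. Then, by direct substitution in the distributional formulation, the Dirac-mass curves $\mu^{i}_{t}:=\delta_{X^{i}(t,x_{0})}$, $i=1,2$, are both nonnegative measure-valued solutions of (CE) with common initial datum $\bar\mu=\delta_{x_{0}}$, which is concentrated on $A$. The two solutions differ at some positive time, contradicting (2).

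The heart of the argument is the direction $(1)\Rightarrow(2)$. Let $\mu^{1},\mu^{2}$ be two nonnegative measure-valued solutions of (CE) with the same initial datum $\bar\mu$ concentrated on $A$. By the superposition principle I obtain two probability measures $\eta^{1},\eta^{2}$ on $\Gamma:=C([0,T];\mathbb{R}^{d})$, each concentrated on the set $I_{b}$ of absolutely continuous curves $\gamma$ satisfying $\dot\gamma(t)=b(t,\gamma(t))$ for a.e.\ $t$, with the matching property $(e_{t})_{\#}\eta^{i}=\mu^{i}_{t}$ for all $t\in[0,T]$; in particular $(e_{0})_{\#}\eta^{i}=\bar\mu$. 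Disintegrating $\eta^{i}$ along $e_{0}$ yields families $\{\eta^{i}_{x}\}_{x\in\mathbb{R}^{d}}$ of probability measures on $\Gamma$ with $\eta^{i}_{x}$ concentrated on the set of integral curves starting at $x$. By hypothesis (1), for $\bar\mu$-a.e.\ $x\in A$ there is exactly one such curve $\gamma_{x}$, hence $\eta^{1}_{x}=\eta^{2}_{x}=\delta_{\gamma_{x}}$. Reassembling via $\eta^{i}=\int \eta^{i}_{x}\,d\bar\mu(x)$ yields $\eta^{1}=\eta^{2}$, and pushing forward by $e_{t}$ gives $\mu^{1}_{t}=\mu^{2}_{t}$ for every $t$, which is (2).

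The main obstacle is purely technical: verifying that the superposition principle is applicable in the present framework, i.e.\ that the representing measure $\eta$ on curves exists and is concentrated on $I_{b}$. For bounded drifts this is standard, but the ambient class $L^{1,\infty}_{\infty,\infty}$ requires one to localize in space and check the integrability condition $\int_{0}^{T}\!\!\int |b(t,x)|\,d\mu_{t}(x)\,dt<\infty$ needed to run the construction; the $L^{\infty}$ bound on $b$ makes this immediate once the $\mu^{i}_{t}$ are shown to have uniformly bounded total mass, which follows from testing (CE) against a smooth spatial cut-off together with Grönwall. Once this verification is in place, the disintegration argument above closes the proof.
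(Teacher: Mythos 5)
Your argument is correct and follows exactly the route the paper relies on: the paper does not reprove this theorem but defers to \cite{5Crippa07}, where the same strategy is used --- Ambrosio's superposition principle to represent each nonnegative solution as $(e_t)_{\#}\eta$ with $\eta$ concentrated on integral curves, disintegration of $\eta$ along $e_0$ so that ODE uniqueness on $A$ forces $\eta_x=\delta_{\gamma_x}$ for $\bar\mu$-a.e.\ $x$, and the Dirac-curve construction for the converse implication. Your remarks on verifying the integrability hypothesis of the superposition principle (boundedness of $b$ plus mass control via a cut-off and Gr\"onwall) are the right technical points and close the argument.
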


The proof of this highly non trivial result can be found in e.g. \cite{5Crippa07}. Note that the superposition principle holds only for the class of positive measures and, without any assumptions on $b$, the result is false for signed measures by the counter example in \cite{5BonicattoGusev18} which shows that uniqueness at the level of the flows is not sufficient for the uniqueness of (\ref{5eqn:CE}) in the class of signed measures. 

The main result of this section says that if we consider again the transformation

\begin{equation}\label{5Psi}
\begin{cases}
\Psi : \Omega\times [0,T] \times\mathbb{R}^d\longrightarrow [0,T]\times\mathbb{R}^d\\
\Psi_{\omega}(t,x):= (t,x+B^H_t(\omega))
\end{cases}
\end{equation}

for a certain set $\Omega$ of full measure which depends only on $b$, then, by arguments similar to the previous section, we know that the following system 
\begin{equation}
\begin{cases}
\frac{d}{dt} \hat{X}(t,x) = b\circ\Psi_{\omega}\left(t,\hat{X}(t,x)\right)\\
\hat{X}(0,x) = x\in\mathbb{R}^d
\end{cases}
\end{equation}

is well posed for every $\omega\in\Omega$ provided $H$ is chosen appropriately, then we have by the superposition principle the following:

\begin{theorem}\label{5mainResultCE}
	Let $b\in L_{\infty ,\infty }^{1,\infty} $, $H< \frac{1}{2(d+2)}$. Define $b^*$ in (CE) as \[b^*(t,x)  = b(t,x+B_t^H(\omega)).
	\]
	
	Then there exists a measurable set $\tilde{\Omega}$ with $\mu(\tilde{\Omega}) =1$ such that for all $\omega\in\tilde{\Omega}$ and all initial data $\bar{\mu}\in\mathcal{M}^+(\mathbb{R}^d)$ there exists a unique weak solution $\mu_t = \mu_t^{\omega}$ given by
	\[\mu_t = \hat{X}(t,\cdot)_{\#}\bar{\mu}\]
	i.e.
	\begin{equation}
		\int_{\mathbb{R}^d} \varphi d\mu_t = \int_{\mathbb{R}^d} \varphi(\hat{X}(t,x)) d\bar{\mu}_t
	\end{equation}
    where $\hat{X}$ is the unique solution to (\ref{5PsiODE}).
\end{theorem}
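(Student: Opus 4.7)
The plan is to reduce the statement to the Superposition Principle (Theorem 3.7) applied to the perturbed ODE
\begin{equation*}
\frac{d}{dt}\hat{X}(t,x)=b\bigl(t,\hat{X}(t,x)+B_{t}^{H}(\omega)\bigr),\quad \hat{X}(0,x)=x.
\end{equation*}
First, I would invoke Theorem 2.2 (path-by-path uniqueness): since $H<\frac{1}{2(d+2)}$, there exists a measurable set $\tilde{\Omega}\subset\Omega$ of full mass such that for every $\omega\in\tilde{\Omega}$ the SDE
\[
X_{t}^{x}=x+\int_{0}^{t}b(s,X_{s}^{x})\,ds+B_{t}^{H}(\omega)
\]
admits a unique solution in $\mathcal{C}([0,T];\mathbb{R}^{d})$ uniformly in $x\in\mathbb{R}^{d}$. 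The elementary change of variables $\hat{X}(t,x)=X_{t}^{x}-B_{t}^{H}(\omega)$ transports this uniqueness statement to the perturbed ODE, so that for every $\omega\in\tilde{\Omega}$ and every starting point $x\in\mathbb{R}^{d}$ the perturbed ODE has one and only one solution. This is precisely the hypothesis of Theorem 3.7 with $A=\mathbb{R}^{d}$.

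Next, fix $\omega\in\tilde{\Omega}$ and an initial datum $\bar{\mu}\in\mathcal{M}^{+}(\mathbb{R}^{d})$. Applying the superposition principle (Theorem 3.7) we deduce that any two positive measure-valued weak solutions of
\begin{equation*}
\partial_{t}\mu_{t}+\operatorname{div}(b^{*}\mu_{t})=0,\qquad \mu_{0}=\bar{\mu},
\end{equation*}
must coincide. This takes care of uniqueness in the class $\mathcal{M}^{+}(\mathbb{R}^{d})$ and is the conceptual core of the result, the heavy lifting having already been carried out by the path-by-path uniqueness theorem.

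For existence, I would verify directly that $\mu_{t}:=\hat{X}(t,\cdot)_{\#}\bar{\mu}$ is a weak solution. Given $\varphi\in\mathcal{C}_{c}^{\infty}([0,T)\times\mathbb{R}^{d})$, the change of variables formula yields
\begin{equation*}
\int_{\mathbb{R}^{d}}\varphi(t,y)\,d\mu_{t}(y)=\int_{\mathbb{R}^{d}}\varphi\bigl(t,\hat{X}(t,x)\bigr)\,d\bar{\mu}(x).
\end{equation*}
Differentiating in $t$ along the characteristics (which is legitimate because $\hat{X}(\cdot,x)$ is absolutely continuous with $|\frac{d}{dt}\hat{X}(t,x)|\leq \|b\|_{\infty}$ uniformly in $x$, so that dominated convergence applies on compact $x$-supports arising from $\bar{\mu}$ being finite on compacts and $\varphi$ having compact $y$-support pulled back through the flow) gives
\begin{equation*}
\frac{d}{dt}\int_{\mathbb{R}^{d}}\varphi\,d\mu_{t}=\int_{\mathbb{R}^{d}}\bigl[\partial_{t}\varphi+b^{*}(t,\hat{X}(t,x))\cdot\nabla\varphi(t,\hat{X}(t,x))\bigr]\,d\bar{\mu}(x)=\int_{\mathbb{R}^{d}}[\partial_{t}\varphi+b^{*}\cdot\nabla\varphi]\,d\mu_{t},
\end{equation*}
and integrating in time recovers the weak formulation with initial datum $\bar{\mu}$.

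The main obstacle I anticipate is technical rather than conceptual: ensuring that the map $x\mapsto \hat{X}(t,x)$ is Borel measurable so that the pushforward is well defined, and handling the product $b^{*}\mu_{t}$ in the weak formulation when $\mu_{t}$ is a general positive measure. Measurability of the flow follows from the joint continuity (indeed local H\"older continuity in $(t,x)$) established earlier, e.g. in Lemma 2.7 together with Proposition 2.3. The product $b^{*}\mu_{t}$ is made sense of in the distributional pairing above since $b^{*}\in L^{\infty}$ and $\varphi$ has compact support, which is why working with the class of $\mathcal{M}^{+}$-valued solutions does not force an absolute continuity assumption on $\mu_{t}$.
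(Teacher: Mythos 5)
Your proposal is correct and follows exactly the route the paper intends: path-by-path uniqueness of the perturbed ODE from Theorem 2.2 via the shift $\hat{X}(t,x)=X_t^x-B_t^H(\omega)$, the superposition principle (Theorem 3.7) for uniqueness in $\mathcal{M}^+(\mathbb{R}^d)$, and a direct verification that the pushforward is a weak solution. The paper in fact states the theorem without writing out a proof, so your explicit treatment of the existence step and of the measurability/finiteness technicalities supplies detail the paper leaves implicit.
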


\section{Further improvement of the spatial regularity of solutions of the transport equation with singular velocity fields perturbed along fractional Brownian paths with Hurst parameter "$H\downarrow 0$" }

In this section we want to explain how the spatial regularity of the constructed unique path-by-path solutions of the transport equation (\ref{5TEperturbed}) can be improved even further by using a perturbation based on fractional Brownian paths with a Hurst parameter approaching $0$.\par
To be more precise, we want to replace the fractional Brownian motion $B^H_{\cdot}$, for $H\in\left(0,\frac{1}{2}\right)$, in Sections 2 and 3 with an even more regularizing driving process $\mathbb{B}^H_{\cdot}$ for $H:=(H_n)_{n\geq 1}\subset\left(0,\frac{1}{2}\right)$ (see \cite{5ABP18}), defined as 
\begin{equation}
    \mathbb{B}_t^H = \sum_{n\geq 1}\lambda_n B_t^{H_n,n},\text{  }0\leq t\leq T
\end{equation}
where $B_{\cdot}^{H_n,n}$, $n\geq 1$ are independent $d$-dimensional fractional Brownian motions with Hurst parameters $H_n\in\left(0,\frac{1}{2}\right)$, $H_n\downarrow 0$ for $n\rightarrow\infty$.\par
Further, $\left(\lambda_n\right)_{n\geq 1}$ is a sequence of real numbers such that there exists a bijection
\begin{equation}\label{50fbm:cond1}
    \{n\in\mathbb{N}: \lambda_n\neq 0\}\rightarrow\mathbb{N},
\end{equation}
\begin{equation}\label{50fbm:cond2}
    \sum_{n\geq 1}\lvert\lambda_n\rvert\in \left(0,\infty\right),
\end{equation}
and
\begin{equation}\label{50fbm:cond3}
        \sum_{n\geq 1}\lvert\lambda_n\rvert E[\sup_{0\leq s\leq 1}\lvert B_s^{H_n,n}\rvert]<\infty
\end{equation}
Under conditions (\ref{50fbm:cond1}),(\ref{50fbm:cond2}) and (\ref{50fbm:cond3}) it was shown in \cite{5ABP18} that $\mathbb{B}_\cdot^H$ is a stationary Gaussian process, which possesses a continuous, but not H\"{o}lder continuous version.\par

In fact, for the vector fields

\begin{equation}\label{5class0fbm}
    b\in\mathcal{L}_{2,p}^q:=L^q([0,T];L^p(\mathbb{R}^d;\mathbb{R}^d))\cap L^1(\mathbb{R}^d;L^\infty([0,T];\mathbb{R}^d)),\quad p,q\in(2,\infty]
\end{equation}

the authors in \cite{5ABP18} established the following result by using an infinite dimensional version of a compactness criterion for square integrable Wiener functionals in \cite{5DMN92}:

\begin{theorem}\label{5ExistUniqSDE0fbm}
    There exists a sequence  $\left(\lambda_n\right)_{n\geq 1}$ depending only on  $\left(H_n\right)_{n\geq 1}$ and $d$, which satisfies the conditions  (\ref{50fbm:cond1}),(\ref{50fbm:cond2}) and (\ref{50fbm:cond3}) such that for all $b\in \mathcal{L}_{2,p}^q,\quad p,q\in(2,\infty]$ and $s\in[0,T]$, $x\in\mathbb{R}^d$ there exists a unique strong solution $X_\cdot^{s,x}$ to the SDE
    \begin{equation}\label{5SDE0fbm}
        X_t^{s,x} = x +\int_{s}^{t} b(u,X_u^{s,x})du + \mathbb{B}_t^H - \mathbb{B}_s^H,\quad X_s^{s,x} = x\in\mathbb{R}^d,\quad s\leq t\leq T
    \end{equation}
\end{theorem}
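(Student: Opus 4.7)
\medskip
\noindent\textbf{Proof plan for Theorem 4.1.}

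The plan is to adapt the compactness-criterion machinery used in Section 2 (at the level of \eqref{5SDE} for a single fractional Brownian motion) to the infinite-dimensional driving noise $\mathbb{B}^H_\cdot$. The overall scheme is the classical ``mollify, estimate, extract, identify'' route. First I would choose a sequence $\{b_n\}_{n\geq 1}\subset C_c^\infty([0,T]\times\mathbb{R}^d;\mathbb{R}^d)$ with $b_n\to b$ a.e.\ and $\sup_n\bigl(\Vert b_n\Vert_{L^q_tL^p_x}+\Vert b_n\Vert_{L^1_xL^\infty_t}\bigr)<\infty$. For each $n$ the SDE \eqref{5SDE0fbm} with $b$ replaced by $b_n$ admits a unique classical strong solution $X^{s,x,n}_\cdot$ since the driving process is continuous and $b_n$ is smooth with compact support.

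The analytic heart of the argument is to realize $\mathbb{B}^H_\cdot$ on an infinite product Wiener space: each $B^{H_k,k}_t=\int_0^t K_{H_k}(t,u)\,dW^k_u$ for independent standard Brownian motions $\{W^k\}_{k\geq 1}$, so $X^{s,x,n}_t$ becomes a functional of the countable family $(W^k)_{k\geq 1}$ and one can develop Malliavin calculus componentwise. The key step is to establish, uniformly in $n$, the estimates required by the infinite-dimensional $L^2$-compactness criterion of \cite{5DMN92}:
\begin{equation*}
\sup_{n}\,\sum_{k\geq 1}\int_0^T\!\!\int_0^T\!\!\frac{\bigl\Vert D^{(k)}_\theta X^{s,x,n}_t-D^{(k)}_{\theta'}X^{s,x,n}_t\bigr\Vert_{L^2(\Omega)}^2}{|\theta-\theta'|^{1+2\beta_k}}\,d\theta\,d\theta'<\infty,
\end{equation*}
together with an analogous uniform $L^2$-bound on each $D^{(k)}_\cdot X^{s,x,n}_t$. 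To obtain these, I would, exactly as in the proof of Lemma~\ref{5RelativeCompactness}, perform a Picard expansion of $D^{(k)}_\theta X^{s,x,n}_t$, apply the shuffle identity, use Girsanov's theorem (valid here because every individual fBm component satisfies a Novikov condition, compare Lemma~\ref{5Novikov}) to replace $X^{s,x,n}_u$ by the free process $x+\mathbb{B}^H_u$, and bound the resulting iterated integrals against the explicit local-time estimates for $\mathbb{B}^H$ derived in \cite{5BLPP18,5ABP18}. The crucial point is that, because $H_k\downarrow 0$ very fast, the local-time regularity of $\mathbb{B}^H$ is arbitrarily strong; choosing $(\lambda_k)_{k\geq 1}$ decaying fast enough then guarantees that the series in $k$ converges and the estimate depends only on $\Vert b\Vert_{\mathcal{L}_{2,p}^q}$.

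Once the DMN compactness criterion is verified uniformly in $n$, the family $\{X^{s,x,n}_t\}_{n\geq 1}$ is relatively compact in $L^2(\Omega;\mathbb{R}^d)$ for every fixed $(s,t,x)$. A limit point $X^{s,x}_t$ along a subsequence is measurable with respect to $\sigma(W^k:k\geq 1)$, hence is adapted; passing to the limit in the integral equation by dominated convergence (using the $L^1_xL^\infty_t$ control on $b_n$ and the Girsanov change of measure to average the drift along the noise path) identifies $X^{s,x}_\cdot$ as a strong solution. For uniqueness, given any two strong solutions one applies the same Girsanov/compactness tandem to show they must coincide with the constructed subsequential limit, arguing as in Corollary~4.8 of \cite{5BNP19}.

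The principal obstacle is bookkeeping the summability in the mode index $k$: each mode $k$ contributes a Malliavin-derivative estimate whose size depends on $H_k$ and the local-time bounds for the reduced process $\mathbb{B}^H-\lambda_k B^{H_k,k}$, and one must choose the weights $(\lambda_k)$ compatibly with the decay of $(H_k)$ so that both \eqref{50fbm:cond1}--\eqref{50fbm:cond3} and the infinite compactness-criterion series converge. The flexibility of the class $\mathcal{L}_{2,p}^q$ (weaker than the $L^{1,\infty}_{\infty,\infty}$ class used in Section~2) is what forces one to exploit the extreme regularization of small-$H_k$ components rather than to rely on a single fBm.
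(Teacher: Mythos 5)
The paper does not actually prove Theorem \ref{5ExistUniqSDE0fbm}: it imports it verbatim from \cite{5ABP18}, noting only that the result there is obtained ``by using an infinite dimensional version of a compactness criterion for square integrable Wiener functionals in \cite{5DMN92}.'' Your plan --- mollification, componentwise Malliavin calculus on the product Wiener space, uniform verification of the Da Prato--Malliavin--Nualart criterion via Picard/shuffle expansions and local-time estimates, then extraction and identification of the limit --- is essentially a reconstruction of that reference's method, so in substance you are following the same route the paper relies on. The one point I would press you on is the Girsanov step: since $\mathbb{B}^H$ is driven by countably many independent Wiener processes, the drift $\int b\,du$ must be absorbed into a \emph{single distinguished} component $\lambda_{n_0}B^{H_{n_0},n_0}$ (with $\lambda_{n_0}\neq 0$, using $K_{H_{n_0}}^{-1}$ and a Novikov bound for that one kernel), rather than ``every individual component'' as you phrase it; as written that step is too vague to certify, though it is repairable and is exactly how \cite{5ABP18} proceeds.
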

Moreover, for all $s\leq t\leq T$
\begin{equation}
\left(x\mapsto X_t^{s,x}\right)\in \bigcap_{k\geq 1}\bigcap_{\alpha>2} L^2(\Omega;W^{k,\alpha}(U))    
\end{equation}
for all bounded open subsets $U\subset\mathbb{R}^d$.\par
Using exactly the same techniques as in Section 2, we mention that one can prove (similarly to Theorem \ref{5PathByPath}) path-by-path uniqueness of solutions to (\ref{5SDE0fbm}) in the sense of Davie \cite{5Davie07} uniformly in the initial conditions.
\begin{theorem}\label{5Davie0fbm}
    Retain the conditions of Theorem \ref{5ExistUniqSDE0fbm} for $p,q=\infty $.\par
    Then for all $0\leq s\leq T$ there exists a measurable set $\Omega^*$ with $\mu(\Omega^*)=1$ such that for all $\omega\in\Omega^*$ the equation 
    \begin{equation}\label{5RDE0fbm}
        X_t^{s,x}(\omega) = x +\int_{s}^{t} b(u,X_u^{s,x}(\omega))du + \mathbb{B}_t^H(\omega) - \mathbb{B}_s^H(\omega),\quad X_s^{s,x} = x\in\mathbb{R}^d,\quad s\leq t\leq T
    \end{equation}
    has a unique solution in the space of continuous functions, uniformly in $x\in\mathbb{R}^d$.
\end{theorem}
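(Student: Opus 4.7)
The plan is to transport the entire argument of Section 2, namely Proposition \ref{5Holder flow}, Theorem \ref{5MainEstimate} and Proposition \ref{5Almost lip.} combined via the Van Kampen/Shaposhnikov synthesis in the proof of Theorem \ref{5PathByPath}, to the driving noise $\mathbb{B}^H_\cdot$. The only ingredients specific to fractional Brownian motion that enter the proofs of Section 2 are (i) the representation of $B^H$ as a Volterra integral against a Wiener process, which provides the Malliavin calculus framework, (ii) the local-nondeterminism/lower Gaussian density estimates used to bound expressions like $\sigma_H^{-2}(r,s)E[b(r,W_\sigma+y)W_\sigma]$, and (iii) the estimates of the form $\sigma_H(r,s)\ge c_H|r-s|^H$ which drive the shuffle-and-integration-by-parts bounds in Lemma \ref{5RelativeCompactness} and Theorem \ref{5MainEstimate}. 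All three ingredients remain available for $\mathbb{B}^H$: each component $B^{H_n,n}$ has its own Volterra kernel $K_{H_n}$, the independence of the components makes $\mathbb{B}^H_t$ jointly Gaussian with covariance $\sum_n\lambda_n^2 R_{H_n}(s,t)$, and the asymptotics $H_n\downarrow 0$ give rise to stronger, not weaker, decay of the corresponding variance ratio.

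First, I would establish the analogue of Proposition \ref{5Holder flow} for $\mathbb{B}^H$. The crucial step is the supremum estimate
\begin{equation*}
E\left[\sup_{t\in[s,T]}\|X_t^{s,x}-X_t^{s,y}\|^{2^r}\right]\le C(K,r,d,T)\|x-y\|^{2^r},
\end{equation*}
proved along the lines of Theorem \ref{5MainEstimate}. For this, one approximates $b$ by smooth $b_n$, applies the Garsia-Rodemich-Rumsey inequality, and controls
$E[\|\tfrac{\partial}{\partial x}X^{x,n}_{t_2}-\tfrac{\partial}{\partial x}X^{x,n}_{t_1}\|^p]$
using Picard expansion of $\tfrac{\partial}{\partial x}X^{x,n}$, Girsanov's theorem (to remove the drift), and the local-time/shuffle machinery developed in \cite{5ABP18} for the driver $\mathbb{B}^H$. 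The $L^2$-compactness criterion from \cite{5DMN92}, already adapted to the $\mathbb{B}^H$-setting in the proof of Theorem \ref{5ExistUniqSDE0fbm}, then yields, together with Lemma \ref{5ContinuousVersion} and Theorem \ref{5Identification} in their obvious analogues, the required joint regularity and an identification of the limiting derivative with the spatial Sobolev derivative of the unique strong solution.

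Second, I would prove the analogue of Proposition \ref{5Almost lip.}, the almost-Lipschitz regularization property of the averaging operator $\int_r^u b(s,\mathbb{B}^H_s+h(s))\,ds$. The argument of the lemma preceding Proposition \ref{5Almost lip.} is purely Clark-Ocone plus standard heat-kernel integration-by-parts, and it goes through verbatim with the Gaussian variance $\sigma^2(r,s):=\mathrm{Var}(\mathbb{B}^H_r-\mathbb{B}^H_s)$ replacing $\sigma_H^2(r,s)$. Since $\sigma(r,s)$ dominates $c_{H_n}|\lambda_n||r-s|^{H_n}$ for every $n$ and we are free to choose $H_n\downarrow 0$, the exponent $2(1-3H)$ appearing in the sub-Gaussian bound can be pushed arbitrarily close to $2$, giving a regularization rate at least as good as that used in the proof of Proposition 2.15, uniformly in the $x$ variable. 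The Borel-Cantelli dyadic refinement that promotes this probabilistic statement to a full-measure pathwise statement (Proposition 2.15) is driver-agnostic and carries over directly.

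Third, I would combine the H\"older flow property and the pathwise almost-Lipschitz averaging estimate through the Van Kampen scheme of the proof of Theorem \ref{5PathByPath}: intersect the full-measure sets obtained above, fix $\omega$ in the intersection, and for any candidate solution $Y_t$ of (\ref{5RDE0fbm}) form $f(t)=X(x,0,r,\mathbb{B}^H(\omega))-X(Y_t,t,r,\mathbb{B}^H(\omega))$, estimate its dyadic increments by composing the $\alpha$-H\"older estimate on the flow with the $l^{4/3}$-regularization bound on the averaging operator, and conclude $f\equiv 0$ by letting the dyadic mesh tend to zero. This forces $Y_t=X(x,0,t,\mathbb{B}^H(\omega))$ pointwise. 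The main obstacle is purely bookkeeping rather than conceptual: one must verify that the shuffle-and-integration-by-parts bounds used in Theorem \ref{5MainEstimate} remain summable when the single kernel $K_H$ is replaced by the series $\sum_n\lambda_n K_{H_n}$, which is ensured by the conditions (\ref{50fbm:cond1})--(\ref{50fbm:cond3}) and by $H_n\downarrow 0$, exactly as in \cite{5ABP18}; no additional Hurst-parameter restriction beyond those already used to define $\mathbb{B}^H$ is needed.
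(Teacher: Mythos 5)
Your proposal follows exactly the route the paper intends: the paper itself offers no detailed argument for this theorem, stating only that it is proved ``using exactly the same techniques as in Section 2'' (i.e.\ transporting the H\"older-flow estimate, the averaging-operator regularization, and the Van Kampen/Shaposhnikov synthesis to the driver $\mathbb{B}^H$, with the shuffle/local-time machinery for $\mathbb{B}^H$ supplied by \cite{5ABP18}). Your sketch is a faithful and in fact more explicit elaboration of that same plan, so it matches the paper's approach.
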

By applying Theorem \ref{5Davie0fbm} and \ref{5ExistUniqSDE0fbm} in combination with very similar proofs as in Section 3.1, we can get the following result:

\begin{theorem}\label{5mainResultTE0fbm}
    Assume the conditions of Theorem \ref{5ExistUniqSDE0fbm} and let $b\in\mathcal{L}_{2,p^{'}}^{q^{'}}, p^{'},q^{'}=\infty$, $u_0\in\mathcal{C}_{b}^{\infty }(\mathbb{R}^{d})$, $p>d$. Define $b^*$ in \ref{5TEperturbed} by $b^*(t,x) = b(t,x+\mathbb{B}_t^H(\omega))$.\par
    Then there exists a measurable set $\Omega^*$ of full mass such that for all $\omega\in\Omega^*$ there exists a unique weak solution $u=u^\omega$ in the class $W_{loc}^{1,p}\left([0,T]\times\mathbb{R}^d\right)$ to the transport equation (\ref{5TEperturbed}).\par
    Furthermore, we have that \[u(t,\cdot)\in\bigcap_{k\geq 1}\bigcap_{\alpha\geq 1}W_{loc}^{k,\alpha}\left(\mathbb{R}^d\right)\]
    
    for all $t$.
\end{theorem}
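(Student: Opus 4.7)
The proof plan follows the same scheme as that of Theorem 3.6, with the fractional Brownian motion $B^H_\cdot$ replaced by the driving process $\mathbb{B}^H_\cdot$ throughout, and the finite-order Sobolev regularity of the flow replaced by the improved regularity supplied by Theorem \ref{5ExistUniqSDE0fbm}. Let $\Psi_\omega(t,x) = (t, x + \mathbb{B}^H_t(\omega))$ and consider the perturbed ODE $\frac{d}{dt}\hat{X}^x_t = b^*(t,\hat{X}^x_t)$, which by the substitution $\hat{X}^x_t(\omega) = X^x_t(\omega) - \mathbb{B}^H_t(\omega)$ reduces to the SDE \eqref{5SDE0fbm}. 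By Theorem \ref{5Davie0fbm} there exists a full-measure set $\Omega^*$ on which \eqref{5RDE0fbm} admits, for every $\omega\in\Omega^*$ and every $x\in\mathbb{R}^d$, a unique continuous solution. An analogous backward-SDE construction produces an inverse flow $\hat{Y}^x_t(\omega)$, and by Theorem \ref{5ExistUniqSDE0fbm} both $\hat{X}^x_t(\omega)$ and $\hat{Y}^x_t(\omega)$ inherit the regularity
\[
\left(x\mapsto \hat{X}^x_t(\omega)\right),\ \left(x\mapsto \hat{Y}^x_t(\omega)\right)\in \bigcap_{k\geq 1}\bigcap_{\alpha>2} W^{k,\alpha}_{loc}(\mathbb{R}^d;\mathbb{R}^d)
\]
for every $t$. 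This already gives the candidate solution $u(t,x):=u_0\bigl(\hat{Y}^x_t(\omega)\bigr)$ the required infinite spatial smoothness via Lemma 3.2 (and its $k$-th order analogue in Remark 3.3), provided the chain rule can be pushed through.

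For \textbf{existence}, I would approximate $b$ by smooth compactly supported vector fields $b_n$ as in Theorem \ref{5ExistUniqSDE0fbm}, write the classical solution $u_n(\omega,t,x)=u_0(\hat{Y}^{x,n}_t(\omega))$ associated to $b_n^*(t,x) = b_n(t,x+\mathbb{B}_t^H(\omega))$, and plug it into the weak formulation \eqref{5weakformulation}. The crucial step is then passing to the limit $n\to\infty$ in the identity
\[
\int_0^T\!\!\int_{\mathbb{R}^d}\!\Bigl\{-u_0(\hat{Y}^{x,n}_t)\rho'(t)\eta(x) + b_n(t,x+\mathbb{B}^H_t)\cdot(\nabla u_0)(\hat{Y}^{x,n}_t)^T \tfrac{\partial}{\partial x}\hat{Y}^{x,n}_t\,\rho(t)\eta(x)\Bigr\}dx\,dt = 0.
\]
This is where the compactness criterion from \cite{5DMN92}, adapted to the $\mathbb{B}^H$ setting as in \cite{5ABP18}, enters: it ensures that along a subsequence $\hat{Y}^{x,n_j}_t$ and $\frac{\partial}{\partial x}\hat{Y}^{x,n_j}_t$ converge in $\mathcal{C}([0,T]\times K; L^2(\Omega;\mathbb{R}^d))$ and $\mathcal{C}([0,T]\times K; L^2(\Omega;\mathbb{R}^{d\times d}))$ respectively. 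Combining this with the Gaussian density of $\mathbb{B}^H_t$ (which is available since each $B^{H_n,n}$ is Gaussian and the series defining $\mathbb{B}^H$ converges in $L^2$) and boundedness of $u_0$ and its gradient, one obtains $L^1(\Omega)$-convergence of the weak formulation to its limit version. By the usual separability argument one extracts a full-measure set $\Omega^*$ (independent of the test functions) on which the weak equation \eqref{5moduloseperability} holds; the $W^{k,\alpha}_{loc}$-regularity of $u(t,\cdot)$ then follows from the chain rule of Remark 3.3 applied to $u_0\circ \hat{Y}^\cdot_t$.

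For \textbf{uniqueness}, I would follow verbatim the characteristics argument of Theorem 3.6. Given any weak solution $u\in W^{1,p}_{loc}$ and a test pair $(\rho,\eta)$, approximate $u$ locally by smooth $u_n$, apply the chain rule along $\hat{X}^x_t(\omega)$, use integration by parts in $t$, and perform the change of variable (Theorem \ref{5ChangeOfVariableFormula}) back to the reference variable $y=\hat{X}^x_t$; the $W^{1,\infty}_{loc}$-regularity of $\hat{Y}$ (Lemma 3.1, which transfers without modification to the $\mathbb{B}^H$ setting because only boundedness of $b^*$ is used) guarantees the Jacobian $|\det\tfrac{\partial}{\partial y}\hat{Y}^y_t|$ is essentially bounded on compacts. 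Passing $n\to\infty$ shows $\int_0^T u(t,\hat{X}^x_t(\omega))\rho'(t)\,dt = 0$ for every $\rho$ and every $x$, hence $t\mapsto u(t,\hat{X}^x_t)$ is constant, forcing $u(t,x)=u_0(\hat{Y}^x_t(\omega))$.

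The main obstacle is the existence step, specifically the identification of the limit of $(\nabla u_0)(\hat{Y}^{x,n}_t)^T\tfrac{\partial}{\partial x}\hat{Y}^{x,n}_t$ with $\nabla u(\omega,t,x)$ in the $\mathbb{B}^H$ framework. Because the driving process is no longer H\"{o}lder continuous, I need to verify that the analogue of Lemma \ref{5ContinuousVersion} — i.e.\ the existence of a jointly continuous version of the limit $(t,x)\mapsto \hat{Y}^x_t$ in $L^2(\Omega)$ — still holds in the $\mathbb{B}^H$ setting. This should follow from the quantitative Malliavin-based estimates underlying Theorem \ref{5ExistUniqSDE0fbm} together with a Kolmogorov-type argument, but it is the one point where a genuinely new piece of work beyond Sections 2--3 is required. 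Once that is in place, everything else transcribes directly.
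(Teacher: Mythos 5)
Your proposal follows essentially the same route as the paper, which gives no separate proof of this theorem but simply asserts that it follows from Theorems \ref{5Davie0fbm} and \ref{5ExistUniqSDE0fbm} "in combination with very similar proofs as in Section 3.1"; your transcription of the existence and uniqueness arguments of Theorem 3.6 into the $\mathbb{B}^H$ setting is exactly what the authors intend. Your flagged concern about re-verifying the jointly continuous version of the limit flow (the analogue of Lemma \ref{5ContinuousVersion}) when the driving process is no longer H\"{o}lder continuous is a fair observation of a point the paper itself leaves implicit, but it does not constitute a departure from the paper's approach.
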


We get as well the following theorem at the level of the continuity equation

\begin{theorem}\label{5mainResultCE0fbm}
	Let $b\in\mathcal{L}_{2,p^{}}^q, p^{},q^{}=\infty$. Define $b^*$ in (\ref{5eqn:CE}) as \[b^*(t,x) = b(t,x+\mathbb{B}_t^H(\omega)).
	\]
	
	Then there exists a measurable set $\tilde{\Omega}$ with full mass such that for all $\omega\in\tilde{\Omega}$ and all initial data $\bar{\mu}\in\mathcal{M}^+(\mathbb{R}^d)$ there exists a unique weak solution $\mu_t = \mu_t^{\omega}$ given by
	\[\mu_t = \hat{X}(t,\cdot)_{\#}\bar{\mu}\]
	i.e.
	\begin{equation}\label{5solutionCE}
		\int_{\mathbb{R}^d} \varphi d\mu_t = \int_{\mathbb{R}^d} \varphi(\hat{X}(t,x)) d\bar{\mu}_t,
	\end{equation}
	where $\hat{X}$ is the unique solution to
    \begin{equation}
    \begin{cases}
    \frac{d}{dt} \hat{X}(t,x) = b^{*}\left(t,\hat{X}(t,x)\right)\\
    \hat{X}(0,x) = x\in\mathbb{R}^d.
    \end{cases}
    \end{equation}

\end{theorem}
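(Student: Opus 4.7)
The plan is to deduce Theorem \ref{5mainResultCE0fbm} from the path-by-path ODE uniqueness of Theorem \ref{5Davie0fbm} and the superposition principle of Theorem \ref{5SuperpositionPrinciple}, exactly in the spirit of the proof of Theorem \ref{5mainResultCE} for the fractional Brownian case.

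First I would translate the random ODE for $\hat{X}$ into the random integral equation (\ref{5RDE0fbm}). Setting $X_t:=\hat{X}(t,x)+\mathbb{B}_t^H(\omega)$, an elementary differentiation shows that $\hat{X}$ solves $\tfrac{d}{dt}\hat{X}(t,x)=b^{\ast}(t,\hat{X}(t,x))$ with $\hat{X}(0,x)=x$ if and only if $X$ satisfies (\ref{5RDE0fbm}) with $X_0=x$. By Theorem \ref{5Davie0fbm} there is a measurable set $\tilde{\Omega}$ of full mass such that for every $\omega\in\tilde{\Omega}$ Equation (\ref{5RDE0fbm}) has one and only one continuous solution, uniformly in the initial point $x\in\mathbb{R}^d$; consequently, on $\tilde{\Omega}$ the random ODE for $\hat{X}$ has a unique continuous solution for every $x\in\mathbb{R}^d$.

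For each fixed $\omega\in\tilde{\Omega}$ I would then invoke the superposition principle (Theorem \ref{5SuperpositionPrinciple}) with $A=\mathbb{R}^d$ for the bounded Borel vector field $b^{\ast}(t,\cdot)=b(t,\cdot+\mathbb{B}_t^H(\omega))$: pointwise uniqueness of ODE solutions yields uniqueness of positive measure-valued weak solutions of the continuity equation with arbitrary initial datum $\bar{\mu}\in\mathcal{M}^+(\mathbb{R}^d)$. Existence is then obtained by verifying directly that the candidate $\mu_t:=\hat{X}(t,\cdot)_{\#}\bar{\mu}$ is a weak solution. The joint continuity of $(t,x)\mapsto\hat{X}(t,x)$, which follows from the corresponding property of $X^x_t$ via the Kolmogorov-type argument underlying Lemma \ref{5JointHolderContinuity} adapted to $\mathbb{B}^H$ through the approximation scheme behind Theorem \ref{5ExistUniqSDE0fbm}, renders the pushforward well-defined. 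For any test function $\varphi\in C_c^{\infty}((0,T)\times\mathbb{R}^d)$, the chain rule along characteristics gives
\[
\frac{d}{dt}\varphi(t,\hat{X}(t,x))=\partial_t\varphi(t,\hat{X}(t,x))+b^{\ast}(t,\hat{X}(t,x))\cdot\nabla\varphi(t,\hat{X}(t,x));
\]
integrating in $t$ against Lebesgue measure and in $x$ against $\bar{\mu}$, using Fubini and the compact support of $\varphi$ to kill boundary contributions, produces the weak formulation (\ref{5solutionCE}).

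The main obstacle is arguably bookkeeping rather than analysis: all the hard work has already been absorbed into Theorems \ref{5ExistUniqSDE0fbm} and \ref{5Davie0fbm}. What requires care is matching the class of solutions in the statement of the superposition principle with the class in which uniqueness is asserted here (namely narrowly continuous weak solutions whose trace at $t=0$ is the given $\bar{\mu}$), verifying Borel measurability of $\hat{X}(t,\cdot)$ for each $t$ so that $\mu_t$ is genuinely a Borel measure, and confirming that the translated drift $b^{\ast}$ and the candidate $\mu_t$ lie within the measure-theoretic framework of Theorem \ref{5SuperpositionPrinciple}.
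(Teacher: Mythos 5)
Your proposal is correct and follows exactly the route the paper intends: the paper itself gives no separate proof of this theorem, deriving it (as it does Theorem \ref{5mainResultCE}) by combining the path-by-path uniqueness of Theorem \ref{5Davie0fbm} with the superposition principle of Theorem \ref{5SuperpositionPrinciple}, with existence supplied by the pushforward of $\bar{\mu}$ under the flow. Your additional remarks on measurability and the verification of the weak formulation merely make explicit what the paper leaves implicit.
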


\section{Appendix}

	\bigskip We start by stating some basic facts about fractional Brownian motion and fractional calculus (see \cite{5Samko-et.al.93} and \cite%
{5Lizorkin01}). We then recall some results from the Malliavin calculus with respect to fractional Brownian motion. For an in-depth treatment of this material see \cite{5Nualart10}. We end with a collection of some technical lemmas that we make use of in our paper. 
	
	\subsection{Fractional Brownian motion}
\bigskip We want to recall here a version of Girsanov's theorem for the
fractional Brownian motion. For this purpose, let us pass in review some
basic concepts from fractional calculus (see \cite{5Samko-et.al.93} and \cite%
{5Lizorkin01}).

Let $a,$ $b\in \mathbb{R}$ with $a<b$. Let $f\in L^{p}([a,b])$ with $p\geq 1$
and $\alpha >0$. Introduce the \emph{left-} and \emph{right-sided
Riemann-Liouville fractional integrals} as 
\begin{equation*}
I_{a^{+}}^{\alpha }f(x)=\frac{1}{\Gamma (\alpha )}\int_{a}^{x}(x-y)^{\alpha
-1}f(y)dy
\end{equation*}%
and 
\begin{equation*}
I_{b^{-}}^{\alpha }f(x)=\frac{1}{\Gamma (\alpha )}\int_{x}^{b}(y-x)^{\alpha
-1}f(y)dy
\end{equation*}%
for almost all $x\in \lbrack a,b]$, where $\Gamma $ is the Gamma function.

For a given integer $p\geq 1$, let $I_{a^{+}}^{\alpha }(L^{p})$ (resp. $%
I_{b^{-}}^{\alpha }(L^{p})$) be the image of $L^{p}([a,b])$ of the operator $%
I_{a^{+}}^{\alpha }$ (resp. $I_{b^{-}}^{\alpha }$). If $f\in
I_{a^{+}}^{\alpha }(L^{p})$ (resp. $f\in I_{b^{-}}^{\alpha }(L^{p})$) and $%
0<\alpha <1$ then we can define the \emph{left-} and \emph{right-sided
Riemann-Liouville fractional derivatives} by 

\begin{equation*}
D_{a^{+}}^{\alpha }f(x)=\frac{1}{\Gamma (1-\alpha )}\frac{d}{dx}\int_{a}^{x}%
\frac{f(y)}{(x-y)^{\alpha }}dy
\end{equation*}%
and 
\begin{equation*}
D_{b^{-}}^{\alpha }f(x)=\frac{1}{\Gamma (1-\alpha )}\frac{d}{dx}\int_{x}^{b}%
\frac{f(y)}{(y-x)^{\alpha }}dy.
\end{equation*}

The left- and right-sided derivatives of $f$ can be also represented as 
\begin{equation*}
D_{a^{+}}^{\alpha }f(x)=\frac{1}{\Gamma (1-\alpha )}\left( \frac{f(x)}{%
(x-a)^{\alpha }}+\alpha \int_{a}^{x}\frac{f(x)-f(y)}{(x-y)^{\alpha +1}}%
dy\right)
\end{equation*}%
and 
\begin{equation*}
D_{b^{-}}^{\alpha }f(x)=\frac{1}{\Gamma (1-\alpha )}\left( \frac{f(x)}{%
(b-x)^{\alpha }}+\alpha \int_{x}^{b}\frac{f(x)-f(y)}{(y-x)^{\alpha +1}}%
dy\right) .
\end{equation*}

Using the above definitions, one obtains that 
\begin{equation*}
I_{a^{+}}^{\alpha }(D_{a^{+}}^{\alpha }f)=f
\end{equation*}%
for all $f\in I_{a^{+}}^{\alpha }(L^{p})$ and 
\begin{equation*}
D_{a^{+}}^{\alpha }(I_{a^{+}}^{\alpha }f)=f
\end{equation*}%
for all $f\in L^{p}([a,b])$ and similarly for $I_{b^{-}}^{\alpha }$ and $%
D_{b^{-}}^{\alpha }$.

\bigskip

Let now $B^{H}=\{B_{t}^{H},t\in \lbrack 0,T]\}$ be a $d$-dimensional \emph{%
fractional Brownian motion} with Hurst parameter $H\in (0,1/2)$, that is $%
B^{H}$ is a centered Gaussian process with a covariance function given by 
\begin{equation*}
(R_{H}(t,s))_{i,j}:=E[B_{t}^{H,(i)}B_{s}^{H,(j)}]=\delta _{ij}\frac{1}{2}%
\left( t^{2H}+s^{2H}-|t-s|^{2H}\right) ,\quad i,j=1,\dots ,d,
\end{equation*}%
where $\delta _{ij}$ is one, if $i=j$, or zero else.

In the sequel we briefly recall the construction of the fractional Brownian
motion, which can be found in \cite{5Nualart10}. For simplicity, consider the
case $d=1$.

Let $\mathcal{E}$ be the set of step functions on $[0,T]$ and $\mathcal{H}$
be the Hilbert space given by the completion of $\mathcal{E}$ with respect
to the inner product 
\begin{equation*}
\langle 1_{[0,t]},1_{[0,s]}\rangle _{\mathcal{H}}=R_{H}(t,s).
\end{equation*}%
From that we get an extension of the mapping $1_{[0,t]}\mapsto B_{t}$ to an
isometry between $\mathcal{H}$ and a Gaussian subspace of $L^{2}(\Omega )$
with respect to $B^{H}$. We denote by $\varphi \mapsto B^{H}(\varphi )$ this
isometry.

If $H<1/2$, one shows that the covariance function $R_{H}(t,s)$ has the
representation

\bigskip\ 
\begin{equation}
R_{H}(t,s)=\int_{0}^{t\wedge s}K_{H}(t,u)K_{H}(s,u)du,  \label{52.2}
\end{equation}%
where 
\begin{equation}
K_{H}(t,s)=c_{H}\left[ \left( \frac{t}{s}\right) ^{H-\frac{1}{2}}(t-s)^{H-%
\frac{1}{2}}+\left( \frac{1}{2}-H\right) s^{\frac{1}{2}-H}\int_{s}^{t}u^{H-%
\frac{3}{2}}(u-s)^{H-\frac{1}{2}}du\right] .  \label{5KH}
\end{equation}%
Here $c_{H}=\sqrt{\frac{2H}{(1-2H)\beta (1-2H,H+1/2)}}$ and $\beta $ is the
Beta function. See \cite[Proposition 5.1.3]{5Nualart10}.

Based on the kernel $K_{H}$, one can introduce by means (\ref{52.2}) an
isometry $K_{H}^{\ast }$ between $\mathcal{E}$ and $L^{2}([0,T])$ such that $%
(K_{H}^{\ast }1_{[0,t]})(s)=K_{H}(t,s)1_{[0,t]}(s).$ This isometry has an
extension to the Hilbert space $\mathcal{H}$, which has the following
representations by means of fractional derivatives

\begin{equation*}
(K_{H}^{\ast }\varphi )(s)=c_{H}\Gamma \left( H+\frac{1}{2}\right) s^{\frac{1%
}{2}-H}\left( D_{T^{-}}^{\frac{1}{2}-H}u^{H-\frac{1}{2}}\varphi (u)\right)
(s)
\end{equation*}%
and 
\begin{align*}
(K_{H}^{\ast }\varphi )(s)=& \,c_{H}\Gamma \left( H+\frac{1}{2}\right)
\left( D_{T^{-}}^{\frac{1}{2}-H}\varphi (s)\right) (s) \\
& +c_{H}\left( \frac{1}{2}-H\right) \int_{s}^{T}\varphi (t)(t-s)^{H-\frac{3}{%
2}}\left( 1-\left( \frac{t}{s}\right) ^{H-\frac{1}{2}}\right) dt.
\end{align*}%
for $\varphi \in \mathcal{H}$. One also proves that $\mathcal{H}=I_{T^{-}}^{%
\frac{1}{2}-H}(L^{2})$. See \cite{5DecreusefondUstunel98} and \cite[Proposition 6]{5AMN01}.

Since $K_{H}^{\ast }$ is an isometry from $\mathcal{H}$ into $L^{2}([0,T])$,
the $d$-dimensional process $W=\{W_{t},t\in \lbrack 0,T]\}$ defined by 
\begin{equation}
W_{t}:=B^{H}((K_{H}^{\ast })^{-1}(1_{[0,t]}))  \label{5WBH}
\end{equation}%
is a Wiener process and the process $B^{H}$ can be represented as 
\begin{equation}
B_{t}^{H}=\int_{0}^{t}K_{H}(t,s)dW_{s}.  \label{5BHW}
\end{equation}%
See \cite{5AMN01}.

In what follows we also need the Definition of a fractional Brownian motion
with respect to a filtration.

\begin{definition}
Let $\mathcal{G}=\left\{ \mathcal{G}_{t}\right\} _{t\in \left[ 0,T\right] }$
be a filtration on $\left( \Omega ,\mathcal{F},P\right) $ satisfying the
usual conditions. A fractional Brownian motion $B^{H}$ is called a $\mathcal{%
G}$-fractional Brownian motion if the process $W$ defined by (\ref{5WBH}) is
a $\mathcal{G}$-Brownian motion.
\end{definition}

\bigskip

In the following, let $W$ be a standard Wiener process on a filtered
probability space $(\Omega ,\mathfrak{A},P),\{\mathcal{F}_{t}\}_{t\in
\lbrack 0,T]},$ where $\mathcal{F}=\{\mathcal{F}_{t}\}_{t\in \lbrack 0,T]}$
is the natural filtration generated by $W$ and augmented by all $P$-null
sets. Denote by $B:=B^{H}$ the fractional Brownian motion with Hurst
parameter $H\in (0,1/2)$ as in (\ref{5BHW}).

We aim at using a version of Girsanov's theorem for fractional Brownian
motion which is due to \cite[Theorem 4.9]{5DecreusefondUstunel98}. The version stated here
corresponds to that in \cite[Theorem 2]{5Nualart10}. To this end, we need the
definition of an isomorphism $K_{H}$ from $L^{2}([0,T])$ onto $I_{0+}^{H+%
\frac{1}{2}}(L^{2})$ with respect to the kernel $K_{H}(t,s)$ in terms of the
fractional integrals as follows (see \cite[Theorem 2.1]{5DecreusefondUstunel98}): 
\begin{equation*}
(K_{H}\varphi )(s)=I_{0^{+}}^{2H}s^{\frac{1}{2}-H}I_{0^{+}}^{\frac{1}{2}%
-H}s^{H-\frac{1}{2}}\varphi ,\quad \varphi \in L^{2}([0,T]).
\end{equation*}

Using this and the properties of the Riemann-Liouville fractional integrals
and derivatives, one can show that the inverse of $K_{H}$ can be represented
as 
\begin{equation*}
(K_{H}^{-1}\varphi )(s)=s^{\frac{1}{2}-H}D_{0^{+}}^{\frac{1}{2}-H}s^{H-\frac{%
1}{2}}D_{0^{+}}^{2H}\varphi (s),\quad \varphi \in I_{0+}^{H+\frac{1}{2}%
}(L^{2}).
\end{equation*}

From this one obtains for absolutely continuous functions $\varphi $ (see 
\cite{5NO02}) that 
\begin{equation*}
(K_{H}^{-1}\varphi )(s)=s^{H-\frac{1}{2}}I_{0^{+}}^{\frac{1}{2}-H}s^{\frac{1%
}{2}-H}\varphi ^{\prime }(s).
\end{equation*}

\begin{theorem}[Girsanov's theorem for fBm]
\label{5girsanov} Let $u=\{u_{t},t\in \lbrack 0,T]\}$ be an $\mathcal{F}$%
-adapted process with integrable trajectories and set $\widetilde{B}%
_{t}^{H}=B_{t}^{H}+\int_{0}^{t}u_{s}ds,\quad t\in \lbrack 0,T].$ Suppose that

\begin{itemize}
\item[(i)] $\int_{0}^{\cdot }u_{s}ds\in I_{0+}^{H+\frac{1}{2}}(L^{2}([0,T]))$%
, $P$-a.s.

\item[(ii)] $E[\xi_T]=1$ where 
\begin{equation*}
\xi_T := \exp\left\{-\int_0^T K_H^{-1}\left( \int_0^{\cdot} u_r
dr\right)(s)dW_s - \frac{1}{2} \int_0^T K_H^{-1} \left( \int_0^{\cdot} u_r
dr \right)^2(s)ds \right\}.
\end{equation*}
\end{itemize}

Then the shifted process $\widetilde{B}^H$ is an $\mathcal{F}$-fractional
Brownian motion with Hurst parameter $H$ under the new probability $%
\widetilde{P}$ defined by $\frac{d\widetilde{P}}{dP}=\xi_T$.
\end{theorem}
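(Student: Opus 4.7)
The plan is to reduce the statement to the classical Girsanov theorem for standard Brownian motion by exploiting the representation $B_t^H = \int_0^t K_H(t,s)\,dW_s$, where $W$ is the underlying $\mathcal{F}$-Brownian motion. The key algebraic fact I will use is that the operator $K_H:L^2([0,T])\to I_{0+}^{H+1/2}(L^2)$ is an isomorphism whose inverse appears in the exponent of $\xi_T$. Hypothesis (i) is precisely the condition that makes $K_H^{-1}\!\left(\int_0^\cdot u_r\,dr\right)$ a well-defined element of $L^2([0,T])$, so the exponential $\xi_T$ is meaningful.

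First I will rewrite the drift shift in terms of the Brownian motion $W$. Setting
\[
v_s := K_H^{-1}\!\left(\int_0^\cdot u_r\,dr\right)(s),
\]
the definition of $K_H$ together with (i) gives $\int_0^t u_s\,ds = \int_0^t K_H(t,s)\, v_s\,ds$ for almost every $t\in[0,T]$, $P$-a.s. Substituting into the definition of $\widetilde{B}^H$, I obtain
\[
\widetilde{B}^H_t \;=\; \int_0^t K_H(t,s)\,dW_s + \int_0^t K_H(t,s)\, v_s\,ds \;=\; \int_0^t K_H(t,s)\,d\widetilde{W}_s,
\]
where $\widetilde{W}_t := W_t + \int_0^t v_s\,ds$.

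Next I will apply the classical Girsanov theorem to $W$. Condition (ii) says exactly that the exponential martingale
\[
\xi_T = \exp\!\Big(-\int_0^T v_s\,dW_s - \tfrac{1}{2}\int_0^T v_s^2\,ds\Big)
\]
has mean one, so $d\widetilde{P}/dP = \xi_T$ defines a probability measure under which $\widetilde{W}$ is a standard $\mathcal{F}$-Brownian motion. Here I need $v$ to be $\mathcal{F}$-adapted; this follows from the representation of $K_H^{-1}$ as a composition of fractional derivatives applied to an $\mathcal{F}$-adapted absolutely continuous process, which preserves adaptedness (the explicit formula $K_H^{-1}\varphi(s) = s^{H-1/2} I_{0+}^{1/2-H} s^{1/2-H}\varphi'(s)$ only involves values of $\varphi$ on $[0,s]$).

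Finally, since under $\widetilde{P}$ the process $\widetilde{W}$ is a standard Brownian motion and $\widetilde{B}^H_t = \int_0^t K_H(t,s)\,d\widetilde{W}_s$, the covariance structure of fBm yields that $\widetilde{B}^H$ is an $\mathcal{F}$-fractional Brownian motion with Hurst parameter $H$ under $\widetilde{P}$. The only subtle point (and the main technical nuisance rather than a deep obstacle) is justifying the identity $\int_0^t u_s\,ds = \int_0^t K_H(t,s)\,v_s\,ds$ pathwise from hypothesis (i); this is handled by invoking that $K_H$ is an isomorphism of $L^2([0,T])$ onto $I_{0+}^{H+1/2}(L^2)$ and that (i) places the integrated drift precisely in the image of $K_H$, so that $v = K_H^{-1}(\int_0^\cdot u_r dr)$ belongs to $L^2([0,T])$ $P$-a.s. and the composition $K_H v$ reconstructs $\int_0^\cdot u_r dr$ identically.
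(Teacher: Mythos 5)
Your argument is correct, but note that the paper does not prove this statement at all: it is quoted verbatim from the literature (Decreusefond--\"{U}st\"{u}nel, Theorem 4.9, in the form given in Nualart's book), so there is no in-paper proof to compare against. Your reduction to the classical Girsanov theorem via the Volterra representation $B^H_t=\int_0^t K_H(t,s)\,dW_s$ is precisely the standard argument used in those references, and all the delicate points are handled: hypothesis (i) guarantees $v=K_H^{-1}\bigl(\int_0^\cdot u_r\,dr\bigr)\in L^2([0,T])$ a.s., the Volterra (causal) structure of $K_H^{-1}$ gives adaptedness of $v$, hypothesis (ii) is exactly the Novikov-type normalization needed for the classical theorem applied to $\widetilde{W}$, and the covariance identity $\int_0^{t\wedge s}K_H(t,u)K_H(s,u)\,du=R_H(t,s)$ transfers the conclusion back to $\widetilde{B}^H$. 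The only step you should make explicit is the identification of the fractional-integral definition of the operator $K_H$ with the integral operator $\varphi\mapsto\int_0^t K_H(t,s)\varphi(s)\,ds$, which is what licenses the identity $\int_0^t u_s\,ds=\int_0^t K_H(t,s)v_s\,ds$; this is Theorem 2.1 of Decreusefond--\"{U}st\"{u}nel and is already implicitly invoked in the appendix where $K_H$ is introduced.
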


\begin{remark}
In the the multi-dimensional case, we define 
\begin{equation*}
(K_{H}\varphi )(s):=((K_{H}\varphi ^{(1)})(s),\dots ,(K_{H}\varphi
^{(d)})(s))^{\ast },\quad \varphi \in L^{2}([0,T];\mathbb{R}^{d}),
\end{equation*}%
where $\ast $ denotes transposition. Similarly for $K_{H}^{-1}$ and $%
K_{H}^{\ast }$.
\end{remark}

	\subsection{Malliavin calculus}

	Let $\mathcal{S}$ be the set of smooth and cylindrical random variables of the form \[ F = f(B^H(\phi_1),\ldots ,B^H(\phi_n)) \]
	where $n\geq 1$, $f\in C_b^{\infty}(\mathbb{R}^n)$ and $\phi_1,\ldots ,\phi_n \in\mathcal{H}$ ($\mathcal{H}$ is defined in the previous section). Given a random variable $F\in\mathcal{S}$ we define its derivative, as an element in $\mathcal{H}$, to be
	\[ D^H F = \sum_{i=1}^n \frac{\partial f}{\partial x_j} (B^H(\phi_1),\ldots ,B^H(\phi_n)) \phi_j \]
	For any $p\geq 1$, we define the Sobolev space $\mathbb{D}^{1,p}_H$ as the completion of $\mathcal{S}$ with respect to the norm \[ ||F||^p_{1,p} = \mathbb{E} |F|^p + \mathbb{E}||D^H F||^p_{\mathcal{H}}\]
	
	Note that that the previous holds for any $H\in (0,1)$ and in particular for $H=\frac{1}{2}$. Denote by $D:= D^{\frac{1}{2}}$ the Malliavin derivative with respect to $W$ and let $\mathbb{D}^{1,p} $ be its corresponding Sobolev space. We restate the following transfer principle, Proposition 5.2.1, \cite{5Nualart10}, which links the $D$ and $D^H$.
	\begin{proposition}
		For any $F\in \mathbb{D}^{1,p} $ \[K^*_H D^H F = D F\]
	\end{proposition}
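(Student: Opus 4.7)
The plan is to establish the identity first on the smooth cylindrical functionals $\mathcal{S}$, where every quantity can be written down explicitly, and then extend to all of $\mathbb{D}^{1,p}$ by a density/closability argument using the fact that $K_H^{\ast}$ is a (linear) isometry from $\mathcal{H}$ onto $L^2([0,T])$.

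First I would take $F = f(B^H(\phi_1),\dots,B^H(\phi_n)) \in \mathcal{S}$ and use the construction of the fractional Brownian motion: the isomorphism $\varphi\mapsto B^H(\varphi)$ between $\mathcal{H}$ and the first Wiener chaos of $B^H$ is obtained from the underlying Wiener process $W$ through $B^H(\varphi) = W(K_H^{\ast}\varphi)$ for every $\varphi\in\mathcal{H}$. Hence $F$ can be equivalently written as a smooth cylindrical functional in the Brownian setting,
\[
F = f(W(K_H^{\ast}\phi_1),\dots,W(K_H^{\ast}\phi_n)),
\]
so that by the very definition of both derivative operators,
\[
D^H F = \sum_{j=1}^n \partial_j f(B^H(\phi_1),\dots,B^H(\phi_n))\,\phi_j,\qquad DF = \sum_{j=1}^n \partial_j f(B^H(\phi_1),\dots,B^H(\phi_n))\,K_H^{\ast}\phi_j.
\]
Applying $K_H^{\ast}$ to $D^H F$ and using its linearity then yields $K_H^{\ast} D^H F = DF$ pointwise, which settles the identity on $\mathcal{S}$.

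Next I would exploit that $K_H^{\ast}:\mathcal{H}\to L^2([0,T])$ is an isometry, so for every $F\in\mathcal{S}$ and every $p\geq 1$,
\[
\|D^H F\|_{\mathcal{H}} = \|K_H^{\ast} D^H F\|_{L^2([0,T])} = \|DF\|_{L^2([0,T])},
\]
and therefore $\|F\|_{1,p}^H = \|F\|_{1,p}$ as Sobolev norms on $\mathcal{S}$. Since $\mathcal{S}$ is dense in both $\mathbb{D}^{1,p}_H$ and $\mathbb{D}^{1,p}$ (the latter with respect to the Wiener process $W$), the two completions coincide and the above identity of norms extends to every $F$ in the common Sobolev space.

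Finally, for $F\in\mathbb{D}^{1,p}$, pick a sequence $F_n\in\mathcal{S}$ with $F_n\to F$ in $\mathbb{D}^{1,p}$. By the norm equivalence just established, $F_n\to F$ in $\mathbb{D}^{1,p}_H$ as well, so $D^H F_n \to D^H F$ in $L^p(\Omega;\mathcal{H})$ and $DF_n\to DF$ in $L^p(\Omega;L^2([0,T]))$. Since $K_H^{\ast}$ is continuous (in fact, isometric) from $\mathcal{H}$ into $L^2([0,T])$, we can pass to the limit in $K_H^{\ast} D^H F_n = DF_n$ to conclude that $K_H^{\ast} D^H F = DF$. The only mildly delicate point in the whole argument is verifying that the two Sobolev closures really agree; this is automatic here because the cylindrical class $\mathcal{S}$ is simultaneously dense in both and carries the same norm, which is the key structural consequence of $K_H^{\ast}$ being an isometry onto $L^2([0,T])$.
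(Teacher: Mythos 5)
The paper does not prove this statement at all: it is quoted verbatim as the transfer principle, Proposition 5.2.1 of Nualart's book, so there is no in-paper argument to compare against. Your proof is correct and is essentially the standard argument behind that cited result: verify the identity on smooth cylindrical functionals via $B^H(\varphi)=W(K_H^{\ast}\varphi)$ and linearity of $K_H^{\ast}$, then transfer the Sobolev norms through the isometry and close up by density. The only point worth making explicit is the one you flag at the end: you need the cylindrical class $\mathcal{S}$ built from $B^H$ to be dense in the Wiener-side space $\mathbb{D}^{1,p}$ as well, which holds because for $H<\tfrac12$ the operator $K_H^{\ast}$ maps $\mathcal{H}$ onto $L^{2}([0,T])$ (equivalently, $W$ and $B^H$ generate the same first chaos, via the mutual representations $W_t=B^H((K_H^{\ast})^{-1}1_{[0,t]})$ and $B^H_t=\int_0^t K_H(t,s)\,dW_s$), so the two cylindrical classes coincide. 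With that observation spelled out, the argument is complete.
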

	
	A corollary of the previous is the following:
	
	\begin{lemma}
		Let $H\in (0,1)$ and $p > 1$, then $B^H_t$ belongs to $\mathbb{D}^{1,p} $ for all $t>0$ and its Malliavin derivative is given by:
		
		\[D B^H_t (s) = \int_0^{\min (s,t)} K_H(t,u) du \]
		and hence
		\[D_\theta B^H_t = K_H (t,\theta) I_d \]
		for any $\theta\in (0,t)$ and where $I_d$ is the identity matrix.
	\end{lemma}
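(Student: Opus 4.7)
The plan is to exploit the integral representation (\ref{5BHW}) of the fractional Brownian motion and then apply the standard Malliavin calculus rule that the Malliavin derivative of a Wiener integral with a deterministic integrand coincides with that integrand. Concretely, for fixed $t>0$ and a fixed coordinate, we have
\[
B^H_t \;=\; \int_0^T K_H(t,u)\,\mathbf 1_{[0,t]}(u)\,dW_u,
\]
so that $B^H_t$ is a centered Gaussian with variance $\int_0^t K_H(t,u)^2\,du = R_H(t,t) = t^{2H}$.

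First, I would verify that $K_H(t,\cdot)\mathbf 1_{[0,t]}\in L^2([0,T])$; this is immediate from (\ref{52.2}) with $s=t$, since it yields $\|K_H(t,\cdot)\mathbf 1_{[0,t]}\|_{L^2}^2 = t^{2H}<\infty$. Consequently $B^H_t\in L^p(\Omega)$ for every $p\ge 1$, as moments of a centered Gaussian are all finite. Next, I would approximate $B^H_t$ by cylindrical variables: let $\{h_n\}_{n\ge 1}\subset L^2([0,T])$ be simple functions with $h_n\to K_H(t,\cdot)\mathbf 1_{[0,t]}$ in $L^2$, and set $F_n := \int_0^T h_n(u)\,dW_u\in\mathcal S$. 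Then $F_n\to B^H_t$ in $L^p(\Omega)$ (Gaussian hypercontractivity), and $DF_n = h_n$ viewed as an element of $L^2([0,T])\hookrightarrow \mathcal H_{1/2}$. Since $h_n\to K_H(t,\cdot)\mathbf 1_{[0,t]}$ in $L^2$, by closability of $D$ on $\mathbb D^{1,p}$ we conclude $B^H_t\in\mathbb D^{1,p}$ with
\[
D_\theta B^H_t \;=\; K_H(t,\theta)\,\mathbf 1_{[0,t]}(\theta),\qquad \theta\in[0,T],
\]
which in the $d$-dimensional case gives the announced identity $D_\theta B^H_t = K_H(t,\theta)I_d$ for $\theta\in(0,t)$. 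The formula
\[
DB^H_t(s) \;=\; \int_0^{\min(s,t)} K_H(t,u)\,du
\]
then follows by integrating the pointwise derivative on $[0,s]$, which matches the pairing $\langle DB^H_t,\mathbf 1_{[0,s]}\rangle$ in the $L^2$ interpretation of the Malliavin derivative.

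I expect no genuine obstacle in this proof: every step is a direct application of standard facts reviewed immediately above the statement (closability of $D$, the transfer principle, and the Wiener-integral formula $D\!\int h\,dW = h$). The only mildly delicate point is the integrability of the kernel $K_H(t,\cdot)^2$ near the singularities at $u=0$ and $u=t$ when $H<1/2$; however this is already encoded in the identity $\int_0^t K_H(t,u)^2\,du = t^{2H}$ coming from (\ref{52.2}), so no additional work is required. The final packaging is to remark that $\mathbb D^{1,p}$ membership for every $p>1$ follows from the Gaussianity of $B^H_t$ and the fact that the derivative is a deterministic element of $L^2$.
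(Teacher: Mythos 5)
Your argument is correct, and it reaches the stated formulas by a route that is slightly different from the one the paper intends. The paper presents this lemma as an immediate corollary of the transfer principle $K_H^{*}D^{H}F=DF$ stated just above it: since $B^H_t=B^H(1_{[0,t]})$ is already a cylindrical functional of the isonormal process with $D^{H}B^H_t=1_{[0,t]}$ in $\mathcal H$, one simply applies $K_H^{*}$ and reads off $(K_H^{*}1_{[0,t]})(s)=K_H(t,s)1_{[0,t]}(s)$ from the formula recorded earlier in the appendix; that is the whole proof. You instead bypass $D^{H}$ and the transfer operator entirely, working on the Wiener side with the Volterra representation $B^H_t=\int_0^T K_H(t,u)1_{[0,t]}(u)\,dW_u$ and re-deriving the rule $D\bigl(\int h\,dW\bigr)=h$ for deterministic $h\in L^2$ by simple-function approximation, equivalence of $L^p$ norms on the first chaos, and closability of $D$. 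The two routes coincide precisely because the integrand of the Volterra representation \emph{is} $K_H^{*}1_{[0,t]}$. What your version buys is self-containedness: it needs only the elementary Wiener-integral rule and the identity $\int_0^t K_H(t,u)^2\,du=t^{2H}$, and it makes the $\mathbb D^{1,p}$ membership for all $p>1$ explicit via Gaussianity, which the paper leaves implicit. What it costs is a few lines of approximation argument that the transfer principle renders unnecessary. Your reading of the first displayed formula as the pairing $\langle DB^H_t,1_{[0,s]}\rangle_{L^2([0,T])}$ is the right way to reconcile it with the pointwise identity $D_\theta B^H_t=K_H(t,\theta)I_d$, and your remark that the apparent singularity of $K_H(t,\cdot)$ is already controlled by the covariance identity is accurate.
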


	\subsection{Technical results}
	In this article we also resort to the following technical lemma (see \cite[%
	Lemma 4.3]{5BNP19}):

\begin{lemma}
\label{5Novikov} Let $\tilde{B}_{t}^{H}$ be a $d$-dimensional fractional
Brownian motion with respect to $(\Omega ,\mathfrak{A},\tilde{P})$. Then for
every $k\in \mathbb{R}$ we have 
\begin{equation*}
\tilde{E}\left[ \exp \left\{ k\int_{0}^{T}\left\vert K_{H}^{-1}\left(
\int_{0}^{\cdot }b(r,\tilde{B}_{r}^{H})dr\right) (s)\right\vert
^{2}ds\right\} \right] \leq C_{H,d,\mu ,T}(\Vert b\Vert _{L_{\infty
}^{\infty }})
\end{equation*}%
for some continuous increasing function $C_{H,d,k,T}$ depending only on $H$, 
$d$, $T$ and $k$.

In particular, 
\begin{equation*}
\tilde{E}\left[ \mathcal{E}\left( \int_{0}^{T}K_{H}^{-1}\left(
\int_{0}^{\cdot }b(r,\tilde{B}_{r}^{H})dr\right) ^{\ast }(s)dW_{s}\right)
^{p}\right] \leq C_{H,d,\mu ,T}(\Vert b\Vert _{L_{\infty }^{\infty }}),
\end{equation*}%
where $\mathcal{E}(M_{t})$ is the Dolean-Dade exponential of a local
martingale $M_{t},0\leq t\leq T$ and where $\tilde{E}$ denotes expectation
under $\tilde{P}$ and $\ast $ transposition.
\end{lemma}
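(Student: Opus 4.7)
\textbf{Proof proposal for Lemma \ref{5Novikov}.} The plan is to observe that the random integrand $K_H^{-1}\bigl(\int_0^{\cdot} b(r,\tilde B_r^H)\,dr\bigr)(s)$ admits a \emph{pathwise, deterministic} bound thanks to the fact that $b\in L_\infty^\infty$ and the absolute continuity of the inner integral. Once such a bound is in hand, both claims follow essentially immediately.

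First I would apply the explicit formula from the Appendix for $K_H^{-1}$ on absolutely continuous functions. With $\varphi(t):=\int_0^t b(r,\tilde B_r^H)\,dr$, we have $\varphi'(u)=b(u,\tilde B_u^H)$, and hence
\begin{equation*}
(K_H^{-1}\varphi)(s)=\frac{s^{H-1/2}}{\Gamma(\tfrac12-H)}\int_0^s (s-u)^{-H-1/2}\,u^{1/2-H}\,b(u,\tilde B_u^H)\,du.
\end{equation*}
Using $|b|\leq \Vert b\Vert_{L_\infty^\infty}$ componentwise, the remaining deterministic integral is a Beta integral (both exponents are greater than $-1$ precisely because $H<1/2$), so
\begin{equation*}
\int_0^s (s-u)^{-H-1/2}\,u^{1/2-H}\,du \;=\; B\!\bigl(\tfrac12-H,\tfrac32-H\bigr)\,s^{1-2H}.
\end{equation*}
Combining these and passing to the vector case coordinate-wise yields the pathwise bound
\begin{equation*}
\bigl|K_H^{-1}\varphi(s)\bigr|\;\leq\;C_{H,d}\,\Vert b\Vert_{L_\infty^\infty}\,s^{1/2-H},\qquad s\in(0,T],
\end{equation*}
and therefore, after squaring and integrating in $s$,
\begin{equation*}
\int_0^T\bigl|K_H^{-1}\varphi(s)\bigr|^2 ds\;\leq\; C_{H,d,T}\,\Vert b\Vert_{L_\infty^\infty}^2
\end{equation*}
\emph{pointwise} in $\omega$. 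Plugging this deterministic bound into the exponential gives the first inequality with the continuous increasing function $C_{H,d,k,T}(x)=\exp\bigl(k\,C_{H,d,T}\,x^2\bigr)$.

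For the second statement, set $M_T:=\int_0^T K_H^{-1}\varphi(s)^{\ast}dW_s$, so that $\langle M\rangle_T=\int_0^T|K_H^{-1}\varphi(s)|^2ds$ is bounded above by a constant $K=C_{H,d,T}\Vert b\Vert_{L_\infty^\infty}^2$ independent of $\omega$. Writing the $p$-th power of the Dol\'eans--Dade exponential as
\begin{equation*}
\mathcal{E}(M)_T^{p}=\exp\bigl(pM_T-\tfrac{p}{2}\langle M\rangle_T\bigr)=\mathcal{E}(pM)_T\cdot\exp\!\Bigl(\tfrac{p^2-p}{2}\langle M\rangle_T\Bigr),
\end{equation*}
the deterministic bound on $\langle M\rangle_T$ implies that Novikov's classical condition for $pM$ holds trivially, so $\mathcal{E}(pM)$ is a true $\tilde P$-martingale with $\tilde E[\mathcal{E}(pM)_T]=1$. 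Consequently
\begin{equation*}
\tilde E[\mathcal{E}(M)_T^{p}]\;\leq\;\exp\!\Bigl(\tfrac{p^2-p}{2}K\Bigr)\,\tilde E[\mathcal{E}(pM)_T]\;=\;\exp\!\Bigl(\tfrac{p^2-p}{2}C_{H,d,T}\Vert b\Vert_{L_\infty^\infty}^2\Bigr),
\end{equation*}
which is the desired estimate.

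The only non-routine step is verifying that the explicit kernel representation of $K_H^{-1}$ is legitimate for this particular $\varphi$ and that the Beta-integral converges at both endpoints. Both are straightforward under $H\in(0,1/2)$ and the boundedness of $b$, so there is no genuine probabilistic obstacle: the entire estimate reduces to a deterministic fractional-calculus bound combined with the elementary martingale identity above.
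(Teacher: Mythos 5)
Your argument is correct: the paper itself gives no proof of this lemma but defers to \cite[Lemma 4.3]{5BNP19}, and the computation you give — the explicit kernel representation of $K_H^{-1}$ on the absolutely continuous path $\varphi(t)=\int_0^t b(r,\tilde B_r^H)\,dr$, the Beta-integral yielding the pathwise bound $|K_H^{-1}\varphi(s)|\le C_{H,d}\Vert b\Vert_{L_\infty^\infty}s^{1/2-H}$, and the resulting deterministic bound on $\langle M\rangle_T$ that trivializes both the exponential moment and the Novikov/Dol\'eans--Dade estimate — is precisely the standard Nualart--Ouknine-type argument underlying that reference. No gaps; the identity $\mathcal{E}(M)_T^{p}=\mathcal{E}(pM)_T\exp\bigl(\tfrac{p^2-p}{2}\langle M\rangle_T\bigr)$ and the adaptedness of the integrand are both in order.
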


\bigskip

In this paper, we will also make use of an integration by parts formula for
iterated integrals based on \emph{shuffle permutations}. For this purpose,
let $m$ and $n$ be integers. Denote by $S(m,n)$ the set of shuffle
permutations, i.e. the set of permutations $\sigma :\{1,\dots
,m+n\}\rightarrow \{1,\dots ,m+n\}$ such that $\sigma (1)<\dots <\sigma (m)$
and $\sigma (m+1)<\dots <\sigma (m+n)$.

Introduce the $m$-dimensional simplex for $0\leq \theta <t\leq T$, 
\begin{equation*}
\Delta _{\theta ,t}^{m}:=\{(s_{m},\dots ,s_{1})\in \lbrack 0,T]^{m}:\,\theta
<s_{m}<\cdots <s_{1}<t\}.
\end{equation*}%
The product of two simplices can be represented as follows 
\begin{equation*}
\Delta _{\theta ,t}^{m}\times \Delta _{\theta ,t}^{n}=%
\mbox{\footnotesize
$\bigcup_{\sigma \in S(m,n)} \{(w_{m+n},\dots,w_1)\in [0,T]^{m+n} : \,
\theta< w_{\sigma(m+n)} <\cdots < w_{\sigma(1)} <t\} \cup \mathcal{N}$
\normalsize},
\end{equation*}%
where the set $\mathcal{N}$ has null Lebesgue measure. So, if $%
f_{i}:[0,T]\rightarrow \mathbb{R}$, $i=1,\dots ,m+n$ are integrable
functions we get that 
\begin{align}
\int_{\Delta _{\theta ,t}^{m}}\prod_{j=1}^{m}f_{j}(s_{j})ds_{m}\dots ds_{1}&
\int_{\Delta _{\theta ,t}^{n}}\prod_{j=m+1}^{m+n}f_{j}(s_{j})ds_{m+n}\dots
ds_{m+1}  \notag \\
& =\sum_{\sigma \in S(m,n)}\int_{\Delta _{\theta
,t}^{m+n}}\prod_{j=1}^{m+n}f_{\sigma (j)}(w_{j})dw_{m+n}\cdots dw_{1}.
\label{5shuffleIntegral}
\end{align}

A generalization of the latter relation is the following (see \cite{5BNP19}):

\begin{lemma}
\label{5partialshuffle} Let $n,$ $p$ and $k$ be non-negative integers, $k\leq
n$. Suppose we have integrable functions $f_{j}:[0,T]\rightarrow \mathbb{R}$%
, $j=1,\dots ,n$ and $g_{i}:[0,T]\rightarrow \mathbb{R}$, $i=1,\dots ,p$. We
may then write 
\begin{align*}
& \int_{\Delta _{\theta ,t}^{n}}f_{1}(s_{1})\dots f_{k}(s_{k})\int_{\Delta
_{\theta ,s_{k}}^{p}}g_{1}(r_{1})\dots g_{p}(r_{p})dr_{p}\dots
dr_{1}f_{k+1}(s_{k+1})\dots f_{n}(s_{n})ds_{n}\dots ds_{1} \\
& =\sum_{\sigma \in A_{n,p}}\int_{\Delta _{\theta ,t}^{n+p}}h_{1}^{\sigma
}(w_{1})\dots h_{n+p}^{\sigma }(w_{n+p})dw_{n+p}\dots dw_{1},
\end{align*}%
where $h_{l}^{\sigma }\in \{f_{j},g_{i}:1\leq j\leq n,1\leq i\leq p\}$.
Above $A_{n,p}$ stands for a subset of permutations of $\{1,\dots ,n+p\}$
such that $\#A_{n,p}\leq C^{n+p}$ for an appropriate constant $C\geq 1$.
Here $s_{0}:=\theta $.
\end{lemma}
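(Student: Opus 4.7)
The plan is to reduce this statement to the basic shuffle identity (\ref{5shuffleIntegral}) by slicing the outer simplex at $s_k$. The key idea is that once one fixes the value of $s_k$, the remaining outer variables $s_{k+1},\ldots,s_n$ live in $\Delta_{\theta,s_k}^{n-k}$, which is the \emph{same} simplex on which the inner $r$-variables are integrated. The two disjoint iterated integrals, both based on the upper endpoint $s_k$, can therefore be merged by a single application of (\ref{5shuffleIntegral}).

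Concretely, I would first apply Fubini to rewrite the left-hand side as
$$\int_{\Delta_{\theta,t}^{k}}f_1(s_1)\cdots f_k(s_k)\,I(s_k)\,ds_k\cdots ds_1,$$
where
$$I(s_k):=\Bigl(\int_{\Delta_{\theta,s_k}^{n-k}}\prod_{j=k+1}^{n}f_j(s_j)\,ds_n\cdots ds_{k+1}\Bigr)\Bigl(\int_{\Delta_{\theta,s_k}^{p}}\prod_{i=1}^{p}g_i(r_i)\,dr_p\cdots dr_1\Bigr).$$
Both factors of $I(s_k)$ are iterated simplex integrals on the same interval $(\theta,s_k)$, so (\ref{5shuffleIntegral}) applies and yields
$$I(s_k)=\sum_{\tau\in S(n-k,p)}\int_{\Delta_{\theta,s_k}^{n-k+p}}\prod_{j=1}^{n-k+p}\widetilde{h}_{j}^{\,\tau}(u_j)\,du_{n-k+p}\cdots du_1,$$
with each $\widetilde{h}_{j}^{\,\tau}\in\{f_{k+1},\ldots,f_n,g_1,\ldots,g_p\}$ determined by the shuffle permutation $\tau$.

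Substituting back and reassembling the $k$-fold and $(n-k+p)$-fold simplex integrals (with $s_k$ serving both as the minimum of the outer block $\{s_1,\ldots,s_k\}$ and as the upper endpoint of the inner $u$-block) produces, term by term, integrals on $\Delta_{\theta,t}^{n+p}$ in which the top $k$ positions are occupied by $f_1,\ldots,f_k$ and the remaining $n-k+p$ positions are filled by the $\widetilde{h}_{j}^{\,\tau}$'s in the order prescribed by $\tau$. Relabelling the integration variables $w_1=s_1,\ldots,w_k=s_k$, $w_{k+j}=u_j$ gives exactly the asserted form. One may thus take $A_{n,p}$ to be (a copy of) $S(n-k,p)$, so that
$$\#A_{n,p}=\binom{n-k+p}{p}\leq 2^{n-k+p}\leq 2^{n+p},$$
giving the constant $C=2$.

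The argument is essentially combinatorial bookkeeping; the only point requiring mild care is the re-assembly step, namely verifying that $\Delta_{\theta,t}^{k}\times\Delta_{\theta,s_k}^{m}$ (interpreted as the $(k+m)$-dimensional region with $s_k$ the last coordinate of the first factor and simultaneously the upper endpoint of the second) covers $\Delta_{\theta,t}^{k+m}$ up to a Lebesgue-null set. This is immediate once one observes that on $\Delta_{\theta,t}^{k+m}$ the $k$-th largest coordinate almost surely separates the $k-1$ coordinates above it from the $m$ coordinates below it, and this coordinate is exactly what plays the role of $s_k$ in the product representation. Hence the whole argument reduces to the previously established shuffle identity and a single Fubini manipulation.
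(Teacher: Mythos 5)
Your argument is correct. Note that the paper itself does not prove this lemma; it is quoted from \cite{5BNP19}, and the only related proof visible here is that of Lemma \ref{5OrderDerivatives}, which proceeds by induction on $n$, peeling off the outermost variable $s_1$ and invoking the shuffle relation at each stage --- which is also the style of argument behind the cited source. Your route is genuinely different and, in my view, cleaner: by integrating out $s_{k+1},\dots,s_n$ first you observe that, conditionally on $s_k$, both the residual outer block and the inner $r$-block are iterated integrals over the \emph{same} simplex base $(\theta,s_k)$, so a single application of the basic shuffle identity (\ref{5shuffleIntegral}) on that interval does all the work; the re-assembly step $\int_{\Delta_{\theta,t}^{k}}\bigl(\int_{\Delta_{\theta,s_k}^{m}}\cdot\bigr)=\int_{\Delta_{\theta,t}^{k+m}}$ is an exact iterated-integral identity, not merely an equality up to null sets, so even less care is needed than you suggest. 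What your approach buys is an explicit and sharp count $\#A_{n,p}=\binom{n-k+p}{p}\leq 2^{n+p}$ (so $C=2$), together with the precise structure of the resulting permutations (identity on the first $k$ slots, a shuffle of $\{f_{k+1},\dots,f_n\}$ with $\{g_1,\dots,g_p\}$ on the rest), which an induction obscures. The only cosmetic point worth adding is the degenerate case $k=0$: with the convention $s_0:=\theta$ the inner simplex $\Delta_{\theta,\theta}^{p}$ is empty, both sides vanish, and the identity holds trivially with $A_{n,p}=\emptyset$.
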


\bigskip

In what follows we need an important
estimate (see e.g. Proposition 3.3 in the second revision of \cite{5BLPP18} for the improved version of the result that we state below).
In order to state this result, we need some notation. Let $m$ be an integer
and let $f:[0,T]^{m}\times (\mathbb{R}^{d})^{m}\rightarrow \mathbb{R}$ be a
function of the form 
\begin{equation}
f(s,z)=\prod_{j=1}^{m}f_{j}(s_{j},z_{j}),\quad s=(s_{1},\dots ,s_{m})\in
\lbrack 0,T]^{m},\quad z=(z_{1},\dots ,z_{m})\in (\mathbb{R}^{d})^{m},
\label{5ffnew}
\end{equation}%
where $f_{j}:[0,T]\times \mathbb{R}^{d}\rightarrow \mathbb{R}$, $j=1,\dots
,m $ are smooth functions with compact support. In addition, let $\varkappa
:[0,T]^{m}\rightarrow \mathbb{R}$ be a function of the form 
\begin{equation}
\varkappa (s)=\prod_{j=1}^{m}\varkappa _{j}(s_{j}),\quad s\in \lbrack
0,T]^{m},  \label{5kappa}
\end{equation}%
where $\varkappa _{j}:[0,T]\rightarrow \mathbb{R}$, $j=1,\dots ,m$ are
integrable functions.

Further, denote by $\alpha _{j}$ a multi-index and $D^{\alpha _{j}}$ its
corresponding differential operator. For $\alpha =(\alpha _{1},\dots ,\alpha
_{m})$ as an element of $\mathbb{N}_{0}^{d\times m}$ with $|\alpha
|:=\sum_{j=1}^{m}\sum_{l=1}^{d}\alpha _{j}^{(l)}$, we write 
\begin{equation*}
D^{\alpha }f(s,z)=\prod_{j=1}^{m}D^{\alpha _{j}}f_{j}(s_{j},z_{j}).
\end{equation*}

In \cite{5BNP19} the following integration by parts formula was shown:%
\begin{equation}
\int_{\Delta _{\theta ,t}^{m}}\prod\limits_{l=1}^{m}D^{\alpha
}f(s,z)du_{m}...du_{1}=\int_{(\mathbb{R}^{d})^{m}}\Lambda _{\alpha
}^{f}(\theta ,t,z)dz\text{,}  \label{IntPart}
\end{equation}%
where%
\begin{equation}
\Lambda _{\alpha }^{f}(\theta ,t,z)=(2\pi )^{-dm}\int_{(\mathbb{R}%
^{d})^{m}}\int_{\Delta _{\theta
,t}^{m}}\prod\limits_{j=1}^{m}f_{j}(s_{j},z_{j})(-iu_{j})^{\alpha _{j}}\exp
(-i\left\langle u_{j},B_{s_{j}}-z_{j}\right\rangle dsdu\text{.}
\label{Lambda}
\end{equation}

Let us also introduce the following notation: For $%
(s,z)=(s_{1},...,s_{m},z_{1},...,z_{m})\in \left[ 0,T\right] ^{m}\times (%
\mathbb{R}^{d})^{m}$ and a shuffle $\sigma \in S(m,m)$ we define%
\begin{equation*}
f_{\sigma }(s,z)=\prod\limits_{j=1}^{2m}f_{\left[ \sigma (j)\right]
}(s_{j},z_{\left[ \sigma (j)\right] })
\end{equation*}%
and%
\begin{equation*}
\varkappa _{\sigma }(s)=\prod\limits_{j=1}^{2m}\varkappa _{\left[ \sigma (j)%
\right] }(s_{j}),
\end{equation*}%
where $\left[ j\right] $ is equal to $j$, if $1\leq j\leq m$ and $j-m$, if $%
m+1\leq j\leq 2m$.

For a multiindex $\alpha $ we also define%
\begin{eqnarray*}
&&\Psi _{\alpha }^{f}(\theta ,t,z) \\
&=&\dprod\limits_{l=1}^{d}\sqrt{(2\left\vert \alpha ^{(l)}\right\vert )!}%
\sum_{\sigma \in S(m,m)}\int_{\Delta _{0,t_{2}}^{2m}}\left\vert f_{\sigma
}(s,z)\right\vert \dprod\limits_{j=1}^{2m}\frac{1}{\left\vert
s_{j}-s_{j-1}\right\vert ^{H(d+2\sum_{i=1}^{d}\alpha _{\left[ \sigma (j)%
\right] }^{(i)})}}ds_{1}...ds_{2m}.
\end{eqnarray*}

\bigskip

\begin{lemma}
\label{LocalTimeEstimate}Suppose that $\Psi _{\alpha }^{\varkappa f}(\theta
,t,z)<\infty $. Then, $\Lambda _{\alpha }^{g}(\theta ,t,z)$ given by (\ref%
{Lambda}) for $g=\varkappa f$ is a random variable in $L^{2}(\Omega )$ and
there exists a constant $C=C(T,H,d)>0$ such that%
\begin{equation*}
\left\vert E\left[ \int_{(\mathbb{R}^{d})^{m}}\Lambda _{\alpha }^{g}(\theta
,t,z)dz\right] \right\vert \leq C^{m/2+\left\vert \alpha \right\vert
/2}\int_{(\mathbb{R}^{d})^{m}}(\Psi _{\alpha }^{g}(\theta ,t,z))^{1/2}dz.
\end{equation*}
\end{lemma}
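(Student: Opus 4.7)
The plan is to reduce the required bound to a pointwise (in $z$) $L^{2}(\Omega)$ estimate on $\Lambda_\alpha^g(\theta,t,z)$, and then to extract the second moment via the Fourier definition of $\Lambda_\alpha^g$, bounding the resulting Gaussian integral with the sharp local non-determinism property of fractional Brownian motion. First I would invoke Fubini and Jensen's inequality to write
\begin{equation*}
\left|E\Big[\int_{(\mathbb{R}^d)^m}\Lambda_\alpha^g(\theta,t,z)\,dz\Big]\right|\leq \int_{(\mathbb{R}^d)^m}\|\Lambda_\alpha^g(\theta,t,z)\|_{L^2(\Omega)}\,dz,
\end{equation*}
so that the claim reduces to the pointwise bound $\|\Lambda_\alpha^g(\theta,t,z)\|_{L^2(\Omega)}^2\leq C^{m+|\alpha|}\,\Psi_\alpha^g(\theta,t,z)$.

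Second, starting from the Fourier representation (\ref{Lambda}), write $|\Lambda_\alpha^g|^2=\Lambda_\alpha^g\,\overline{\Lambda_\alpha^g}$ as a fourfold integral over the frequency variables $(u,u')\in(\mathbb{R}^d)^{2m}$ and the time variables $(s,s')\in\Delta_{\theta,t}^m\times\Delta_{\theta,t}^m$. Taking expectation inside yields the characteristic functional of $B^H$:
\begin{equation*}
E\!\left[e^{-i\sum_j\langle u_j,B_{s_j}\rangle+i\sum_j\langle u'_j,B_{s'_j}\rangle}\right]=\exp\!\left(-\tfrac12 Q(u,u')\right),
\end{equation*}
where $Q$ is the covariance quadratic form of the linear combination $\sum_j\langle u_j,B_{s_j}\rangle-\sum_j\langle u'_j,B_{s'_j}\rangle$. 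I would then apply the shuffle relation (\ref{5shuffleIntegral}) to rewrite $\Delta_{\theta,t}^m\times\Delta_{\theta,t}^m$ as $\bigcup_{\sigma\in S(m,m)}\Delta_{\theta,t}^{2m}$, reindexing the pair $(u_j,u'_j)$ and the factors $g_j(\cdot,z_j)g_j(\cdot,z_j)$ according to $[\sigma(\cdot)]$. This puts the $2m$ time variables into the single increasing sequence $\theta<s_1<\cdots<s_{2m}<t$, which is precisely the configuration in which the local non-determinism of $B^H$ operates.

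Third, for each shuffle $\sigma$ and each such ordered $(s_1,\dots,s_{2m})$, I would use the sharp local non-determinism of fractional Brownian motion, namely the existence of $c_H>0$ such that for all ordered $s_1<\cdots<s_{2m}$ and all $w_1,\dots,w_{2m}\in\mathbb{R}^d$,
\begin{equation*}
\mathrm{Var}\!\Big(\sum_{j=1}^{2m}\langle w_j,B^H_{s_j}\rangle\Big)\geq c_H\sum_{j=1}^{2m}|w_j|^2|s_j-s_{j-1}|^{2H},
\end{equation*}
to bound $Q$ from below (in the sense of quadratic forms) by the diagonal form $c_H\sum_j|w_j|^2|s_j-s_{j-1}|^{2H}$ in the reindexed frequencies $w_j\in\mathbb{R}^d$. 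Consequently the Gaussian factor $e^{-Q/2}$ is dominated by a product of one-frequency Gaussians, and the $u$-integral of $\prod_j|w_j|^{|\alpha_{[\sigma(j)]}|}e^{-c_H|w_j|^2|s_j-s_{j-1}|^{2H}/2}$ is evaluated componentwise using $\int_{\mathbb{R}}v^{2k}e^{-\tau v^2/2}dv\leq C^k\sqrt{(2k)!}\,\tau^{-(2k+1)/2}$. This produces exactly the factor $\prod_{j=1}^{2m}|s_j-s_{j-1}|^{-H(d+2\sum_i\alpha_{[\sigma(j)]}^{(i)})}$, together with the per-coordinate combinatorial weight $\prod_{l=1}^d\sqrt{(2|\alpha^{(l)}|)!}$ after summing the component exponents.

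The main obstacle will be the combinatorial bookkeeping: once the shuffle $\sigma$ is fixed, one must verify that the multi-index $\alpha_{[\sigma(j)]}$ attached to position $j\in\{1,\dots,2m\}$ matches the power of $|s_j-s_{j-1}|$ that local non-determinism produces, and that after the $u$-integration and collection of factorials, one recovers precisely $\Psi_\alpha^g(\theta,t,z)$ (with the absolute value on $g_\sigma$) and not some variant of it. The constants $C=C(T,H,d)$ and the power $C^{m/2+|\alpha|/2}$ will follow by collecting the constants from the Gaussian moment bound, from $c_H^{-1}$, and from the Fourier normalization $(2\pi)^{-dm}$; taking the square root at the end of the pointwise $L^2$ estimate and integrating in $z$ yields the stated inequality.
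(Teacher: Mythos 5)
Your proposal is correct and follows essentially the same route as the paper, which for this lemma simply defers to the proof of Theorem 3.1 in \cite{5BNP19}: that proof is exactly the argument you sketch (reduce to a pointwise $L^{2}(\Omega)$ bound, expand $|\Lambda_{\alpha}^{g}|^{2}$ via the Fourier representation, merge the two simplices by shuffles, and control the Gaussian characteristic functional by the strong local non-determinism of $B^{H}$ followed by Gaussian moment integrals). The only point to watch is the bookkeeping you yourself flag: an extra Cauchy--Schwarz in the time variables is what doubles the exponents to $H(d+2\sum_{i}\alpha_{[\sigma(j)]}^{(i)})$ and produces the $\prod_{l}\sqrt{(2|\alpha^{(l)}|)!}$ weight appearing in $\Psi_{\alpha}^{g}$.
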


\begin{proof}
See the proof of Theorem 3.1 in \cite{5BNP19}.
\end{proof}

\bigskip

\begin{theorem}
\label{5mainestimate2} Let $B^{H},H\in (0,1/2)$ be a standard $d-$dimensional
fractional Brownian motion and functions $f$ and $\varkappa $ as in (\ref%
{5ffnew}), respectively as in (\ref{5kappa}). Let $\theta ,t\in \lbrack 0,T]$
with $\theta <t$ and%
\begin{equation*}
\varkappa _{j}(s)=(K_{H}(s,\theta ))^{\varepsilon _{j}},\theta <s<t
\end{equation*}%
for every $j=1,...,m$ with $(\varepsilon _{1},...,\varepsilon _{m})\in
\{0,1\}^{m}.$ Let $\alpha \in (\mathbb{N}_{0}^{d})^{m}$ be a multi-index. If 
\begin{equation*}
H<\frac{\frac{1}{2}-\gamma }{(d-1+2\sum_{l=1}^{d}\alpha _{j}^{(l)})}
\end{equation*}%
for all $j$, where $\gamma \in (0,H)$ is sufficiently small, then there
exists a universal constant $C$ (depending on $H$, $T$ and $d$, but
independent of $m$, $\{f_{i}\}_{i=1,...,m}$ and $\alpha $) such that for any 
$\theta ,t\in \lbrack 0,T]$ with $\theta <t$ we have%

\begin{equation*}
\begin{split}
&\left\vert E\int_{\Delta _{\theta ,t}^{m}}\left( \prod_{j=1}^{m}D^{\alpha
_{j}}f_{j}(s_{j},B_{s_{j}}^{H})\varkappa _{j}(s_{j})\right) ds\right\vert \\
\leq &C^{m+\left\vert \alpha \right\vert }\prod_{j=1}^{m}\left\Vert
f_{j}(\cdot ,z_{j})\right\Vert _{L^{1}(\mathbb{R}^{d};L^{\infty
}([0,T]))}\theta ^{(H-\frac{1}{2})\sum_{j=1}^{m}\varepsilon _{j}} \\
&\times \frac{(\prod_{l=1}^{d}(2\left\vert \alpha ^{(l)}\right\vert
)!)^{1/4}(t-\theta )^{-H(md+2\left\vert \alpha \right\vert )+(H-\frac{1}{2}%
-\gamma )\sum_{j=1}^{m}\varepsilon _{j}+m}}{\Gamma (-H(2md+4\left\vert
\alpha \right\vert )+2(H-\frac{1}{2}-\gamma )\sum_{j=1}^{m}\varepsilon
_{j}+2m)^{1/2}}.
\end{split}
\end{equation*}
\end{theorem}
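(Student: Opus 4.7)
The plan is to reduce the claim to the local-time type bound in Lemma~\ref{LocalTimeEstimate} via the integration-by-parts identity \eqref{IntPart}, and then to estimate the resulting quantity $\Psi^{g}_{\alpha}(\theta,t,z)$ by carefully separating the singular weight $\varkappa$ (which carries the dependence on $\theta$ and $\sum\varepsilon_j$) from the functions $f_j$ (which provide the $\prod \Vert f_j(\cdot,z_j)\Vert_{L^{1}(\mathbb{R}^d;L^{\infty}([0,T]))}$ factor after integration in $z$), finally applying a Gamma-function estimate on an $m$-fold integral over the simplex with power singularities.

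First, I apply \eqref{IntPart} to $g(s,z):=\varkappa(s)f(s,z)=\prod_{j=1}^{m}\varkappa_{j}(s_{j})f_{j}(s_{j},z_{j})$, so that
\[
\int_{\Delta_{\theta,t}^{m}}\prod_{j=1}^{m}D^{\alpha_{j}}f_{j}(s_{j},B_{s_{j}}^{H})\varkappa_{j}(s_{j})\,ds
=\int_{(\mathbb{R}^{d})^{m}}\Lambda_{\alpha}^{g}(\theta,t,z)\,dz.
\]
Taking expectation and invoking Lemma~\ref{LocalTimeEstimate} gives
\[
\Big|E\!\int_{(\mathbb{R}^{d})^{m}}\Lambda_{\alpha}^{g}(\theta,t,z)dz\Big|
\le C^{m/2+|\alpha|/2}\!\int_{(\mathbb{R}^{d})^{m}}\bigl(\Psi_{\alpha}^{g}(\theta,t,z)\bigr)^{1/2}dz.
\]
So the entire task reduces to estimating $\Psi_{\alpha}^{g}$. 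Unpacking the definition,
\[
\Psi_{\alpha}^{g}(\theta,t,z)=\prod_{l=1}^{d}\sqrt{(2|\alpha^{(l)}|)!}\sum_{\sigma\in S(m,m)}\int_{\Delta_{\theta,t}^{2m}}|f_{\sigma}(s,z)|\,\varkappa_{\sigma}(s)\prod_{j=1}^{2m}\frac{ds_{j}}{|s_{j}-s_{j-1}|^{H(d+2\sum_{i=1}^{d}\alpha_{[\sigma(j)]}^{(i)})}}.
\]

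Second, I isolate the $\varkappa$ contribution via Hölder's inequality in the $s$-variables on $\Delta_{\theta,t}^{2m}$ with a small exponent $\gamma\in(0,H)$: split each $s$-integral into the $L^{1/(1-\eta)}$ norm of $\varkappa_{\sigma}$ (using the crucial bound $K_H(s,\theta)\le C_H(s-\theta)^{H-1/2}\theta^{1/2-H}$ valid for $H<1/2$, see \cite{5Nualart10}) times the $L^{1/\eta}$ norm of $|f_\sigma|\prod|s_j-s_{j-1}|^{-H(d+2\sum\alpha)}$, with $\eta$ chosen so that the $\varkappa$-integral produces the factor
\[
\theta^{(H-\tfrac12)\sum_{j=1}^{m}\varepsilon_{j}}\,(t-\theta)^{(H-\tfrac12-\gamma)\sum_{j=1}^{m}\varepsilon_{j}},
\]
as in the proof of Lemma~A.4 of \cite{5BNP19}. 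Each $\varkappa_{[\sigma(j)]}$ appears exactly twice across the $2m$ slots, which is why the exponents $\varepsilon_{j}$ appear multiplied by $1$ (and not $2$) after we take the square root in the final step.

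Third, I bound the $f$-factors by $|f_{\sigma}(s,z)|\le\prod_{j=1}^{m}\Vert f_{j}(\cdot,z_{j})\Vert_{L^{\infty}([0,T])}^{2}$ (each $f_j$ occurs twice in $f_\sigma$), pull this out of the $s$-integral, and note that
\[
\int_{(\mathbb{R}^{d})^{m}}\Bigl(\prod_{j=1}^{m}\Vert f_{j}(\cdot,z_{j})\Vert_{L^{\infty}([0,T])}^{2}\Bigr)^{1/2}dz=\prod_{j=1}^{m}\Vert f_{j}\Vert_{L^{1}(\mathbb{R}^{d};L^{\infty}([0,T]))}.
\]
For the remaining pure simplex integral with singular weights, Lemma~A.7 of \cite{5BNP19} (the fractional Beta/Gamma lemma applied with weight exponents $-H(d+2\sum\alpha)$) gives
\[
\int_{\Delta_{\theta,t}^{2m}}\prod_{j=1}^{2m}\frac{ds_{j}}{|s_{j}-s_{j-1}|^{H(d+2\sum_{i}\alpha_{[\sigma(j)]}^{(i)})}}
\le C^{2m}\frac{(t-\theta)^{-H(2md+4|\alpha|)+2m}}{\Gamma\bigl(-H(2md+4|\alpha|)+2m\bigr)},
\]
which is finite precisely under the standing assumption $H<(\tfrac12-\gamma)/(d-1+2\sum_{l}\alpha_{j}^{(l)})$.

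Finally, I assemble: the shuffle sum contributes $|S(m,m)|=\binom{2m}{m}\le 4^{m}$, the factor $\prod_{l}\sqrt{(2|\alpha^{(l)}|)!}$ is dominated by $\sqrt{(2|\alpha|)!}\,C^{|\alpha|}\le((2|\alpha|)!)^{1/2}C^{|\alpha|}$ by a standard Stirling argument, and after taking the square root dictated by $\Psi_{\alpha}^{g}(\cdot)^{1/2}$ the various $(t-\theta)$ and $\theta$ exponents combine to exactly the expression stated in the theorem. The main obstacle is bookkeeping: one has to match the exponent $-H(md+2|\alpha|)+(H-\tfrac12-\gamma)\sum\varepsilon_j+m$ on $(t-\theta)$ and the power of $\theta$, which requires keeping careful track of the square root, the split induced by Hölder's inequality, and the fact that each $f_j$ and $\varkappa_j$ appears twice in the shuffle product. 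Once the exponents line up, the form of the constant $C^{m+|\alpha|}$ follows from consolidating the combinatorial factors $4^m$, $C^{2m}$ and $C^{|\alpha|}$.
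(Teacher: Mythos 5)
The paper does not prove this statement itself: it is imported from Proposition 3.3 of \cite{5BLPP18} (see the sentence immediately preceding the theorem), with the supporting machinery (\ref{IntPart}), Lemma \ref{LocalTimeEstimate} and the shuffle/Gamma lemmas taken from \cite{5BNP19}. Your sketch reconstructs exactly that external strategy --- integration by parts to $\Lambda_\alpha^g$, Lemma \ref{LocalTimeEstimate} to reduce to $\Psi_\alpha^g$, extraction of $\prod_j\Vert f_j\Vert_{L^1(\mathbb{R}^d;L^\infty([0,T]))}$ after noting that each $f_j$ occurs twice in the shuffle product, and a Beta/Gamma estimate on the weighted simplex integral --- so the route is the right one and is essentially the only one available with the paper's toolkit.

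Two concrete points would derail the bookkeeping as written. First, the ``crucial bound'' on the kernel is stated backwards: from (\ref{5KH}) one gets $K_H(s,\theta)\le C_{H,T}\,\theta^{H-\frac12}(s-\theta)^{H-\frac12}$; the second summand of $K_H$ alone is of order $\theta^{H-\frac12}$, so a bound carrying the factor $\theta^{\frac12-H}$ already fails at $s=2\theta$. It is the exponent $H-\frac12$ on $\theta$ --- doubled because each $\varkappa_j$ occurs twice in $g_\sigma$, then halved by the square root of $\Psi_\alpha^g$ --- that produces $\theta^{(H-\frac12)\sum_j\varepsilon_j}$ in the conclusion; your version would give the reciprocal power. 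Second, if you H\"older the whole of $\varkappa_\sigma$ out of the $s$-integral, the remaining simplex integral carries no $\varepsilon_j$-dependence and your Gamma denominator comes out as $\Gamma(-H(2md+4|\alpha|)+2m)^{1/2}$, not the stated $\Gamma(-H(2md+4|\alpha|)+2(H-\frac12-\gamma)\sum_j\varepsilon_j+2m)^{1/2}$. The stated form arises by keeping the singular factors $(s_j-\theta)^{H-\frac12}$ (up to a $\gamma$-loss) inside the simplex integral as additional weights and invoking the weighted version of the Beta/Gamma lemma --- precisely the role of the parameters $\varepsilon_j,w_j$ in Lemma A.7 of \cite{5BNP19}, as used in the proof of Theorem \ref{5MainEstimate}. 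A minor further point: the factor $(\prod_{l}(2|\alpha^{(l)}|)!)^{1/4}$ passes through unchanged from $(\Psi_\alpha^g)^{1/2}$, so no Stirling step is needed (or wanted) there.
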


\bigskip

The following auxiliary result is very useful in connection with iterated integrals:

\begin{lemma}
\label{5OrderDerivatives}Let $n,$ $p$ and $k$ be non-negative integers, $%
k\leq n$. Assume we have functions $f_{j}:[0,T]\rightarrow \mathbb{R}$, $%
j=1,\dots ,n$ and $g_{i}:[0,T]\rightarrow \mathbb{R}$, $i=1,\dots ,p$ such
that 
\begin{equation*}
f_{j}\in \left\{ \frac{\partial ^{\alpha _{j}^{(1)}+...+\alpha _{j}^{(d)}}}{%
\partial ^{\alpha _{j}^{(1)}}x_{1}...\partial ^{\alpha _{j}^{(d)}}x_{d}}%
b^{(r)}(u,X_{u}^{x}),\text{ }r=1,...,d\right\} ,\text{ }j=1,...,n
\end{equation*}%
and 
\begin{equation*}
g_{i}\in \left\{ \frac{\partial ^{\beta _{i}^{(1)}+...+\beta _{i}^{(d)}}}{%
\partial ^{\beta _{i}^{(1)}}x_{1}...\partial ^{\beta _{i}^{(d)}}x_{d}}%
b^{(r)}(u,X_{u}^{x}),\text{ }r=1,...,d\right\} ,\text{ }i=1,...,p
\end{equation*}%
for $\alpha :=(\alpha _{j}^{(l)})\in \mathbb{N}_{0}^{d\times n}$ and $\beta
:=(\beta _{i}^{(l)})\in \mathbb{N}_{0}^{d\times p},$ where $X_{\cdot }^{x}$
is the strong solution to 
\begin{equation*}
X_{t}^{x}=x+\int_{0}^{t}b(u,X_{u}^{x})du+B_{t}^{H},\text{ }0\leq t\leq T
\end{equation*}%
for $b=(b^{(1)},...,b^{(d)})$ with $b^{(r)}\in C_{c}((0,T)\times \mathbb{R}%
^{d})$ for all $r=1,...,d$. So (as we shall say in the sequel) the product $%
g_{1}(r_{1})\cdot \dots \cdot g_{p}(r_{p})$ has a total order of derivatives 
$\left\vert \beta \right\vert =\sum_{l=1}^{d}\sum_{i=1}^{p}\beta _{i}^{(l)}$%
. We know from Lemma \ref{5partialshuffle} that 

\begin{align}
& \int_{\Delta _{\theta ,t}^{n}}f_{1}(s_{1})\dots f_{k}(s_{k})\int_{\Delta
_{\theta ,s_{k}}^{p}}g_{1}(r_{1})\dots g_{p}(r_{p})dr_{p}\dots
dr_{1}f_{k+1}(s_{k+1})\dots f_{n}(s_{n})ds_{n}\dots ds_{1}  \notag \\
& =\sum_{\sigma \in A_{n,p}}\int_{\Delta _{\theta ,t}^{n+p}}h_{1}^{\sigma
}(w_{1})\dots h_{n+p}^{\sigma }(w_{n+p})dw_{n+p}\dots dw_{1},  \label{5h}
\end{align}%

where $h_{l}^{\sigma }\in \{f_{j},g_{i}:1\leq j\leq n,$ $1\leq i\leq p\}$, $%
A_{n,p}$ is a subset of permutations of $\{1,\dots ,n+p\}$ such that $%
\#A_{n,p}\leq C^{n+p}$ for an appropriate constant $C\geq 1$, and $%
s_{0}=\theta $. Then the products%
\begin{equation*}
h_{1}^{\sigma }(w_{1})\cdot \dots \cdot h_{n+p}^{\sigma }(w_{n+p})
\end{equation*}%
have a total order of derivatives given by $\left\vert \alpha \right\vert
+\left\vert \beta \right\vert .$
\end{lemma}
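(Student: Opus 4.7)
The statement amounts to a simple bookkeeping claim and my plan is essentially a counting argument that unpacks the structure of the shuffle identity recalled in Lemma~5.9.

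First, I would go back to the construction underlying Lemma~5.9. The key point is that when one expresses
\begin{equation*}
\int_{\Delta_{\theta,t}^n} f_1(s_1)\cdots f_k(s_k)\Bigl(\int_{\Delta_{\theta,s_k}^p} g_1(r_1)\cdots g_p(r_p)\,dr_p\cdots dr_1\Bigr) f_{k+1}(s_{k+1})\cdots f_n(s_n)\,ds_n\cdots ds_1
\end{equation*}
as a sum over $\sigma\in A_{n,p}$ of iterated integrals on $\Delta_{\theta,t}^{n+p}$, each permutation $\sigma$ produces a tuple $(h_1^\sigma,\dots,h_{n+p}^\sigma)$ which is a \emph{rearrangement} of the original $n+p$-tuple $(f_1,\dots,f_n,g_1,\dots,g_p)$. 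In particular, each of the $n$ functions $f_j$ and each of the $p$ functions $g_i$ occurs \emph{exactly once} in the product $h_1^\sigma(w_1)\cdots h_{n+p}^\sigma(w_{n+p})$; no factor is duplicated, dropped, or altered by the shuffle, the permutation only changes the time labels attached to them.

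Second, I would invoke the obvious additivity of the ``total order of derivatives'' under such products. By hypothesis, $f_j$ carries a differential operator of order $\sum_{l=1}^d \alpha_j^{(l)}$ and $g_i$ carries one of order $\sum_{l=1}^d \beta_i^{(l)}$. Since the order of a product of smooth factors is, by definition in this paper, the sum of the orders of its factors, we compute
\begin{equation*}
\sum_{r=1}^{n+p}\mathrm{ord}\bigl(h_r^\sigma\bigr) = \sum_{j=1}^{n}\sum_{l=1}^{d}\alpha_j^{(l)} + \sum_{i=1}^{p}\sum_{l=1}^{d}\beta_i^{(l)} = |\alpha|+|\beta|,
\end{equation*}
which is precisely the claim. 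The value is independent of the particular $\sigma\in A_{n,p}$, which is exactly why the bound is stated uniformly in $\sigma$.

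There is no genuine obstacle in this proof; the only thing to verify carefully is that the constructive algorithm producing $A_{n,p}$ in Lemma~5.9 really does yield permutations rather than, say, maps that could repeat an index. This is immediate from the proof of Lemma~5.9, where the cover of $\Delta_{\theta,t}^n\times\Delta_{\theta,s_k}^p$ up to a null set is built by shuffle-type reorderings of a fixed $(n+p)$-tuple. Therefore the lemma follows by inspection of that construction together with the additivity identity displayed above.
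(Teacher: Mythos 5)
Your proof is correct, but it takes a genuinely different route from the paper's. The paper proves the lemma by induction on $n$: the base case $n=1$ is checked directly for $k=0,1$, and the inductive step peels off the outermost integration variable $s_1$, applies the shuffle identity (\ref{5shuffleIntegral}) to the remaining inner double integral, invokes the induction hypothesis to conclude that those inner products have total order $\sum_{l}\sum_{j\geq 2}\alpha_j^{(l)}+|\beta|$, and then adds back the order $\sum_l\alpha_1^{(l)}$ carried by $f_1$. Your argument instead observes that, by the very statement of Lemma \ref{5partialshuffle}, $A_{n,p}$ is a set of permutations of $\{1,\dots,n+p\}$ acting on the fixed tuple $(f_1,\dots,f_n,g_1,\dots,g_p)$, so each summand's product is a rearrangement of the same multiset of factors; since the ``total order of derivatives'' is by definition additive over the factors, it is invariant under rearrangement and equals $|\alpha|+|\beta|$ for every $\sigma$. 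This is shorter and isolates the real content of the lemma (multiset invariance under shuffling), at the cost of leaning on the precise meaning of ``subset of permutations'' in Lemma \ref{5partialshuffle} — the one point you rightly flag as needing verification, since the bare condition $h_l^\sigma\in\{f_j,g_i\}$ would not by itself exclude repetitions. The paper's induction buys a self-contained re-derivation that tracks the factors through the recursive construction of $A_{n,p}$, so it does not have to take that structural fact on faith; your version buys brevity and makes transparent why the answer is independent of $\sigma$. Both are acceptable proofs of the stated claim.
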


\begin{proof}
The result is proved by induction on $n$. For $n=1$ and $k=0$ the result is
trivial. For $k=1$ we have

\begin{equation*}
\begin{split}
	\int_{\theta }^{t}f_{1}(s_{1})\int_{\Delta _{\theta
		,s_{1}}^{p}}g_{1}(r_{1})\dots g_{p}(r_{p}) &dr_{p}\dots dr_{1}ds_{1} \\
=&\int_{\Delta _{\theta ,t}^{p+1}}f_{1}(w_{1})g_{1}(w_{2})\dots
g_{p}(w_{p+1})dw_{p+1}\dots dw_{1},
\end{split}
\end{equation*}%

where we have put $w_{1}=s_{1},$ $w_{2}=r_{1},\dots ,w_{p+1}=r_{p}$. Hence
the total order of derivatives involved in the product of the last integral
is given by $\sum_{l=1}^{d}\alpha
_{1}^{(l)}+\sum_{l=1}^{d}\sum_{i=1}^{p}\beta _{i}^{(l)}=\left\vert \alpha
\right\vert +\left\vert \beta \right\vert .$

Assume the result holds for $n$ and let us show that this implies that the
result is true for $n+1$. Either $k=0,1$ or $2\leq k\leq n+1$. For $k=0$ the
result is trivial. For $k=1$ we have 
\begin{align*}
&\int_{\Delta _{\theta ,t}^{n+1}} f_{1}(s_{1})\int_{\Delta _{\theta
,s_{1}}^{p}}g_{1}(r_{1})\dots g_{p}(r_{p})dr_{p}\dots
dr_{1}f_{2}(s_{2})\dots f_{n+1}(s_{n+1})ds_{n+1}\dots ds_{1} \\
& =\int_{\theta }^{t}f_{1}(s_{1}) \\ &\left( \int_{\Delta _{\theta
,s_{1}}^{n}}\int_{\Delta _{\theta ,s_{1}}^{p}}g_{1}(r_{1})\dots
g_{p}(r_{p})dr_{p}\dots dr_{1}f_{2}(s_{2})\dots
f_{n+1}(s_{n+1})ds_{n+1}\dots ds_{2}\right) ds_{1}.
\end{align*}%
From (\ref{5shuffleIntegral}) we observe by using the shuffle permutations
that the latter inner double integral on diagonals can be written as a sum
of integrals on diagonals of length $p+n$ with products having a total order
of derivatives given by $\sum_{l=1}\sum_{j=2}^{n+1}\alpha
_{j}^{(l)}+\sum_{l=1}^{d}\sum_{i=1}^{p}\beta _{i}^{(l)}$. Hence we obtain a
sum of products, whose total order of derivatives is $\sum_{l=1}^{d}%
\sum_{j=2}^{n+1}\alpha _{j}^{(l)}+\sum_{l=1}^{d}\sum_{i=1}^{p}\beta
_{i}^{(l)}+\sum_{l=1}^{d}\alpha _{1}^{(l)}=\left\vert \alpha \right\vert
+\left\vert \beta \right\vert .$

For $k\geq 2$ we have (in connection with Lemma \ref{5partialshuffle}) from
the induction hypothesis that%
{\small
\begin{align*}
\int_{\Delta _{\theta ,t}^{n+1}}f_{1}(s_{1})\dots f_{k}(s_{k})\int_{\Delta
_{\theta ,s_{k}}^{p}}g_{1}(r_{1})\dots g_{p}(r_{p})& dr_{p}\dots
dr_{1}f_{k+1}(s_{k+1})\dots f_{n+1}(s_{n+1})ds_{n+1}\dots ds_{1} \\
=\int_{\theta }^{t}f_{1}(s_{1})\int_{\Delta _{\theta
,s_{1}}^{n}}f_{2}(s_{2})\dots f_{k}(s_{k})& \int_{\Delta _{\theta
,s_{k}}^{p}}g_{1}(r_{1})\dots g_{p}(r_{p})dr_{p}\dots dr_{1} \\
& \times f_{k+1}(s_{k+1})\dots f_{n+1}(s_{n+1})ds_{n+1}\dots ds_{2}ds_{1} \\
=\sum_{\sigma \in A_{n,p}}\int_{\theta }^{t}f_{1}(s_{1})\int_{\Delta
_{\theta ,s_{1}}^{n+p}}& h_{1}^{\sigma }(w_{1})\dots h_{n+p}^{\sigma
}(w_{n+p})dw_{n+p}\dots dw_{1}ds_{1},
\end{align*}}%
where each of the products $h_{1}^{\sigma }(w_{1})\cdot \dots \cdot
h_{n+p}^{\sigma }(w_{n+p})$ has a total order of derivatives given by $%
\sum_{l=1}\sum_{j=2}^{n+1}\alpha
_{j}^{(l)}+\sum_{l=1}^{d}\sum_{i=1}^{p}\beta _{i}^{(l)}.$ Thus we get a sum
with respect to a set of permutations $A_{n+1,p}$ with products having a
total order of derivatives which is%
\begin{equation*}
\sum_{l=1}^{d}\sum_{j=2}^{n+1}\alpha
_{j}^{(l)}+\sum_{l=1}^{d}\sum_{i=1}^{p}\beta _{i}^{(l)}+\sum_{l=1}^{d}\alpha
_{1}^{(l)}=\left\vert \alpha \right\vert +\left\vert \beta \right\vert .
\end{equation*}
\end{proof}

\bigskip

The following result can be found in \cite{5ACHP18} or \cite{5BNP19}:

\begin{lemma}
\bigskip \label{5BoundedDerivatives}Let $b\in C_{c}^{\infty }((0,T)\times 
\mathbb{R}^{d}).$ Fix integers $p\geq 2$ and $k\geq 1$. Then, if $H<\frac{1}{2(d-1+2k)},$ we
have 
\begin{equation*}
\sup_{s,t\in [0,T]}\sup_{x\in \mathbb{R}^{d}}E[\left\Vert \frac{\partial^k }{\partial x^k}%
X_{t}^{s,x}\right\Vert ^{p}]\leq C_{k,p,H,d,T}(\left\Vert b\right\Vert
_{L_{\infty }^{\infty }},\left\Vert b\right\Vert _{L_{\infty }^{1}})<\infty
\end{equation*}%
for some continuous function $C_{k,p,H,d,T}:[0,\infty )^{2}\longrightarrow
\lbrack 0,\infty )$.
\end{lemma}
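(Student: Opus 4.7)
The plan is to derive the estimate by expanding $\frac{\partial^{k}}{\partial x^{k}}X_{t}^{s,x}$ in a Picard-type series of iterated integrals over simplices, then pass from $X$ to the fractional Brownian path by Girsanov's theorem, and finally bound the resulting integrals against the pure fBm using the integration-by-parts formula (\ref{IntPart}) together with Lemma \ref{LocalTimeEstimate} and Theorem \ref{5mainestimate2}. Concretely, for $k=1$ differentiation of the SDE gives
\begin{equation*}
\frac{\partial}{\partial x}X_{t}^{s,x} = I_{d\times d}+\sum_{m\geq 1}\int_{\Delta_{s,t}^{m}}\prod_{j=1}^{m}Db(u_{j},X_{u_{j}}^{s,x})\,du_{m}\cdots du_{1},
\end{equation*}
and by iterating the chain rule (Fa\`{a} di Bruno), for arbitrary $k\geq 1$ the derivative $\frac{\partial^{k}}{\partial x^{k}}X_{t}^{s,x}$ is a (convergent) sum of iterated integrals over $\Delta_{s,t}^{m}$ whose integrands are products of derivatives $D^{\alpha_{j}}b(u_{j},X_{u_{j}}^{s,x})$ of orders $|\alpha_{j}|\leq k$, with total order of differentiation $|\alpha|=\sum_{j}|\alpha_{j}|$ growing like $m+k-1$ (this is where Lemma \ref{5OrderDerivatives} enters in exactly the role it plays for $A_{7}$ in the proof of Lemma \ref{5RelativeCompactness}).

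Next I would compute the $p$-th moment. By Minkowski's inequality applied to the series, it suffices to control the $L^{p}(\Omega)$-norm of each iterated integral. Raising one such iterated integral to the $p$-th power and applying the shuffle identity (\ref{5shuffleIntegral}) recursively rewrites $\bigl(\int_{\Delta_{s,t}^{m}}\prod_{j=1}^{m}g_{j}(u_{j})du\bigr)^{p}$ as a sum of at most $C^{pm}$ iterated integrals over $\Delta_{s,t}^{pm}$, each with integrand a product of $pm$ factors drawn from $\{D^{\alpha_{j}}b(\cdot,X_{\cdot}^{s,x})\}$. Then Girsanov (Theorem \ref{5girsanov}), combined with the uniform Novikov-type bound in Lemma \ref{5Novikov} and H\"{o}lder's inequality, reduces the expectation to an analogous one in which $X_{u_{j}}^{s,x}$ is replaced by the shifted fBm $x+B_{u_{j}}^{H}$, picking up only a multiplicative constant depending on $\|b\|_{L_{\infty}^{\infty}}$.

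With the problem transferred to the pure fBm, apply the local-time integration by parts (\ref{IntPart}) and Lemma \ref{LocalTimeEstimate}, or equivalently the sharp form of Theorem \ref{5mainestimate2} in the case $\varepsilon_{j}\equiv 0$ (since no $K_{H}$-weights are present here). For a generic summand of the expansion above, the total order of derivatives after shuffling is $|\alpha|=O(pm+pk)$, so Theorem \ref{5mainestimate2} yields a bound of the form
\begin{equation*}
C^{pm+|\alpha|}\,\|b\|_{L_{\infty}^{1}}^{pm}\,\frac{\bigl((2|\alpha|)!\bigr)^{1/4}\,(t-s)^{-H(pmd+2|\alpha|)+pm}}{\Gamma\!\bigl(-H(2pmd+4|\alpha|)+2pm\bigr)^{1/2}}.
\end{equation*}
Under the hypothesis $H<\frac{1}{2(d-1+2k)}$ one checks, exactly as in the end of the proof of Theorem \ref{5MainEstimate}, that $-H(2pmd+4|\alpha|)+2pm$ grows linearly in $m$ with a strictly positive slope, so that by Stirling's formula the $m$-th term of the series decays super-exponentially. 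Summing over $m$ gives a finite bound depending continuously on $\|b\|_{L_{\infty}^{\infty}}$ and $\|b\|_{L_{\infty}^{1}}$, uniformly in $s,t\in[0,T]$ and $x\in\mathbb{R}^{d}$.

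The main obstacle I expect is bookkeeping the Fa\`{a} di Bruno combinatorics for general $k$: one must verify that the total number of summands at level $m$ is at most $C^{m+k}$, that the total order of differentiation on each summand is bounded by a linear function of $m$ and $k$ consistent with $|\alpha|\leq pm+2pk$ (so that the factor $H(d-1+2k)$ emerges rather than a strictly worse constant), and that each derivative factor $D^{\alpha_{j}}b$ has finite $L^{1}(\mathbb{R}^{d};L^{\infty}([0,T]))$-norm so that the final estimate depends only on $\|b\|_{L_{\infty}^{\infty}}$ and $\|b\|_{L_{\infty}^{1}}$. Once this combinatorial accounting is settled, the convergence of the series follows by the same $\Gamma$-function argument used in Theorem \ref{5MainEstimate}.
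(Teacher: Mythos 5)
The paper does not actually prove this lemma: it is imported from \cite{5ACHP18} and \cite{5BNP19} (``The following result can be found in\dots''), so there is no internal proof to compare against. Your proposal reconstructs precisely the method of those references, which is also the machinery this paper deploys for the closely related estimates in Lemma \ref{5RelativeCompactness} and Theorem \ref{5MainEstimate}: expand $\frac{\partial ^k}{\partial x^k}X_t^{s,x}$ by Picard iteration and the chain rule into iterated integrals over simplices whose integrands are products of factors $D^{\alpha _j}b(u_j,X_{u_j}^{s,x})$ with $\left\vert \alpha _j\right\vert \leq k$, raise to the $p$-th power via the shuffle relation (\ref{5shuffleIntegral}) and Lemma \ref{5partialshuffle}, remove the drift by Girsanov (Theorem \ref{5girsanov}) together with the Novikov bound of Lemma \ref{5Novikov}, and estimate the resulting functionals of $B^H$ by (\ref{IntPart}), Lemma \ref{LocalTimeEstimate} and Theorem \ref{5mainestimate2} with $\varepsilon _j\equiv 0$. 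Two small corrections to your accounting. First, the constant $d-1+2k$ does not emerge from the growth of the total order $\left\vert \alpha \right\vert$ in $m$ (that growth only needs to be linear for the $\Gamma$-factor to dominate the factorial in the numerator); it comes from the \emph{per-factor} hypothesis of Theorem \ref{5mainestimate2}, namely $H<(\tfrac{1}{2}-\gamma )/(d-1+2\sum_{l}\alpha _j^{(l)})$ for \emph{every} $j$, which with $\left\vert \alpha _j\right\vert \leq k$ is exactly the stated restriction on $H$. Second, your worry that each derivative factor $D^{\alpha _j}b$ must have finite $L^{1}(\mathbb{R}^{d};L^{\infty }([0,T]))$-norm is unnecessary: the integration-by-parts formula (\ref{IntPart}) moves all derivatives off $b$ and onto the Gaussian kernel, which is why the bound in Theorem \ref{5mainestimate2} involves only $\prod_j\Vert f_j(\cdot ,z_j)\Vert _{L^{1}(\mathbb{R}^{d};L^{\infty }([0,T]))}$ for the \emph{underived} factors and hence why the final constant depends only on $\Vert b\Vert _{L_{\infty }^{\infty }}$ and $\Vert b\Vert _{L_{\infty }^{1}}$. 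With these points understood, your outline is the intended proof.
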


\bigskip

The following result which is due to \cite[Theorem 1]{5DMN92} provides a
compactness criterion for subsets of $L^{2}(\Omega)$ using Malliavin
calculus.

\begin{theorem}
\label{5VI_MCompactness}Let $\left\{ \left( \Omega ,\mathcal{A},P\right)
;H\right\} $ be a Gaussian probability space, that is $\left( \Omega ,%
\mathcal{A},P\right) $ is a probability space and $H$ a separable closed
subspace of Gaussian random variables of $L^{2}(\Omega )$, which generate
the $\sigma $-field $\mathcal{A}$. Denote by $\mathbf{D}$ the derivative
operator acting on elementary smooth random variables in the sense that%
\begin{equation*}
\mathbf{D}(f(h_{1},\ldots,h_{n}))=\sum_{i=1}^{n}\partial
_{i}f(h_{1},\ldots,h_{n})h_{i},\text{ }h_{i}\in H,f\in C_{b}^{\infty }(%
\mathbb{R}^{n}).
\end{equation*}%
Further let $\mathbb{D}^{1,2}$ be the closure of the family of elementary
smooth random variables with respect to the norm%
\begin{align*}
\left\Vert F\right\Vert _{1,2}:=\left\Vert F\right\Vert _{L^{2}(\Omega
)}+\left\Vert \mathbf{D}F\right\Vert _{L^{2}(\Omega ;H)}.
\end{align*}%
Assume that $C$ is a self-adjoint compact operator on $H$ with dense image.
Then for any $c>0$ the set 
\begin{equation*}
\mathcal{G}=\left\{ G\in \mathbb{D}^{1,2}:\left\Vert G\right\Vert
_{L^{2}(\Omega )}+\left\Vert C^{-1} \mathbf{D} \,G\right\Vert _{L^{2}(\Omega
;H)}\leq c\right\}
\end{equation*}%
is relatively compact in $L^{2}(\Omega )$.
\end{theorem}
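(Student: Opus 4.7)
The plan is to exploit the Wiener chaos decomposition of $L^2(\Omega)$ adapted to the eigenbasis of $C$, together with the spectral decay of $C$, to reduce relative compactness to a diagonal extraction on coordinates. Since $C$ is self-adjoint, compact, and has dense range (hence is injective, as $\ker C = \overline{\mathrm{range}\,C}^{\perp} = \{0\}$), the spectral theorem supplies an orthonormal basis $\{e_k\}_{k\geq 1}$ of $H$ with $Ce_k = \mu_k e_k$, $\mu_k \neq 0$, and $\mu_k \to 0$. Viewing each $e_k \in H \subset L^2(\Omega)$ as a standard Gaussian, the multi-Hermite polynomials
\[
H_\alpha := \prod_{k} (\alpha_k!)^{-1/2}\, h_{\alpha_k}(e_k), \qquad \alpha = (\alpha_1,\alpha_2,\dots) \in \Lambda,
\]
where $\Lambda$ is the set of finitely supported sequences in $\mathbb{N}_0$ and $h_n$ is the $n$-th probabilists' Hermite polynomial, form an orthonormal basis of $L^2(\Omega)$, so every $G$ admits a unique expansion $G = \sum_{\alpha} G_\alpha H_\alpha$.

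Next I would translate both norms defining $\mathcal{G}$ into coefficient-wise sums. On polynomial cylinders, $h_n' = n h_{n-1}$ gives $\langle \mathbf{D}H_\alpha, e_k\rangle_H = \sqrt{\alpha_k}\, H_{\alpha-\delta_k}$; by Parseval in the $\{e_k\}$ basis,
\[
\|\mathbf{D}G\|_{L^2(\Omega;H)}^{2} = \sum_\alpha |G_\alpha|^2 |\alpha|, \qquad
\|C^{-1}\mathbf{D}G\|_{L^2(\Omega;H)}^{2} = \sum_{k} \mu_k^{-2}\|\langle \mathbf{D}G,e_k\rangle\|_{L^2}^{2} = \sum_\alpha |G_\alpha|^2 \sum_k \mu_k^{-2}\alpha_k.
\]
Both identities extend to $\mathbb{D}^{1,2}$ by closure, and $G$ lies in the domain of the closed extension of $I\otimes C^{-1}$ precisely when the last series is finite. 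Consequently every $G \in \mathcal{G}$ satisfies
\[
\sum_\alpha |G_\alpha|^2 \leq c^2 \qquad \text{and} \qquad \sum_\alpha |G_\alpha|^2 \sum_k \mu_k^{-2}\alpha_k \leq c^2.
\]

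The decisive compactness input is that for every $M>0$ the level set
\[
\Lambda_M := \Bigl\{\alpha \in \Lambda : \sum_k \mu_k^{-2}\alpha_k \leq M \Bigr\}
\]
is \emph{finite}: since $\mu_k^{-2}\to\infty$, we have $\alpha_k = 0$ for all $k$ exceeding some $k_0(M)$, and for the finitely many remaining $k$, $\alpha_k \leq M\mu_k^2$. A Markov-type estimate then yields the uniform tail bound
\[
\sum_{\alpha\notin \Lambda_M}|G_\alpha|^2 \leq \frac{1}{M}\sum_{\alpha\notin \Lambda_M}|G_\alpha|^2\sum_k \mu_k^{-2}\alpha_k \leq \frac{c^2}{M}
\]
valid for every $G \in \mathcal{G}$, so the tail in the Hermite basis vanishes uniformly as $M\to\infty$.

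To conclude, given any sequence $(G^{(n)})_n \subset \mathcal{G}$, fix $M$: the coordinate vectors $(G^{(n)}_\alpha)_{\alpha\in\Lambda_M}$ lie in a bounded subset of the finite-dimensional space $\mathbb{R}^{\Lambda_M}$, so by Bolzano--Weierstrass and a diagonal extraction over $M=1,2,\dots$ we obtain a subsequence along which $G^{(n)}_\alpha \to G^\infty_\alpha$ for every $\alpha \in \Lambda$, with $\sum_\alpha |G^\infty_\alpha|^2 \leq c^2$. Combining the coordinatewise convergence on each $\Lambda_M$ with the uniform tail estimate above via a standard $3\varepsilon$ argument gives $G^{(n)} \to G^\infty := \sum_\alpha G^\infty_\alpha H_\alpha$ in $L^2(\Omega)$, proving relative compactness. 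The step demanding most care is the second one: rigorously passing the diagonal action of $C^{-1}$ through the chaos expansion for elements of $\mathbb{D}^{1,2}$ that are not polynomial cylinders, which I would handle via closability of $\mathbf{D}$ and functional calculus for the self-adjoint $C$.
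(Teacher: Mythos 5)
Your proof is correct, and it reconstructs essentially the argument of the cited source: the paper itself gives no proof of this theorem but refers to Da Prato--Malliavin--Nualart \cite{5DMN92}, whose proof likewise rests on the Wiener chaos expansion in the eigenbasis of $C$, the identity $\left\Vert C^{-1}\mathbf{D}G\right\Vert^{2}_{L^{2}(\Omega;H)}=\sum_{\alpha}\left\vert G_{\alpha}\right\vert^{2}\sum_{k}\mu_{k}^{-2}\alpha_{k}$, and the finiteness of the level sets $\Lambda_{M}$ coming from $\mu_{k}\rightarrow 0$. The only point needing care --- interpreting $C^{-1}\mathbf{D}G$ via the closed extension of $I\otimes C^{-1}$ so that the coefficient identity survives the passage from polynomial cylinders to general elements of $\mathbb{D}^{1,2}$ --- is one you correctly identify and resolve.
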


In order to formulate compactness criteria useful for our purposes, we need
the following technical result which also can be found in \cite{5DMN92}.

\begin{lemma}
\label{5VI_DaPMN} Let $v_{s},s\geq 0$ be the Haar basis of $L^{2}([0,T])$.
For any $0<\alpha <1/2$ define the operator $A_{\alpha }$ on $L^{2}([0,T])$
by%
\begin{equation*}
A_{\alpha }v_{s}=2^{k\alpha }v_{s}\text{, if }s=2^{k}+j\text{ }
\end{equation*}%
for $k\geq 0,0\leq j\leq 2^{k}$ and%
\begin{equation*}
A_{\alpha }1=1.
\end{equation*}%
Then for all $\beta $ with $\alpha <\beta <(1/2),$ there exists a constant $%
c_{1}$ such that%
\begin{equation*}
\left\Vert A_{\alpha }f\right\Vert \leq c_{1}\left\{ \left\Vert f\right\Vert
_{L^{2}([0,T])}+\left(\int_{0}^{T}\int_{0}^{T}\frac{\left| f(t)-f(t^{\prime
})\right|^2}{\left\vert t-t^{\prime }\right\vert ^{1+2\beta }}dt\,dt^{\prime
}\right)^{1/2}\right\} .
\end{equation*}
\end{lemma}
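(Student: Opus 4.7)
The plan is to estimate $\|A_\alpha f\|_{L^2([0,T])}^2$ by expanding $f$ in the Haar basis and bounding the contribution from each dyadic scale in terms of the Gagliardo-type seminorm
\[
[f]_\beta^2 := \int_0^T\!\!\int_0^T \frac{|f(t)-f(t')|^2}{|t-t'|^{1+2\beta}}\,dt\,dt'.
\]
For clarity take $T=1$; the general case follows by rescaling. Write the Haar expansion
\[
f = \langle f,\mathbf{1}\rangle\,\mathbf{1} + \sum_{k\geq 0}\sum_{j=0}^{2^k-1} \langle f, v_{k,j}\rangle\, v_{k,j},
\]
where $v_{k,j}$ is the $L^2$-normalized Haar function supported on the dyadic interval $I_{k,j}$ of length $2^{-k}$, taking the value $+2^{k/2}$ on its left half $I^-_{k,j}$ and $-2^{k/2}$ on its right half $I^+_{k,j}$. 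Since $\{v_{k,j}\}\cup\{\mathbf{1}\}$ is an orthonormal basis and $A_\alpha$ acts diagonally, Parseval gives
\[
\|A_\alpha f\|_{L^2}^2 = |\langle f,\mathbf{1}\rangle|^2 + \sum_{k\geq 0} 2^{2k\alpha}\sum_{j=0}^{2^k-1} |\langle f,v_{k,j}\rangle|^2,
\]
and since $|\langle f,\mathbf{1}\rangle|\leq \|f\|_{L^2}$ it suffices to bound the double sum by a constant times $[f]_\beta^2$.

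The key computation is the mean-zero cancellation identity: using $|I^-_{k,j}|=|I^+_{k,j}|=2^{-k-1}$,
\[
\langle f,v_{k,j}\rangle = 2^{k/2}\!\int_{I^-_{k,j}}\!\!f - 2^{k/2}\!\int_{I^+_{k,j}}\!\!f = \frac{2^{k/2}}{|I^+_{k,j}|}\int_{I^-_{k,j}}\!\!\int_{I^+_{k,j}} \bigl(f(t)-f(t')\bigr)\,dt'\,dt.
\]
Applying Cauchy–Schwarz and using $|t-t'|\leq 2^{-k}$ for $(t,t')\in I^-_{k,j}\times I^+_{k,j}$, so that $|t-t'|^{1+2\beta}\leq 2^{-k(1+2\beta)}$, yields
\[
|\langle f,v_{k,j}\rangle|^2 \leq 2^{k}\int_{I^-_{k,j}}\!\!\int_{I^+_{k,j}} |f(t)-f(t')|^2\,dt'\,dt \leq 2^{-2k\beta}\!\int_{I^-_{k,j}}\!\!\int_{I^+_{k,j}} \frac{|f(t)-f(t')|^2}{|t-t'|^{1+2\beta}}\,dt'\,dt.
\]

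Multiplying by $2^{2k\alpha}$ and summing over $j$: the rectangles $I^-_{k,j}\times I^+_{k,j}$ are pairwise disjoint subsets of $[0,1]^2$, so
\[
\sum_{j=0}^{2^k-1} 2^{2k\alpha}|\langle f,v_{k,j}\rangle|^2 \leq 2^{2k(\alpha-\beta)}\,[f]_\beta^2.
\]
Finally summing over $k\geq 0$ the geometric series $\sum_k 2^{2k(\alpha-\beta)}$ converges precisely because $\alpha<\beta$, giving
\[
\|A_\alpha f\|_{L^2}^2 \leq \|f\|_{L^2}^2 + \frac{1}{1-2^{2(\alpha-\beta)}}\,[f]_\beta^2,
\]
which after taking square roots yields the stated estimate with an explicit constant $c_1$ depending only on $\beta-\alpha$.

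There is no real obstacle here: the argument is essentially a Littlewood–Paley decomposition adapted to the Haar system, with the scale-by-scale bound driven by the cancellation of $v_{k,j}$ (its mean-zero property is what converts $|f|$-type bounds into $|f(t)-f(t')|$-type bounds) and the size condition $|t-t'|\leq 2^{-k}$ on the support. The only place the hypothesis $\alpha<\beta<\tfrac12$ is used is to guarantee summability of the resulting geometric series; the upper bound $\beta<\tfrac12$ is not actually needed for this inequality and appears only so that the right-hand side is finite for the functions of interest later in the paper.
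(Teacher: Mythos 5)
Your proof is correct: the Parseval identity, the mean--zero rewriting of the Haar coefficient as a double integral of $f(t)-f(t')$, the Cauchy--Schwarz and $|t-t'|\leq 2^{-k}$ bounds, the disjointness of the rectangles $I^-_{k,j}\times I^+_{k,j}$ at each fixed scale, and the geometric series in $k$ (which is where $\alpha<\beta$ enters) all check out. The paper itself offers no proof of this lemma --- it only cites Da Prato--Malliavin--Nualart \cite{5DMN92} --- and your argument is essentially the standard one from that reference, so there is nothing to reconcile; your closing remark that $\beta<1/2$ is irrelevant to the inequality itself is also accurate.
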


A direct consequence of Theorem \ref{5VI_MCompactness} and Lemma \ref%
{5VI_DaPMN} is now the following compactness criteria. See \cite{5DMN92} for
a proof.

\begin{corollary}
\label{5VI_compactcrit} Let a sequence of $\mathcal{F}_T$-measurable random
variables $X_n\in\mathbb{D}^{1,2}$, $n=1,2...$, be such that there exists a
constant $C>0$ with 
\begin{equation*}
\sup_n \text{\emph{E}}[|X_n|^2] \leq C ,
\end{equation*}
\begin{equation*}
\sup_n \text{\emph{E}}\left[ \| D_t X_n \|_{L^2([0,T])}^2 \right] \leq C \,
\end{equation*}
and there exists a $\beta \in (0,1/2)$ such that 
\begin{equation*}
\sup_n \int_0^T \int_0^T \frac{\text{\emph{E}}\left[ \| D_t X_n -
D_{t^{\prime}} X_n \|^2 \right]}{|t-t^{\prime}|^{1+2\beta}}
dtdt^{\prime}<\infty
\end{equation*}
where $\|\cdot\|$ denotes any matrix norm.

Then the sequence $X_n$, $n=1,2...$, is relatively compact in $L^{2}(\Omega
) $.
\end{corollary}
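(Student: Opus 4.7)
The plan is to combine Theorem \ref{5VI_MCompactness} with Lemma \ref{5VI_DaPMN} in a direct fashion. Concretely, I would choose the self-adjoint compact operator $C$ appearing in Theorem \ref{5VI_MCompactness} to be $C := A_{\alpha}^{-1}$ for some fixed $\alpha\in(0,\beta)$, where $A_{\alpha}$ is the operator from Lemma \ref{5VI_DaPMN} and $\beta$ is the exponent given in the hypothesis of the corollary. Since $A_{\alpha}$ acts diagonally on the Haar basis with eigenvalues $2^{k\alpha}$, its inverse $A_{\alpha}^{-1}$ acts by $2^{-k\alpha}$ and hence is self-adjoint, compact and has dense image on $L^{2}([0,T])$. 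The Malliavin derivative $\mathbf{D} X_n$ is to be viewed as an $L^{2}([0,T])$-valued random variable, so that $C^{-1}\mathbf{D}G=A_{\alpha}\mathbf{D}G$ is the quantity one must control in the set $\mathcal{G}$ of Theorem \ref{5VI_MCompactness}.

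First I would apply Lemma \ref{5VI_DaPMN} pointwise in $\omega$ with $f:=\mathbf{D}_{\cdot}X_n(\omega)\in L^{2}([0,T])$. Squaring the resulting inequality, using $(a+b)^{2}\le 2(a^{2}+b^{2})$, and taking expectation leads to
\begin{equation*}
E\bigl[\|A_{\alpha}\mathbf{D}_{\cdot}X_n\|_{L^{2}([0,T])}^{2}\bigr]
\le 2c_{1}^{2}\,E\bigl[\|\mathbf{D}_{\cdot}X_n\|_{L^{2}([0,T])}^{2}\bigr]
+2c_{1}^{2}\int_{0}^{T}\!\!\int_{0}^{T}\frac{E[\|D_{t}X_n-D_{t'}X_n\|^{2}]}{|t-t'|^{1+2\beta}}\,dt\,dt'.
\end{equation*}
By the last two hypotheses of the corollary, both terms on the right are bounded uniformly in $n$, and combined with $\sup_{n}E[|X_n|^{2}]\le C$ this yields a constant $c>0$ such that every $X_n$ lies in the set
\begin{equation*}
\mathcal{G}:=\bigl\{G\in\mathbb{D}^{1,2}:\ \|G\|_{L^{2}(\Omega)}+\|A_{\alpha}\mathbf{D}G\|_{L^{2}(\Omega;L^{2}([0,T]))}\le c\bigr\}.
\end{equation*}
An application of Theorem \ref{5VI_MCompactness} with $C=A_{\alpha}^{-1}$ then gives that $\mathcal{G}$ is relatively compact in $L^{2}(\Omega)$, so the sequence $\{X_n\}_{n\geq 1}$ is relatively compact in $L^{2}(\Omega)$ as claimed.

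The only genuinely conceptual point is the choice of $C=A_{\alpha}^{-1}$: one wants $A_{\alpha}^{-1}$ compact enough for Theorem \ref{5VI_MCompactness} to apply, yet with slow enough decay that $\|A_{\alpha}\mathbf{D}G\|$ can be estimated by the Besov-type seminorm of $\mathbf{D}G$ featured in the hypothesis. This trade-off is exactly what Lemma \ref{5VI_DaPMN} supplies, provided $\alpha<\beta$, which is the only reason $\beta<1/2$ enters. Beyond verifying the displayed bound and combining the two preceding ingredients there is no real obstacle; the main care needed is to remember that $X_n$, being $\mathcal{F}_T$-measurable and belonging to $\mathbb{D}^{1,2}$, lies in the Gaussian Malliavin-Sobolev framework of Theorem \ref{5VI_MCompactness} with $H=L^{2}([0,T])$.
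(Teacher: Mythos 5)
Your proposal is correct and follows exactly the route the paper intends: the paper presents this corollary as ``a direct consequence of Theorem \ref{5VI_MCompactness} and Lemma \ref{5VI_DaPMN}'' (deferring details to \cite{5DMN92}), and your choice $C=A_{\alpha}^{-1}$ with $\alpha\in(0,\beta)$, followed by the pointwise application of Lemma \ref{5VI_DaPMN} to $f=D_{\cdot}X_n(\omega)$ and the squaring/expectation step, is precisely that argument. The verification that $A_{\alpha}^{-1}$ is self-adjoint, compact and has dense image is sound, so nothing is missing.
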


We also need the following special version of a lemma due to Garcia,
Rodemich and Rumsey (see e.g. \cite{Kwapien} or \cite{Stroock})

\begin{lemma}
\label{GarciaRodemichRumsey}Let $\Lambda $ be a compact interval endowed
with a metric $d$. Define $\sigma (r)=\inf_{x\in \Lambda }\lambda (B(x,r))$,
where $B(x,r):=\left\{ y\in \Lambda :d(x,y)\leq r\right\} $ denotes the ball
of radius $r$ centered in $x\in \Lambda $ and where $\lambda $ is the
Lebesgue measure. Assume that $\Psi :\left[ 0,\infty \right) \longrightarrow %
\left[ 0,\infty \right) $ is positive, increasing and convex with $\Psi
(0)=0 $ and denote by $\Psi ^{-1}$ its inverse, which is a positive,
increasing and concave function. Suppose that $f:$ $\left[ 0,\infty \right)
\longrightarrow \left[ 0,\infty \right) $ is continuous on $\left( \Lambda
,d\right) $ and let%
\begin{equation*}
U=\int_{\Lambda \times \Lambda }\Psi \left( \frac{\left\vert
f(t)-f(s)\right\vert }{d(t,s)}\right) dtds\text{.}
\end{equation*}%
Then%
\begin{equation*}
\left\vert f(t)-f(s)\right\vert \leq 18\int_{0}^{d(t,s)/2}\Psi ^{-1}\left( 
\frac{U}{(\sigma (r))^{2}}\right) dr.
\end{equation*}
\end{lemma}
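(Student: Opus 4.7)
The plan is to follow the classical Garsia--Rodemich--Rumsey argument by constructing, via a Chebyshev selection procedure, a sequence of points in $\Lambda$ along which increments of $f$ can be estimated in terms of $\Psi^{-1}$ and $\sigma$, and then recognising the resulting geometric sum as a Riemann sum for the target integral. First, I would introduce the auxiliary function
\[
I(x) := \int_{\Lambda} \Psi\!\left( \frac{|f(x)-f(y)|}{d(x,y)} \right) dy, \qquad x\in\Lambda,
\]
so that by Fubini $\int_{\Lambda} I(x)\,dx = U$. The idea is that for ``most'' points $x$ the value $I(x)$ is small, and for each such $x$ there are in turn ``most'' points $y$ near $x$ for which $\Psi(|f(x)-f(y)|/d(x,y))$ is small.

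Next, fix $s \neq t$ and set $d_0 = d(s,t)$, $d_n = d_0\,2^{-n}$. Starting from $t_0 = t$, I would construct recursively a sequence $(t_n)$ with $d(t_n, t_{n+1}) \le d_{n+1}$ as follows. Inside the ball $B(t_n, d_{n+1})$, whose Lebesgue measure is at least $\sigma(d_{n+1})$, apply Chebyshev's inequality twice: the set of $y$ with $I(y) > 2U/\sigma(d_{n+1})$ has measure $< \sigma(d_{n+1})/2$, and the set of $y$ with $\Psi(|f(t_n)-f(y)|/d(t_n,y)) > 2I(t_n)/\sigma(d_{n+1})$ also has measure $< \sigma(d_{n+1})/2$. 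Hence a point $t_{n+1}$ can be chosen in $B(t_n, d_{n+1})$ avoiding both bad sets. This produces, inductively, both $I(t_{n+1}) \le 2U/\sigma(d_{n+1})$ and, using monotonicity and concavity of $\Psi^{-1}$,
\[
|f(t_n) - f(t_{n+1})| \le d_{n+1}\, \Psi^{-1}\!\left( \frac{2 I(t_n)}{\sigma(d_{n+1})} \right) \le d_{n+1}\, \Psi^{-1}\!\left( \frac{4U}{\sigma(d_n)^{2}} \right).
\]
A symmetric construction starting from $s_0 = s$ yields a sequence $(s_n)$ with analogous bounds. Continuity of $f$ forces $f(t_n), f(s_n) \to f(z)$ for a common limit point $z$ (since $d(t_n, s_n) \to 0$ as the $d_n$ form a convergent geometric series of radii).

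Then I would convert the telescoping sum $|f(t) - f(z)| \le \sum_n |f(t_n) - f(t_{n+1})|$ into an integral by comparing each term $d_{n+1}\Psi^{-1}(4U/\sigma(d_n)^2)$ with the integral of $\Psi^{-1}(U/\sigma(r)^2)$ over the interval $[d_{n+2}, d_{n+1}]$. Because $\sigma$ is non-decreasing, $\Psi^{-1}(U/\sigma(r)^2)$ is non-increasing in $r$, which makes this Riemann-sum comparison valid at the cost of a universal multiplicative constant. Adding the corresponding bound obtained from the $s_n$ sequence gives
\[
|f(t) - f(s)| \le 18 \int_0^{d(t,s)/2} \Psi^{-1}\!\left( \frac{U}{\sigma(r)^{2}} \right) dr,
\]
the constant $18$ arising precisely from the geometric book-keeping (the doubling of radii, the factors of $2$ and $4$ inside $\Psi^{-1}$, and the two symmetric sequences combined).

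The main obstacle, as usual in GRR-type arguments, is not any single inequality but the careful simultaneous selection: one must verify that the two ``bad'' subsets of $B(t_n, d_{n+1})$ together cover less than the full ball, which hinges on the definition of $\sigma$ as an \emph{infimum} of ball volumes and on choosing the radii $d_n$ geometric rather than merely summable. Secondarily, some care is needed to pass from the discrete upper bound $\sum_n d_{n+1} \Psi^{-1}(4U/\sigma(d_n)^{2})$ to the continuous integral in the statement using only monotonicity (not differentiability) of $\sigma$, since no regularity of $\sigma$ has been assumed beyond what follows from its definition. All the other ingredients---convexity of $\Psi$, concavity of $\Psi^{-1}$, continuity of $f$---enter only in standard ways.
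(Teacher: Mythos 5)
The paper itself gives no proof of this lemma --- it is quoted from the literature (see \cite{Kwapien}, \cite{Stroock}) --- so the question is only whether your reconstruction of the Garsia--Rodemich--Rumsey chaining argument is sound. The overall strategy (Fubini for $I(x)$, Chebyshev selection of good points, telescoping, Riemann-sum comparison using monotonicity of $\sigma$) is the correct one, but your construction has two genuine gaps.

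First, you seed the recursion at $t_0=t$ and $s_0=s$. The very first increment bound $|f(t_0)-f(t_1)|\le d_1\,\Psi^{-1}(2I(t_0)/\sigma(d_1))$ then requires a bound on $I(t)$ at the \emph{given} point $t$, and none is available: $\int_\Lambda I\,dx=U$ controls $I$ only off an exceptional set, and $t$ may lie in it. The classical proof therefore starts the telescoping at a \emph{selected} good point $t_0\in B(t,d_0)$ with $I(t_0)\le 2U/\sigma(d_0)$, and must then separately estimate the junction term $|f(t_0)-f(s_0)|$ by a joint Chebyshev selection of the pair $(t_0,s_0)\in B(t,d_0)\times B(s,d_0)$ --- this bridging step is precisely where the square $\sigma(\cdot)^2$ in the denominator is used. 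Second, and more seriously, you choose $t_{n+1}$ in the ball $B(t_n,d_{n+1})$ centered at the \emph{moving} point $t_n$. The sequence $(t_n)$ is then Cauchy and converges to some $z_t$ with $d(z_t,t)\le\sum_{k\ge1}d_k=d_0$, and likewise $(s_n)\to z_s$; but nothing forces $z_t=z_s$, and your assertion that $d(t_n,s_n)\to 0$ is false in general (already on $\Lambda=[0,1]$ with the Euclidean metric the two chains can drift to different limits). Your telescoping sums therefore bound $|f(t)-f(z_t)|$ and $|f(s)-f(z_s)|$ and leave $|f(z_t)-f(z_s)|$ unestimated. The standard remedy is to anchor the selection balls at the fixed endpoints, i.e.\ take $t_{n+1}\in B(t,d_{n+1})$, so that $t_n\to t$ and the chain closes at $t$ itself by continuity of $f$ on $(\Lambda,d)$. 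A minor further slip: from $I(t_n)\le 2U/\sigma(d_n)$ you obtain $2I(t_n)/\sigma(d_{n+1})\le 4U/(\sigma(d_n)\sigma(d_{n+1}))\le 4U/\sigma(d_{n+1})^2$, not $4U/\sigma(d_n)^2$, since $\sigma(d_{n+1})\le\sigma(d_n)$; the Riemann-sum comparison should accordingly be made against $\int_{d_{n+2}}^{d_{n+1}}\Psi^{-1}\bigl(U/\sigma(r)^2\bigr)\,dr$, which still yields the stated constant after the usual bookkeeping.
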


The following lemma is used in the proof of Proposition \ref{5Holder flow}

\begin{lemma}\label{5ExtKolmogorov}
	Let $X(s, x)$ be a continuous with respect to $x$ process with values in a complete metric space $(M, \varrho_M)$ on 
	$\mathcal{S} \times [0, 1]^d$.
	Assume that for some $a, b > 0$
	$$ 
	\sup_{s \in \mathcal{S}}\mathbb{E} \varrho_{M} (X_{s}(u), X_{s}(v))^{a} \leq |u - v|^{d + b}, \ u, v \in [0, 1]^d
	$$
	For any $\alpha \in (0, b/a), \eta \in (0, b - \alpha a)$ 
	and any increasing sequence $S$
	of finite subsets $\{S_n\}_{n = 0}^{\infty}$ with  
	$|S_{n}| \leq 2^{\eta  n}$
	there exists a set $\Omega'$ of probability $1$ such that
	$$ 
	\varrho_{M} (X_{s}(u), X_{s}(v)) \leq C(\alpha, \eta, S, \omega)|u - v|^{\alpha} \ s \in S_n,  u, v \in [0, 1]^d, 
	|u - v| \leq 2^{-n},
	\omega \in \Omega',
	$$ 
\end{lemma}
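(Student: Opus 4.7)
The plan is to run the classical dyadic chaining proof of Kolmogorov's continuity theorem, but with the unions taken simultaneously over dyadic pairs at each scale and over the exponentially growing finite parameter sets $S_n$. The hypothesis $\eta < b-\alpha a$ is exactly what lets the two union bounds coexist.

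First I would fix the dyadic grids $D_n := \{k/2^n : k \in \{0,\dots,2^n\}^d\}$ in $[0,1]^d$. For any fixed $s$ and any nearest-neighbour pair $u \sim v \in D_n$ (differing by $2^{-n}$ in a single coordinate), Chebyshev's inequality together with the moment hypothesis gives
\[
\mathbb{P}\bigl(\varrho_M(X_s(u),X_s(v)) > 2^{-\alpha n}\bigr) \leq 2^{n\alpha a}|u-v|^{d+b} \leq 2^{-n(d+b-\alpha a)}.
\]
Union-bounding over the $O(d\cdot 2^{nd})$ such pairs in $D_n$ and over the at most $2^{\eta n}$ elements of $S_n$ yields
\[
\mathbb{P}(B_n) \leq C(d)\cdot 2^{-n(b-\alpha a - \eta)},\qquad B_n := \{\exists s\in S_n,\ \exists u\sim v \in D_n: \varrho_M(X_s(u),X_s(v)) > 2^{-\alpha n}\}.
\]
The assumption $\eta < b-\alpha a$ makes this summable, so Borel--Cantelli produces a set $\Omega'$ of full probability and a random threshold $n_0(\omega)$ such that $\omega \notin B_n$ for every $n \geq n_0$.

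Next I would run the classical chaining on $\Omega'$. Fix $s \in S_n$ with $n \geq n_0$ and $u,v \in [0,1]^d$ with $0 < |u-v| \leq 2^{-n}$; pick $m \geq n$ with $2^{-(m+1)} < |u-v| \leq 2^{-m}$. The monotonicity $S_n \subseteq S_m \subseteq S_k$ for $k \geq m$ ensures that the bound controlling $B_k$ applies to this $s$ at every finer scale. Round $u,v$ down coordinate-wise to $u_k,v_k \in D_k$ and telescope
\[
\varrho_M(X_s(u),X_s(v)) \leq \varrho_M(X_s(u_m),X_s(v_m)) + \sum_{k\geq m}\bigl(\varrho_M(X_s(u_{k+1}),X_s(u_k)) + \varrho_M(X_s(v_{k+1}),X_s(v_k))\bigr).
\]
Each term on the right is a path of at most $d$ nearest-neighbour increments in $D_{k+1}$ (respectively $D_m$ for the leading term), hence is bounded on $\Omega'$ by $C(d)\cdot 2^{-\alpha(k+1)}$; summing the geometric series gives $\varrho_M(X_s(u),X_s(v)) \leq C(d,\alpha)\cdot 2^{-\alpha m} \leq C'\,|u-v|^\alpha$, first for dyadic arguments and then, by continuity of $X_s(\cdot)$, for arbitrary $u,v \in [0,1]^d$. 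The finitely many scales $n < n_0(\omega)$ and the finite set $\bigcup_{n<n_0}S_n$ are absorbed into $C(\alpha,\eta,S,\omega)$ by applying the bound just obtained at scale $m = n_0$ for $|u-v|\leq 2^{-n_0}$ and using continuity on the compact set $[0,1]^d$ to handle the range $2^{-n_0} < |u-v| \leq 2^{-n}$.

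The only delicate point is the Borel--Cantelli step: the union over $S_n$ inflates the scale-$n$ bad-event probability by a factor $2^{\eta n}$, and the condition $\eta < b - \alpha a$ is precisely what keeps $\sum_n \mathbb{P}(B_n)$ finite. This is exactly where the lemma departs from the one-parameter Kolmogorov criterion and where the trade-off between the permitted growth of $|S_n|$ and the moment exponent $a$ is visible; everything else is bookkeeping on the classical chaining.
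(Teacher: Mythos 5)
Your proof is correct. The paper itself offers no argument for this lemma --- its ``proof'' is the single line ``See \cite{5Shaposhnikov17}'' --- so you have supplied the omitted reasoning, and what you wrote is exactly the standard two-parameter chaining argument that the cited reference carries out: Chebyshev plus a union bound over the $O(2^{nd})$ dyadic neighbour pairs \emph{and} the $2^{\eta n}$ points of $S_n$, Borel--Cantelli (summability being precisely the condition $\eta<b-\alpha a$), monotonicity of $\{S_n\}$ to propagate the control to all finer scales, telescoping along rounded dyadic approximants, and continuity in $x$ to pass from dyadic to general arguments and to absorb the finitely many scales below $n_0(\omega)$. The only nit is cosmetic: the leading leg from $u_m$ to $v_m$ may require up to $2d$ (not $d$) nearest-neighbour steps in $D_m$, since each coordinate of $u_m-v_m$ can be as large as $2\cdot 2^{-m}$; this only changes the constant $C(d)$.
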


\begin{proof}
	See \cite{5Shaposhnikov17}.
\end{proof}

\begin{proposition}\label{5FlowProperty}
	Let $b\in L^{1,\infty}_{\infty,\infty}$ and $H<\frac{1}{2(d+2)}$.\par
	Further, let $\left((t,x)\mapsto X_t^{s,x}\right)$ be a locally H\"{o}lder continuous process (see Lemma \ref{5JointHolderContinuity}), which for all fixed $x\in\mathbb{R}^d$ uniquely satisfies the SDE (2.1) a.e.\par
	Then for all $s$ there exists a measurable set $\Omega^+$ with $\mu(\Omega^+) = 1$ such that for all $t\in[s,T]$, $x\in\mathbb{R}^d$ the following holds 
	\[X_t^x(\omega) = X_t^{s,X_s^x}(\omega)\]
\end{proposition}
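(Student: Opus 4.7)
The plan is to derive the flow identity as a direct consequence of the path-by-path uniqueness in Theorem~\ref{5PathByPath}, combined with the elementary observation that, once the initial segment $[0,s]$ has been traversed, the remaining path $t\mapsto X_{t}^{x}(\omega)|_{[s,T]}$ solves the SDE on $[s,T]$ starting from the specific point $X_{s}^{x}(\omega)\in\mathbb{R}^{d}$. The essential input is the \emph{uniformity in the initial condition} of Davie-type uniqueness, which allows the initial datum to depend on $\omega$ without any measurability issue arising.

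First, I would record that the conclusion of Theorem~\ref{5PathByPath} extends verbatim to SDEs started at any initial time $s\in[0,T)$: the averaging operator estimate, the Hölder-flow property, and the Van Kampen-type argument from Section~2 can each be run on $[s,T]$ in place of $[0,T]$, since none of them uses $s=0$ in a structural way. This furnishes, for each such $s$, a measurable set $\widetilde\Omega_{s}\subset\Omega$ of full measure such that for every $\omega\in\widetilde\Omega_{s}$ the integral equation
\begin{equation*}
Z_{t}=y+\int_{s}^{t}b(u,Z_{u})\,du+B_{t}^{H}(\omega)-B_{s}^{H}(\omega),\qquad t\in[s,T],
\end{equation*}
has one and only one solution in $\mathcal{C}([s,T];\mathbb{R}^{d})$, uniformly in $y\in\mathbb{R}^{d}$. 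Define $\Omega^{+}$ to be the intersection of $\widetilde\Omega_{s}$ with the full-measure set on which $t\mapsto X_{t}^{x}(\omega)$ uniquely solves the SDE started from $(0,x)$ for every $x$ (Theorem~\ref{5PathByPath} at $s=0$), and with the full-measure set supplied by Lemma~\ref{5JointHolderContinuity} on which $(t,x)\mapsto X_{t}^{s,x}(\omega)$ is locally Hölder.

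Now I would fix any $\omega\in\Omega^{+}$ and any $x\in\mathbb{R}^{d}$. Restricting the identity $X_{t}^{x}(\omega)=x+\int_{0}^{t}b(u,X_{u}^{x}(\omega))\,du+B_{t}^{H}(\omega)$ to $t\in[s,T]$ and subtracting its value at $t=s$ yields
\begin{equation*}
X_{t}^{x}(\omega)=X_{s}^{x}(\omega)+\int_{s}^{t}b(u,X_{u}^{x}(\omega))\,du+B_{t}^{H}(\omega)-B_{s}^{H}(\omega),\qquad t\in[s,T].
\end{equation*}
For this fixed $\omega$ the value $y:=X_{s}^{x}(\omega)$ is just a point of $\mathbb{R}^{d}$, and the uniqueness clause valid on $\widetilde\Omega_{s}$ \emph{for every} $y\in\mathbb{R}^{d}$ therefore forces $X_{t}^{x}(\omega)=X_{t}^{s,y}(\omega)=X_{t}^{s,X_{s}^{x}(\omega)}(\omega)$ for all $t\in[s,T]$. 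Since $x$ and $\omega\in\Omega^{+}$ were arbitrary, this delivers the claimed identity on the single full-measure set $\Omega^{+}$.

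The main obstacle is conceptual rather than computational: ordinary pathwise uniqueness, which holds $\mathbf{P}$-a.s.\ only \emph{per fixed} $y$, is not strong enough to allow the substitution of a random initial value, because the exceptional null set could a priori depend on $y=X_{s}^{x}(\omega)$ in a non-measurable fashion. It is precisely the path-by-path formulation of Theorem~\ref{5PathByPath}, which produces a \emph{single} null set outside of which uniqueness holds simultaneously for all initial conditions, that eliminates this difficulty and makes the substitution legitimate.
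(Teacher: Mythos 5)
Your argument is clean in isolation, but within the architecture of this paper it is circular. The flow identity you are asked to prove is precisely what supplies point (3) of Proposition \ref{5Holder flow} (the paper states that points (1)--(3) of that proposition follow directly from Proposition \ref{5FlowProperty} and its companions in the Appendix), and the proof of the path-by-path uniqueness Theorem \ref{5PathByPath} in turn relies on Proposition \ref{5Holder flow}: the Van Kampen/Shaposhnikov argument explicitly invokes the composition property $X(Y_s,s,r,B^H(\omega)) = X\bigl(X(Y_s,s,t,B^H(\omega)),t,r,B^H(\omega)\bigr)$ when rewriting $f(t)-f(s)$, as well as the H\"older estimate of point (4). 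So you cannot take Theorem \ref{5PathByPath} (or its time-$s$ analogue) as an input here without begging the question. You correctly diagnose why plain pathwise uniqueness for a fixed deterministic initial point is insufficient --- the exceptional null set depends on the initial value, which here is the random point $X_s^x(\omega)$ --- but the tool you reach for to overcome this is proved downstream of the very statement at hand.

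The paper instead proves the proposition by approximation: for smooth compactly supported $b_n$ the flow property holds surely by classical theory; one then controls $E\bigl[\int_{\mathbb{R}^d}\lvert X_t^{s,X_s^{x,n},n}\psi(X_s^{x,n})-X_t^{s,X_s^{x,n}}\psi(X_s^{x,n})\rvert\,dx\bigr]$ via the change-of-variables formula (Theorem \ref{5ChangeOfVariableFormula}), H\"older's inequality, the $L^2$-convergence $X_t^{s,y,n}\to X_t^{s,y}$, and the uniform moment bound on $\det\frac{\partial}{\partial y}Y_s^{y,n}$ from Lemma \ref{5BoundedDerivatives}, and finally extracts a.s.\ convergent subsequences and uses continuity in $(t,x)$ to upgrade to a single full-measure set valid for all $t$ and $x$. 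To salvage your route you would have to first establish path-by-path uniqueness without any appeal to the flow property, which the present structure of the paper does not provide; as written, your proof does not stand.
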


\begin{proof}
	Let $\{b_n\}_{n\geq 1}\subset\mathcal{C}_c^{\infty}\left([0,T]\times\mathbb{R}^d\right)$ be an approximation sequence for $b$ as in Lemma \ref{5JointHolderContinuity}.
	Then we can find continuous processes $\left((t,x)\mapsto X_t^{s,x,n}\right)$, $n\geq 1$, which uniquely solve the SDE (\ref{5SDE}) with respect to the vector fields $b_n$ $\mu$-a.e. uniformly in $x\in\mathbb{R}^d$.\par
	Using uniqueness and the fact that $b_n\in\mathcal{C}_c^{\infty}\left([0,T]\times\mathbb{R}^d\right)$, we can find an $\Omega^{'}$ with $\mu(\Omega^{'})=1$ such that for all $\omega\in\Omega^{'}$, $n\geq 1$, $t\in[0,T]$ and $x\in\mathbb{R}^d$ we have 
		\[X_t^{x,n}(\omega) = X_t^{s,X_s^{x,n},n}(\omega)\]
		
	Let $\psi\in \mathcal{C}_c^{\infty}\left(\mathbb{R}^d\right) $ be non-negative and $t\in\left(s,T\right]$ fixed. Then using the change of variable formula (Theorem \ref{5ChangeOfVariableFormula}) we see that 
	\begin{equation}
		\begin{split}
			E\Big[\int_{\mathbb{R}^d} \Big\lvert  X_t^{s,X_s^{x,n},n}\psi(X_s^{x,n})-& X_t^{s,X_s^{x,n}}\psi(X_s^{x,n}) \Big\rvert dx\Big]\\
			&= E\left[\int_{\mathbb{R}^d} \left\lvert  X_t^{s,y,n} - X_t^{s,y} \right\rvert \psi(y) \left\lvert \text{det}\frac{\partial}{\partial y} Y_s^{y,n} \right\rvert dy\right]
		\end{split}
	\end{equation}
	
	where $Y_t^{x,n}$ is the inverse process of $X_t^{x,n}$.\par
	By applying H\"{o}lder's inequality we obtain that 
	
	\[E\left[\int_{\mathbb{R}^d} \left\lvert  X_t^{s,y,n} - X_t^{s,y} \right\rvert \psi(y) \left\lvert \text{det}\frac{\partial}{\partial y} Y_s^{y,n} \right\rvert dy \right] \leq J_1(n)\times J_2(n)\]

	where \[J_1(n):=E\left[\int_{\mathbb{R}^d} \left\lvert  X_t^{s,y,n} - X_t^{s,y} \right\rvert^2 \psi(y)  dy \right]^{\frac{1}{2}}  \]
	
	and \[J_2(n):=E\left[\int_{\mathbb{R}^d} \left\lvert  \text{det}\frac{\partial}{\partial y} Y_s^{y,n} \right\rvert^2 \psi(y)  dy \right]^{\frac{1}{2}}  \]

	Since $\psi$ has compact support, we see from Theorem 2.7 that $J_1(n) \xrightarrow{n\rightarrow\infty} 0 $\par
	
	On the other hand, since
	
	 \[J_2(n) =\left( \int_{\mathbb{R}^d} E\left[\left\lvert  \text{det}\frac{\partial}{\partial y} Y_s^{y,n} \right\rvert^2 \right] \psi(y)  dy  \right)^\frac{1}{2} \]
	Lemma \ref{5BoundedDerivatives} gives \[\sup_{n\in \mathbb{N},y\in\mathbb{R}^d}  E\left[\left\lvert  \text{det}\frac{\partial}{\partial y} Y_s^{y,n} \right\rvert^2 \right] <\infty \]
	Thus $J_2(n)\leq M$ for all $n$ for a constant $M$.\par
	Hence \[E\left[\int_{\mathbb{R}^d} \left\lvert  X_t^{s,y,n} - X_t^{s,y} \right\rvert \psi(y) \left\lvert \text{det}\frac{\partial}{\partial y} Y_s^{y,n} \right\rvert dy \right]\xrightarrow{n\rightarrow\infty} 0 \]
	and consequently
	\[E\left[\int_{\mathbb{R}^d} \left\lvert  X_t^{s,X_s^{x,n},n}\psi(X_s^{x,n})- X_t^{s,X_s^{x,n}}\psi(X_s^{x,n}) \right\rvert dx\right]\xrightarrow{n\rightarrow\infty} 0 \]
	
	It follows that there exists a subsequence $n_k$, $k\geq 1$ and a null set $\mathcal{N}$ such that for all $x\in\mathcal{U}:=\mathbb{R}^d\setminus\mathcal{N}$
	\[\left\lVert  X_t^{s,X_s^{x,n_k},n_k}\psi(X_s^{x,n_k})- X_t^{s,X_s^{x,n_k}}\psi(X_s^{x,n_k})  \right\rVert_{L^1(\Omega)}\xrightarrow{k\rightarrow\infty} 0 \]
	
	Thus for all $x\in\mathcal{U}$ there exists a subsequence $m_l=m_l(x)$ and an $\tilde{\Omega} = \tilde{\Omega}(x)$ with $\mu(\tilde{\Omega}) = 1$ such that for all $\omega\in\tilde{\Omega}$
	\begin{equation}\label{5proofFlow1}
\left( X_t^{s,X_s^{x,n_{m_l}},n_{m_l}}\psi(X_s^{x,n_{m_l}})\right)(\omega)- \left(X_t^{s,X_s^{x,n_{m_l}}}\psi(X_s^{x,n_{m_l}}) \right)(\omega)  \xrightarrow{l\rightarrow\infty} 0
	\end{equation} 
	
	Set $r_l = r_l(x) = n_{m_l}(x)$, $l\geq 1$.\par
	On the other hand, we know that for all $x\in\mathbb{R}^d$ \[\lVert X_s^{x,n} - X_s^x \rVert_{L^1(\Omega)}\xrightarrow{n\rightarrow\infty} 0 \]
	So in particular 
	\[\lVert X_s^{x,r_l(x)} - X_s^x \rVert_{L^1(\Omega)}\xrightarrow{l\rightarrow\infty} 0 \]
	for all $x\in\mathcal{U}$.\par
	Then we can find for all $x\in\mathcal{U}$ another subsequence, say, $z_i = z_i(x)$, $i\geq 1$ and an $\Omega^* = \Omega^*(x)$ such that for all $\omega\in\Omega^*$
	\[\left(X_s^{x,r_{z_i(x)}}\right)(\omega)\xrightarrow{i\rightarrow\infty} X_s^x(\omega)\]
	
	Hence, we find by continuity that for all $x\in\mathcal{U}$ and $\omega\in\Omega^{**} = \Omega^{**}(x) \subseteq \Omega^{*} = \Omega^{*}(x) $ with $\mu(\Omega^{**}) = 1$
	
	\begin{equation}\label{5proofFlow2}
		\left( X_t^{s,X_s^{x,r_{z_i(x)}}}\psi(X_s^{x,r_{z_i(x)}})\right)(\omega)- \left(X_t^{s,X_s^{x}}\psi(X_s^{x}) \right)(\omega)  \xrightarrow{i\rightarrow\infty} 0
	\end{equation}

		We also see from (\ref{5proofFlow1}) that for all $x\in\mathcal{U}$, $\omega\in\tilde{\Omega}(x)\bigcap\Omega^{**}(x)$
		\begin{equation}\label{5proofFlow3}
		\left( X_t^{s,X_s^{x,r_{z_i(x)}},r_{z_i(x)}}\psi(X_s^{x,r_{z_i(x)}})\right)(\omega)- \left(X_t^{s,X_s^{x,r_{z_i(x)}}}\psi(X_s^{x,r_{z_i(x)}}) \right)(\omega)  \xrightarrow{i\rightarrow\infty} 0
		\end{equation}
	
		Setting $\bar{r}_i = \bar{r}_i(x) = r_{z_i(x)}$, we further get that for all $x\in\mathcal{U}$ there exist $l_j =l_j(x)$, $j\geq 1$ and $\bar{\Omega} = \bar{\Omega}(x)$ with $\mu(\bar{\Omega})=1$ such that 
	
	\begin{equation}\label{5proofFlow4}
	\left( X_t^{x,\bar{r}_{l_j(x)}}\psi(X_s^{x,\bar{r}_{l_j(x)}})\right)(\omega) \xrightarrow{j\rightarrow\infty} X_t^x(\omega)\psi(X_s^x(\omega))
	\end{equation}
	So it follows from (\ref{5proofFlow2}), (\ref{5proofFlow3}) and (\ref{5proofFlow4}) that for all $x\in\mathcal{U}$, $\omega\in\bar{\Omega}(x)\bigcap\tilde{\Omega}(x)\bigcap\Omega^{**}(x)$

		\begin{equation}\label{5proofFlow5}
X_t^x(\omega)\psi(X_s^x(\omega)) = \left(X_t^{s,X_s^x}\psi(X_s^x)\right)(\omega)
	\end{equation}
	
	Consider now a sequence $\{\psi_n\}_{n\geq 1}\subset \mathcal{C}_c^{\infty}(\mathbb{R}^d)$ such that \[\psi_n(y)\xrightarrow{n\rightarrow\infty} 1\] for all $y\in\mathbb{R}^d$.
	
	Hence (\ref{5proofFlow5}) shows that for all $x\in\mathcal{U}$ there exists a $\Omega^\circ = \Omega^\circ(x)$ such that for all $\omega\in\Omega^\circ$ and $n\geq 1$ 
	\[X_t^x(\omega)\psi_n(X_s^x(\omega)) = \left(X_t^{s,X_s^x}\psi_n(X_s^x)\right)(\omega)\]
	
	It follows that for all $x\in\mathcal{U}$, $\omega\in\Omega^\circ(x)$
	\begin{equation}\label{5proofFlow6}
		X_t^x(\omega) = X_t^{s,X_s^x}(\omega)
	\end{equation}
	
	Since both sides of (\ref{5proofFlow6}) are continuous, in $t$ and $x$, we can find an $\Omega^+$ with $\mu(\Omega^+) = 1$ such that for all $\omega\in\Omega^+$, $t\in\left[s,T\right]$, $x\in\mathbb{R}^d$ \[X_t^x(\omega) = X_t^{s,X_s^x}(\omega).\]
\end{proof}

\begin{remark}\label{5remark:uniformFlow}
We mention that one can prove the existence of a continuous version of $\left( (s,t,x)\mapsto X_t^{s,x}\right)$ (compare e.g. \cite{5MNP14} in the Wiener case). Using this, one can even show the flow property in Proposition \ref{5FlowProperty} for a measurable set $\Omega^*$ of full mass, uniformly in $s,t$ and $x$.\par
\end{remark}

In the same way as in Proposition \ref{5FlowProperty}, one can prove the following result:
\begin{proposition}\label{5prop:homeomorphism}
    Under the conditions of Proposition 3.13 there exists a measurable set $\Omega^*$ with $\mu(\Omega^*) = 1$ such that for all $\omega\in\Omega^*$ the mapping
    \[\left(x\mapsto X_t^{s,x}(x)\right)\]
    is a homeomorphism for all $t\in\left(s,T\right]$.
\end{proposition}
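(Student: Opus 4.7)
The plan is to closely mimic the proof of Proposition \ref{5FlowProperty}, but establishing the two-sided inverse identities rather than the semigroup identity. First, I would invoke the backward-SDE / inverse-flow construction already alluded to at the start of Section 3.1, which, by the same Malliavin-compactness machinery used in Section 2, yields a jointly locally H\"{o}lder continuous adapted process $Y_t^{s,x}(\omega)$ together with smooth approximations $Y_t^{s,x,n}$ arising from $b_n \in C_c^\infty$ converging to $b$. For the smooth approximations, classical ODE theory gives that $x \mapsto X_t^{s,x,n}(\omega)$ is a $C^\infty$-diffeomorphism with inverse $Y_t^{s,x,n}(\omega)$, so the pointwise identities
\begin{equation*}
X_t^{s,Y_t^{s,x,n}(\omega),n}(\omega) = x \quad \text{and} \quad Y_t^{s,X_t^{s,x,n}(\omega),n}(\omega) = x
\end{equation*}
hold simultaneously for all $\omega$, all $n$, all $t \in (s,T]$ and all $x \in \mathbb{R}^d$.

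Next, I would transfer these identities to the limit. Fix $t \in (s,T]$ and a nonnegative cutoff $\psi \in C_c^\infty(\mathbb{R}^d)$. Exactly as in the proof of Proposition \ref{5FlowProperty}, I would bound
\begin{equation*}
E\Big[\int_{\mathbb{R}^d} \big| X_t^{s,Y_t^{s,x,n},n}\psi(Y_t^{s,x,n}) - X_t^{s,Y_t^{s,x,n}}\psi(Y_t^{s,x,n}) \big|\, dx \Big]
\end{equation*}
by a Cauchy--Schwarz splitting, using the change-of-variables formula (Theorem \ref{5ChangeOfVariableFormula}), the uniform bound on $E[|\det \partial_x X_s^{y,n}|^2]$ from Lemma \ref{5BoundedDerivatives}, and the $L^2$-convergence $X_t^{s,y,n} \to X_t^{s,y}$ from Theorem \ref{5StrongConvergenceDerivative}; an analogous argument handles the symmetric expression with the roles of $X$ and $Y$ swapped. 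A three-stage subsequence/dense-set extraction as in the proof of Proposition \ref{5FlowProperty} produces a full-measure set $\Omega^*$ on which both
\begin{equation*}
X_t^{s,Y_t^{s,x}(\omega)}(\omega) = x \quad \text{and} \quad Y_t^{s,X_t^{s,x}(\omega)}(\omega) = x
\end{equation*}
hold for all countably many $(t,x)$ in a chosen dense set. The joint continuity of both $(t,x)\mapsto X_t^{s,x}(\omega)$ and $(t,x)\mapsto Y_t^{s,x}(\omega)$ (Lemma \ref{5JointHolderContinuity} and its backward analogue) then extends these identities to all $(t,x) \in (s,T] \times \mathbb{R}^d$, so that on $\Omega^*$ the map $x \mapsto X_t^{s,x}(\omega)$ is a continuous bijection with continuous inverse $Y_t^{s,\cdot}(\omega)$, i.e.\ a homeomorphism.

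The main obstacle is the pointwise-in-$\omega$ passage to the limit in the composed expressions $X_t^{s,Y_t^{s,x,n},n}$, since one is composing two stochastic processes that both depend on $n$ and whose convergences live a priori only in $L^2(\Omega)$. This is handled exactly as in the three-stage subsequence extraction carried out in the proof of Proposition \ref{5FlowProperty}: first extract a subsequence so that the inner arguments $Y_t^{s,x,n_k}(\omega)$ converge a.s., then a further subsequence so that the outer map $X_t^{s,\cdot}(\omega)$ evaluated at these converging arguments also converges a.s.\ (continuity in the spatial variable, combined with the cutoff $\psi$, is what makes this step legal), and a final subsequence so that the fully mixed expression converges almost surely. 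The uniform moment bounds on $\partial_x X^{\cdot,n}$ from Lemma \ref{5BoundedDerivatives} supply the uniform integrability needed to justify each passage, and an application of the continuity of $\psi_n \uparrow 1$ removes the cutoff at the end.
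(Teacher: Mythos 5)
Your proposal is correct and follows essentially the same route the paper intends: the paper gives no detailed argument for this proposition, stating only that it is proved ``in the same way as'' Proposition~\ref{5FlowProperty}, and your adaptation --- establishing the two-sided inverse identities for the smooth approximations, transferring them to the limit via the change-of-variables formula, the uniform derivative bounds of Lemma~\ref{5BoundedDerivatives}, and the multi-stage subsequence extraction, then upgrading to all $(t,x)$ by joint continuity of the forward and backward flows --- is precisely that argument spelled out.
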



\begin{thebibliography}{99}
\bibitem{5AMN01} Al\`{o}s, E., Mazet, O., Nualart, D., 
\textit{Stochastic calculus with respect to Gaussian processes}. Annals of
Probability \textbf{29}, 766--801 (2001).

\bibitem{5Ambrosio04} Ambrosio, L. Transport equation and Cauchy problem for BV vector fields. Inventiones mathematicae 158, 227–260

\bibitem{5AGS08}
Ambrosio, L., Gigli, N., \& Savaré, G. (2008). Gradient flows: in metric spaces and in the space of probability measures. Springer Science \& Business Media.

\bibitem{5AmbrosioBernard08} Ambrosio, L., \& Bernard, P. (2008). Uniqueness of signed measures solving the continuity equation for Osgood vector fields. arXiv preprint arXiv:0807.1592.

\bibitem{5ABP18} Amine, O., Ba\~{n}os, D., Proske, F. "C-infinity-regularization by Noise of Singular ODE's." arXiv preprint arXiv:1710.05760 (2018).

\bibitem{5ABP20} Amine, O., Ba\~{n}os, D., Proske, F.: To appear in: Infinite Dimensional Analysis Quantum Probability and Related Topics (2020).

\bibitem{5ACHP18} Amine, O., Coffie, E., Harang, F., \& Proske, F. (2018). A Bismut-Elworthy-Li Formula for Singular SDE's Driven by a Fractional Brownian Motion and Applications to Rough Volatility Modeling. arXiv preprint arXiv:1805.11435.

\bibitem{5BahouriChemin94} Bahouri, Hajer, and J-Y. Chemin. "Équations de transport relatives á des champs de vecteurs non-lipschitziens et mécanique des fluides." Archive for rational mechanics and analysis 127.2 (1994): 159-181.


\bibitem{5BDMP18} Ba\~{n}os, D., Duedahl, S., Meyer-Brandis, T., Proske, F.:
Construction of Malliavin differentiable strong solutions of SDE's under an
integrability conditionon the drift without the Yamada-Watanabe principle.
Annales de II.H.P. Probabilites et statistiques (2018).

\bibitem{5BonicattoGusev18} Bonicatto, Paolo, and Nikolay A. Gusev. "Non-uniqueness of signed measure-valued solutions to the continuity equation in presence of a unique flow." arXiv preprint arXiv:1809.10216 (2018).

\bibitem{5BNP19} Ba\~{n}os, D., Nilssen, T., Proske, F.. Strong Existence and Higher Order Fréchet Differentiability of Stochastic Flows of Fractional Brownian Motion Driven SDEs with Singular Drift. Journal of Dynamics and Differential Equations (2019). doi: https://doi.org/10.1007/s10884-019-09789-4 

\bibitem{5BLPP18} Ba\~{n}os, D., Ortiz-Latorre, S., Pilipenko, A., Proske, F.:
Strong solutions of SDE's with generalized drift and multidimensional
fractional Brownian initial noise. arXiv:1705.01616v2 [math.PR] (2018).

\bibitem{5Beck-et.al.14} Beck, Lisa, et al. "Stochastic ODEs and stochastic linear PDEs with critical drift: regularity, duality and uniqueness." arXiv preprint arXiv:1401.1530 (2014).

\bibitem{5BouchutCrippa13} Bouchut, François, and Gianluca Crippa. "Lagrangian flows for vector fields with gradient given by a singular integral." Journal of hyperbolic differential equations 10.02 (2013): 235-282.

\bibitem{5BouchutDesvillettes01} Bouchut, F., and L. Desvillettes. "On two-dimensional Hamiltonian transport equations with continuous coefficients." Differential and Integral Equations 14.8 (2001): 1015-1024.



\bibitem{5BOS11} Brenier, Yann, Felix Otto, and Christian Seis. "Upper bounds on coarsening rates in demixing binary viscous liquids." SIAM Journal on Mathematical Analysis 43.1 (2011): 114-134.

\bibitem{5BreschJabin18} Bresch, Didier, and Pierre–Emmanuel Jabin. "Global existence of weak solutions for compressible Navier–Stokes equations: Thermodynamically unstable pressure and anisotropic viscous stress tensor." Annals of Mathematics 188.2 (2018): 577-684.

\bibitem{5ButkovskyMytnik19} Butkovsky, Oleg, and Leonid Mytnik. "Regularization by noise and flows of solutions for a stochastic heat equation." The Annals of Probability 47.1 (2019): 165-212.

\bibitem{5CatellierGubinelli16} Catellier, R., Gubinelli, M.: Averaging along irregular curves
and regularisation of ODE's. Stoch. Proc. and their Appl., Vol. 126, Issue
8, p. 2323-2366 (August 2016).

\bibitem{5CCR06} Colombini, Ferruccio, Gianluca Crippa, and Jeffrey Rauch. "A note on two-dimensional transport with bounded divergence." Communications in Partial Differential Equations 31.7 (2006): 1109-1115.

\bibitem{5ChampagnatJabin10} Champagnat, Nicolas, and Pierre-Emmanuel Jabin. "Well posedness in any dimension for Hamiltonian flows with non BV force terms." Communications in Partial Differential Equations 35.5 (2010): 786-816.

\bibitem{5Crippa07} Crippa, Gianluca. The flow associated to weakly differentiable vector fields. Diss. University of Zurich, (2008).

\bibitem{5Crippa-et.al.17} Crippa, Gianluca, et al. "Eulerian and Lagrangian Solutions to the Continuity and Euler Equations with $L^1$ Vorticity." SIAM Journal on Mathematical Analysis 49.5 (2017): 3973-3998.

\bibitem{5CrippaDeLellis08} Crippa, Gianluca, and Camillo De Lellis. "Estimates and regularity results for the DiPerna-Lions flow." Journal für die reine und angewandte Mathematik (Crelles Journal) 2008.616 (2008): 15-46.

\bibitem{5DMN92} Da Prato, G., Malliavin, P., Nualart, D.: Compact families of
Wiener functionals. C. R. Acad. Sci. Paris, t. 315, S\'{e}rie I, 1287--1291
(1992).

\bibitem{5Davie07} Davie, Alexander M. "Uniqueness of solutions of stochastic differential equations." International Mathematics Research Notices (2007).

\bibitem{5DecreusefondUstunel98} Decreusefond, L., \"{U}st\"{u}nel,\ A.S.: Stochastic analysis
of the fractional Brownian motion. Potential Analysis 10, 177-214 (1998).

\bibitem{5DePauw02} Depauw, Nicolas. "Non-unicité du transport par un champ de vecteurs presque $ BV$." Séminaire Équations aux dérivées partielles (Polytechnique) (2002): 1-9.

\bibitem{5DiPernaLions89} DiPerna, Ronald J., and Pierre-Louis Lions. "Ordinary differential equations, transport theory and Sobolev spaces." Inventiones mathematicae 98.3 (1989): 511-547.

\bibitem{5Flandoli11} Flandoli, Franco. "Regularizing properties of Brownian paths and a result of Davie." Stochastics and Dynamics 11.02n03 (2011): 323-331.

\bibitem{5FGP10} Flandoli, Franco, Massimiliano Gubinelli, and Enrico Priola. "Well-posedness of the transport equation by stochastic perturbation." Inventiones mathematicae 180.1 (2010): 1-53.

\bibitem{5Hauray03} Hauray, Maxime. "On two-dimensional Hamiltonian transport equations with $\mathbb {L} _ {loc}^ p $ coefficients." Annales de l'IHP Analyse non linéaire. Vol. 20. No. 4. 2003.

\bibitem{5Hajlasz93} Hajlasz, Piotr. "Change of variables formula under minimal assumptions." Colloquium Mathematicae. Vol. 64. No. 1. 1993.

\bibitem{5Jabin10} Jabin, Pierre-Emmanuel. "Differential equations with singular fields." Journal de Mathématiques Pures et Appliquées 94.6 (2010): 597-621.

\bibitem{KaratzasShreve} Karatzas, Ioannis, and Steven Shreve. Brownian motion and Stochastic Calculus. Vol. 113. Springer (2014).

\bibitem{5Kunita} Kunita, H.: Stochastic Flows and Stochastic Differential
Equations. Cambridge Univ. Press (1988).

\bibitem{Kwapien} Kwapie\'{n}, S., Rosi\'{n}ski, J.: Sample H\"{o}lder Continuity of Stochastic Processes and Majorizing Measures. In: Dalang, R. C., Dozzi, M.,\ Russo,\ F., editors. Seminar on\ Stochastic Analysis, Random fields and Applications IV, No. 58 in Progress in Probability, p. 155-163. Birkh\"{a}user Basel (2004).

\bibitem{5Lizorkin01} Lizorkin, P.I. \textit{Fractional integration and
differentiation}, Encyclopedia of Mathematics, Springer, (2001)

\bibitem{5MNP14} S.E.A. Mohammed, T. Nilssen, F. Proske, \textit{Sobolev
Differentiable Stochastic Flows for SDE's with Singular Coefficients:
Applications to the Transport Equation.} Ann. Probab. \textbf{43} (3),
(2015), 1535--1576.

\bibitem{5MMNPZ13} Menoukeu-Pamen, O., Meyer-Brandis, T., Nilssen, T., Proske,
F., Zhang, T.: A variational approach to the construction and Malliavin
differentiability of strong solutions of SDE's. Math. Ann. 357 (2),
761--799, (2013).

\bibitem{5MS19} Modena, Stefano, and László Székelyhidi. "Non-renormalized solutions to the continuity equation." Calculus of Variations and Partial Differential Equations 58.6 (2019): 208.

\bibitem{5Nualart10} Nualart, D.: The Malliavin Calculus and Related Topics.
2nd edition, Spinger (2010).

\bibitem{5NO02} Nualart, D., Ouknine, Y.: Regularization of differential
equations by fractional noise. Stochastic Processes and their
Applications,102(1), 103--116 (2002).

\bibitem{5Priola18} Priola, Enrico. "Davie’s type uniqueness for a class of SDEs with jumps." Annales de l'Institut Henri Poincaré, Probabilités et Statistiques. Vol. 54. No. 2. Institut Henri Poincaré, (2018).

\bibitem{5Samko-et.al.93} Samko, S. G., Kilbas, A. A., Marichev, O. L. 
\textit{Fractional Integrals and Derivatives}. Theory and Applications,
Gordon and Breach, (1993).

\bibitem{5Seis17} Seis, Christian. "A quantitative theory for the continuity equation." Annales de l'Institut Henri Poincare (C) Non Linear Analysis. Vol. 34. No. 7. Elsevier Masson, (2017).

\bibitem{5Shaposhnikov16} Shaposhnikov, A. V. "Some remarks on Davie’s uniqueness theorem." Proceedings of the Edinburgh Mathematical Society 59.4 (2016): 1019-1035.

\bibitem{5Shaposhnikov17} Shaposhnikov, A. V. "Correction to the paper" Some remarks on Davie's uniqueness theorem"." arXiv preprint arXiv:1703.06598 (2017).

\bibitem{5ShaposhnikovWresch20} Shaposhnikov, Alexander, and Lukas Wresch. "Pathwise vs. path-by-path uniqueness." arXiv preprint arXiv:2001.02869 (2020).

\bibitem{Stroock} Stroock, D. W.: Probability Theory: An Analytic View. Cambridge University
Press, Cambridge, 2. edition (2010).



\bibitem{5VanKampen37} Van Kampen, E. R. "Remarks on systems of ordinary differential equations." American Journal of Mathematics 59.1 (1937): 144-152.

\bibitem{5Veretennikov79} A.Y. Veretennikov, \textit{On the strong solutions of
stochastic differential equations.} Theory Probab. Appl. \textbf{24},
(1979), 354--366.

%\bibitem{5ViensVizcarra07} Viens, F.G., Vizcarra, A.B.: Supremum Concentration
%Inequality and Modulus of Continuity. Journal of Functional Analysis, 248,
%1-26 (2007).

\bibitem{5Wresch17} Wresch, Lukas. "Path-by-path uniqueness of infinite-dimensional stochastic differential equations." arXiv preprint arXiv:1706.07720 (2017).

\bibitem{5Zvonkin74} A.K. Zvonkin, \textit{A transformation of the state space
of a diffusion process that removes the drift.} Math.USSR (Sbornik) \textbf{%
22}, (1974), 129--149.
\end{thebibliography}
\end{document}